\definecolor{linkColor}{rgb}{0.0,0.11,0.22}
\definecolor{YaleBlue}{rgb}{0.0,0.22,0.444}
\DeclareMathOperator{\relu}{relu}
\newcommand{\sfw}{\texttt{1-SFW}\xspace}
\newcommand{\AlgQFW}{\texttt{Quantized Frank-Wolfe}\xspace}
\newcommand{\sncAlg}{\texttt{Stochastic Non-Convex Quantized 
		Frank-Wolfe}\xspace}
\newcommand{\qfw}{QFW\xspace}
\newcommand{\sscheme}{\texttt{Sign Encoding Scheme}\xspace}
\newcommand{\mscheme}{\texttt{Partition Encoding Scheme}\xspace}
\newcommand{\fl}{\texttt{FL}\xspace}
\newcommand{\AlgCG}{\texttt{Continuous Greedy}\xspace}
\newcommand{\Algblack}{\texttt{Black-Box Continuous Greedy}\xspace}
\newcommand{\Algdis}{\texttt{Discrete Black-Box Greedy}\xspace}
\newcommand{\revise}[1]{{#1}}
\newcommand{\domainsh}{\domain_\delta}
\DeclareMathOperator*{\diam}{diam}
\newcommand{\dif}{\mathop{}\!\mathrm{d}}
\newtheorem{theorem}{Theorem}
\newtheorem{lemma}{Lemma}
\newtheorem{assump}{Assumption}
\newtheorem{remark}{Remark}
\crefname{assump}{Assumption}{Assumptions}
\newcommand{\tnabla}{\tilde{\nabla}}
\newcommand{\tF}{\tilde{F}}
\newcommand{\expect}{\mathbb{E}}
\newcommand{\constraint}{\mathcal{K}}
\newcommand{\one}{\mathbf{1}}
\newcommand{\domain}{\mathcal{X}}
\newcommand{\ie}{\emph{i.e.}}
\newcommand{\eg}{\emph{e.g.}}
\newcommand{\matroid}{\mathcal{I}}
\DeclareMathOperator*{\argmax}{arg\,max}
\DeclareMathOperator*{\argmin}{arg\,min}
\begin{document}

\newcommand{\unit}[1]{\ensuremath{\, \mathrm{#1}}}

\titleformat{\chapter}[display]
    {\fontfamily{ptm}\huge\bfseries}{\chaptertitlename\ \thechapter}{5pt}{\centering\singlespacing\huge}% NEW
\titlespacing*{\chapter}{0pt}{50pt}{30pt}% NEW

	% Dissertation Title: Subtitle
	\title{Scalable Projection-Free Optimization}
	% Full Name of Author (as it should appear on the diploma)
	\author{Mingrui Zhang}
	%
	% where there is an advisory committee, only the chairperson is listed
	\advisor{Amin Karbasi}
	% Month and Year Degree is to be awarded
	% (not the month where the dissertation is submitted)
	\date{June 2021}

	% \singlespace

    \pagestyle{plain}	

	\newpage    
	% Not to exceed 750 words
\begin{abstract}
As a projection-free algorithm, Frank-Wolfe (FW) method, also known as 
conditional gradient, has recently received considerable attention in the 
machine learning community. In this dissertation\footnote{\revise{This 
dissertation includes the following publications 
\citep{zhang2020one,zhang2020quantized,chen2020black}. Some passages in this 
dissertation
have been quoted verbatim from the above papers.}}, we study several topics on 
the FW variants for scalable projection-free optimization.

We first propose \sfw, the first projection-free method that requires only one
sample per iteration to update the optimization variable and yet achieves the 
best known complexity bounds for convex, non-convex, and monotone DR-submodular 
settings. Then we move forward to the distributed setting, and develop \AlgQFW
(QFW), a general communication-efficient distributed FW framework for both
convex and non-convex objective functions. We study the performance of 
QFW in two widely recognized settings: 1) stochastic optimization and 2) 
finite-sum optimization. Finally, we propose \Algblack, a derivative-free and 
projection-free algorithm, that maximizes a monotone continuous DR-submodular
function over a bounded convex body in Euclidean space.

\end{abstract}

    \maketitle
        
    \thispagestyle{empty}    
    \makecopyright

	\pagenumbering{roman}	    
	\setcounter{page}{3}

	\newpage
	\begin{center}
{\bf \large Acknowledgments}
\end{center}
During the six years for pursuing a Ph.D. degree, I have received a lot of 
help, support and encouragement.

First and foremost I would like to express my sincere gratitude to my advisors, 
Professor Amin Karbasi and Professor Sekhar Tatikonda, for their constant 
guidance, regular meetings, helpful advice and support for my research and this 
dissertation. I am deeply grateful to Professor Huibin Zhou, the other member 
in my dissertation committee. I would also like to thank all my research 
collaborators: Professor Hamed Hassani, Lin Chen, Yifei Min, Aryan Mokhtari, 
Zebang Shen. 

I would like to express my sincere thanks to all my teachers from the  
elementary school to the graduate school. Especially I would like to offer my 
thanks to the following people, listed in a chronological order: Lan Yang, 
Xiabing Shen, Weizi Zheng, Panfeng Wang, Professor Chunwei Song, Professor 
Houhong Fan, Professor Minping Qian, Professor Daquan Jiang.  

I would also like to thank my friends at Yale University, Peking University, 
Kaifeng High School, etc. Here I would like to give thanks to the following 
people, listed in an alphabetic order: Peiliang Bai, Lin Chen, Siyuan Dong, 
Xialiang Dou, Bei Fan, Peizhen Guo, Chris Harshaw, Bo Hu, Wenmian Hua, Dingjue 
Ji, Pengcheng Li, 
Yitong Li, Yupeng Li, Naijia Liu, Zhan Liu, Yifei Min, Marko Mitrovic, Zhichao 
Peng, Weiqi 
Shi, Chuan Tian, 
Kaizheng Wang, Qingcan Wang, Shuai Wang, Xingyan Wang, Zeyu Wang, Zhixin Wang, 
Zheng Wei,
Yujun Xie, 
Wenjie Xiong, Ruitu Xu, Sheng Xu, Yihan Zhou, Yun Zhou, Zihan Zhuo.

In the end, I would like to express my deepest gratitude to my parents for 
their endless love and support.

    \tableofcontents % 
    \listoffigures   % The standard LaTeX form works well.
    \listoftables    % 

    \mainmatter      % Reset the pagenumbering options
    
	\chapter{Introduction}\label{cha:intro}
In many modern machine learning scenarios, the task of learning is usually 
converted to an optimization problem, where the loss function is defined as the 
empirical loss function (plus some possible regularization). In many 
cases, the corresponding constraint set is not the whole Euclidean space, but 
some bounded convex set. 

In order to solve these \emph{constrained} optimization problems, methods like 
Projected Gradient Descent (PGD) are quite popular and effective in practice.
In these methods, projection oracle is applied once the proposed iterates land 
outside the 
feasibility region. However, the projection operation can be computationally 
expensive for some 
special constraint sets. For example, in recommender systems and matrix 
completion, projections amount to expensive linear algebraic operations.
Similarly, projections onto matroid polytopes with exponentially many linear 
inequalities are daunting tasks in general. This difficulty has motivated the
use of projection-free algorithms.

As a projection-free algorithm for various constrained convex 
\citep{jaggi2013revisiting,lacoste2015global,garber2015faster,hazan2016variance,mokhtari2018stochastic}
 and non-convex 
\citep{lacoste2016convergence,reddi2016stochastic,mokhtari2018escaping,zhang2019one,hassani2019stochastic}
optimization problems, the Frank-Wolfe (FW) method \citep{frank1956algorithm}, 
also known as conditional gradient, has recently received considerable 
attention in the machine learning community.

In this dissertation, we investigate various topics on the FW variants for 
scalable 
projection-free optimization.

%\section{Challenges in Projection-Free Optimization}
\section{One-Sample Stochastic Frank-Wolfe}
Although Frank-Wolfe (FW) methods have been widely used for solving constrained 
optimization problems 
\citep{frank1956algorithm,jaggi2013revisiting,lacoste2015global}, exact 
gradient evaluations are required in order to guarantee convergence. In many 
cases, however, exact gradients are difficult to compute or even inaccessible. 
This challenge motivates the study of FW variants which can be fed with 
stochastic gradient information. 

Indeed, extending the original FW methods to 
the stochastic setting is a 
challenging task as it is known that FW-type methods are highly sensitive to 
stochasticity in gradient computation \citep{hazan2012projection}.
To resolve this issue, several stochastic variants of FW methods have been 
studied in the literature 
\citep{hazan2012projection,hazan2016variance,reddi2016stochastic,
	lan2016conditional,braun2017lazifying,hassani2019stochastic,
	shen2019complexities,yurtsever2019conditional}.
In all these stochastic methods, the basic idea is to provide an accurate 
estimate of the gradient by using some variance reduction techniques that 
typically rely on large mini-batches of samples where the size grows with the 
number of iterations or is reciprocal of the desired accuracy. A  growing 
mini-batch, however, is undesirable in practice as requiring a large collection 
of samples per iteration  may easily prolong the duration of each 
iterate  without updating optimization parameters frequently enough 
\cite{defazio2018ineffectiveness}.   A notable exception to this trend is the 
the work of \cite{mokhtari2018stochastic} which employs a momentum 
variance reduction technique requiring only one sample per iteration; however, 
this method suffers from suboptimal convergence rates.

In this dissertation, we present the first projection-free method 
that requires only one sample per iteration to update the optimization 
variable and yet achieves the best known complexity bounds for convex, 
non-convex, and monotone DR-submodular settings.

\section{Communication-Efficient Frank-Wolfe in the Distributed Setting}
Thanks to the numerous information-sensors and many other modern information 
technologies, 
the sizes of available datasets have been growing fast recently. As a result, 
efficient FW methods are also motivated to be applied to large-scale problems 
(\emph{e.g.}, training deep neural networks 
\citep{ravi2018constrained,schramowski2018neural,berrada2018deep}, 
RBMs \citep{ping2016learning}). To this end, distributed FW variants have been 
proposed for specific problems, \emph{e.g.}, online learning 
\citep{zhang2017projection}, learning low-rank matrices 
\citep{zheng2018distributed}, and optimization under block-separable 
constraint sets \citep{wang2016parallel}. 

As is well known, a significant performance bottleneck of distributed 
optimization methods is the 
cost of communicating gradients, which is typically handled by using a 
parameter-server 
framework.  Intuitively, if each worker in the distributed 
system transmits the entire gradient, then at least $d$ floating-point numbers 
are communicated for each worker, where $d$ is the dimension of the problem. 
This 
communication 
cost can be a huge burden on the performance of parallel optimization 
algorithms 
\citep{chilimbi2014project,seide20141,strom2015scalable}. To circumvent this 
drawback, communication-efficient parallel algorithms have received 
significant attention. One major approach is to quantize the 
gradients while maintaining sufficient information 
\citep{de2015taming,abadi2016tensorflow,wen2017terngrad}. For 
\textit{unconstrained} optimization, when projection is not required for 
implementing Stochastic Gradient Descent (SGD), several communication-efficient 
distributed methods have been proposed, including 
QSGD \citep{alistarh2017qsgd}, SIGN-SGD 
\citep{bernstein2018signsgd}, and Sparsified-SGD~\citep{stich2018sparsified}.

In the constrained setting, and in particular for distributed FW 
methods, the communication-efficient versions were only studied for 
specific problems such as sparse learning 
\citep{bellet2015distributed,lafond2016d}. 
In this dissertation, we develop \AlgQFW (QFW), a general 
communication-efficient distributed FW framework for both convex and non-convex 
objective 
functions. We also study the performance of QFW in two widely 
recognized settings: 1) stochastic optimization and 2) finite-sum optimization.

\section{Black-Box Submodular Maximization}
Black-Box optimization, also known as zeroth-order or derivative-free
optimization\footnote{\revise{We note that black-box 
		optimization (BBO) and 
		derivative-free 
		optimization (DFO) are not identical terms. 
		\citet{audet2017derivative} defined DFO as ``the mathematical study of 
		optimization 
		algorithms that do not use derivatives'' and BBO as ``the study of 
		design 
		and analysis of algorithms that assume the objective and/or constraint 
		functions are given by blackboxes''. However, as the differences are 
		nuanced in 
		most scenarios, this dissertation uses them interchangeably. For a 
		detailed 
		review of 
		DFO and BBO, interested readers refer to 
		book \citep{audet2017derivative}.}}, has been 
extensively studied in the literature 
\citep{conn2009introduction,bergstra2011algorithms,rios2013derivative,shahriari2016taking}.
In this setting, we assume that the objective function is unknown 
and we can only obtain 
zeroth-order information such as  (stochastic) function evaluations.

Fueled by a growing number of machine learning applications, black-box 
optimization  methods are usually 
considered in scenarios where 
gradients (\emph{i.e.}, 
first-order information) are 1) difficult or slow to compute, \emph{e.g.}, 
graphical 
model inference \citep{wainwright2008graphical}, structure predictions 
\citep{taskar2005learning,sokolov2016stochastic}, or 2) inaccessible, 
\emph{e.g.}, 
hyper-parameter 
turning for natural language processing or image 
classifications \cite{snoek2012practical,thornton2013auto}, black-box attacks 
for finding adversarial examples \cite{chen2017zoo,ilyas2018black}. Even though 
heuristics such as 
random or grid search, with undesirable dependencies on the dimension, are 
still used in some applications 
(\emph{e.g.}, 
parameter tuning for deep networks), there have been a growing number of 
rigorous methods to address the 
convergence rate of black-box optimization in convex and non-convex settings 
\citep{wang2017stochastic,balasubramanian2018zeroth,sahu2018towards}.

Continuous DR-submodular functions are an important subset of non-convex 
functions that can be 
minimized exactly \cite{bach2016submodular,staib2017robust}
and maximized approximately 
\cite{bian2017continuous,bian2017guaranteed,hassani2017gradient,
	mokhtari2018conditional,hassani2019stochastic,zhang2019one}.
This class of 
functions generalizes the notion of diminishing returns, 
usually defined over discrete set functions, to the continuous domains. They 
have found 
numerous applications in machine learning including MAP inference in 
determinantal point processes (DPPs) 
\cite{kulesza2012determinantal}, experimental design 
\cite{chen2018online}, 
resource allocation \cite{eghbali2016designing},  
mean-field inference in probabilistic models 
\cite{bian2018optimal}, among many others. 

In this dissertation, we propose a derivative-free FW method for continuous 
DR-submodular maximization over a bounded convex body, which also avoids the 
expensive projection operations.

%\section{Projection-Free Optimization in the Online Setting}

\section{Organization}
In this dissertation, we study several topics on scalable projection-free 
optimization algorithms. The rest of this 
dissertation is organized as follows.

In \cref{cha:background}, we introduce important background and 
related work.

In \cref{cha:1sfw}, we present the first one-sample stochastic Frank-Wolfe 
method, called \sfw, which attains the best known complexity bounds for 
convex, non-convex, and monotone DR-submodular settings, while requiring only 
\emph{one single} stochastic oracle 
query per iteration and avoiding large batch sizes altogether. In particular, 
we show 
that \sfw achieves the 
optimal  convergence rate  of $\mathcal{O}(1/\epsilon^2)$  for reaching an  
$\epsilon$-suboptimal solution in the stochastic convex setting, and  a 
$(1-1/e)-\epsilon$ approximate solution for a stochastic monotone DR-submodular 
maximization problem. In a general non-convex setting, \sfw finds an 
$\epsilon$-first-order stationary point after at most 
$\mathcal{O}(1/\epsilon^3)$ iterations. We also empirically validate the 
efficiency of \sfw algorithm by comparing it with baseline methods in 
\cref{sec:experiments-sfw}. This chapter is based on our work in 
\citep{zhang2020one}\footnote{\revise{This work was done in collaboration with 
Zebang Shen, Aryan Mokhtari, Hamed Hassani, and Amin Karbasi. I proposed the 
algorithms and also completed the 
work of theoretical analysis.}}.

In \cref{cha:qfw}, we propose a novel distributed projection-free framework, 
\AlgQFW (QFW), which handles quantization for constrained convex and 
non-convex optimization problems in finite-sum and stochastic cases. We show 
that with quantized gradients, we can obtain a 
provably convergent method which preserves the 
convergence rates of the state-of-the-art vanilla centralized methods in all 
the considered cases 
\citep{zhang2019one,shen2019complexities,hassani2019stochastic,yurtsever2019conditional}.
In \cref{sec:experiments-qfw}, we evaluate the performance of algorithms by 
visualizing their loss vs.\ the number of transmitted bits on two problems: 
multinomial logistic regression and three-layer neural network under $\ell_1$ 
constraint. This chapter is based on our work in 
\citep{zhang2020quantized}\footnote{\revise{This work was done in collaboration 
with Lin Chen, Aryan Mokhtari, Hamed Hassani, and Amin Karbasi. I proposed the 
algorithms and also completed the work of theoretical analysis.}}.

In \cref{cha:black}, we propose a derivative-free and projection-free algorithm 
\Algblack (BCG), that maximizes a monotone continuous DR-submodular 
function over a bounded convex body in Euclidean space. We study three 
scenarios:

(1) In the deterministic setting, where function evaluations can be obtained 
exactly, BCG achieves the tight $[(1-1/e)OPT-\epsilon]$ approximation guarantee 
with $\mathcal{O}(d/\epsilon^3)$ function evaluations.%, where $d$ 
%is the dimension. 

(2) In the stochastic setting, where function evaluations are noisy, BCG 
achieves the tight $[(1-1/e)OPT-\epsilon]$ approximation guarantee with 
$\mathcal{O}(d^3/\epsilon^5)$ function evaluations.

(3) In the discrete setting, \Algdis (DBG), the discrete version of BCG, 
achieves the tight 
$[(1-1/e)OPT-\epsilon]$ 
approximation guarantee with $\mathcal{O}(d^5/\epsilon^5)$ function 
evaluations. 

In \cref{sec:experiment-black}, numerical experiments show that 
empirically, our proposed algorithm often requires significantly fewer function 
evaluations and less running time compared with baselines, while achieving a 
practically similar utility. This chapter is based on our work in 
\citep{chen2020black}\footnote{\revise{This work was done in collaboration 
with Lin Chen, Hamed Hassani, and Amin Karbasi. I proposed the algorithms, 
completed the work of theoretical analysis and numerical experiments 
jointly with Lin Chen.}}.

	\chapter{Preliminaries}\label{cha:background}
In this chapter, we present important preliminaries and related work for the 
topics of this dissertation.
%\section{Notation}

\section{Frank-Wolfe Algorithm}
Frank-Wolfe method \citep{frank1956algorithm}, also known as the conditional 
gradient method, has been studied for both convex 
optimization 
\citep{jaggi2013revisiting,lacoste2015global,garber2015faster,hazan2016variance,mokhtari2018stochastic}
and non-convex optimization problems
\citep{lacoste2016convergence,reddi2016stochastic,mokhtari2018escaping,
	shen2019hessian,hassani2019stochastic}.

As a projection-free algorithm, FW method replaces the projection operations by 
solving a linear optimization problem for each iteration. To be precise, 
suppose that we want to solve the following constrained convex optimization 
problem:
\begin{align*}\label{eq:fw-intro}
\min_{x \in \constraint} F(x), 
\end{align*}
where $\constraint \subseteq \mathbb{R}^d$ is a compact convex set, and $F$ is 
convex.
Assuming that we have access to the \emph{exact} gradient 
$\nabla F$, the FW method starts from some initial point $ x^{(1)} \in 
\constraint$, and at the $ k $-th 
iteration, solves a linear optimization problem
\begin{equation*}\label{eq:linear_opt}
v^{(k)}\gets \argmin_{v\in 
	\constraint} \langle 
v, \nabla F(x^{(k)})\rangle,
\end{equation*}
which is used to update %\[
$x^{(k+1)} \gets x^{(k)} + \eta_k (v^{(k)} - x^{(k)})$, 
where $ \eta_k \in [0,1]$ is the step size. Note that $x^{(k+1)}$ is a convex 
combination of $x^{(k)}$ and $v^{(k)}$, which are defined to fall in 
$\constraint$. Therefore, we have $x^{(k+1)} \in \constraint$, thus avoid 
the projection operation which is used to guarantee all the iterates land 
inside the constraint set. 

Since linear optimization problems can 
usually be solved fast by various algorithms, thus for many practical problems, 
FW methods can be much more computationally efficient 
than the projected gradient-based methods like PGD.

In terms of theoretical convergence rate, it can be shown that the iterates of 
the FW method above satisfies that 
\begin{equation*}
F(x^{(k)}) - F(x^*) \leq \mathcal{O}(\frac{1}{k}),
\end{equation*}
where $x^*$ is the global minimizer of the convex function $F$ in $\constraint$ 
\citep{frank1956algorithm,dunn1978conditional}. FW methods can also be utilized 
to solve non-convex minimization and monotone continuous DR-submodular 
maximization problems 
with slight modifications 
\citep{lacoste2016convergence,bian2017guaranteed}.

When fed with \emph{stochastic} gradient, however, FW methods may diverge 
\citep{hazan2016variance,mokhtari2018stochastic}. In order to establish 
guaranteed convergences, stochastic FW methods are usually incorporated with 
various 
variance reduction techniques.
A detailed summary of 
convergence rates for various stochastic FW-type algorithms for convex 
minimization, non-convex minimization, and monotone continuous DR-submodular 
maximization problems can be found in \cref{table-sfw}.

\subsection{Related Work on Frank-Wolfe Algorithm}
Frank-Wolfe methods are very sensitive to noisy gradients. This issue was 
recently resolved in centralized \citep{mokhtari2018stochastic} and online 
settings \citep{Chen2018Projection,chen2019projection}. 
In large-scale settings, distributed FW methods were proposed to solve 
specific 
problems, including optimization under block-separable constraint set 
\citep{wang2016parallel}, and learning low-rank matrices 
\citep{zheng2018distributed}. The communication-efficient distributed FW 
variants were proposed for specific sparse learning problems in 
\cite{bellet2015distributed,lafond2016d}, and for general constrained 
optimization problems in our paper \citep{zhang2020quantized}.
Zeroth-order FW methods 
were studied in \citep{chen2020black,balasubramanian2018zeroth,sahu2018towards}.

Stochastic FW methods are strongly associated with variance reduction 
techniques. Several works have studied different ideas for reducing variance. 
The SVRG method was proposed by
\cite{Johnson2013Accelerating} for the convex setting and then extended to the 
non-convex setting in %by several other works 
\citep{allen2016variance,reddi2016stochastic,zhou2018stochastic}. The StochAstic
Recursive grAdient algoritHm (SARAH)  was studied in 
\citep{nguyen2017sarah,nguyen2017stochastic}. Then as a variant of SARAH, the 
Stochastic Path-Integrated Differential Estimator (SPIDER) technique was 
proposed by \cite{fang2018spider}. Based on SPIDER, various algorithms for 
convex and non-convex optimization problems have been studied 
\citep{shen2019complexities,hassani2019stochastic,yurtsever2019conditional}. 

\section{Submodular Functions}
We say a set function $f:2^{\Omega}\to 
\mathbb{R}$ is submodular, if it satisfies the diminishing returns property: 
for any $A\subseteq B\subseteq \Omega$ and $x\in \Omega\setminus B$, we have 
\begin{equation}
f(A\cup\{x\})-f(A)\geq f(B\cup \{x\})-f(B).
\end{equation}
%\begin{equation}\label{eq:discrete-submodular}
%f(A\cup\{x\})-f(A)\geq f(B\cup \{x\})-f(B).
%\end{equation}
In words, the marginal gain of adding an element $x$ to a subset $A$ is no less 
than that of adding $x$ to its superset $B$. A submodular set function $f: 
2^{\Omega} \to \mathbb{R}$ is called monotone if for any two sets 
$A\subseteq B\subseteq \Omega$ we have $f(A)\le f(B)$. 

For the continuous analogue, consider a function $F:\mathcal{X} \to 
\mathbb{R}_+$, where $\mathcal{X}=\Pi_{i=1}^d \mathcal{X}_i$, and each 
$\mathcal{X}_i$ is a compact subset of $\mathbb{R}_+$. We define $F$ to be 
continuous submodular if $F$ is continuous and for all $x, y \in \mathcal{X}$, 
we have 
\begin{equation}
F(x) + F(y) \geq F(x\vee y) + F(x \wedge y),
\end{equation}
where $\vee$ and $\wedge$ are the component-wise maximizing and minimizing 
operators, respectively.

For two vectors $x, y \in \mathbb{R}^d$, we write $x\le y$ if $x_i\le y_i$ 
holds for every 
$i \in \{1,2,\cdots, d\}$, where $x_i$ is the $i$-th coordinate of $x$. Then a 
continuous function $F$ is  called 
DR-submodular 
\cite{bian2017guaranteed} 
if $F$ is differentiable and $\forall\, x \leq y: \,\nabla F(x) \geq \nabla 
F(y).$ The function $F$ is called monotone if for $x \leq y$, we have
$F(x) \leq F(y).$    

An important implication of DR-submodularity is that the function $F$ is 
concave in any non-negative directions, \emph{i.e.}, for $x \leq y$, we have 
\begin{equation}
F(y) \leq F(x)+ \langle \nabla F(x), y-x\rangle.
\end{equation}
%\begin{equation}
%F(y) \leq F(x)+ \langle \nabla F(x), y-x\rangle    
%\end{equation}

For continuous DR-submodular maximization, it has been shown that approximated 
solution within a factor of $(1-e^{-1} + \epsilon)$ can not be obtained in 
polynomial time \citep{bian2017guaranteed}.

\subsection{Related Work on Submodular Functions}
Submodular functions \cite{nemhauser1978analysis}, that capture the intuitive 
notion of diminishing returns, 
have become increasingly 
important in various machine learning applications. 
Examples include graph cuts in computer vision 
\cite{jegelka2011submodularity, jegelka2011approximation},  
data summarization \cite{lin2011word, lin2011class, 
	tschiatschek2014learning,chen2018weakly,chen2017interactive},
influence maximization 
\cite{kempe2003maximizing,rodriguez2012influence,zhang2016influence},
feature compression \cite{bateni2019categorical}, 
network inference \cite{chen2017submodular},
active and semi-supervised 
learning 
\cite{guillory2010interactive, golovin2011adaptive, wei2015submodularity}, 
crowd 
teaching 
\cite{singla2014near},  
dictionary 
learning \cite{das2011submodular}, fMRI parcellation 
\cite{salehi2017submodular}, compressed sensing and structured 
sparsity 
\cite{bach2010structured, bach2012optimization}, fairness in machine learning 
\cite{balkanski2015mechanisms, celis2016fair}, learning causal structures 
\cite{steudel2010causal,zhou2016causal}, experimental design 
\citep{chen2018online}, MAP inference in determinantal
point processes (DPPs) \citep{kulesza2012determinantal}, and mean-field 
inference in probabilistic models \citep{bian2019optimal}, to name a few.

Continuous DR-submodular functions naturally extend the notion of 
diminishing returns to  the continuous domains \cite{bian2017guaranteed}. 
Monotone continuous DR-submodular functions can be 
minimized exactly \citep{bach2015submodular,staib2017robust}, and maximized 
approximately \citep{bian2017guaranteed,bian2017continuous,hassani2017gradient,
	mokhtari2017conditional,mokhtari2018conditional,niazadeh2018optimal,hassani2019stochastic,
	chen2019unconstrained}. Among those works, \citet{bach2015submodular} 
	derived connections between continuous 
submodularity and convexity, while \citet{bian2017guaranteed} studied the 
offline 
continuous DR-submodular maximization and proposed a variant of the 
Frank-Wolfe 
algorithm to achieve the tight $(1-1/e)$ approximation ratio.
A derivative-free and projection-free algorithm for monotone continuous 
DR-submodular maximization was proposed in our work \citep{chen2020black}.

In the online setting, maximization of submodular set functions was studied 
in~\citep{streeter2009online,golovin14online}. Adaptive submodular bandit 
maximization was analyzed in \citep{gabillon2013adaptive}. The
linear submodular bandit problems were studied in 
\citep{yue2011linear,yu2016linear}. The first online and bandit algorithms for 
general continuous submodular maximization problems were proposed in our work 
\citep{zhang2019online}.

	\chapter{One Sample Stochastic Frank-Wolfe}\label{cha:1sfw}
	%!TEX root = Thesis-Mingrui.tex
\section{Introduction}
Recall that FW algorithms are very sensitive to noisy gradients 
\citep{hazan2016variance,mokhtari2018stochastic}. As a result, 
many stochastic FW variants are fed with an accurate estimation of gradients by 
utilizing various variance reduction techniques 
\citep{hazan2012projection,hazan2016variance,reddi2016stochastic,
lan2016conditional,braun2017lazifying,hassani2019stochastic,
shen2019complexities,yurtsever2019conditional}. These variance 
reduction methods usually rely on large mini-batches of samples where the size 
grows with the 
number of iterations or is reciprocal of the desired accuracy. A growing 
mini-batch, 
however, is undesirable in practice as requiring a large collection of samples 
per iteration  may easily prolong the duration of each 
iterate  without updating optimization parameters frequently enough 
\cite{defazio2018ineffectiveness}. A notable exception to this trend is the the 
work of \cite{mokhtari2018stochastic} which employs a momentum 
variance-reduction technique requiring only one sample per iteration; however, 
this method suffers from suboptimal convergence rates. At the heart of this 
chapter\footnote{\revise{This chapter is based on our work in 
		\citep{zhang2020one}.}} is the answer to the following question:

%\vspace{-2mm}
\begin{quote}
	\textit{Can we achieve the best known complexity bounds for a stochastic 
	variant 
	of Frank-Wolfe while using a single stochastic sample per iteration?}
\end{quote}
%\vspace{-2mm}

We show that the answer to the above question is positive and present the first projection-free method 
that  requires only one sample per iteration to update the optimization variable 
and yet  achieves the best known complexity bounds for convex, non-convex, and 
monotone
DR-submodular settings.

More formally, we focus on a general \emph{non-oblivious} constrained 
stochastic optimization problem
\begin{equation}\label{eq:formation}
\min_{x \in \constraint}F(x) \triangleq \min_{x \in \constraint} 
\expect_{z \sim p(z;x)}[\tF(x;z)],
\end{equation}
where $x \in \mathbb{R}^d$ is the optimization variable, $\constraint 
\subseteq \mathbb{R}^d$ is the convex constraint set, and the objective 
function $F: \mathbb{R}^d \to \mathbb{R}$ is defined as the expectation over a 
set of functions $\tF$. The function $\tF: \mathbb{R}^d \times 
\mathcal{Z} \to \mathbb{R}$ is determined by $x$ and a 
random variable $z \in \mathcal{Z}$ with distribution $z \sim p(z;x)$. 
We refer to problem~\eqref{eq:formation} as a non-oblivious stochastic 
optimization problem 
as the 
distribution of the random variable $z$ depends on the choice of $x$. When 
the distribution $p$ is independent of $x$,  we are in the standard 
oblivious 
stochastic optimization regime where the goal is to solve
\begin{equation}\label{eq:ob_op}
\min_{x \in \constraint} F(x) \triangleq \min_{x \in \constraint} 
\expect_{z \sim p(z)}[\tF(x;z)].
\end{equation}
Hence, the oblivious problem \eqref{eq:ob_op} can be considered as a special 
case of the non-oblivious problem \eqref{eq:formation}. Note that 
non-oblivious stochastic optimization has broad applications in machine 
learning, including multi-linear extension of a discrete submodular 
function \citep{hassani2019stochastic}, MAP inference in determinantal
point processes (DPPs) \citep{kulesza2012determinantal}, and reinforcement 
learning 
\citep{du2017stochastic,sutton2018reinforcement,papini2018stochastic,shen2019hessian}.

%For constrained optimization, the usual projected gradient descent approach 
%which requires projection at each iteration could be inefficient, since in 
%many 
%settings the projection operation is computationally prohibitive (\emph{e.g.}, 
%projection over bounded trace norm matrices and 
%matroid polytopes). The Frank-Wolfe (FW) algorithm 
%\citep{frank1956algorithm,jaggi2013revisiting,lacoste2015global}, however, 
%does not require any projection operation, by means of solving a linear 
%optimization problem each iteration to update the optimization variable 
%$\bx$.
%
%For instance, in minimization problems, FW tries to find a vector $\bv_t \in 
%\constraint$ such that
%\begin{equation*}
%\label{eq:standard_fw_lp}
%\bv_t = \argmin_{\bv \in \constraint} \langle \bv, \nabla F(\bx_t) \rangle.
%\end{equation*}
%In other words, $\bv_t$ can be regarded as a descent direction at round $t$, 
%which is used to update $\bx_{t+1}$ as
%\begin{equation*}
%\label{eq:fw_update}
%\bx_{t+1} = \bx_t + \eta_t (\bv_t-\bx_t),
%\end{equation*}
%where $\eta_t$ is the step size.

Our goal is to propose an efficient FW-type method for the non-oblivious 
optimization problem  \eqref{eq:formation}. Here, the efficiency is measured 
by the number of 
stochastic oracle queries, \ie, the sample complexity of $z$. 
As we mentioned earlier, among the stochastic variants of FW, the momentum 
stochastic Frank-Wolfe method proposed in 
\citep{mokhtari2017conditional,mokhtari2018stochastic} is the only method that 
requires only one sample per iteration. However, the stochastic oracle 
complexity of this  algorithm is suboptimal, \ie, 
$\mathcal{O}(1/\epsilon^3)$ stochastic queries are required for both convex 
minimization and monotone DR-submodular maximization problems. This suboptimal 
rate is due to the fact that the gradient estimator in momentum FW is biased 
and it is necessary to use a more conservative averaging parameter to control the effect of the bias term.

To resolve this issue, we propose a one-sample stochastic Frank-Wolfe method, 
called \sfw, which modifies the gradient approximation in momentum FW to ensure 
that the resulting gradient estimation is an unbiased estimator of the gradient 
(Section~\ref{sec:sfw}). This goal has been achieved by adding an unbiased 
estimator of the gradient variation  $\Delta_t = \nabla F(x_t) - \nabla 
F(x_{t-1})$ to the gradient approximation vector 
(Section~\ref{sec:sg_approx}). We later explain why coming up with an unbiased 
estimator of the gradient difference $\Delta_t$ could be a challenging task in the non-oblivious setting and 
show how we overcome this difficulty (Section~\ref{sec:grad_var_est}). We also 
characterize the convergence guarantees of \sfw for convex minimization, 
non-convex minimization, and monotone DR-submodular maximization 
(Section~\ref{sec:results}). In particular, we show that \sfw achieves the 
optimal  convergence rate  of $\mathcal{O}(1/\epsilon^2)$  for reaching an  
$\epsilon$-suboptimal solution in the stochastic convex setting, and  a 
$(1-1/e)-\epsilon$ approximate solution for a stochastic monotone DR-submodular 
maximization problem. Moreover, in a general non-convex setting, \sfw finds an 
$\epsilon$-first-order stationary point after at most 
$\mathcal{O}(1/\epsilon^3)$ iterations, achieving the current best known 
convergence rate.  Finally, we study the oblivious problem in \eqref{eq:ob_op} 
and show that our proposed \sfw method becomes significantly simpler and  the
corresponding theoretical results hold under less strict assumptions. For 
example, in the non-oblivious setting, we require second-order information as 
the nature of the problems requires; while in the oblivious setting, 
we only need access to first-order information (\cref{thm:no}).
We further highlight 
the similarities between the variance reduced method in 
\citep{cutkosky2019momentum} also known as STORM and the oblivious variant of 
\sfw. Indeed, our algorithm has been originally inspired by STORM.

\begin{table*}[t!]
	%	\scriptsize
	\begin{center}
		\caption{Convergence guarantees of stochastic Frank-Wolfe methods for 
			constrained convex minimization, non-convex
			minimization, and stochastic monotone continuous 
				DR-submodular function maximization.}\label{table-sfw}
		\begin{tabular}{cccccc}
			\specialrule{.15em}{.05em}{.05em}
			Function& Ref.&   Batch  
			&Complexity & 
			%\textbf{oblivious}& 
			Non-oblivious &Utility\\
			\specialrule{.1em}{.05em}{.05em}
			Convex &\citep{hazan2012projection} &  $\mathcal{O} 
			(1/\epsilon^2)$  
			&$\mathcal{O}(1/ \epsilon^{4}) $ & \ding{55}&-\\
			\hline
			Convex &\citep{hazan2016variance} & $\mathcal{O} (1/\epsilon^2)$  
			&$\mathcal{O}(1/ \epsilon^{3}) $ &  \ding{55}&-\\
			\hline
			Convex &\citep{mokhtari2018stochastic} &  $1$ &$\mathcal{O}(1/ 
			\epsilon^{3}) $ & \ding{55}&-\\
			%\hline
			\hline
			Convex &\citep{yurtsever2019conditional}&  
			$\mathcal{O}(1/\epsilon)$ &$\mathcal{O}(1/\epsilon^2) $ & 
			\ding{55}&-\\
			\hline
			Convex &\citep{hassani2019stochastic}&  
			{{$\mathcal{O}(1/\epsilon)$}}  & 
			$\mathcal{O}(1/ \epsilon^{2}) $ &  \checkmark&-\\
			\hline
			Convex &\textbf{This diss.} &  1 & $\mathcal{O}(1/ \epsilon^{2}) $ 
			&  \checkmark &-\\			
			\specialrule{.1em}{.05em}{.05em} 
			Non-convex&\citep{hazan2016variance} &  $\mathcal{O} 
			(1/\epsilon^2)$ 
			&$\mathcal{O}(1/ \epsilon^{4}) $ &  \ding{55}&-\\
			\hline
			Non-convex&\citep{hazan2016variance}  & $\!\!\!\mathcal{O} 
			(1/\epsilon^{4/3})\!\!\!$ &$\mathcal{O}(1/ \epsilon^{10/3}) $ & 
			\ding{55}&-\\
			\hline
			Non-convex&\citep{shen2019complexities} & 
			{{$\mathcal{O}(1/\epsilon)$}}   
			&$\mathcal{O}(1/ \epsilon^{3}) $ &  \ding{55}&-\\
			\hline
			Non-convex&\citep{yurtsever2019conditional}&  
			{{$\mathcal{O}(1/\epsilon)$}}  
			&$\mathcal{O}(1/\epsilon^3) $ &  \ding{55}&-\\
			\hline
			Non-convex&\citep{hassani2019stochastic} &  
			{{$\mathcal{O}(1/\epsilon)$}}   
			&$\mathcal{O}(1/ \epsilon^{3}) $ &  \checkmark&-	\\
			\hline
			Non-convex&\textbf{This diss.} & 1 & $\mathcal{O}(1/ \epsilon^{3}) 
			$ & 
			\checkmark&-\\
			\specialrule{.1em}{.05em}{.05em}
			Submodular &\citep{hassani2017gradient} & 1 & 
			$O(1/{\epsilon^{2}})$ &\ding{55} & 
			$(1/2)\rm{OPT}$$-\epsilon$\\
			\hline
			Submodular &\citep{mokhtari2018stochastic}  & 1 & 
			$O(1/{\epsilon^{3}})$ &\ding{55} 
			&$(1-1/e)\rm{OPT}$$-\epsilon$ \\
			\hline
			Submodular &\citep{hassani2019stochastic}  
			&{{$\mathcal{O}(1/\epsilon)$}}& 
			{\color{black}{$\mathcal{O}(1/\epsilon^2)$}}& \checkmark
			&$(1-1/e)\rm{OPT}$$-\epsilon$\\
			\hline
			Submodular &\textbf{This diss.} & 1 &  $\mathcal{O}(1/ 
			\epsilon^{2}) $  &\checkmark 
			&$(1-1/e)\rm{OPT}$$-\epsilon$ \\
			\specialrule{.15em}{.05em}{.05em}
		\end{tabular}
		%\vspace{-1mm}
		%\vspace{-1mm}
	\end{center}
\end{table*}

Theoretical results of \sfw and other related works are 
summarized in \cref{table-sfw}. The 
complexity shows the required number of stochastic queries to obtain an 
$\epsilon$-suboptimal solution in convex case; an $\epsilon$-first-order 
stationary point in non-convex case; and an $\alpha\cdot \text{OPT}-\epsilon$ 
utility in monotone DR-submodular case, where $\alpha=1/2$ or $(1-1/e)$. These 
results show 
that  \sfw attains the best known complexity bounds in all the 
considered settings, while requiring only \emph{one single} stochastic oracle 
query per iteration and avoiding large batch sizes altogether. Even though the 
focus of this chapter is the fundamental theory behind \sfw, we provide some 
empirical evidence in \cref{sec:experiments-sfw}. All the proofs in this 
chapter are provided in \cref{sec:proof-1sfw}.
%Also, \textbf{all the proofs are given in the appendix}.

%\begin{table*}[ht]
%	%\begin{threeparttable}[t]
%	\centering
%	\caption{Theoretical results of non-oblivious stochastic optimization 
%	algorithms}
%	\label{tab:contribution}
%	\begin{tabular}{llll}  
%		\toprule[1.5pt]
%		 Ref. &Function   & Stochastic Oracle Queries & Number of Queries Per 
%		 Iteration  \\ %& Average Bits  \\  	
%		\midrule
%				 \citep{hassani2019stochastic} & Convex& 
%$\mathcal{O}(1/\epsilon^2)$  &$\mathcal{O}(1/\epsilon)$  \\
%		 \citep{hassani2019stochastic} & DR-Submodular &  
%$\mathcal{O}(1/\epsilon^2)$ & $\mathcal{O}(1/\epsilon)$\\
%		This paper & Convex  & $\mathcal{O}(1/\epsilon^2)$ & 1 \\ 
%		This paper & Non-Convex & $\mathcal{O}(1/\epsilon^3)$ & 1 \\ 
%		 This paper & DR-Submodular  & $\mathcal{O}(1/\epsilon^2 )$ & 1 \\
%		\bottomrule[1.25pt]
%	\end{tabular}
%\end{table*}

	%!TEX root = Thesis-Mingrui.tex

\section{One Sample SFW Algorithm}\label{sec:sfw}

%In this section, we introduce our proposed one sample SFW (\sfw) method. We 
%first present the mechanism for computing a variance reduced unbiased 
%estimator 
%of the gradient $\nabla F(\bx_t)$. Then, we explain the procedure for 
%computing 
%an unbiased estimator of the gradient variation $\Delta_t = \nabla F(\bx_t) - 
%\nabla F(\bx_{t-1})$ in a non-oblivious setting which is required for the 
%gradient approximation of \sfw. Then, we present the complete description of 
%our proposed method.

\subsection{Stochastic Gradient Approximation}\label{sec:sg_approx}
%As we noted above, in the SVRG-type methods, it is crucial to maintain a 
%high-accuracy gradient estimation at each iteration, which is achieved by 
%sampling over large sample sets. If we only make one single query per 
%iteration, 
%however, the resulted gradient estimation will have much larger variance, 
%which 
%makes the algorithm converge at a slower rate, or even diverge. Therefore, we 
%need to further 
%reduce the variance introduced by the one-sample gradient estimation.

%Unlike the SVRG methods, 
%\citet{mokhtari2017conditional,mokhtari2018stochastic} 
%proposed a stochastic FW method based on a momentum variance reduction 
%approach.
%Even though their scheme requires one sample per iteration, the stochastic 
%oracle complexity of their proposed algorithm is suboptimal, i.e., 
%$\mathcal{O}(1/\epsilon^3)$ stochastic queries are required for 
%both convex minimization and monotone 
%DR-submodular maximization problems.  

In our work, we build on the momentum variance 
reduction approach proposed in 
\citep{mokhtari2017conditional,mokhtari2018stochastic}
to reduce the 
variance of the one-sample method. 
To be more precise, in the momentum FW method 
\citep{mokhtari2017conditional}, we update the gradient approximation $d_t$ 
at round 
$t$ as follows
\begin{equation}\label{eq:momentum}
d_t = (1-\rho_t) d_{t-1} + \rho_t \nabla \tF(x_t; z_t),
\end{equation}
where $\rho_t$ is the averaging parameter and $\nabla \tF(x_t; z_t)$ is a 
\emph{one-sample} estimation of the gradient. Since $d_t$ is a weighted 
average of the previous gradient estimation $d_{t-1}$ and the newly updated 
stochastic gradient, it has a lower variance comparing to one-sample 
estimation $\nabla \tF(x_t; z_t)$. In particular, it was shown in 
\citep{mokhtari2017conditional} that the variance of gradient approximation in 
\eqref{eq:momentum} approaches zero at a sublinear rate of $O(t^{-2/3})$. The 
momentum approach reduces the variance of gradient approximation, but it leads 
to a \emph{biased} gradient approximation, \ie, $d_t$  is not an 
unbiased estimator of the gradient $\nabla F(x_t)$. Consequently, it is 
necessary to use a conservative averaging parameter $\rho_t$ for momentum FW to control the effect of the bias term which leads to a 
sublinear error rate of $\mathcal{O}(t^{-1/3})$ and overall complexity of 
$\mathcal{O}(1/\epsilon^3)$.

%We also  note that although $\bd_t$ may not be an unbiased estimation of the true gradient, the gradient error goes to  zero as we increase $t$, i.e., as the optimization procedure proceeds.  

To resolve this issue and come up with a faster momentum based FW method for 
the non-oblivious problem in \eqref{eq:formation}, we slightly modify the 
gradient estimation in \eqref{eq:momentum} to ensure that the resulting 
gradient estimation is an unbiased estimator of the gradient $\nabla F(x_t)$. 
Specifically, we add the term  
$\tilde{\Delta}_t$, which is an unbiased estimator of the gradient variation  
$\Delta_t = \nabla F(x_t) - \nabla F(x_{t-1})$, to $d_{t-1}$. This modification 
leads to the following gradient approximation
%%%%
\begin{equation}\label{eq:momentum_new}
d_t = (1-\rho_t) (d_{t-1}+\tilde{\Delta}_t) + \rho_t \nabla \tF(x_t; z_t).
\end{equation}
%%%
To verify that $d_t $ is an unbiased estimator of $\nabla F(x_t)$ we can use a 
simple induction argument. Assuming that $d_{t-1}$ is an unbiased estimator of 
$\nabla F(x_t)$ and $\tilde{\Delta}_t$ is an unbiased estimator of $\nabla 
F(x_t) - \nabla F(x_{t-1})$ we have $\expect [d_t] = (1-\rho_t)( \nabla 
F(x_{t-1})+(\nabla F(x_t) - \nabla F(x_{t-1})))+\rho_t \nabla F(x_t)= \nabla 
F(x_t)$. Hence, the gradient approximation in \eqref{eq:momentum_new} leads to 
an unbiased approximation of the gradient. Let us now explain how to compute an 
unbiased estimator of the gradient variation $\Delta_t = \nabla F(x_t) - \nabla 
F(x_{t-1})$ in the non-oblivious setting. %As we will see, our proposed 
%estimator will lead to further variance reduction. 

\subsection{Gradient Variation Estimation}\label{sec:grad_var_est}

The most natural approach for estimating the gradient variation  
$\Delta_t=\nabla F(x_t) - \nabla F(x_{t-1})$ using only one sample $z$ is 
computing the difference of two consecutive stochastic gradients, \ie, 
$\nabla \tF(x_t; z)-\nabla \tF(x_{t-1}; z)$. However, this approach 
leads to an unbiased estimator of the gradient variation  $\Delta_t$ only in 
the oblivious setting where $p(z)$ is independent of the choice of $x$, and 
would introduce bias in the more general non-oblivious case.  To better 
highlight this issue, assume that $z$ is sampled according to distribution 
$p(z;x_t)$. Note that $\nabla \tF(x_t; z)$ is an unbiased estimator of 
$\nabla F(x_t)$, \ie, $\expect[\nabla F(x_t; z)] = \nabla 
F(x_t)$, 
however, $\nabla \tF(x_{t-1};z)$ is not an unbiased estimator of $\nabla 
F(x_{t-1})$ since $p(z;x_{t-1})$ may be different from $p(z;x_t)$.

To circumvent this obstacle, an \emph{unbiased} estimator of $\Delta_t$ was 
introduced in \citep{hassani2019stochastic}. To explain their proposal for 
approximating the 
gradient variation using only one sample, note that the difference 
$\Delta_t=\nabla F(x_t) - \nabla F(x_{t-1})$ can be written as
\begin{equation*}
\begin{split}
\Delta_t &= \int_0^1 \nabla ^2 F(x_t(a))(x_t -x_{t-1}) \mathrm{d}a \\
&= \left[\int_0^1 \nabla ^2 F(x_t(a))\mathrm{d}a \right] (x_t -x_{t-1}),
\end{split}
\end{equation*}
where $x_t(a) = a x_t + (1-a)x_{t-1}$ for  $a\in [0,1]$. According to this 
expression, one can find an unbiased estimator of $\int_0^1 \nabla 
^2 F(x_t(a))\mathrm{d}a$ and use its product with $(x_t -x_{t-1})$ to 
find an unbiased estimator of $\Delta_t$. It can be easily verified that 
$\nabla ^2 F(x_t(a))(x_t -x_{t-1})$ is an unbiased estimator of 
$\Delta_t$ if $a$ is chosen from $[0,1]$ uniformly at random. Therefore, all we 
need is to come up with an unbiased estimator of the Hessian $\nabla ^2 F$.

By basic calculus, we can show that  $\forall\, x \in 
\constraint$ and $z$ with distribution $p(z;x)$, the matrix 
$\tilde{\nabla}^2F(x;z)$ defined as
%%%
\begin{align}\label{eq:gradient_diff_estimation2}
\tilde{\nabla}^2F(x;z) &= \tF(x;z)[\nabla \log p(z;x)][\nabla \log 
p(z;x)]^\top \nonumber\\
&\ + \nabla^2 \tF(x;z)  + [\nabla \tF (x; z)][\nabla \log 
p(z;x)]^\top\nonumber\\
&\ + \tF(x;z) \nabla^2 \log p(z;x)  \nonumber\\
&\ + [\nabla \log p(z;x)][\nabla \tF(x;z)]^\top,
\end{align}
%%%
is an \emph{unbiased} estimator of $\nabla^2F(x)$.  Note that the above 
expression requires only one sample of $z$. As a result, we can construct 
$\tilde{\Delta}_t$ as an unbiased estimator of ${\Delta}_t$ using only one 
sample
\begin{equation}\label{eq:gradient_diff_estimation}
\tilde{\Delta}_t \triangleq \tilde{\nabla}_t^2 (x_t - x_{t-1}),
\end{equation}
where $\tilde{\nabla}_t^2 = \tilde{\nabla}^2F(x_t(a);z_t(a))$,
 and $z_t(a)$ follows the distribution $p(z_t(a);x_t(a))$. By using this 
procedure, we can indeed compute the vector $d_t$ in \eqref{eq:momentum_new} 
with only one sample of $z$ per iteration.
%We note that if we only use one sample of $\bz$ per iteration, i.e., 
%$\bz_t = \bz_t(a)$, the 
%stochastic gradient in \eqref{eq:momentum} becomes $\nabla \tF(\bx_t; 
%\bz_t(a))$. It is not an unbiased estimator of $\nabla F(\bx_t)$, since 
%$\nabla F(\bx_t) =\expect_{\bz\sim p(\bz;\bx_t)}\tF(\bx_t;\bz)$, while 
%$\bz_t(a) \sim p(\bz; \bx_t(a))$.
Through a completely
different analysis from the ones in \citep{mokhtari2017conditional,hassani2019stochastic}, we 
show that the modified $d_t$ is still a good gradient estimation 
(\cref{lem:graident_error}), which allows
the establishment of the best known stochastic oracle
complexity for our proposed algorithm.

Another issue of this scheme is that in 
\eqref{eq:gradient_diff_estimation2} and \eqref{eq:gradient_diff_estimation}, 
we need to calculate 
%\newline
$\nabla^2 \tF(x_t(a);z_t(a))\cdot$
\newline
$(x_t-x_{t-1})$ and $\nabla^2 \log 
p(x_t(a);z_t(a))(x_t-x_{t-1})$, where computation of Hessian is 
involved. When exact Hessian is not accessible, however, we can resort to an 
approximation by the difference of two gradients.
Precisely, for any function $\psi: \mathbb{R}^d \to \mathbb{R}$, any vector 
$u \in \mathbb{R}^d$ with $\|u \|\leq D = \max_{x,y \in \constraint}\|x-y\|$, 
and some $\delta >0$ small 
enough, we have
\begin{equation*}
\phi(\delta;\psi) \triangleq \frac{\nabla \psi(x+\delta u) - \nabla \psi(x 
- \delta u)}{2\delta} \approx \nabla^2 \psi(x) u.
\end{equation*}
If we assume that $\psi$ is $L_2$-second-order smooth, \ie, 
$\|\nabla^2\psi(x) - \nabla^2\psi(y)) \| \leq L_2 \|x-y \|,\ \forall\, 
x, y \in \mathbb{R}^d$, we can 
upper bound the 
approximation error quantitatively:
\begin{align}\label{eq:gradient_difference_bound}
\|\nabla^2 \psi(x) u - \phi(\delta;\psi) \| = \|\nabla^2 
\psi(x) u - 
\nabla^2 \psi(\tilde{x}) u) \| \leq D^2L_2\delta,
\end{align}
where $\tilde{x}$ is obtained by the mean-value theorem. In other words, the 
approximation error can be sufficiently small for proper $\delta$. So we can 
estimate $\Delta_t$ by
%%%
\begin{align}\label{eq:gradient_diff_estimation3}
\tilde{\Delta}_t &= \tF(x;z)[\nabla \log p(z;x)][\nabla \log 
p(z;x)]^\top u_t \nonumber\\
&\ + \phi(\delta_t,\tF(x;z))  + [\nabla \tF (x; z)][\nabla \log 
p(z;x)]^\top u_t \nonumber\\
&\ + \tF(x;z) \phi(\delta_t,\log p(z,x))  \nonumber\\
&\ + [\nabla \log p(z;x)][\nabla \tF(x;z)]^\top u_t,
\end{align}
where $u_t = x_t -x_{t-1}$, $x, z, \delta_t$ are chosen 
appropriately. We also 
note that since computation of gradient difference has a computational 
complexity of $\mathcal{O}(d)$, while that for Hessian is $\mathcal{O}(d^2)$, 
this approximation strategy can also help to accelerate the optimization 
process.

\subsection{Variable Update}

Once the gradient approximation $d_t$ is computed, we can follow the update of 
conditional gradient methods for computing the iterate $x_t$. In this section, 
we introduce two different schemes for updating the iterates depending on the 
problem that we aim to solve. 

For minimizing a general (non-)convex function using one sample stochastic FW, 
we update the iterates according to 
%%%
\begin{equation}\label{eq:option1}
x_{t+1} = x_{t}+\eta_{t} (v_t-x_{t}),
\end{equation} 
%%%
where $v_t = 
\argmin_{v \in \constraint}\{v^\top d_{t}\}$. In this case, we find the 
direction that minimizes the inner product with the 
current gradient approximation $d_t$ over the constraint set $\constraint$, 
and update the variable $x_{t+1}$ by descending in the direction of 
$v_t-x_{t}$ with step size $\eta_t$.

For monotone DR-submodular maximization, the update rule is slightly different, 
and a stochastic variant of the continuous greedy method 
\citep{vondrak2008optimal} can be used. 
%As in the case of (non-)convex minimization that we simply replace the gradient by its stochastic approximation, for the monotone DR-submodular maximization problem we replace the gradient in the update of Continuous Greedy with its stochastic approximation $\bd_t$. 
Using the same stochastic estimator $d_t$ as in the (non-)convex case, the 
update rule for DR-Submodular optimization is given by
%%%
\begin{equation}\label{eq:option2}
x_{t+1} = x_{t}+ \eta_t v_t, 
\end{equation}
%%%
where $v_t = 
\argmax_{v \in \constraint}\{v^\top d_{t}\}, \eta_t=1/T, T$ is the total 
number of 
iterations.
%Note that in this case we find the direction that maximizes 
%the inner product with the current gradient approximation $\bd_t$ over the 
%constraint set $\constraint$, and move towards that direction with step size 
%$\eta_t=1/T$. 
Hence, if we start from the origin, after $T$ steps the outcome 
will be a feasible point as it can be written as the average of $T$ feasible 
points. 

The description of  our proposed \sfw method for smooth (non-)convex 
minimization as well as monotone DR-submodular maximization is outlined in 
\cref{alg:one_sample}.

\begin{algorithm}[t]
	\caption{One-Sample SFW (\sfw)}
	\begin{algorithmic}[1]
		\Require Step sizes $\rho_t \in 
		(0,1),   
		\eta_t \in (0,1)$, initial point $x_1 \in 
		\constraint$, total number 
		of iterations $T$
		\Ensure $x_{T+1}$ or $x_o$, where $x_o$ is chosen from $\{x_1, 
		x_2,\cdots, x_T\}$ uniformly at random  
		\For{$t=1,2,\dots, T$}
		\If{$t=1$} 
%		\State sample a minibatch $\mathcal{M}_0$ of $\bz$ according 
%		to $p(\bz,\bx_1)$, obtain $\bd_1 = \nabla \tilde{F}(\bx_1; 
%		\mathcal{M}_0)$ 
		\State Sample a point $z_1$ according 
		to $p(z_1,x_1)$
		\State  Compute $d_1 = \nabla \tilde{F}(x_1; 
		z_1)$		
		\Else
		\State Choose $a$ uniformly at random from $[0,1]$
		\State Compute $x_t(a) = a x_t + (1-a)x_{t-1}$
		\State Sample a point $z_t$ according to $p(z; x_t(a))$
		\State Compute $\tilde{\Delta}_t$ either by $\tnabla_t^2 = \tnabla^2 
		F(x_t(a); z_t)$ based on \eqref{eq:gradient_diff_estimation2} and 
		$\tilde{\Delta}_t =\tilde{\nabla}_t^2 (x_t - x_{t-1})$ (Exact 
		Hessian Option); or by \cref{eq:gradient_diff_estimation3} with 
		$x=x_t(a), z=z_t$ (Gradient Difference Option)
		%\State Compute $\tnabla_t^2 = \tnabla^2 
		%F(\bx_t(a); \bz)$ based on \eqref{eq:gradient_diff_estimation2}
		%\State Compute $\tilde{\Delta}_t =\tilde{\nabla}_t^2 (\bx_t - 
		%%%\bx_{t-1})$
		%\State Sample a point $\bz_t$ according to $p(\bz; \bx_t)$
		\State $d_t = (1-\rho_t)(d_{t-1}+\tilde{\Delta}_t) + \rho_t \nabla 
		\tilde{F}(x_t,z_t)$
		\EndIf
		\State (non-)convex minimization: Update $x_{t+1}$ based on 
		\eqref{eq:option1} 
		\State  DR-submodular maximization: Update $x_{t+1}$ based on 
		\eqref{eq:option2} 
		\EndFor
	\end{algorithmic}
	\label{alg:one_sample}
\end{algorithm}

\section{Main Results}\label{sec:results}
Before presenting the convergence results of our algorithm, we first state our 
assumptions on the constraint set $\constraint$, the 
stochastic function $\tF$, and the distribution $p(z;x)$. %\textcolor{red}{H: 
%I'm not sure in the following what we're meaning by ``stochastic''. Once, x 
%and 
%z are fixed, then there is nothing stochastic in $\tilde{F}(x;z)$. In other 
%words, $\tilde{F}$ is a deterministic function of x and z. Feel free to 
%disregard this comment.}

\begin{assump}\label{assum:constraint}
	The constraint set $\constraint \subseteq \mathbb{R}^d$ is compact with 
	diameter $D = \max_{x,y \in \constraint}\|x-y\|$, and radius $R = 
	\max_{x\in\constraint} \|x\|$. 
\end{assump}

%\begin{assump}\label{assum:stoch_estimation}
%For $\forall \bx \in \constraint, \bz \sim p(\bz;\bx)$, then $\nabla 
%\tilde{F}(\bx;\bz)$ is an unbiased estimation of $\nabla F(\bx)$ with 
%uniformly bound variance $\sigma^2$. 
%\end{assump}

\begin{assump}\label{assum:stoch_bound}
The stochastic function $\tF(x;z)$ has uniformly bounded
function value, \ie, $| \tF(x;z)| \leq B$ for all $x \in 
\constraint,  z \in \mathcal{Z}$. 
\end{assump}

\begin{assump}\label{assum:gradient_norm}
The stochastic gradient $\nabla \tF$ has uniformly bound norm: $\|\nabla 
\tF(x;z) \| \leq G_{\tF}, \forall\, x \in \constraint, \forall\, z \in 
\mathcal{Z}$. The norm of the gradient of $\log p$ has bounded fourth-order 
moment: $\expect_{z \sim p(z;x)} \|\nabla \log p(z;x) \|^4 \leq 
G_p^4$. We also define $G=\max\{G_\tF, G_p\}$.
\end{assump}

\begin{assump}\label{assum:second_order}
The stochastic Hessian $\nabla^2 \tF$ has uniformly bounded spectral norm: $\| 
\nabla^2 \tF(x;z)\| \leq L_{\tF}, \forall\, x \in \constraint, \forall\, 
z \in \mathcal{Z}$. The spectral norm of the Hessian of $\log p$ has bounded 
second-order moment: $\expect_{z \sim p(z;x)} \|\nabla^2 \log p(z;x) 
\|^2 \leq L_p^2$. We also define $L = \max \{L_\tF, L_p\}$. 
\end{assump}

We note that in Assumptions~\ref{assum:stoch_bound}-\ref{assum:second_order}, we 
assume that the stochastic function $\tF$ has uniformly bounded function 
value, gradient norm, and second-order differential. 
%\textcolor{red}{We also 
%note that all these assumptions are necessary, and will explain the reasons in 
%detail in \cref{sec:assump}.}
We also note that all these assumptions are necessary, and not restrictive. We 
elaborate on the reasons as below:
\begin{itemize}
	\item \cref{assum:constraint}: The compactness of the feasible set has been 
	assumed in all 
	projection-free papers. It is indeed needed for the convergence of the 
	linear 
	optimization subroutine in the Frank-Wolfe method, otherwise, $v_t$ in 
	\eqref{eq:option1} can 
	be unbounded.
	
	\item \cref{assum:gradient_norm,assum:second_order} about $\tilde F$: 
	Bounded gradient and Hessian of the 
	stochastic function $\tilde F$ are the customary assumptions for all the 
	variance reduction methods when we solve the problem over a compact set. 
	The boundedness of the function values (\cref{assum:stoch_bound}) is a 
	direct implication of bounded gradient and compact constraint set. 
	
%	\cref{assum:stoch_bound}: Note that \cref{assum:stoch_bound} is only 
%	required for the \emph{non-oblivious} setting (see \eqref{eq:formation}). 
%	For 
%	the other (oblivious) setting (see \eqref{eq:ob_op}), which is itself a 
%	rich 
%	class and includes the common finite sum minimization as a special case, we 
%	do 
%	not require \cref{assum:stoch_bound} to provide guarantees. The 
%	non-oblivious 
%	setting which assumes 
%	the probability distribution of random variable depends on the decision 
%	variable $\bx$, includes as special case scenarios such as reinforcement 
%	learning 
%	and continuous submodular optimization. The necessity of 
%	\cref{assum:stoch_bound} in such scenarios 
%	stems from the broadness of formulation \eqref{eq:formation}: Consider the 
%	expected trajectory 
%	reward maximization in reinforcement learning, which is a special case of 
%	\eqref{eq:formation}. 
%	For this specific task, \cref{assum:stoch_bound} is implied by the 
%	boundedness 
%	of the MDP’s reward 
%	function, which itself is one of the most common assumptions of RL. 
%	Consequently, as a more general formulation, assuming the boundedness on 
%	the 
%	stochastic function is reasonable.
	
	\item \cref{assum:gradient_norm,assum:second_order} about the distribution 
	$p$: 
	We emphasize these assumptions hold 
	trivially for the oblivious setting \eqref{eq:ob_op}, where $p$ is 
	not a function of the variable $x$. For the non-oblivious case 
	\eqref{eq:formation}, consider the reinforcement learning as an 
	example where $p$ is the 
	distribution of a trajectory given the policy parameter $x$. It can be 
	verified that for common Gaussian policy with bounded mean and variance, 
	the smoothness of the parameterization of the policy (\eg, neural network 
	with smooth activation function) can imply
	\cref{assum:gradient_norm,assum:second_order}.

\end{itemize}

Now with these 
assumptions,  we can establish an upper bound for the second-order moment of 
the spectral norm of the Hessian estimator $\tnabla^2 F(x;z)$
%, which is defined 
in \eqref{eq:gradient_diff_estimation2}.

\begin{lemma}\label{lem:scg++8.1}[Lemma 7.1 of 
	\citep{hassani2019stochastic}]
Under Assumptions~\ref{assum:stoch_bound}-\ref{assum:second_order}, for all 
$x \in \constraint$, we have
\begin{equation*}
\begin{split}
\expect_{z \sim p(z;x)}[\| \tnabla^2F(x;z) \|^2] \leq 4B^2G^4 + 
16G^4 + 4L^2 + 4B^2L^2 
\triangleq \bar{L}.
\end{split}
\end{equation*}
\end{lemma}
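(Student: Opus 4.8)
The plan is to exploit the additive structure of the Hessian estimator in \eqref{eq:gradient_diff_estimation2}: it is a sum of matrix-valued terms, several of which are rank-one outer products whose spectral norms factor nicely. Rather than treat the five summands in \eqref{eq:gradient_diff_estimation2} separately, I would first group the two transpose-symmetric cross terms $[\nabla \tF][\nabla \log p]^\top$ and $[\nabla \log p][\nabla \tF]^\top$ into a single matrix, leaving exactly four groups
\begin{align*}
A_1 &= \tF(x;z)[\nabla \log p(z;x)][\nabla \log p(z;x)]^\top,\\
A_2 &= \nabla^2 \tF(x;z),\\
A_3 &= \tF(x;z)\nabla^2 \log p(z;x),\\
A_4 &= [\nabla \tF(x;z)][\nabla \log p(z;x)]^\top + [\nabla \log p(z;x)][\nabla \tF(x;z)]^\top.
\end{align*}
I would then apply the elementary inequality $\|\sum_{i=1}^4 A_i\|^2 \le 4\sum_{i=1}^4 \|A_i\|^2$ (triangle inequality followed by Cauchy--Schwarz against the all-ones vector) to reduce the task to bounding the second moment of each $\|A_i\|$ individually. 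The factor of $4$ here is precisely what produces the leading constants in the claimed bound, so the choice of four groups (not five) is what makes the constants come out as stated.

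Next I would bound each piece using submultiplicativity of the spectral norm together with the identity $\|uv^\top\| = \|u\|\,\|v\|$ for outer products. For $A_2$ and $A_3$ the estimates are immediate from Assumptions~\ref{assum:stoch_bound} and \ref{assum:second_order}: $\|A_2\| \le L_{\tF} \le L$ gives $\expect\|A_2\|^2 \le L^2$, and $\|A_3\| \le |\tF|\,\|\nabla^2 \log p\| \le B\,\|\nabla^2 \log p\|$ gives $\expect\|A_3\|^2 \le B^2 \expect\|\nabla^2\log p\|^2 \le B^2 L_p^2 \le B^2 L^2$. For the rank-one quadratic term, $\|A_1\| = |\tF|\,\|\nabla \log p\|^2 \le B\,\|\nabla \log p\|^2$, so $\expect\|A_1\|^2 \le B^2 \expect\|\nabla \log p\|^4 \le B^2 G_p^4 \le B^2 G^4$, which is exactly where the fourth-moment assumption on $\|\nabla \log p\|$ is used directly. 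Finally, for the combined cross term, $\|A_4\| \le 2\,\|\nabla \tF\|\,\|\nabla \log p\| \le 2G\,\|\nabla \log p\|$, hence $\expect\|A_4\|^2 \le 4G^2 \expect\|\nabla \log p\|^2$.

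The one genuinely non-routine point, and the step I expect to need care, is the term $A_4$: here a second moment of $\|\nabla \log p\|$ appears, whereas Assumption~\ref{assum:gradient_norm} only controls its fourth moment. I would close this gap with the power-mean (Jensen) inequality, $\expect\|\nabla \log p\|^2 \le \big(\expect\|\nabla \log p\|^4\big)^{1/2} \le G_p^2 \le G^2$, which yields $\expect\|A_4\|^2 \le 4G^4$. Assembling the four estimates through $\expect\|\tnabla^2 F(x;z)\|^2 \le 4\big(\expect\|A_1\|^2 + \expect\|A_2\|^2 + \expect\|A_3\|^2 + \expect\|A_4\|^2\big)$ then gives $4\big(B^2 G^4 + L^2 + B^2 L^2 + 4G^4\big) = 4B^2G^4 + 16G^4 + 4L^2 + 4B^2L^2 = \bar L$, as required. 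The main obstacle is thus purely bookkeeping: ensuring the grouping and the correct moment bound are matched to each term so that the constants land exactly on the stated value.
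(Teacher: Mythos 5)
Your proof is correct, and the constants land exactly on the stated bound: grouping the two transpose-symmetric cross terms into one summand so that Cauchy--Schwarz contributes a factor of $4$ (rather than $5$), using $\|uv^\top\|=\|u\|\,\|v\|$ for the rank-one pieces, and closing the second-moment gap for $\|\nabla\log p(z;x)\|$ via Jensen against the assumed fourth moment are all exactly the right moves. Note that this dissertation does not reprove the lemma but imports it by citation from Lemma~7.1 of \citep{hassani2019stochastic}; your argument is the standard one used there, so there is no substantive divergence to report.
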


Note that the result in Lemma \eqref{lem:scg++8.1} also implies the $\bar{L}$-smoothness of $F$, since 
\begin{align*}
\|\nabla^2 
F(x) \|^2 &= \|\expect_{z \sim p(z;x)}[\tnabla^2 F(x;z)] \|^2\\
& \leq 
\expect_{z \sim p(z;x)}[\|\tnabla^2 F(x;z) \|^2] \\
& \leq \bar{L}^2.
\end{align*}
In other words, the conditions in 
Assumptions~\ref{assum:stoch_bound}-\ref{assum:second_order} implicitly imply 
that the objective function $F$ is $\bar{L}$-smooth. 

To establish the convergence guarantees for our proposed \sfw algorithm, the 
key step is to derive an upper bound on the errors of the estimated gradients. To do so, we prove the following lemma, which provides the required upper bounds in 
different settings of parameters.

\begin{lemma}\label{lem:graident_error}
Consider the gradient approximation $d_t$ defined in \eqref{eq:momentum_new}. 
Under Assumptions~\ref{assum:constraint}-\ref{assum:second_order},
 if we run Algorithm~\ref{alg:one_sample} with Exact Hessian Option in Line 9, 
 and with parameters $\rho_t = (t-1)^{-\alpha}\ (\forall\, t \ge 2)$, and 
$\eta_t \leq t^{-\alpha}\ (\forall\, t \ge 1$ and for some $\alpha \in (0,1])$, 
then the gradient estimation $d_t$ satisfies
\begin{equation}
\expect[\|\nabla F(x_t) - d_t\|^2] \leq C t^{-\alpha},
\end{equation}
where %the constant $C$ is given by
 $C\! =\! \max 
 \!\left\{\!\frac{2(2G\!+\!D\bar{L})^2}{2\!-\!2^{-\alpha}\!-\!\alpha}, 
\left[\frac{2}{2\!-\!2^{-\alpha}\!-\!\alpha} \right]^4\!\!, 
[2D(\bar{L}\!+\!L)]^4 \!\right\}\!.$	
\end{lemma}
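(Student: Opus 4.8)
The plan is to track the gradient-estimation error $e_t \triangleq \nabla F(x_t) - d_t$ and to derive a one-step recursion for its mean-squared norm $\expect[\|e_t\|^2]$, which I would then unroll by induction. First I would split $\nabla F(x_t) = (1-\rho_t)\nabla F(x_t) + \rho_t\nabla F(x_t)$ in \eqref{eq:momentum_new} and regroup, obtaining
\begin{equation*}
e_t = (1-\rho_t)e_{t-1} + (1-\rho_t)(\Delta_t - \tilde{\Delta}_t) + \rho_t\bigl(\nabla F(x_t) - \nabla\tF(x_t;z_t)\bigr).
\end{equation*}
The structural point is that, conditioned on the filtration $\mathcal{F}_{t-1}$ generated by the history up to the formation of $x_t$, the estimator $\tilde{\Delta}_t$ is unbiased for $\Delta_t$ (as established in Section~\ref{sec:grad_var_est} through the Hessian estimator \eqref{eq:gradient_diff_estimation2} and the uniform draw of $a$), while the one-sample gradient term is unbiased for $\nabla F(x_t)$. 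Hence the composite noise $b_t \triangleq (1-\rho_t)(\Delta_t - \tilde{\Delta}_t) + \rho_t(\nabla F(x_t) - \nabla\tF(x_t;z_t))$ satisfies $\expect[b_t \mid \mathcal{F}_{t-1}] = 0$, and since $e_{t-1}$ is $\mathcal{F}_{t-1}$-measurable the cross term drops: $\expect[\langle e_{t-1}, b_t\rangle] = 0$. This yields the clean recursion $\expect[\|e_t\|^2] = (1-\rho_t)^2\expect[\|e_{t-1}\|^2] + \expect[\|b_t\|^2]$.

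Next I would bound the noise variance $\expect[\|b_t\|^2]$ by controlling the two components in $L^2$ and applying Minkowski's inequality. Assumption~\ref{assum:gradient_norm} gives the almost-sure bound $\|\nabla F(x_t) - \nabla\tF(x_t;z_t)\| \le 2G$. For the variation term, conditional unbiasedness of $\tilde{\Delta}_t$ gives variance $\le$ second moment, so with $\tilde{\Delta}_t = \tnabla_t^2(x_t - x_{t-1})$ and the Hessian-estimator second-moment bound of \cref{lem:scg++8.1} I get $\sqrt{\expect[\|\Delta_t - \tilde{\Delta}_t\|^2]} \le \bar{L}\,\|x_t - x_{t-1}\|$. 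The step-size bookkeeping is then essential: since $x_t - x_{t-1}$ is a multiple of $\eta_{t-1}$ times a feasible direction and $\constraint$ has diameter $D$ (Assumption~\ref{assum:constraint}), we have $\|x_t - x_{t-1}\| \le \eta_{t-1}D \le \rho_t D$, where the last step uses the deliberate coupling $\eta_{t-1} \le (t-1)^{-\alpha} = \rho_t$. Combining yields the scalar recursion
\begin{equation*}
\expect[\|e_t\|^2] \le (1-\rho_t)^2\,\expect[\|e_{t-1}\|^2] + \rho_t^2(2G + D\bar{L})^2.
\end{equation*}

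Finally I would solve this by induction to prove $\expect[\|e_t\|^2] \le C t^{-\alpha}$. Writing $s = t-1$ so that $\rho_t = s^{-\alpha}$, the inductive step reduces to the elementary inequality $(1-s^{-\alpha})^2 C s^{-\alpha} + s^{-2\alpha}(2G+D\bar{L})^2 \le C(s+1)^{-\alpha}$; using $(1-x)^2 \le 1-x$ for $x \in [0,1]$ together with a mean-value estimate $s^{-\alpha} - (s+1)^{-\alpha} \le \alpha s^{-\alpha-1}$, this closes exactly when $\alpha \in (0,1]$ and $C$ dominates the steady-state value $\tfrac{2(2G+D\bar{L})^2}{2 - 2^{-\alpha} - \alpha}$. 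The remaining entries of the $\max$ defining $C$ absorb the base cases (for instance $t=2$, where $\rho_2 = 1$ collapses $d_2$ to a pure one-sample gradient) and the slack in these discrete-calculus estimates; the base case $\expect[\|e_1\|^2] \le 4G^2 \le C$ is immediate from Assumption~\ref{assum:gradient_norm}.

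I expect the main obstacle to be twofold. The first is rigorously justifying the conditional unbiasedness of \emph{both} noise components in the \emph{non-oblivious} regime: the subtlety is that $z_t$ is drawn from the shifted distribution $p(z;x_t(a))$, yet must deliver unbiased estimators of quantities anchored at $x_t$ and along the interpolation $x_t(a)$, which is precisely what the score-function Hessian estimator \eqref{eq:gradient_diff_estimation2} and the uniform choice of $a$ are engineered to guarantee. The second is making the inductive inequality close uniformly over all $t$ with a single explicit constant, which forces the regime-dependent tuning of $C$ through the $\max$ and the careful matching of the exponents $s^{-\alpha-1}$ and $s^{-2\alpha}$ that only balances for $\alpha \le 1$.
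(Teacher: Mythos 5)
Your proposed recursion rests on the claim that the composite noise $b_t = (1-\rho_t)(\Delta_t-\tilde\Delta_t)+\rho_t(\nabla F(x_t)-\nabla\tF(x_t;z_t))$ is conditionally mean-zero, so that the cross term $\expect[\langle e_{t-1},b_t\rangle]$ vanishes. That is exactly the step that fails in the non-oblivious setting, and the paper goes out of its way to flag it: in \cref{alg:one_sample} the single sample $z_t$ is drawn from $p(z;x_t(a))$ with $x_t(a)=ax_t+(1-a)x_{t-1}$, so while the score-function construction does make $\tilde\Delta_t$ unbiased for $\Delta_t$, the term $\nabla\tF(x_t;z_t)$ is \emph{not} an unbiased estimator of $\nabla F(x_t)$ (the sampling distribution is anchored at $x_t(a)$, not at $x_t$, and $a\neq 1$ almost surely). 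Its conditional bias is of order $\eta_{t-1}D(\bar L+L)$, which the paper bounds via the smoothness of $F$ and of $\tF(\cdot;z)$. Consequently the cross term with $e_{t-1}$ does not drop; it contributes an extra $2\eta_{t-1}\rho_t(1-\rho_t)D(\bar L+L)\sqrt{\expect[A_{t-1}]}$ to the recursion, i.e.\ a term of the form $\tfrac{2D(\bar L+L)}{t^{2\alpha}}\sqrt{\expect[A_t]}$. (A second non-vanishing cross term, between $\Delta_t-\tilde\Delta_t$ and the gradient error, is harmless in your scheme since you absorb it into $\expect[\|b_t\|^2]$ via Minkowski.)

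This is not cosmetic: the $\sqrt{\expect[A_t]}$ term is precisely why the stated constant contains the entries $\bigl[\tfrac{2}{2-2^{-\alpha}-\alpha}\bigr]^4$ and $[2D(\bar L+L)]^4$ — the induction needs $2D(\bar L+L)\le C^{1/4}$ and $C^{1/4}\ge \tfrac{2}{2-2^{-\alpha}-\alpha}$ to dominate $C^{3/4}t^{-(5/2)\alpha}$ by $\tfrac{(2-2^{-\alpha}-\alpha)C}{2}t^{-2\alpha}$. Your attribution of these entries to ``base cases and slack'' is therefore wrong; your clean recursion would only ever require the first entry of the $\max$. The result is still true with the stated $C$, but to prove it you must keep the biased cross term, bound the conditional bias of $\nabla\tF(x_t;z_t)$ by $\eta_{t-1}D(\bar L+L)$ using \cref{lem:scg++8.1} and \cref{assum:second_order}, and then run the induction with the additional $\sqrt{\expect[A_t]}$ term, exactly as the paper does.
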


Lemma \eqref{lem:graident_error} shows that with an appropriate parameter 
setting, the 
gradient error converges to zero at a rate of $\mathcal{O}(t^{-\alpha})$. With 
this 
unifying upper bound, we can obtain the convergence rates of our algorithm for 
different kinds of objective functions. 

If in the update of \sfw we use the Gradient Difference Option in Line 9 of 
\cref{alg:one_sample} to estimate $\tilde{\Delta}_t$, as pointed out above, we 
need one further assumption on second-order smoothness of the functions $\tF$ 
and $\log p$. %Specifically, we have
\begin{assump}\label{assum:hessian}
The stochastic function $\tF$ is uniformly $L_{2,\tF}$- second-order smooth: 
$\|\nabla^2 \tF(x;z) - 
\nabla^2 \tF(y;z) \|\leq L_{2,\tF}\|x-y \|,\ \forall\, x,y \in 
\constraint, \forall\, z \in \mathcal{Z}$. The log probability $\log 
p(z;x)$ is uniformly $L_{2,p}$-second-order smooth: $\|\nabla^2 \log 
p(z;x) - 
\nabla^2 \log p(z;y) \|\leq L_{2,p}\|x-y \|,\ \forall\, x,y \in 
\constraint, \forall\, z \in \mathcal{Z}$. We also define $L_2 = 
\max\{L_{2,\tF}, L_{2,p}\}$.
\end{assump}

We note that under \cref{assum:hessian}, the approximation bound in 
\eqref{eq:gradient_difference_bound} holds for both 
$\tF$ and $\log p$. So for $\delta_t$ sufficiently small, the error introduced 
by the Hessian approximation can be ignored. Thus similar upper bound for 
errors of estimated gradients still holds. 
\begin{lemma}\label{lem:graident_error_2}
	Consider the gradient approximation $d_t$ defined in 
	\eqref{eq:momentum_new}. Under 
	Assumptions~\ref{assum:constraint}-\ref{assum:hessian},
	if we run Algorithm~\ref{alg:one_sample} with Gradient Difference Option in 
	Line 9, and with parameters $\rho_t = (t-1)^{-\alpha}, \delta_t = 
	\frac{\sqrt{3}\eta_{t-1\bar{L}}}{DL_2(1+B)}\ (\forall\, t \ge 2)$, and 
	$\eta_t \leq t^{-\alpha}\ (\forall\, t \ge 1$ and for some $\alpha \in 
	(0,1])$, 
	then the gradient estimation $d_t$ satisfies
	\begin{equation}
	\expect[\|\nabla F(x_t) - d_t\|^2] \leq C t^{-\alpha},
	\end{equation}
	where %the constant $C$ is given by
	%\begin{equation*}
	%\begin{split}
	$C= \max \
	\!\bigg\{\!\frac{8(D^2\bar{L}^2+G^2+GD\bar{L})}{2\!-\!2^{-\alpha}\!-\!\alpha},
	\left(\frac{2}{2\!-\!2^{-\alpha}\!-\!\alpha} \right)^4\!\!, 
	 (4D(\bar{L}\!+\!L))^4 \bigg\}.$
	%\end{split}
	%\end{equation*}	
\end{lemma}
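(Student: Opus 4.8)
The plan is to replay the proof of \cref{lem:graident_error} almost verbatim, the only structural change being that the one-sample variation estimator of \eqref{eq:gradient_diff_estimation3} is now a \emph{biased} estimator of $\Delta_t = \nabla F(x_t) - \nabla F(x_{t-1})$, with a bias of order $\eta_{t-1}$ coming from replacing each Hessian--vector product by a symmetric finite difference. Concretely, write $\tilde\Delta_t$ for the Gradient Difference estimator of \eqref{eq:gradient_diff_estimation3} and $\tilde\Delta_t^{\star} := \tnabla_t^2(x_t - x_{t-1})$ for the Exact Hessian estimator of \eqref{eq:gradient_diff_estimation} built from the \emph{same} sample $z_t$ and the same $a$. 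The first task is to control the discrepancy $b_t := \tilde\Delta_t - \tilde\Delta_t^{\star}$, and the key observation is that only two of the five terms of \eqref{eq:gradient_diff_estimation2} are approximated, namely $\nabla^2\tF(x;z)\,u_t$ and $\tF(x;z)\nabla^2\log p(z;x)\,u_t$ with $u_t = x_t - x_{t-1}$; the remaining three terms are shared and cancel in $b_t$.

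For the control I would apply the finite-difference bound \eqref{eq:gradient_difference_bound} to these two terms. Under \cref{assum:hessian} they are approximated to within $D^2 L_{2,\tF}\delta_t$ and $|\tF(x;z)|\,D^2 L_{2,p}\delta_t \le B D^2 L_{2,p}\delta_t$ respectively, so that $\|b_t\| \le D^2 L_2 (1+B)\delta_t$ holds \emph{surely}. Plugging in the prescribed $\delta_t = \sqrt{3}\,\eta_{t-1}\bar L/(D L_2(1+B))$ collapses this to the clean bound $\|b_t\| \le \sqrt{3}\,D\bar L\,\eta_{t-1}$; this is precisely the tuning that forces the approximation error to scale like the step size, hence like $t^{-\alpha}$.

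Next I would replay the error recursion. Writing $e_t := \nabla F(x_t) - d_t$ and subtracting $\nabla F(x_t) = (1-\rho_t)(\nabla F(x_{t-1})+\Delta_t) + \rho_t\nabla F(x_t)$ from \eqref{eq:momentum_new} gives $e_t = (1-\rho_t)\big(e_{t-1} - (\tilde\Delta_t - \Delta_t)\big) + \rho_t(\nabla F(x_t) - \nabla\tF(x_t;z_t))$. This is \emph{identical} to the recursion in \cref{lem:graident_error} except that the variation discrepancy is now $\tilde\Delta_t - \Delta_t = (\tilde\Delta_t^{\star} - \Delta_t) + b_t$ rather than $\tilde\Delta_t^{\star} - \Delta_t$. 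Splitting $b_t = \bar b_t + (b_t - \bar b_t)$ with $\bar b_t := \expect[b_t\mid\mathcal F_{t-1}]$, the fluctuation $b_t - \bar b_t$ merges into the conditionally-mean-zero martingale noise (inflating its second-moment bound only by fixed constants), while the $\mathcal F_{t-1}$-measurable bias $\bar b_t$, with $\|\bar b_t\| \le \sqrt 3 D\bar L\eta_{t-1}$, is the genuinely new term. Because $\tilde\Delta_t^{\star}$ is conditionally unbiased for $\Delta_t$ (the property established just before \eqref{eq:gradient_diff_estimation}), carrying $\bar b_t$ through the conditional-expectation step and applying Young's inequality $\|e_{t-1} - \bar b_t\|^2 \le (1+\rho_t/2)\|e_{t-1}\|^2 + (1+2/\rho_t)\|\bar b_t\|^2$ contributes an extra additive term of order $(2/\rho_t)\,3D^2\bar L^2\eta_{t-1}^2 = O(t^{-\alpha})$, using $\eta_{t-1}^2/\rho_t \le (t-1)^{-\alpha}$. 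Every other term, in particular the gradient-sample term $\rho_t(\nabla F(x_t) - \nabla\tF(x_t;z_t))$, is bounded exactly as in \cref{lem:graident_error}.

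Finally I would close by the same induction on $t$. Assuming $\expect\|e_{t-1}\|^2 \le C(t-1)^{-\alpha}$, the resulting scalar recursion $a_t \le (1-\rho_t)^2(1+\rho_t/2)\,a_{t-1} + O(\rho_t^2) + 6D^2\bar L^2(t-1)^{-\alpha}$ is driven to $a_t \le Ct^{-\alpha}$ by the same elementary estimate comparing $(1-\rho_t)^2(1+\rho_t/2)(t-1)^{-\alpha}$ with $t^{-\alpha}$ that produces the $1/(2 - 2^{-\alpha} - \alpha)$ factors, provided $C$ exceeds the three competing quantities in the stated maximum. The new $6D^2\bar L^2$ piece is exactly the Young-plus-$\delta_t$ bias just computed, which is why the first quantity in $C$ grows from $2(2G+D\bar L)^2$ to $8(D^2\bar L^2 + G^2 + GD\bar L)$ (a difference of $6D^2\bar L^2$), while the factor-of-two inflation of the noise bounds accounts for the fourth-power term enlarging from $[2D(\bar L+L)]^4$ to $[4D(\bar L+L)]^4$. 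I expect the main difficulty to be purely in the bookkeeping: one must verify that the prescribed $\delta_t$ schedule is the one that forces the finite-difference bias to enter at the \emph{same} $t^{-\alpha}$ order as the intrinsic variance (a slower decay of $\delta_t$ would leave a residual the induction cannot absorb), and one must correctly separate the random $b_t$ into its measurable conditional mean $\bar b_t$ (which becomes the bias) and its fluctuation (which must be folded into the martingale part) rather than treating $b_t$ as a deterministic perturbation.
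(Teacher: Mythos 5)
Your identification of what changes relative to \cref{lem:graident_error} is accurate: the finite-difference discrepancy is bounded surely by $D^2L_2(1+B)\delta_t$ via \eqref{eq:gradient_difference_bound}, and the prescribed $\delta_t$ turns this into $\sqrt{3}\,D\bar L\,\eta_{t-1}$. The gap is in how you feed that discrepancy into the error recursion. By splitting off the conditional bias $\bar b_t$ and applying Young's inequality with weights $(1+\rho_t/2,\,1+2/\rho_t)$, you create an additive forcing term of order $\eta_{t-1}^2/\rho_t=\Theta((t-1)^{-\alpha})$. A recursion of the form $a_t\le(1-\tfrac32\rho_t)a_{t-1}+\Theta(\rho_t)+O(\rho_t^2)$ with $\rho_t=(t-1)^{-\alpha}$ does \emph{not} admit solutions decaying like $t^{-\alpha}$: at the target level $a_{t-1}\le C(t-1)^{-\alpha}$ the contraction only buys you $O(\rho_t a_{t-1})=O(t^{-2\alpha})$ of slack per step, so a forcing term of order $t^{-\alpha}$ dominates it for large $t$ and drives $a_t$ toward the constant fixed point $\Theta(1)$ rather than to zero. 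The induction you invoke therefore cannot close for any choice of $C$, and your attribution of the extra $6D^2\bar L^2$ in the constant to this $t^{-\alpha}$-order term does not match where it actually arises.

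The paper's proof never separates the bias from the fluctuation. It bounds the second moment of the \emph{whole} discrepancy by orthogonally decomposing it into the finite-difference error and the exact-Hessian estimator's deviation,
$\expect\|\tilde\Delta_t-\Delta_t\|^2\le\big[D^2L_2(1+B)\delta_t\big]^2+\eta_{t-1}^2D^2\bar L^2=3\eta_{t-1}^2D^2\bar L^2+\eta_{t-1}^2D^2\bar L^2=4\eta_{t-1}^2D^2\bar L^2$,
i.e., exactly the Lemma~\ref{lem:graident_error} bound with the constant quadrupled, and then lets $\tilde\Delta_t-\Delta_t$ occupy the same slot in the recursion as before, where it enters multiplied by $(1-\rho_t)^2$ at order $\eta_{t-1}^2=t^{-2\alpha}$. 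That is what turns the $D^2\bar L^2$ coefficient of the $t^{-2\alpha}$ term into $4D^2\bar L^2$ and the noise cross-term constant from $[2D(\bar L+L)]^4$ to $[4D(\bar L+L)]^4$, yielding the stated $C$. You are right that the cross term $\expect\langle e_{t-1},\bar b_t\rangle$ no longer vanishes exactly for the biased estimator (the paper reuses the unbiasedness step of Lemma~\ref{lem:graident_error} without comment), but any treatment of it must keep its contribution at order $t^{-2\alpha}$; the Young split with weight $1/\rho_t$ is precisely the wrong tool for that.
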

\cref{lem:graident_error_2} shows that with Gradient Difference Optionin Line 9 
of \cref{alg:one_sample}, the error of estimated gradient has the same 
order of convergence rate as that with Exact Hessian Option. So in the 
following three subsections, we will present the theoretical results of our proposed \sfw 
algorithm with Exact Hessian Option, for convex minimization, non-convex 
minimization, and monoton DR-submodular maximization, respectively. The results 
of Gradient Difference Option only differ in constant factors.

\subsection{Convex Minimization}
For convex minimization problems, to obtain an $\epsilon$-suboptimal solution, 
\cref{alg:one_sample} only requires at most $\mathcal{O}(1/\epsilon^2)$ 
stochastic oracle queries, and $\mathcal{O}(1/\epsilon^2)$ linear optimization 
oracle calls. Or precisely, we have 
%%%%
\begin{theorem}[Convex]\label{thm:convex}
Consider the \sfw method outlined in \cref{alg:one_sample}  
with Exact Hessian Option in Line 9. Further, suppose the conditions in 
Assumptions~\ref{assum:constraint}-\ref{assum:second_order} hold, and assume 
that $F$ is convex on $\constraint$. If we set the algorithm parameters 
as $\rho_t = 
 (t-1)^{-1}$ and $\eta_t = t^{-1}$, then the output $x_{T+1} \in \constraint$ 
 is feasible and satisfies
\begin{equation*}
\expect[F(x_{T+1}) - F(x^*)] \leq \frac{2\sqrt{C}D}{\sqrt{T}} + 
\frac{\bar{L}D^2(1+\ln T)}{2T} ,
\end{equation*}
where $C=\max \{4(2G+D\bar{L})^2, 256, [2D(\bar{L}+L)]^4 \}$, and $x^*$ is a 
minimizer of $F$ on $\constraint$. 	
\end{theorem}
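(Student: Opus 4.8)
The plan is to run the standard Frank--Wolfe descent argument while carefully tracking the error incurred by replacing the true gradient $\nabla F(x_t)$ with its estimate $d_t$, and then to close the recursion using the second-moment bound of \cref{lem:graident_error} specialized to $\alpha=1$ (where $2-2^{-\alpha}-\alpha=\tfrac12$, which is exactly what produces the stated constant $C$). First I would invoke the $\bar L$-smoothness of $F$ (a consequence of \cref{lem:scg++8.1}) together with the update $x_{t+1}=x_t+\eta_t(v_t-x_t)$ and the diameter bound $\|v_t-x_t\|\le D$ to get the descent inequality
\[
F(x_{t+1}) \le F(x_t) + \eta_t \langle \nabla F(x_t), v_t - x_t\rangle + \frac{\bar L \eta_t^2 D^2}{2}.
\]

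The conceptual crux is converting $\langle \nabla F(x_t), v_t - x_t\rangle$ into genuine progress toward $x^*$ despite the fact that $v_t=\argmin_{v\in\constraint}\langle v, d_t\rangle$ is optimal for $d_t$, not for $\nabla F(x_t)$. I would split $\langle \nabla F(x_t), v_t - x_t\rangle = \langle d_t, v_t - x_t\rangle + \langle \nabla F(x_t) - d_t, v_t - x_t\rangle$, apply the linear-minimization optimality $\langle d_t, v_t - x_t\rangle \le \langle d_t, x^* - x_t\rangle$, and recombine the two error contributions into the single term $\langle \nabla F(x_t) - d_t, v_t - x^*\rangle$. Convexity gives $\langle \nabla F(x_t), x^* - x_t\rangle \le F(x^*) - F(x_t)$, while Cauchy--Schwarz with the diameter bound gives $\langle \nabla F(x_t) - d_t, v_t - x^*\rangle \le D\,\|\nabla F(x_t) - d_t\|$. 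Writing $h_t = F(x_t) - F(x^*)$, taking expectations, and using Jensen together with \cref{lem:graident_error}, namely $\expect[\|\nabla F(x_t) - d_t\|]\le \sqrt{\expect[\|\nabla F(x_t) - d_t\|^2]}\le \sqrt{C/t}$, yields the recursion
\[
\expect[h_{t+1}] \le (1-\eta_t)\expect[h_t] + \eta_t D\sqrt{C/t} + \frac{\bar L \eta_t^2 D^2}{2}.
\]

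Finally, with $\eta_t=1/t$ this reads $\expect[h_{t+1}] \le (1-1/t)\expect[h_t] + D\sqrt{C}\,t^{-3/2} + \tfrac12 \bar L D^2 t^{-2}$, and the clean way to solve it is to multiply through by $t$ so that it telescopes: $t\,\expect[h_{t+1}] - (t-1)\expect[h_t] \le D\sqrt{C}\,t^{-1/2} + \tfrac12 \bar L D^2 t^{-1}$. Summing from $t=1$ to $T$ collapses the left side to $T\,\expect[h_{T+1}]$ (the $t=1$ boundary term vanishes), leaving the two elementary series $\sum_{t=1}^T t^{-1/2}\le 2\sqrt{T}$ and $\sum_{t=1}^T t^{-1}\le 1+\ln T$ on the right; dividing by $T$ gives precisely $2\sqrt{C}D/\sqrt{T} + \bar L D^2(1+\ln T)/(2T)$. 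There is no genuine obstacle here once \cref{lem:graident_error} is in hand, since all of the momentum and unbiasedness machinery is absorbed into that lemma; the only delicate point is bookkeeping the constants so that $C$ matches the $\alpha=1$ instance, and verifying the $\sum t^{-1/2}\le 2\sqrt T$ bound that is responsible for the dominant $\mathcal{O}(1/\sqrt{T})$ rate.
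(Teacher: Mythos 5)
Your proposal is correct and follows essentially the same route as the paper: the one-step inequality you derive (smoothness, splitting the inner product via the optimality of $v_t$ for $d_t$, convexity, Cauchy--Schwarz) is exactly the content of the auxiliary lemma the paper cites from \citep{yurtsever2019conditional}, and the rest is the same application of Jensen and \cref{lem:graident_error} with $\alpha=1$. Your telescoping after multiplying by $t$ is algebraically identical to the paper's unrolling via $\prod_{i=k+1}^{T}(1-\eta_i)=k/T$, and both yield the same sums $\sum_{t=1}^T t^{-1/2}\le 2\sqrt{T}$ and $\sum_{t=1}^T t^{-1}\le 1+\ln T$; the only (trivial) omission is the one-line induction showing $x_{T+1}\in\constraint$.
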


The result in Theorem~\ref{thm:convex} shows that the proposed one sample 
stochastic Frank-Wolfe method, in the convex setting, has an overall complexity 
of $\mathcal{O}(1/\epsilon^2)$ for finding an $\epsilon$-suboptimal solution. 
Note that to prove this claim we used the result in 
Lemma~\ref{lem:graident_error} for the case where $\alpha=1$, \ie, the 
variance of gradient approximation converges to zero at a rate of 
$\mathcal{O}(1/t)$. We also highlight that \sfw is \emph{parameter-free}, as 
the learning rate $\eta_t$ and the momentum parameter $\rho_t$ do not depend on 
the parameters of the problem.

\subsection{Non-Convex Minimization}
For non-convex minimization problems, showing that the gradient norm approaches 
zero, \ie, $\|\nabla F(x_t) \| \to 0$, implies convergence to a 
stationary point in the \emph{unconstrained} setting. Thus, it is usually used 
as a measure for convergence. In the constrained setting, however, the norm of 
gradient is not a proper measure for defining stationarity and we instead use 
the Frank-Wolfe Gap 
\citep{jaggi2013revisiting,lacoste2016convergence}, which is defined by
\begin{equation*}
\mathcal{G}(x) = \max_{v \in \constraint} \langle v-x,-\nabla F(x) 
\rangle.
\end{equation*}
We note that by definition, $\mathcal{G}(x) \geq 0,  \forall\, x \in 
\constraint$. If some point $x \in \constraint$ satisfies $\mathcal{G}(x) = 
0$, then it is a first-order stationary point.

In the following theorem, we formally prove the number of iterations required 
for one sample stochastic FW to find an $\epsilon$-first-order stationary point 
in expectation, \ie, a point $x$ that satisfies 
$\expect[\mathcal{G}(x)] \leq \epsilon$. 

%%%
\begin{theorem}[Non-Convex]\label{thm:nonconvex}
Consider the \sfw method outlined in Algorithm~\ref{alg:one_sample}  
with Exact Hessian Option in Line 9. Further, suppose the conditions in 
Assumptions~\ref{assum:constraint}-\ref{assum:second_order} hold. If we 
set the algorithm parameters as 
$\rho_t 
= (t-1)^{-2/3},$ and $\eta_t = T^{-2/3}$, then the 
output $x_{o} \in \constraint$ is feasible and satisfies
\begin{equation*}
\expect[\mathcal{G}(x_o)] \leq \frac{2B+3\sqrt{C}D/2}{T^{1/3}} + 
\frac{\bar{L}D^2}{2T^{2/3}},
\end{equation*}
where %the constant $C$ is given by
$C\! = \!\max \left\{\frac{2(2G\!+\!D\bar{L})^2}{\frac{4}{3}-2^{-\frac{2}{3}}}, 
\left[ 
\frac{2}{\frac{4}{3}-2^{-\frac{2}{3}}}\right]^4\!\!, [2D(\bar{L}\!+\!L)]^4 
\right\}\!.$
\end{theorem}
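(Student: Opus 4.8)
The plan is to run the standard smoothness-based descent analysis for Frank--Wolfe, while carefully routing the gradient-estimation error through the Frank--Wolfe gap. First I would invoke the $\bar{L}$-smoothness of $F$ (noted just after \cref{lem:scg++8.1}) together with the update $x_{t+1}=x_t+\eta_t(v_t-x_t)$ and the diameter bound $\|v_t-x_t\|\le D$ from \cref{assum:constraint} to obtain the per-step inequality
\begin{equation*}
F(x_{t+1}) \leq F(x_t) + \eta_t \langle \nabla F(x_t), v_t - x_t\rangle + \frac{\bar{L}\, \eta_t^2 D^2}{2}.
\end{equation*}

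The key step is to convert the linear term into the negative Frank--Wolfe gap plus a controllable error. Let $\hat{v}_t = \argmin_{v\in\constraint}\langle \nabla F(x_t), v\rangle$, so that $\langle \nabla F(x_t), \hat{v}_t - x_t\rangle = -\mathcal{G}(x_t)$. I would decompose $\langle \nabla F(x_t), v_t-x_t\rangle = \langle d_t, v_t - x_t\rangle + \langle \nabla F(x_t)-d_t, v_t-x_t\rangle$, then use the optimality of $v_t$ for $d_t$, namely $\langle d_t, v_t\rangle \le \langle d_t, \hat{v}_t\rangle$, to replace $v_t$ by $\hat{v}_t$ inside the first term. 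The two resulting error contributions recombine into a single inner product, giving
\begin{equation*}
\langle \nabla F(x_t), v_t - x_t\rangle \leq -\mathcal{G}(x_t) + \langle \nabla F(x_t) - d_t,\, v_t - \hat{v}_t\rangle \leq -\mathcal{G}(x_t) + D\,\|\nabla F(x_t) - d_t\|,
\end{equation*}
where the final bound is Cauchy--Schwarz with $\|v_t - \hat{v}_t\|\le D$. Recognizing that the cross terms merge as $\langle \nabla F(x_t)-d_t,\,v_t-\hat{v}_t\rangle$ (rather than bounding each separately) is exactly what yields the constant $D$ and hence the sharp coefficient $3\sqrt{C}D/2$ in the statement; a naive split would produce $2D\|\nabla F(x_t)-d_t\|$ and a constant twice as large.

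From here the rest is summation. Rearranging gives $\eta_t \mathcal{G}(x_t) \le F(x_t) - F(x_{t+1}) + D\eta_t\|\nabla F(x_t) - d_t\| + \bar{L}\eta_t^2 D^2/2$. With $\eta_t = T^{-2/3}$ constant in $t$, I would sum over $t=1,\dots,T$, telescoping the function-value differences; the telescoped difference $F(x_1)-F(x_{T+1})$ is at most $2B$ by \cref{assum:stoch_bound}. Taking expectations and applying \cref{lem:graident_error} with $\alpha=2/3$ (valid since $\eta_t=T^{-2/3}\le t^{-2/3}$ for $t\le T$) together with Jensen's inequality yields $\expect\|\nabla F(x_t)-d_t\| \le \sqrt{C}\,t^{-1/3}$, so that $\sum_{t=1}^T \expect\|\nabla F(x_t)-d_t\| \le \sqrt{C}\sum_{t=1}^T t^{-1/3} \le \tfrac{3}{2}\sqrt{C}\,T^{2/3}$. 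Dividing through by $T$ and using that $\expect[\mathcal{G}(x_o)] = \tfrac{1}{T}\sum_{t=1}^T \expect[\mathcal{G}(x_t)]$ (since $x_o$ is uniform over the iterates) collects the three contributions—$2B/T^{1/3}$, $3\sqrt{C}D/(2T^{1/3})$, and $\bar{L}D^2/(2T^{2/3})$—into exactly the claimed bound.

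The main obstacle I anticipate is the error-term bookkeeping in the middle step: one must track the gradient error simultaneously through $v_t$ and $\hat{v}_t$ and see that the cross terms recombine, which is what controls the constant. Everything else is routine—the smoothness descent, matching the exponents of $T$ coming from $\eta_t^2 T$, $\eta_t\cdot T^{2/3}$, and the $1/T$ averaging, and the harmonic-type estimate $\sum_{t=1}^T t^{-1/3}=O(T^{2/3})$. Notably, no convexity of $F$ is used anywhere, consistent with the non-convex setting.
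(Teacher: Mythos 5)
Your proposal is correct and follows essentially the same route as the paper's proof: the smoothness descent step, the decomposition through $v_t$ and the true minimizer with the cross terms recombining into $\langle \nabla F(x_t)-d_t,\,v_t-v_t'\rangle$, Cauchy--Schwarz with the diameter, and then summation with Lemma~\ref{lem:graident_error} at $\alpha=2/3$ and the estimate $\sum_{t\le T}t^{-1/3}\le \tfrac{3}{2}T^{2/3}$. The only piece you omit is the (routine) induction showing $x_t\in\constraint$ for all $t$, which the paper states explicitly.
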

We remark that Theorem~\eqref{thm:nonconvex} shows that Algorithm~\ref{alg:one_sample} finds an 
$\epsilon$-first-order stationary points after at most 
$\mathcal{O}(1/\epsilon^3)$ iterations, while uses exactly one stochastic 
gradient per iteration. Note that to obtain the best performance guarantee in 
Theorem~\eqref{thm:nonconvex}, we used the result of 
Lemma~\ref{lem:graident_error} for the case where $\alpha=2/3$, \ie, the 
variance of gradient approximation converges to zero at a rate of 
$\mathcal{O}(T^{-2/3})$. Again, we highlight that \sfw is a 
\emph{parameter-free} algorithm.

\subsection{Monotone DR-Submodular Maximization}

In this subsection, we focus on the convergence properties of one-sample 
stochastic Frank-Wolfe or one-sample stochastic Continuous Greedy for solving a 
monotone continuous DR-submodular maximization problem.
%A set function $f: 2^\Omega \to \mathbb{R}$ is submodular if the diminishing 
%returns property is satisfied, i.e., for all $A \subseteq B \subseteq 
%\Omega$ and $x \in \Omega \setminus B$, we have $f(A \cup \{x\}) - f(A) \geq 
%f(B \cup \{x\}) - f(B)$. Here, $f(x|A) \triangleq f(A \cup \{x\}) - f(A)$ can 
%be viewed as a discrete derivative \citep{nemhauser1978analysis}. 
%
%Based on this 
%observation, we can extend submodularity to continuous domain.  

Recall that for monotone continuous DR-submodular maximization, approximated 
solution within a factor of $(1-e^{-1} + \epsilon)$ can not be obtained in 
polynomial time \citep{bian2017guaranteed}. 
%As a result, we analyze 
%the convergence rate for the tight $(1-e^{-1})$OPT approximation. 
To achieve a $(1-e^{-1})\mathrm{OPT}- \epsilon$ approximation guarantee, 
\sfw requires at most $\mathcal{O}(1/\epsilon^2)$ 
stochastic oracle queries, and $\mathcal{O}(1/\epsilon^2)$ linear optimization 
oracle calls, which are the lower bounds of the complexity established in 
\cite{hassani2019stochastic}.
 
\begin{theorem}[Submodular]\label{thm:sub}
Consider the \sfw method outlined in Algorithm~\ref{alg:one_sample}  
with Exact Hessian Option in Line 9 for maximizing DR-Submodular functions. 
Further, suppose the conditions in 
Assumptions~\ref{assum:constraint}-\ref{assum:second_order} hold, and further 
assume that $F$ is monotone and continuous DR-submodular on 
the positive orthant. If we set the algorithm parameters as $x_1=0, 
\rho_t = (t-1)^{-1}, \eta_t = T^{-1}$, then the output $x_{T+1} \in 
\constraint$ is feasible and satisfies
\begin{equation*}
\expect[F(x_{T+1})] \geq (1-e^{-1})F(x^*) - \frac{4R\sqrt{C}}{T^{1/2}} - 
\frac{\bar{L}R^2}{2T},
\end{equation*}
where $C=\max\{4(2G+R\bar{L})^2, 256, [2R(\bar{L}+L)]^4 \}$.
\end{theorem}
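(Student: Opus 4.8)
The plan is to track the optimality gap $h_t = F(x^*) - F(x_t)$ and to show that it contracts by a factor $(1-\eta_t)$ per step, up to an additive error governed by the gradient-estimation bound of \cref{lem:graident_error}. First I would record feasibility of the output: since $x_1 = 0$ and $x_{t+1} = x_t + \eta_t v_t$ with $\eta_t = 1/T$, we obtain $x_{T+1} = \frac{1}{T}\sum_{t=1}^T v_t$, an average of $T$ points of $\constraint$, hence itself in $\constraint$. The same recursion, together with $v_t \in \constraint$ lying in the positive orthant, shows $x_t \geq 0$ for all $t$, which I will need for the submodular step.

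The core one-step estimate combines smoothness with the linear-maximization step. Using $\bar{L}$-smoothness (implied by \cref{lem:scg++8.1}) and $x_{t+1} - x_t = \eta_t v_t$ with $\|v_t\| \leq R$, I get $F(x_{t+1}) \geq F(x_t) + \eta_t \langle \nabla F(x_t), v_t\rangle - \frac{\bar{L}\eta_t^2 R^2}{2}$. I then split $\langle \nabla F(x_t), v_t\rangle = \langle d_t, v_t\rangle + \langle \nabla F(x_t) - d_t, v_t\rangle$ and use that $v_t$ maximizes $\langle d_t, \cdot\rangle$ over $\constraint$, so $\langle d_t, v_t\rangle \geq \langle d_t, x^*\rangle$. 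Rearranging gives $\langle \nabla F(x_t), v_t\rangle \geq \langle \nabla F(x_t), x^*\rangle + \langle \nabla F(x_t) - d_t, v_t - x^*\rangle$.

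The step specific to DR-submodular maximization, and the main conceptual obstacle, is the inequality $\langle \nabla F(x_t), x^*\rangle \geq F(x^*) - F(x_t)$. I would establish it by applying concavity along non-negative directions to the pair $x_t \leq x^* \vee x_t$, giving $F(x^* \vee x_t) - F(x_t) \leq \langle \nabla F(x_t), x^* \vee x_t - x_t\rangle$. Monotonicity then supplies two facts: $F(x^* \vee x_t) \geq F(x^*)$, and $\nabla F(x_t) \geq 0$, so that (using $0 \leq x^* \vee x_t - x_t \leq x^*$, which relies on $x_t \geq 0$) we have $\langle \nabla F(x_t), x^* \vee x_t - x_t\rangle \leq \langle \nabla F(x_t), x^*\rangle$. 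Chaining these yields the claim; the remaining ingredients (smoothness, optimality of $v_t$, Cauchy--Schwarz) are standard.

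Putting these together, with $\|v_t - x^*\| \leq 2R$ and Cauchy--Schwarz, I obtain the recursion $h_{t+1} \leq (1-\eta_t) h_t + 2R\eta_t \|\nabla F(x_t) - d_t\| + \frac{\bar{L}\eta_t^2 R^2}{2}$. Taking expectations and applying Jensen's inequality with \cref{lem:graident_error} at $\alpha = 1$ gives $\expect\|\nabla F(x_t) - d_t\| \leq \sqrt{C}\,t^{-1/2}$. Finally I would unroll the recursion with the constant step $\eta_t = 1/T$: using $(1-1/T)^T \leq e^{-1}$, the bound $h_1 = F(x^*) - F(0) \leq F(x^*)$ (since $F \geq 0$), and $\sum_{t=1}^T t^{-1/2} \leq 2\sqrt{T}$, the accumulated error telescopes to $\frac{4R\sqrt{C}}{\sqrt{T}} + \frac{\bar{L}R^2}{2T}$, delivering the stated guarantee $\expect[F(x_{T+1})] \geq (1-e^{-1})F(x^*) - \frac{4R\sqrt{C}}{\sqrt{T}} - \frac{\bar{L}R^2}{2T}$.
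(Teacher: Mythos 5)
Your proposal is correct and follows essentially the same route as the paper: the same smoothness-plus-linear-oracle one-step bound, the same DR-submodular inequality $F(x^*)-F(x_t)\leq \langle \nabla F(x_t), x^*\rangle$ proved via concavity along non-negative directions and monotonicity, and the same unrolling with $(1-1/T)^T\leq e^{-1}$. The only detail worth making explicit is why \cref{lem:graident_error} applies with the constant $C$ written in terms of $R$ rather than $D$: in the continuous-greedy update $\|x_{t+1}-x_t\|=\eta_t\|v_t\|\leq \eta_t R$, so the lemma's proof goes through verbatim with $R$ replacing $D$, which is exactly what the paper notes.
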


Finally, we note that \cref{alg:one_sample} can also be used to solve 
stochastic discrete submodular maximization 
\citep{karimi2017stochastic,mokhtari2017conditional}. Precisely, we can apply 
\cref{alg:one_sample} on the multilinear extension of the discrete submodular 
functions, and round the output to a feasible set by lossless rounding schemes 
like pipage rounding \citep{calinescu2011maximizing} and contention resolution 
method \citep{chekuri2014submodular}.

\section{Oblivious Setting}\label{sec:obl}

In this section, we specifically study the oblivious problem introduced in 
\eqref{eq:ob_op} which is a special case of the non-oblivious problem defined 
in \eqref{eq:formation}. In particular, we show that our proposed \sfw method 
becomes significantly simpler and  the
corresponding theoretical results hold under less strict assumptions.

%the proposed one sample Frank-Wolfe method becomes significantly simpler under 
%the oblivious setting. Also, we show that  the theoretical results for one 
%sample SFW hold under less strict assumptions when we are in the oblivious 
%regime. 

\subsection{Algorithm}

As we discussed in Section \ref{sec:sfw}, a major challenge that we face for 
designing a variance reduced Frank-Wolfe method for the non-oblivious setting 
is computing an unbiased estimator of the gradient variation $\Delta_t=\nabla 
F(x_t)-\nabla F(x_{t-1})$. This is indeed not problematic in the oblivious 
setting, as in this case
$z \sim p(z)$ is independent of $x$ and therefore $\nabla \tF(x_t;z) - \nabla 
\tF(x_{t-1};z)$ is an unbiased estimator of  the gradient variation $\Delta_t = 
\nabla F(x_t) - \nabla F(x_{t-1})$. Hence, in the oblivious setting, our 
proposed one sample FW uses the following gradient approximation 
 %%%%
\begin{equation*}\label{eq:momentum_new_2}
d_t = (1-\rho_t) (d_{t-1}+\tilde{\Delta}_t) + \rho_t \nabla \tF(x_t; z_t),
\end{equation*}
%%%
 where $\tilde{\Delta}_t$ is given by 
 $$\tilde{\Delta}_t = \nabla \tF(x_t;z_t) - \nabla 
		\tF(x_{t-1};z_t).$$	
 The rest of the algorithm for updating the variable $x_t$ is identical to the 
 one for the non-oblivious setting. The description of our proposed algorithm 
 for the oblivious setting is outlined in \cref{alg:one_sample_ob}.

\begin{algorithm}[t]
	\caption{One-Sample SFW (Oblivious Setting)}
	\begin{algorithmic}[1]
		\Require Step sizes $\rho_t \in 
		(0,1),   
		\eta_t \in (0,1)$, initial point $x_1 \in 
		\constraint$, total number 
		of iterations $T$
		\Ensure $x_{T+1}$ or $x_o$, where $x_o$ is chosen from $\{x_1, 
		x_2,\cdots, x_T\}$ uniformly at random  
		\For{$t=1,2,\dots, T$}
		\State Sample a point $z_t$ according 
		to $p(z)$
		\If{$t=1$} 
		\State  Compute $d_1 = \nabla \tilde{F}(x_1; 
		z_1)$		
		\Else
		\State $\tilde{\Delta}_t = \nabla \tF(x_t;z_t) - \nabla 
		\tF(x_{t-1};z_t)$
		\State $d_t = (1-\rho_t)(d_{t-1}+\tilde{\Delta}_t) + \rho_t \nabla 
		\tilde{F}(x_t,z_t)$
		\EndIf
		\State (non-)convex minimization: Update $x_{t+1}$ based on 
		\eqref{eq:option1} 
		\State  DR-submodular maximization: Update $x_{t+1}$ based on 
		\eqref{eq:option2} 
		\EndFor
	\end{algorithmic}
	\label{alg:one_sample_ob}
\end{algorithm}

%\vspace{2mm}
\begin{remark}
We note that by rewriting our proposed \sfw method for the
oblivious setting, we recover the variance reduction technique STORM 
\citep{cutkosky2019momentum} with different 
sets of parameters. In \citep{cutkosky2019momentum}, however, the STORM 
algorithm was combined with SGD to solve \emph{unconstrained} non-convex 
minimization problems, while our proposed \sfw method solves convex 
minimization, non-convex minimization, and DR-submodular maximization in a 
\emph{constrained} setting.\end{remark}

\subsection{Theoretical Results}

In this subsection, we show that the variant of one sample stochastic FW for 
the oblivious setting (described in  \cref{alg:one_sample_ob}) recovers the 
theoretical results for the non-oblivious setting with fewer assumptions. In 
particular, we only require the following condition for the stochastic 
functions $\tilde{F}$ to prove our main results. 

\vspace{2mm}

\begin{assump}\label{assum:function_ob}
The function $\tF$ has uniformly bound gradients, \ie, $ \forall\, x \in 
\constraint, \forall\, z \in 
\mathcal{Z}$,
$$\|\nabla \tF(x;z) \| \leq G.$$
Moreover, the function $\tF$  is uniformly  $L$-smooth, \ie, $\forall\, x, 
y 
\in \constraint, \forall\, z \in 
\mathcal{Z}$,
$$\|\nabla \tF(x;z) - \nabla \tF(y;z) \|\leq L \| 
x-y\|.$$ 
\end{assump}
%\vspace{2mm}

We note that as direct corollaries of \cref{thm:convex,thm:nonconvex,thm:sub}, 
\cref{alg:one_sample_ob} achieves the same convergence rates, which is 
stated in 
\cref{thm:no} formally.

\begin{theorem}\label{thm:no}
Consider the oblivious variant of \sfw outlined in 
Algorithm~\ref{alg:one_sample_ob}, and assume that the conditions in 
\cref{assum:constraint,assum:stoch_bound,assum:function_ob} hold. Then we have
\begin{enumerate}
\item If $F$ is convex on $\constraint$, and we set $\rho_t = 
(t-1)^{-1}$ and $\eta_t = t^{-1}$, then the output $x_{T+1} \in 
\constraint$ is feasible and satisfies
\begin{equation*}
\expect[F(x_{T+1}) - F(x^*)] \leq \mathcal{O}(T^{-1/2}).
\end{equation*}

\item If $F$ is non-convex, and we set $\rho_t 
= (t-1)^{-2/3},$ and $\eta_t = T^{-2/3}$, then the 
output $x_{o} \in \constraint$ is feasible  and satisfies
\begin{equation*}
\expect[\mathcal{G}(x_o)] \leq \mathcal{O}(T^{-1/3}).
\end{equation*}

\item If $F$ is monotone DR-submodular on $\constraint$, and we set $x_1=0, 
\rho_t = 
(t-1)^{-1}$ and $ \eta_t = T^{-1}$, then the output $x_{T+1} \in 
\constraint$ is feasible and satisfies
\begin{equation*}
\expect[F(x_{T+1})] \geq (1-e^{-1})F(x^*) - \mathcal{O}(T^{-1/2}).
\end{equation*}
\end{enumerate}
\end{theorem}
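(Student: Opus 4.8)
The plan is to show that \cref{thm:no} is obtained by re-establishing the central gradient-error estimate of \cref{lem:graident_error} under the weaker hypotheses \cref{assum:constraint,assum:stoch_bound,assum:function_ob}, and then invoking the arguments of \cref{thm:convex,thm:nonconvex,thm:sub} verbatim. The key observation that makes the weaker assumptions suffice is that in the oblivious setting the simple difference estimator $\tilde{\Delta}_t = \nabla \tF(x_t;z_t) - \nabla \tF(x_{t-1};z_t)$ is \emph{already} unbiased for $\Delta_t = \nabla F(x_t) - \nabla F(x_{t-1})$, because $z_t \sim p(z)$ does not depend on $x$. Consequently none of the Hessian-based construction of \cref{sec:grad_var_est}, and hence none of the second-order Assumptions \ref{assum:gradient_norm}--\ref{assum:second_order} or \cref{lem:scg++8.1}, is required.

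First I would re-derive the gradient-error recursion. Writing $e_t := \nabla F(x_t) - d_t$ and substituting the update $d_t = (1-\rho_t)(d_{t-1}+\tilde{\Delta}_t) + \rho_t \nabla \tF(x_t;z_t)$, one obtains the decomposition
\begin{equation*}
e_t = (1-\rho_t)\,e_{t-1} + (1-\rho_t)\big(\Delta_t - \tilde{\Delta}_t\big) + \rho_t\big(\nabla F(x_t) - \nabla \tF(x_t;z_t)\big).
\end{equation*}
Since $e_{t-1}$, $x_t$, and $x_{t-1}$ are measurable with respect to the history before $z_t$ is drawn, and since both $\Delta_t - \tilde{\Delta}_t$ and $\nabla F(x_t) - \nabla \tF(x_t;z_t)$ have zero conditional mean, the cross terms involving $e_{t-1}$ vanish in conditional expectation. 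Taking conditional second moments, using $\|a+b\|^2 \le 2\|a\|^2 + 2\|b\|^2$, and bounding $\|\nabla F(x_t) - \nabla \tF(x_t;z_t)\| \le 2G$ via \cref{assum:function_ob}, I would arrive at
\begin{equation*}
\expect[\|e_t\|^2] \leq (1-\rho_t)^2\,\expect[\|e_{t-1}\|^2] + 2\,\expect[\|\Delta_t - \tilde{\Delta}_t\|^2] + 8G^2\rho_t^2.
\end{equation*}

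The step where the weaker hypotheses are essential is the bound on the variation error. Because each $\tF(\cdot;z)$ is $L$-smooth, one has $\|\tilde{\Delta}_t\| \le L\|x_t - x_{t-1}\|$, and since $\tilde{\Delta}_t$ is unbiased, $\expect[\|\Delta_t - \tilde{\Delta}_t\|^2] \le \expect[\|\tilde{\Delta}_t\|^2] \le L^2\|x_t - x_{t-1}\|^2 \le L^2 D^2 \eta_{t-1}^2$ (with $R$ replacing $D$ for the submodular update \eqref{eq:option2}). This plays exactly the role that the Hessian second-moment bound $\bar{L}$ of \cref{lem:scg++8.1} played in the non-oblivious analysis. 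Plugging in $\rho_t = (t-1)^{-\alpha}$ and $\eta_t \le t^{-\alpha}$ and running the same induction as in \cref{lem:graident_error} yields $\expect[\|\nabla F(x_t) - d_t\|^2] \le C t^{-\alpha}$ for a constant $C$ depending only on $G$, $L$, and $D$ (or $R$).

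With this estimate in hand the three statements follow as corollaries. Note that \cref{assum:function_ob} forces $F = \expect_z[\tF]$ to be $L$-smooth, so $L$ assumes the role previously played by $\bar{L}$. Part (1) then repeats the descent argument of \cref{thm:convex} with $\alpha=1$; part (2) repeats the Frank-Wolfe-gap argument of \cref{thm:nonconvex} with $\alpha=2/3$; and part (3) repeats the continuous-greedy argument of \cref{thm:sub} with $\alpha=1$ and $x_1=0$. In each case the only property of the estimator used downstream is the bound $\expect[\|\nabla F(x_t) - d_t\|^2] \le C t^{-\alpha}$ together with smoothness of $F$, both now available. I expect the main subtlety to be the careful justification of the conditional cross-term cancellation: the two noise sources $\Delta_t - \tilde{\Delta}_t$ and $\nabla F(x_t) - \nabla \tF(x_t;z_t)$ are correlated since they share the single sample $z_t$, but each is separately mean-zero given the history, which is all that the argument requires.
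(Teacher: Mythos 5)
Your proposal is correct and is precisely the argument the paper intends: the paper offers no written proof of this theorem (it is asserted as a "direct corollary" of \cref{thm:convex,thm:nonconvex,thm:sub}), and the only content that actually needs checking is that the gradient-error bound $\expect[\|\nabla F(x_t)-d_t\|^2]\le Ct^{-\alpha}$ of \cref{lem:graident_error} survives when $\tilde{\Delta}_t$ is the simple difference estimator under \cref{assum:function_ob}, which you establish via unbiasedness of both noise terms and the smoothness bound $\|\tilde{\Delta}_t\|\le L\|x_t-x_{t-1}\|\le LD\eta_{t-1}$. Your recursion is in fact cleaner than the paper's non-oblivious one (the $\sqrt{\expect[A_{t-1}]}$ cross terms vanish exactly rather than needing to be bounded), and the downstream invocations of the three theorems, with $L$ playing the role of $\bar{L}$ and $R$ replacing $D$ in the submodular case, are exactly right.
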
 

\cref{thm:no} shows that the oblivious version of \sfw requires at most $\mathcal{O}(1/\epsilon^2)$ stochastic oracle queries to find an $\epsilon$-suboptimal solution for convex minimization, at most $\mathcal{O}(1/\epsilon^2)$ stochastic gradient evaluations to achieve a $(1-1/e)-\epsilon$ approximate solution for monotone DR-submodular maximization, and at most 
$\mathcal{O}(1/\epsilon^3)$ stochastic oracle queries  to find an 
$\epsilon$-first-order stationary point for non-convex minimization.
%\section{Experiments}

%\section{Conclusion}
%In this paper, we studied the problem of solving constrained  stochastic 
%optimization programs  using projection-free methods. We proposed the first 
%stochastic variant of the Frank-Wolfe method, called \sfw, that  requires only 
%one stochastic sample per iteration while achieving the best known complexity 
%bounds for (non-)convex minimization and monotone DR-submodular maximization. 
%In particular, we proved that \sfw achieves the best known 
%oracle complexity of 
%$\mathcal{O}(1/\epsilon^2)$ for reaching an  
%$\epsilon$-suboptimal solution in the stochastic convex setting, and a 
%$(1-1/e)\text{OPT}-\epsilon$ approximate solution for a stochastic monotone 
%DR-submodular 
%maximization problem. Moreover, in a non-convex setting, \sfw finds an 
%$\epsilon$-first-order stationary point after at most 
%$\mathcal{O}(1/\epsilon^3)$ iterations, achieving the best known overall 
%complexity.  

	%!TEX root = Thesis-Mingrui.tex

\section{Experiments}\label{sec:experiments-sfw}
In this section, we empirically validate the efficiency of the proposed \sfw 
algorithm by 
comparing it with the baseline methods: Stochastic Frank-Wolfe (SFW) 
\cite{hazan2016variance} and Stochastic Conditional Gradient (SCG) 
\cite{mokhtari2018stochastic}.
Note that SCG is the only existing provably convergent Frank-Wolfe variant that 
accepts a constant per-iteration mini-batch size (possibly 1).
Denote the constant mini-batch size of \sfw and SCG by $m$.
The growing mini-batch size of SFW is set to $m\cdot t^2$, where $t$ is the 
iteration count.

We study three types of problems, \emph{i.e.}, $\ell_1$-constrained 
logistic-regression 
(convex), robust low rank matrix recovery (non-convex), and maximization of 
multilinear extensions of monotone discrete submodular functions  
(DR-submodular).
\subsection{Logistic Regression}
In this task, we consider $\ell_1$-constrained logistic regression problem.
Concretely, denote each data point $i$ by $(a_i, y_i)$, where $a_i \in \RBB^d$ 
is a feature vector and $y_i \in \{1, \dots, C\}$ is the 
corresponding label.
Our goal is to minimize the following loss
$$
\begin{aligned}
F(\WB)
= \frac{1}{n}\sum_{i=1}^n \mathrm{log}(1+ \mathrm{exp}(-y_i\WB_c^T a_i)),
\end{aligned}
$$
over the constraint $\CM = \{\WB \in \RBB^{d \times C}: \|\WB \|_1 \le r \}$ 
for some constant $r \in \RBB_+$, where $\|\WB\|_1$ is the matrix $\ell_1$ 
norm, \emph{i.e.}, $\|\WB\|_1 = \max_{1 \le j \le C} \sum_{i=1}^d |[\WB]_{ij}|$.
We note that the loss function $F$ is convex and smooth.

Two datasets are used in our experiments: MNIST (digit 2 and 4 as positive and 
negative class respectively) and CIFART10 (cat and dog as positive and negative 
class respectively).
In terms of the parameter setting, we grid search the step size $\eta_t$ for 
all three 
methods over the set $\{\min\{1, c/(t+1)^a\}|c\in\{0.1, 0.25, 0.5, 1.0, 2.0\}, 
a\in\{1, 2/3, 1/2\} \}$, set the mixing weights $\rho_t$ of SCG and \sfw to 
$1/(t+1)^{2/3}$, and set the constant mini-batch parameter $m = 16$.
We report the results in Figure \ref{fig_lr}. We can see the advantage of \sfw 
over its competitors.

\begin{figure*}[t]
	\centering 
	\begin{subfigure}[t!]{0.45\linewidth}
		\includegraphics[width=\columnwidth]{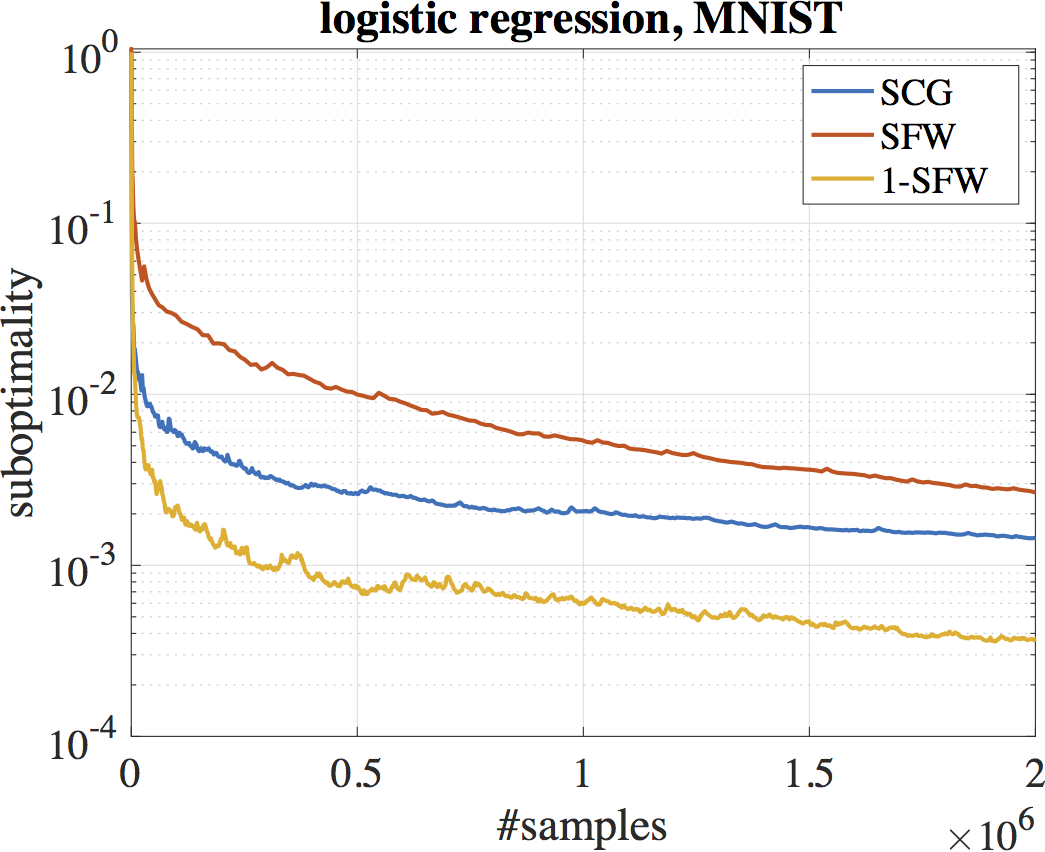}
		\caption{MNIST}
		\label{fig:sfw-1-1}
	\end{subfigure}
	\begin{subfigure}[t!]{0.45\linewidth}
		\includegraphics[width=\columnwidth]{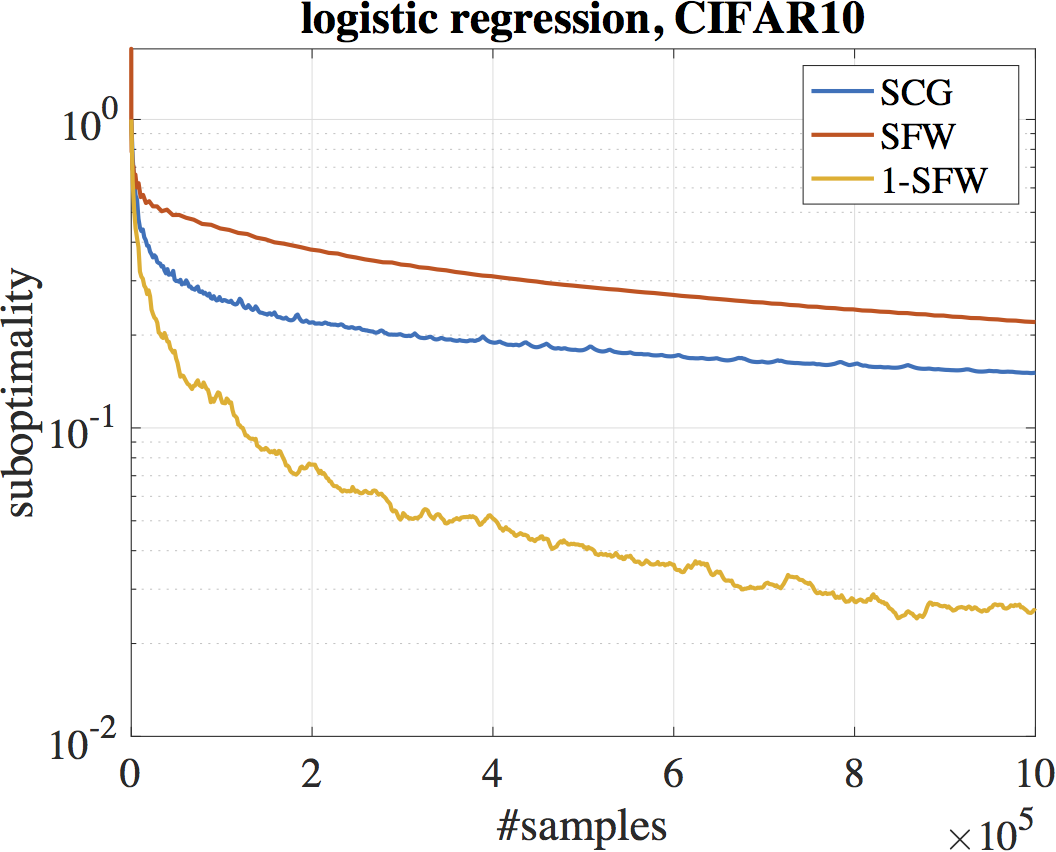}
		\caption{CIFAR10}
		\label{fig:sfw-1-2}
		%			\caption{Legend}
		%			\label{fig:stoch_c1}
	\end{subfigure}
\caption{Logistic Regression. (a) uses digit 2 and 4 in MNIST, (b) uses cat 
	and dog in CIFAR10.}
\label{fig_lr}
\end{figure*}

%\begin{figure}[t]
%	\centering
%	\begin{tabular}{c@{}c}
%		
%\includegraphics[width=0.45\columnwidth]{}
%		
%		&
%		
%\includegraphics[width=0.45\columnwidth]{}\\
%		\scriptsize (a) & \scriptsize (b)
%	\end{tabular}
%	\caption{Logistic Regression. (a) uses digit 2 and 4 in MNIST, (b) uses cat 
%		and dog in CIFAR10.}
%	\label{fig_lr}
%\end{figure}

\subsection{Robust Low-Rank Matrix Recovery}
LRMR plays a key role in solving many important learning tasks, such as 
collaborative filtering \citep{koren2009matrix}, dimensionality reduction 
\citep{weinberger2006unsupervised}, and multi-class learning 
\citep{xu2013speedup}.
The loss of LRMR is defined as
\begin{equation*}
\begin{aligned}
\min_{\XB\in \RBB^{M\times N}}& \quad \sum_{(i,j)\in\Omega} 
\psi(\XB_{ij}-\YB_{ij}), \\
\text{subject to} & \quad \quad \|\XB\|_*\leq B,
\end{aligned}
\label{eqn_LRMR}
\end{equation*}
where $\psi: \RBB\rightarrow\RBB$ is the potentially non-convex empirical loss 
function, $\XB_{ij}$ is the $(i,j)$-th element of matrix $\XB$, and $\Omega$ is 
the set of observed indices in target matrix $\YB \in \RBB^{M\times N}$.
Here we focus on a robust version of LRMR with the loss $\psi$ being:
\begin{equation}
\psi(z; \sigma) = 1 - \exp(-z^2/2\sigma),
\label{eqn_negative_gaussian_loss}
\end{equation}
where $\sigma$ is a tunable parameter.
Loss \eqref{eqn_negative_gaussian_loss} is less sensitive to the discrepancy 
$\XB_{ij}-\YB_{ij}$ compared to the common least square loss $\psi(z) = z^2/2$, 
and hence is robust to adversarial outliers \citep{qu2017non}.

In each trial, we first generate an underlying matrix $\MB$ of size $200\times 
200$ and rank $\gamma = 15$.
The singular values of $\MB$ are set as ${2^{[\gamma]}}/{2^{\gamma}}\times 50$ 
and hence $\|\MB\|_*\leq C = 100$, where $[\gamma] = \{1, \ldots, \gamma\}.$ 
We then inject adversarial noise into $\MB$ by (1) uniformly sampling $5\%$ of 
the entries in $\MB$ and (2) adding random noise uniformly sampled from 
$[-\rho, \rho]$ to each selected entry, where the noise level $\rho$ equals 
$10$.
Denote $\hat{\MB}$ as the matrix after noise injection.
We uniformly sample $10\%$ of the entries in $\hat{\MB}$ to obtain the 
observations, \emph{i.e.}, $\YB_{ij}$.
Hence $|\Omega|$, the number of observation is $M\times N\times10\% = 4,000$.

In terms of algorithmic parameter setting, we set the mini-batch size $m$ to 
${|\Omega|}/{20}$.
The number of epoch $T$ is set to $50$ for all cases, and the step size 
parameter $\eta_t$ is set to $1/(T*|\Omega|/m) = 1/1000$ in all cases for all 
methods.

We present the comparison of listed methods in Figure \ref{fig_rlrmr}, where we 
observe that \sfw has the best performance in terms of the Frank-Wolfe gap 
(\cref{fig:sfw-2-1}), 
gradient estimation accuracy (\cref{fig:sfw-2-2}), and the Root Mean Square 
Error (RMSE) between 
the prediction matrix and the underlying true matrix (\cref{fig:sfw-2-3}).

\begin{figure*}[t]
	\centering 
	\begin{subfigure}[t!]{0.32\linewidth}
		\includegraphics[width=\columnwidth]{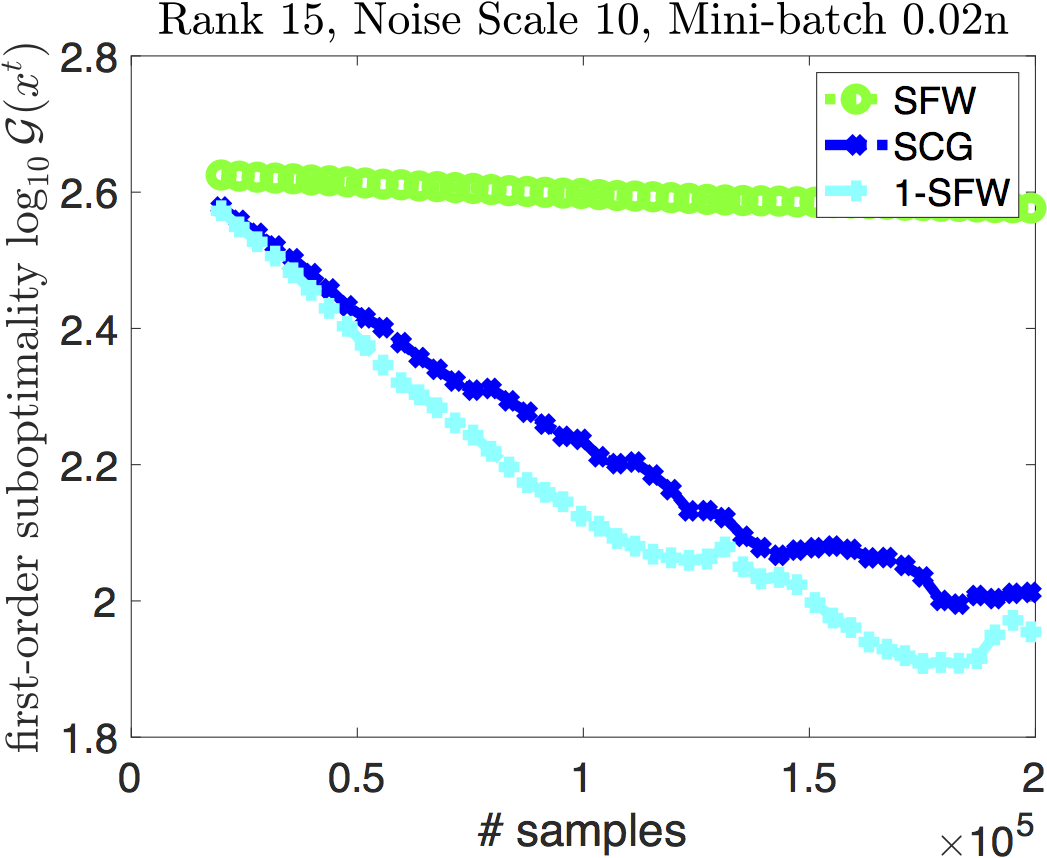}
		\caption{Frank-Wolfe gap}
		\label{fig:sfw-2-1}
	\end{subfigure}
	\begin{subfigure}[t!]{0.32\linewidth}
		\includegraphics[width=\columnwidth]{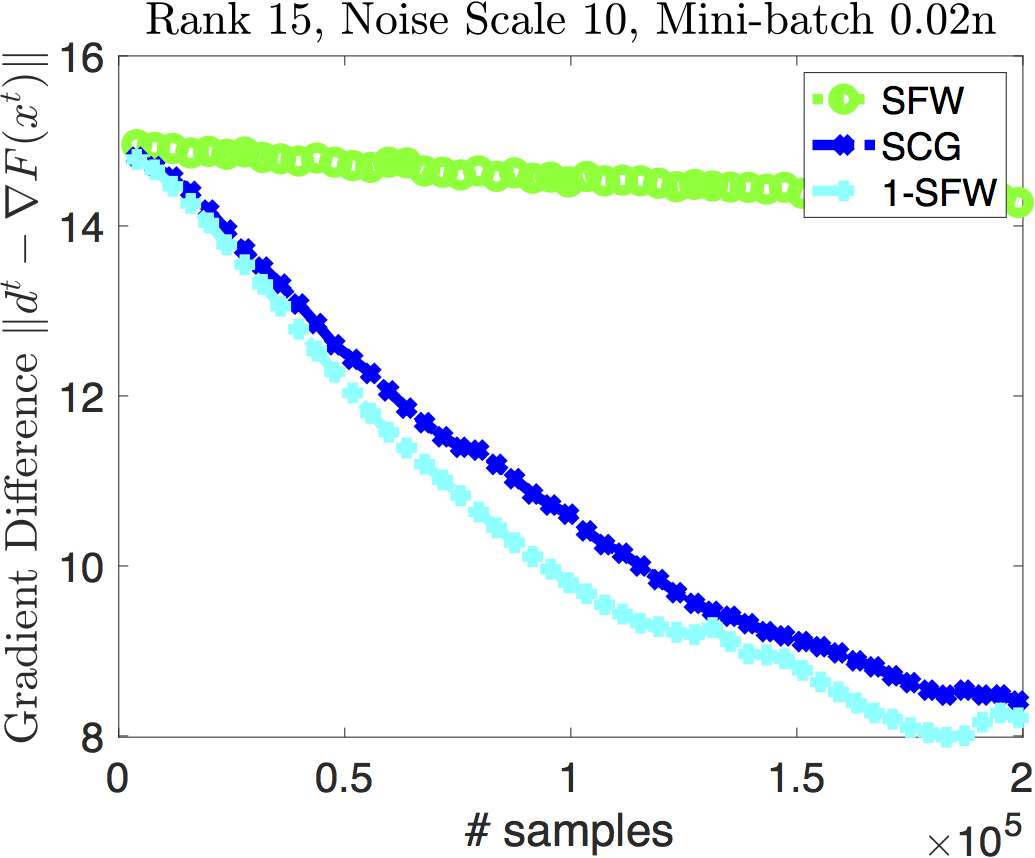}
		\caption{Gradient estimation acc.}
		\label{fig:sfw-2-2}
		%			\caption{Legend}
		%			\label{fig:stoch_c1}
	\end{subfigure}
	\begin{subfigure}[t!]{0.32\linewidth}
		\includegraphics[width=\columnwidth]{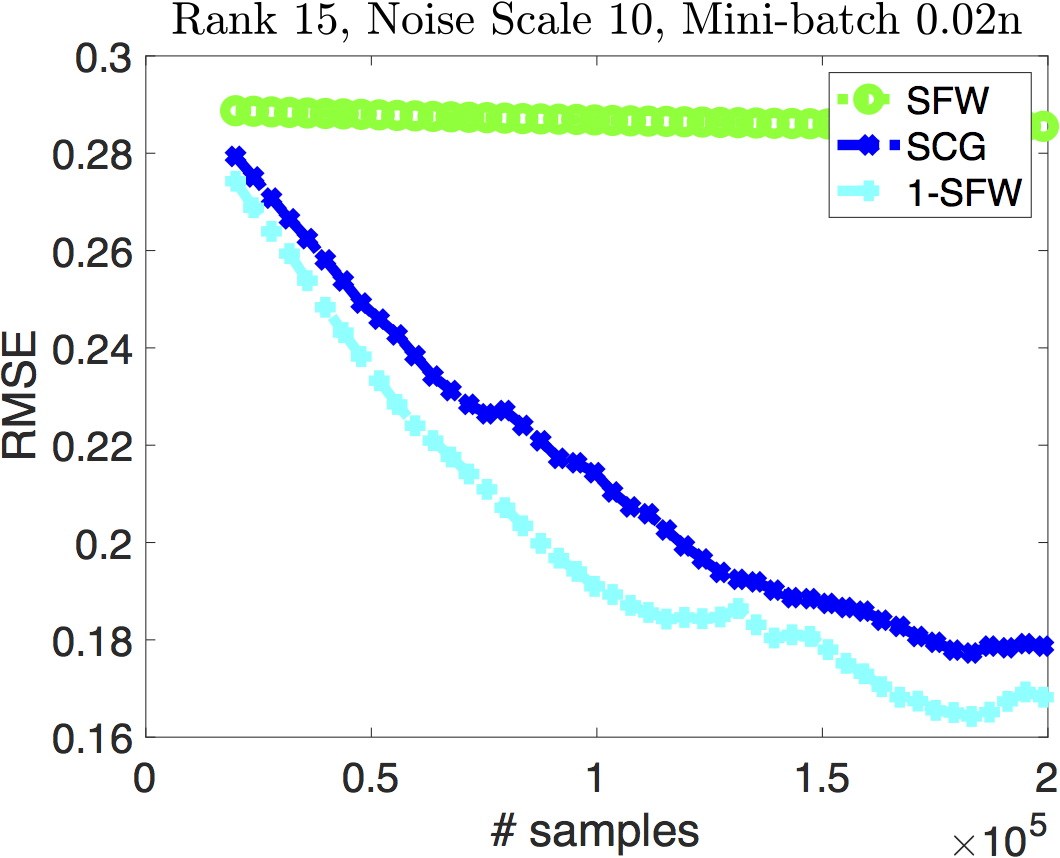}
		%			\vspace{-6mm}
		\caption{Root Mean Square Error}
		\label{fig:sfw-2-3}
	\end{subfigure} 
	%		\vspace{-1.5mm}
	\caption{Matrix Recovery. (a) compares the Frank-Wolfe gap, (b) compares 
				the accuracy of gradient estimation, (c) compares the Root Mean 
		Square Error (RMSE) between 
				the prediction matrix and the underlying true matrix.}
		%		The quantization level \emph{thm} denotes that  $ 
		%		z_{1,t} $ and $ z_{2,t} $ are chosen according to  
		%		\cref{thm:finite_convex}.
		\label{fig_rlrmr}
\end{figure*}

%\begin{figure}[t]
%	\centering
%	\begin{tabular}{c@{}c@{}c}
%		\includegraphics[width=0.32\columnwidth]{pix/firstOrderSubopt
%			imality_rank15_aS10_mP50.eps}
%		\label{fig:sfw-2-1}
%		&
%		
%\includegraphics[width=0.32\columnwidth]{}
%		&
%		
%\includegraphics[width=0.32\columnwidth]{}\\
%		\tiny (a) & \tiny (b) & \tiny (c)
%	\end{tabular}
%	\caption{Matrix Recovery. (a) compares the Frank-Wolfe gap, (b) compares 
%		the accuracy of gradient estimation, (c) compares the Root Mean Square 
%		Error (RMSE) between 
%		the prediction matrix and the underlying true matrix.}
%	\label{fig_rlrmr}
%\end{figure}

\subsection{Discrete Monotone Submodular Maximization with Matroid Constraint}
In this subsection, we consider the discrete monotone submodular maximization 
subject to a matroid constraint via the maximizing the corresponding 
multilinear extension.
Let $V$ be a finite set of $d$ elements and $\IM$ be a collection of its 
subsets.
It is proved that to maximize a discrete monotone submodular function $f: 
2^V\rightarrow\RBB_+$ subject to the matroid constraint $\MM\defi\{V, \IM\}$ is 
equivalent to maximize its multilinear extension, defined as 
\begin{equation} \label{eq:multilinear}
F(x) = \sum_{S \subset [d]} f(S) \prod_{j \in S} [x]_j \prod_{\ell \notin 
	S} ( 1 - [x]_{\ell}),
\end{equation}
subject to the constraint $x \in\CM$, where $\CM$ is the base polytope of 
$\MM$.
Further, it is known that $F$ is monotone DR-submodular.

We now focus on a concrete recommendation problem which can be formulated as 
discrete monotone submodular maximization.
We use $r(u,j)$ to denote user $u$'s rating for item $j \in [d]$ and set 
$r(u,j) = 0$ if item $j$ is not rated by user $u$.
Our goal is to recommend a set of $k=10$ items to all users such that they have 
the highest total rating.
Two types of utility functions can be defined for such task: facility location
\begin{equation}
f(S) = \sum_{u} \max_{j \in S} \ r(u, j), % \ \mathrm{s.t.} \ S \subset [d] \ 
%\mathrm{and} \ |S| = k.
\end{equation}
or concave over modular
\begin{equation}
f(S) = \sum_{u} (\sum_{j \in S} \ r(u, j))^{1/2}.
\end{equation}
Here the matroid is $\{V, \IM\defi\{S \subseteq V | |S|=k\} \}$.
Two datasets are used in this experiment, Jester 
1\footnote{http://eigentaste.berkeley.edu/dataset/} and movielens 
1M\footnote{https://grouplens.org/datasets/movielens/} with the results 
presented in Figure \ref{fig_submodular_jester} and Figure 
\ref{fig_submodular_movielens} respectively.
We observe that \sfw always achieves the highest utility after sufficient 
function evaluations.

\begin{figure*}[t]
	\centering 
	\begin{subfigure}[t!]{0.45\linewidth}
		\includegraphics[width=\textwidth]{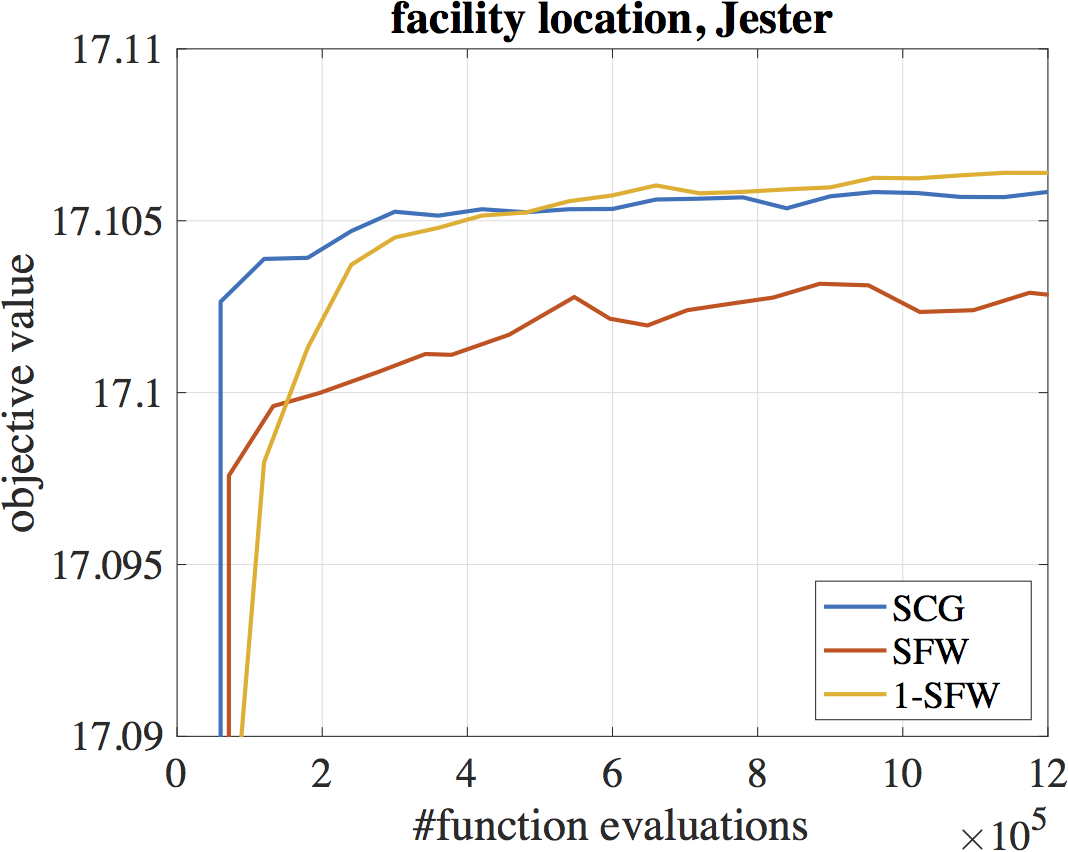}
		\caption{Facility location utility}
		\label{fig:sfw-3-1}
	\end{subfigure}
	\begin{subfigure}[t!]{0.45\linewidth}
		\includegraphics[width=\textwidth]{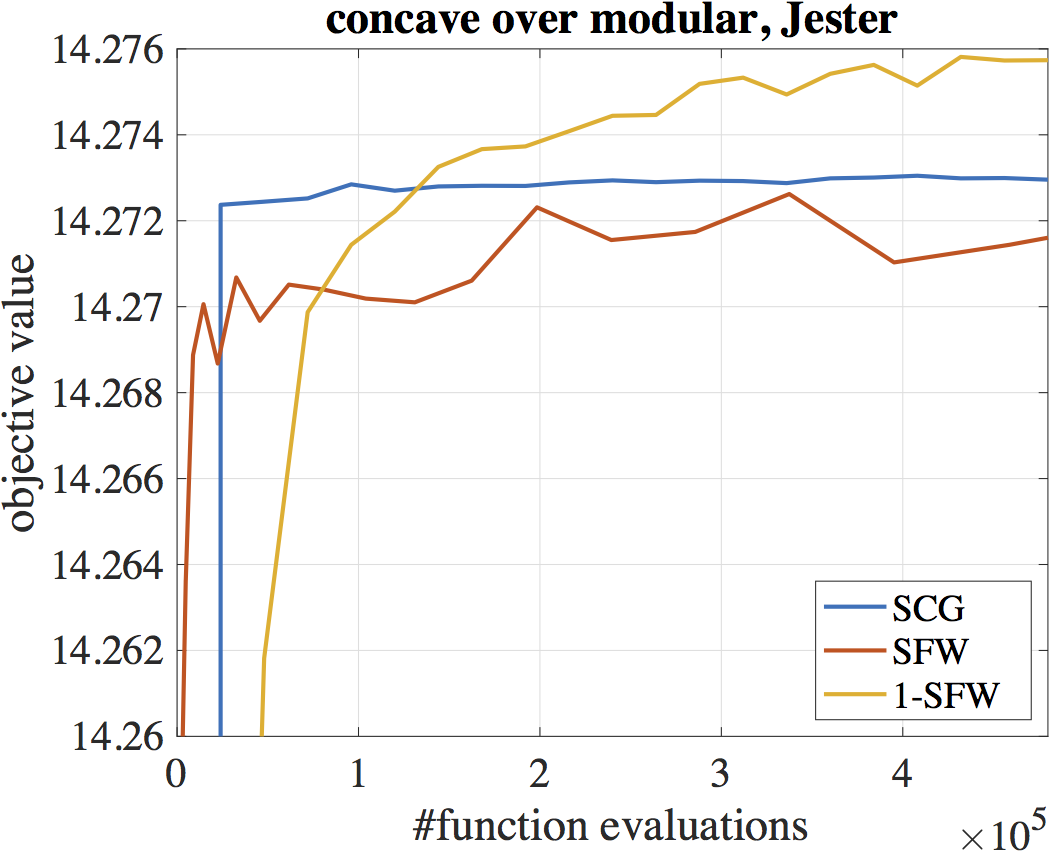}
		\caption{Concave over modular utility}
		\label{fig:sfw-3-2}
		%			\caption{Legend}
		%			\label{fig:stoch_c1}
	\end{subfigure}
	\caption{Submodular Maximization on Jester dataset. (a) uses the facility 
		location utility and (b) uses the concave over modular utility.}
	\label{fig_submodular_jester}
\end{figure*}

%\begin{figure}[t]
%	\centering
%	\begin{tabular}{c@{}c}
%		\includegraphics[width=0.48\textwidth]{}
%		 &
%		
%\includegraphics[width=0.48\textwidth]{}
%		 
%		\\
%		\tiny (a) & \tiny (b)
%	\end{tabular}
%	\caption{Submodular Maximization on Jester dataset. (a) uses the facility 
%		location utility and (b) uses the concave over modular utility.}
%	\label{fig_submodular_jester}
%\end{figure}

\begin{figure*}[t]
	\centering 
	\begin{subfigure}[t!]{0.45\linewidth}
		\includegraphics[width=\textwidth]{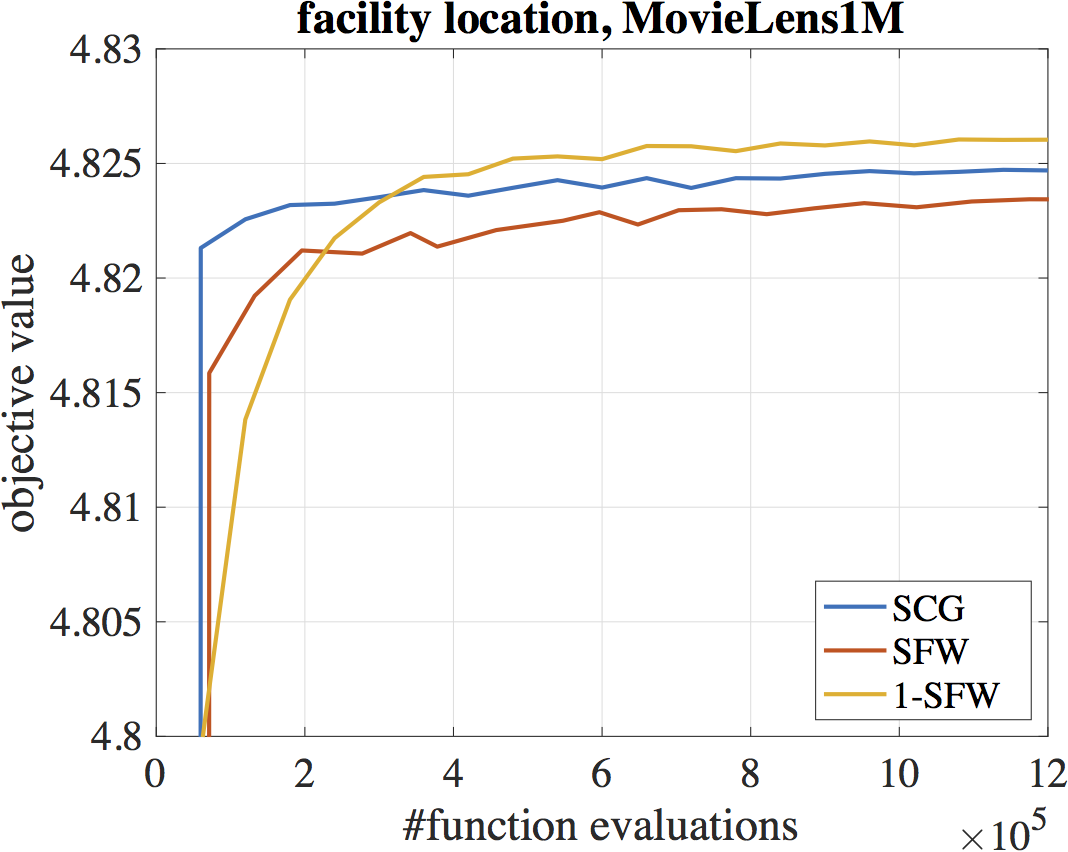}
		\caption{Facility location utility}
		\label{fig:sfw-4-1}
	\end{subfigure}
	\begin{subfigure}[t!]{0.45\linewidth}
		\includegraphics[width=\textwidth]{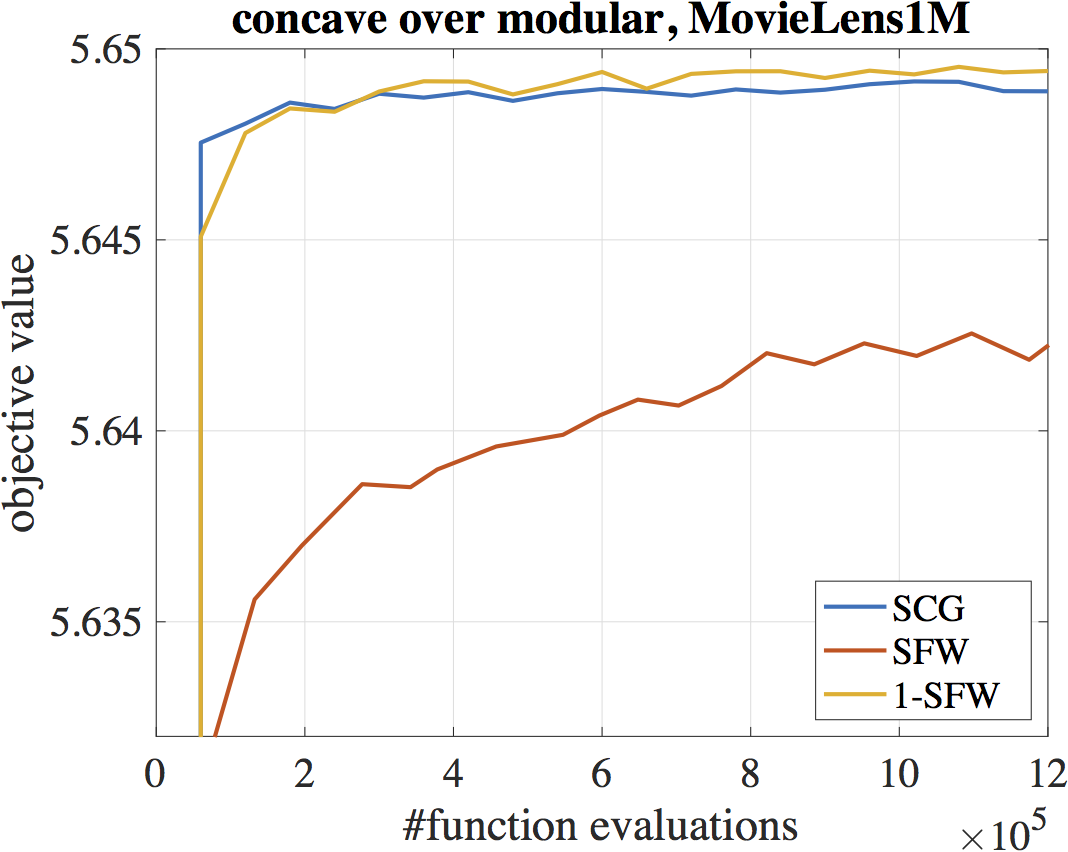}
		\caption{Concave over modular 
			utility}
		\label{fig:sfw-4-2}
		%			\caption{Legend}
		%			\label{fig:stoch_c1}
	\end{subfigure}
	\caption{Submodular Maximization on Movielens dataset. (a) uses the 
		facility location utility and (b) uses the concave over modular 
		utility.}
	\label{fig_submodular_movielens}
\end{figure*}

\section{Conclusion}
In this chapter, we studied the problem of solving constrained  stochastic 
optimization programs  using projection-free methods. We proposed the first 
stochastic variant of the Frank-Wolfe method, called \sfw, that requires only 
one stochastic sample per iteration while achieving the best known complexity 
bounds for (non-)convex minimization and monotone DR-submodular maximization. 
In particular, we proved that \sfw achieves the best known 
oracle complexity of 
$\mathcal{O}(1/\epsilon^2)$ for reaching an  
$\epsilon$-suboptimal solution in the stochastic convex setting, and a 
$(1-1/e)\text{OPT}-\epsilon$ approximate solution for a stochastic monotone 
DR-submodular 
maximization problem. Moreover, in a non-convex setting, \sfw finds an 
$\epsilon$-first-order stationary point after at most 
$\mathcal{O}(1/\epsilon^3)$ iterations, achieving the best known overall 
complexity.

%%%%%%%%%%%%%%%%%%%%%%%%%%%%
% Proof
%%%%%%%%%%%%%%%%%%%%%%%%%%%%

\section{Proofs}\label{sec:proof-1sfw}
\subsection{Proof of Lemma 2}

\begin{proof}
Let $A_t = \|\nabla F(x_t) - d_t\|^2$. By definition, we have
\begin{equation*}%\label{eq:decomp}
A_t = \| \nabla F(x_{t-1}) - d_{t-1}  + \nabla F(x_{t}) - 
\nabla F(x_{t-1}) - (d_t - d_{t-1})\|^2.
\end{equation*}

Note that
\begin{equation*}
d_t - d_{t-1} = - \rho_t d_{t-1} + \rho_t \nabla \tF(x_t,z_t) + 
(1-\rho_t) \tilde{\Delta}_t,
\end{equation*}
and define $\Delta_t = \nabla F(x_{t}) - 
\nabla F(x_{t-1})$, we have
\begin{equation*}
\begin{split}
A_t &= \|\nabla F(x_{t-1}) - d_{t-1} + \Delta_t - (1-\rho_t) 
\tilde{\Delta}_t - \rho_t \nabla \tF(x_t,z_t) + \rho_t d_{t-1} \|^2  \\
& = \|\nabla F(x_{t-1}) - d_{t-1} + (1-\rho_t)(\Delta_t -  
\tilde{\Delta}_t) + \rho_t(\nabla F(x_t)-\nabla \tF(x_t,z_t) + 
\rho_t(d_{t-1}-\nabla F(x_{t-1}))) \|^2 \\
&= \|(1-\rho_t)(\nabla F(x_{t-1}) - d_{t-1}) + (1-\rho_t)(\Delta_t -  
\tilde{\Delta}_t) + \rho_t(\nabla F(x_t)-\nabla \tF(x_t,z_t))\|^2.
\end{split}
\end{equation*}

Since $\tilde{\Delta}_t$ is an unbiased estimator of $\Delta_t$, $\expect[A_t]$ 
can be decomposed as
\begin{equation}
\label{eq:decomp2}
\begin{split}
\expect[A_t] &= \expect\{(1-\rho_t)^2 \|\nabla F(x_{t-1}) - d_{t-1} \|^2 + 
(1-\rho_t)^2 \| \Delta_t -  
\tilde{\Delta}_t \|^2  \\
&\quad + \rho_t^2 \|\nabla F(x_t)-\nabla \tF(x_t,z_t) 
\|^2 \\
&\quad +2\rho_t(1-\rho_t)\langle\nabla F(x_{t-1}) - d_{t-1} , \nabla 
F(x_t)-\nabla \tF(x_t,z_t) \rangle \\
&\quad + 2\rho_t(1-\rho_t)\langle \Delta_t -  
\tilde{\Delta}_t , \nabla F(x_t)-\nabla \tF(x_t,z_t) \rangle\}.
\end{split}
\end{equation}

Then we turn to upper bound the items above. First, by \cref{lem:scg++8.1}, we 
have
\begin{equation}
\label{eq:aux1}
\begin{split}
\expect[\|\tilde{\Delta}_t - \Delta_t \|^2] &= \expect[\| 
\tnabla_t^2(x_t-x_{t-1}) - (\nabla F(x_t) - \nabla F(x_{t-1}))]\|^2] \\
&\leq \expect[\|\tnabla_t^2(x_t-x_{t-1}) \|^2] \\
&= \expect[\|\tnabla_t^2(\eta_{t-1}(v_{t-1}-x_{t-1})) \|^2] \\
&\leq \eta_{t-1}^2D^2 \expect[\| \tnabla_t^2\|^2] \\
&\leq \eta_{t-1}^2D^2\bar{L}^2.
\end{split}
\end{equation}

By Jensen's inequality, we have
\begin{equation*}\label{eq:aux2}
\expect[\|\tilde{\Delta}_t - \Delta_t \|] \leq 
\sqrt{\expect[\|\tilde{\Delta}_t - \Delta_t \|^2]} \leq \eta_{t-1} D 
\bar{L},
\end{equation*}
and
\begin{equation*}\label{eq:aux5}
\expect[\|\nabla F(x_t) -d_t \|] = \sqrt{\expect[\|\nabla F(x_t) -d_t 
	\|^2]} = \sqrt{\expect[A_t]}.
\end{equation*}

Note that $z_t$ is sampled according to $p(z; x_t(a))$, where $x_{t}(a) 
= a x_t + (1-a)x_{t-1}$. Thus $\nabla \tF(x_t,z_t)$ is NOT an unbiased 
estimator of $\nabla F(x_t)$ when $a \neq 1$, which occurs with probability 
1. 
%However, with regularity condition \cref{assum:density}, $\nabla 
%\tF(\bx_t,\bz_t) $ can be a good estimation of $\nabla F(\bx_t)$. 
%Specifically, 
%let $\bz_t^*$ be sampled according to $p(\bz; \bx_t)$, \emph{i.e.}, 
%$\expect[\nabla \tF(\bx_t, \bz_t^*)] = \nabla F(\bx_t)$. Then we have
However, we will show that $\nabla \tF(x_t, z_t)$ is still a good estimator.
%$\expect [\| \nabla \tF(\bx_t, \bz_t) - \nabla F(\bx_t)\|^2]$ can 
%still be upper bounded effectively:
Let $\mathcal{F}_{t-1}$ be the $\sigma$-field generated by all the randomness 
before round $t$, then by Law of Total Expectation, we have
\begin{equation}\label{eq:aux3}
\begin{split}
&\expect[2\rho_t(1-\rho_t)\langle\nabla F(x_{t-1}) - d_{t-1} , \nabla 
F(x_t)-\nabla \tF(x_t,z_t) \rangle] \\
=& \expect[\expect[2\rho_t(1-\rho_t)\langle\nabla F(x_{t-1}) - d_{t-1} , 
\nabla 
F(x_t)-\nabla \tF(x_t,z_t) \rangle|\mathcal{F}_{t-1},x_t(a)]] \\
=& \expect[2\rho_t(1-\rho_t)\langle\nabla F(x_{t-1}) - d_{t-1} , 
\expect[\nabla 
F(x_t)-\nabla \tF(x_t,z_t) |\mathcal{F}_{t-1},x_t(a)]\rangle],
\end{split}
\end{equation}
where 
\begin{equation*}
%\begin{split}
\expect[\nabla 
F(x_t)-\nabla \tF(x_t,z_t) |\mathcal{F}_{t-1}]\rangle] = \nabla F(x_t) 
- \nabla F(x_t(a)) + \nabla F(x_t(a)) - \expect[\nabla 
\tF(x_t,z_t)|\mathcal{F}_{t-1},x_t(a)].
%\end{split}
\end{equation*}

By \cref{lem:scg++8.1}, $F$ is $\bar{L}$-smooth, thus 
\begin{equation*}
\|\nabla F(x_t) - \nabla F(x_t(a)) \| \leq \bar{L} \|x_t - x_t(a) \| = 
\bar{L}(1-a)\| \eta_{t-1}(v_{t-1}-x_{t-1})\| \leq \eta_{t-1}D\bar{L}.
\end{equation*}

We also have
\begin{equation*}
\begin{split}
&\| \nabla F(x_t(a)) - \expect[\nabla 
\tF(x_t,z_t)|\mathcal{F}_{t-1},x_t(a)]\| \\
={}& \| \int [\nabla \tF 
(x_t(a); z) - 
\nabla \tF (x_t; z)] p(z; x_t(a)) \mathrm{d}z \|\\
\leq{}& \int \| \nabla \tF (x_t(a); z) - 
\nabla \tF (x_t; z)\| p(z; x_t(a)) \mathrm{d}z \\
\leq{}& \int L \|x_t(a)-x_t \| p(z; x_t(a)) \mathrm{d}z \\
\leq{}& \eta_{t-1}DL,
\end{split}
\end{equation*}
where the second inequality holds because of \cref{assum:second_order}. Combine 
the analysis above with \cref{eq:aux3},
we have
\begin{equation}\label{eq:aux7}
\begin{split}
&\expect[2\rho_t(1-\rho_t)\langle\nabla F(x_{t-1}) - d_{t-1} , \nabla 
F(x_t)-\nabla \tF(x_t,z_t) \rangle] \\
\leq{} & \expect[2\rho_t(1-\rho_t)\|\nabla F(x_{t-1}) - d_{t-1}\| \cdot 
\|\expect[\nabla 
F(x_t)-\nabla \tF(x_t,z_t) |\mathcal{F}_{t-1}]\|] \\
\leq{} & 2\rho_t(1-\rho_t) \expect[\|\nabla F(x_{t-1}) - d_{t-1}\|] \cdot 
(\eta_{t-1}D\bar{L} + \eta_{t-1}DL) \\
\leq{} & 2\eta_{t-1}\rho_t(1-\rho_t) \sqrt{\expect[A_{t-1}]}D(\bar{L}+L).
\end{split}
\end{equation}
%\begin{equation}\label{eq:aux3}
%\expect [\| \nabla \tF(\bx_t, \bz_t) - \nabla F(\bx_t)\|^2] = \int \|\nabla 
%\tF(\bx_t, \bs) - \nabla F(\bx_t) \|^2 p(\bs;\bx_t(a)) \mathrm{d}s\leq \int 
%4G^2 p(\bs;\bx_t(a)) \mathrm{d}s = 4G^2,
%\end{equation}

Finally, by \cref{assum:gradient_norm}, we have $\| \nabla F(x_t)-\nabla 
\tF(x_t,z_t) \| \leq 2G$. Thus 
\begin{equation}\label{eq:aux6}
\rho_t^2 \|\nabla F(x_t)-\nabla \tF(x_t,z_t) 
\|^2 \leq 4\rho_t^2G^2,
\end{equation}
and
\begin{equation}\label{eq:aux4}
\begin{split}
&\expect [2\rho_t(1-\rho_t)\langle \Delta_t -  
\tilde{\Delta}_t , \nabla F(x_t)-\nabla \tF(x_t,z_t) \rangle] \\
\le{}& 
\expect[2\rho_t(1-\rho_t)\| \Delta_t -  
\tilde{\Delta}_t\| \cdot \|\nabla F(x_t)-\nabla \tF(x_t,z_t) \|] \\
%&\leq 2\rho_t(1-\rho_t) \cdot 2G \expect[\|\nabla F(\bx_t)-\nabla 
%\tF(\bx_t,\bz_t) \|] \\
\leq{}& 4\eta_{t-1}\rho_t(1-\rho_t)GD\bar{L}.
\end{split}
\end{equation}

Combine \cref{eq:decomp2,eq:aux1,eq:aux6,eq:aux7,eq:aux4}, we have
\begin{equation*}
\begin{split}
\expect[A_t] &\leq (1-\rho_t)^2 \expect[A_{t-1}] + (1-\rho_t)^2 
\eta_{t-1}^2D^2\bar{L}^2 + \rho_t^2 4G^2 \\
&\quad + 2\eta_{t-1}\rho_t(1-\rho_t) \sqrt{\expect[A_{t-1}]}D(\bar{L}+L) + 
4\eta_{t-1}\rho_t(1-\rho_t)GD\bar{L}. 
\end{split}
\end{equation*}

For the simplicity of analysis, we replace $t$ by $t+1$, and have
\begin{equation}\label{eq:recursive}
\begin{split}
\expect[A_{t+1}] 
\leq& (1-\rho_{t+1})^2 \expect[A_{t}] + (1-\rho_{t+1})^2 
\eta_{t}^2D^2\bar{L}^2 + \rho_{t+1}^2 4G^2 \\
&\quad + 2\eta_{t}\rho_{t+1}(1-\rho_{t+1}) \sqrt{\expect[A_{t}]}D(\bar{L}+L) + 
4\eta_{t}\rho_{t+1}(1-\rho_{t+1})GD\bar{L} \\
\leq& (1-\frac{1}{t^\alpha})^2 \expect[A_{t}] + 
\frac{D^2\bar{L}^2+4G^2+4GD\bar{L}}{t^{2\alpha}} + 
\frac{2D(\bar{L}+L)}{t^{2\alpha}}\sqrt{\expect[A_{t}]}.
\end{split}
\end{equation}

We claim that $\expect[A_{t}] \leq C t^{-\alpha}$, and prove it by induction. 
Before the proof, we first analyze one item in the definition of $C: 
\frac{2(2G+D\bar{L})^2}{2-2^{-\alpha}-\alpha}$. Define $h(\alpha) = 
2-2^{-\alpha}-\alpha$. Since $h'(\alpha) = 2^{-\alpha}\ln(2)-1\le 0$ for 
$\alpha \in (0,1]$, so $1=h(0) \ge h(\alpha) \ge h(1) = 1/2 >0, \forall\, 
\alpha 
\in (0,1]$. As a result, $2 \le \frac{2}{2-2^{-\alpha}-\alpha}\le 4$.

When $t=1$, we have
\begin{equation*}
\expect[A_{1}] = \expect[\|\nabla F(x_1)-\nabla \tF (x_1;z_1) \|^2] \leq 
(2G)^2 \leq \frac{2(2G+D\bar{L})^2}{2-2^{-\alpha}-\alpha} / 1 \leq C\cdot 
1^{-\alpha}.
\end{equation*}

When $t=2$, since $\rho_2 = 1$, we have
\begin{equation*}
\begin{split}
\expect[A_{2}] = \expect[\|\nabla 
\tF(x_2,z_2) - \nabla F(x_2)\|^2] \leq (2G)^2 \leq 
\frac{2(2G+D\bar{L})^2}{2-2^{-\alpha}-\alpha} / 2 \leq C \cdot 2^{-\alpha}.
\end{split}
\end{equation*}

Now assume for $t \ge 2$, we have $\expect[A_{t}] \leq C t^{-\alpha}$, by 
\cref{eq:recursive} and the definition of $C$, we have
\begin{equation}\label{eq:induction}
\begin{split}
\expect[A_{t+1}] &\leq (1-\frac{1}{t^\alpha})^2 \cdot Ct^{-\alpha} + 
\frac{(2G+D\bar{L})^2}{t^{2\alpha}} + 
\frac{2D(\bar{L}+L)}{t^{(5/2)\alpha}}\sqrt{C}\\
&\leq Ct^{-\alpha} - 2Ct^{-2\alpha} + Ct^{-3\alpha} + 
\frac{(2-2^{-\alpha}-\alpha)C}{2t^{2\alpha}} + \frac{C^{3/4}}{t^{(5/2)\alpha}} 
\\ 
&\leq \frac{C}{t^\alpha} + \frac{-2C+Ct^{-\alpha}+ 
(2-2^{-\alpha}-\alpha)C/2+t^{-\alpha/2}C 
/ C^{1/4}}{t^{2\alpha}} \\
&\leq \frac{C}{t^\alpha} + \frac{C[-2+2^{-\alpha} +(2-2^{-\alpha}-\alpha)/2 
+(2-2^{-\alpha}-\alpha)/2]}{t^{2\alpha}} \\
&\leq \frac{C}{t^\alpha} - \frac{\alpha C}{t^{2\alpha}}.
\end{split}
\end{equation}

Define $g(t) = t^{-\alpha}$, then $g(t)$ is a convex function for $\alpha \in 
(0,1]$. Thus we have $g(t+1) - g(t) \ge g'(t)$, \emph{i.e.}, $(t+1)^{-\alpha} 
- t^{-\alpha} \geq -\alpha t^{-(\alpha+1)}$. So we have
\begin{equation*}
\frac{C}{t^{\alpha}}- \frac{\alpha C}{t^{2\alpha}} \leq C(t^{-\alpha} - \alpha 
t^{-(1+\alpha)}) \leq C (t+1)^{-\alpha}.
\end{equation*}

Combine with \cref{eq:induction}, we have $\expect[A_{t+1}] \leq C 
(t+1)^{-\alpha}$. Thus by induction, we have $\expect[A_{t}] \leq C 
t^{-\alpha}, \forall\, t \ge 1$.
\end{proof}

\subsection{Proof of Lemma 3}
The only difference with the proof of \cref{lem:graident_error} is the bound 
for $\expect \| \tilde{\Delta}_t - \Delta_t\|$. Specifically, we have
\begin{equation*}
\begin{split}
\expect[\|\tilde{\Delta}_t - \Delta_t \|^2] &= \expect[\| 
\tilde{\Delta}_t - \tnabla_t^2(x_t-x_{t-1}) + \tnabla_t^2(x_t-x_{t-1}) 
- (\nabla F(x_t) - \nabla F(x_{t-1}))]\|^2] \\
&= \expect[\|\tilde{\Delta}_t - \tnabla_t^2(x_t-x_{t-1}) \|^2] + 
\expect[\|\tnabla_t^2(x_t-x_{t-1}) 
- (\nabla F(x_t) - \nabla F(x_{t-1})) \|^2]\\
&\leq [D^2L_2\delta_t(1+\tF(x_t(a),z_t))]^2 + \eta_{t-1}^2D^2\bar{L}^2\\
&\leq (1+B)^2L_2^2D^4\delta_t^2 + \eta_{t-1}^2D^2\bar{L}^2 \\
&\leq 4\eta_{t-1}^2D^2\bar{L}^2.
\end{split}
\end{equation*}

Then by the analysis same to the proof of \cref{lem:graident_error}, we have
\begin{equation*}
\expect[A_{t+1}]\leq  (1-\frac{1}{t^\alpha})^2 \expect[A_{t}] + 
\frac{4(D^2\bar{L}^2+G^2+GD\bar{L})}{t^{2\alpha}} + 
\frac{4D(\bar{L}+L)}{t^{2\alpha}}\sqrt{\expect[A_{t}]},
\end{equation*}
and thus $\expect[A_{t+1}] \leq C 
(t+1)^{-\alpha}$, where $C\! =\! \max \! \left\{ 
\!\frac{8(D^2\bar{L}^2+G^2+GD\bar{L})}{2\!-\!2^{-\alpha}\!-\!\alpha},
\left[\frac{2}{2\!-\!2^{-\alpha}\!-\!\alpha} \right]^4\!\!, 
[4D(\bar{L}\!+\!L)]^4 \! \right\}\!.$

\subsection{Proof of Theorem 1}
First, since $x_{t+1} = (1-\eta_t)x_t + \eta_t v_t$ is a convex 
combination of $x_t, v_t$, and $x_1 \in \constraint, v_t \in 
\constraint, \forall\ t$, we can prove $x_t \in \constraint, \forall\ t $ by 
induction. So $x_{T+1} \in \constraint$.

Then we present an auxiliary lemma.
\begin{lemma}[Proof of Theorem 1 in 
	\citep{yurtsever2019conditional}]\label{lem:aryan1}
Under the condition of \cref{thm:convex}, in \cref{alg:one_sample}, we have 
\begin{equation*}
F(x_{t+1}) - F(x^*) \leq (1-\eta_t)(F(x_t) - F(x^*)) + \eta_tD\|\nabla 
F(x_t) - d_t\| + \frac{\bar{L}D^2\eta_t^2}{2}.
\end{equation*}
\end{lemma}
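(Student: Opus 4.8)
The plan is to establish this inexact-gradient descent inequality by combining the $\bar{L}$-smoothness of $F$ (which follows from \cref{lem:scg++8.1}) with the convexity assumption and the defining optimality property of the Frank-Wolfe direction $v_t$. First I would invoke smoothness along the update $x_{t+1} = x_t + \eta_t(v_t - x_t)$ to write
$$F(x_{t+1}) \leq F(x_t) + \eta_t \langle \nabla F(x_t), v_t - x_t \rangle + \frac{\bar{L}\eta_t^2}{2}\|v_t - x_t\|^2,$$
and then bound $\|v_t - x_t\|^2 \leq D^2$ using the diameter from \cref{assum:constraint}. This immediately yields the quadratic term $\bar{L}D^2\eta_t^2/2$, so everything reduces to controlling the linear term.

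The heart of the argument is to convert $\langle \nabla F(x_t), v_t - x_t\rangle$ into something governed by the optimality gap plus the gradient error. I would add and subtract $d_t$, writing $\langle \nabla F(x_t), v_t - x_t\rangle = \langle d_t, v_t - x_t\rangle + \langle \nabla F(x_t) - d_t, v_t - x_t\rangle$. Because $v_t = \argmin_{v \in \constraint} \langle v, d_t\rangle$ and $x^* \in \constraint$, optimality gives $\langle d_t, v_t\rangle \leq \langle d_t, x^*\rangle$, hence $\langle d_t, v_t - x_t\rangle \leq \langle d_t, x^* - x_t\rangle$. Re-expressing $d_t = \nabla F(x_t) + (d_t - \nabla F(x_t))$ in this last inner product and recombining the two error contributions gives
$$\langle \nabla F(x_t), v_t - x_t\rangle \leq \langle \nabla F(x_t), x^* - x_t\rangle + \langle \nabla F(x_t) - d_t, v_t - x^*\rangle.$$

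To finish, I would apply convexity of $F$ to the first term, $\langle \nabla F(x_t), x^* - x_t\rangle \leq F(x^*) - F(x_t)$, and Cauchy--Schwarz together with $\|v_t - x^*\| \leq D$ (both points lie in $\constraint$) to the second, giving $\langle \nabla F(x_t) - d_t, v_t - x^*\rangle \leq D\|\nabla F(x_t) - d_t\|$. Substituting these back and regrouping the $\eta_t(F(x^*) - F(x_t))$ contribution into a $(1-\eta_t)$ factor produces exactly the claimed bound. There is no serious obstacle here; the only point demanding care is the bookkeeping in the decomposition step, where combining the two error inner products into the single term $\langle \nabla F(x_t) - d_t, v_t - x^*\rangle$, rather than bounding $\langle \nabla F(x_t)-d_t, v_t-x_t\rangle$ and $\langle d_t-\nabla F(x_t), x^*-x_t\rangle$ separately, is what keeps the error coefficient at $\eta_t D$ instead of $2\eta_t D$.
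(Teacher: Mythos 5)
Your proof is correct and follows essentially the same argument as the paper's source for this lemma (and the identical decomposition the paper carries out explicitly in its proofs of Theorems 2 and 3): smoothness along the Frank--Wolfe step, adding and subtracting $d_t$, invoking the optimality of $v_t$ against $x^*$, convexity, and Cauchy--Schwarz with the diameter bound. Your closing observation—that combining the two error inner products into the single term $\langle \nabla F(x_t)-d_t,\, v_t-x^*\rangle$ is what keeps the coefficient at $\eta_t D$ rather than $2\eta_t D$—is exactly the right point of care.
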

By Jensen's inequality and \cref{lem:graident_error} with $\alpha=1$, we have
\begin{equation*}
\expect[\| \nabla F(x_t) - d_t\| ] \leq \sqrt{\expect[\| \nabla F(x_t) - 
d_t\|^2 ]} \leq \frac{\sqrt{C}}{\sqrt{t}},
\end{equation*}
where $C=\max \{4(2G+D\bar{L})^2, 256, [2D(\bar{L}+L)]^4 \}$. Then by 
\cref{lem:aryan1}, we have
\begin{equation}\label{eq:convex1}
\begin{split}
&\expect[F(x_{T+1}) - F(x^*)] \\
\leq{}& (1-\eta_T)\expect[F(x_T) - F(x^*)] + 
\eta_TD\expect[\|\nabla 
F(x_T) - d_T\|] + \frac{\bar{L}D^2\eta_T^2}{2} \\
={}& \prod_{i=1}^T (1-\eta_i) \expect[F(x_1) - F(x^*)] + D\sum_{k=1}^T 
\eta_k\expect[\|\nabla F(x_k) - d_k\|] \prod_{i=k+1}^{T} (1-\eta_i) \\
&\quad + \frac{\bar{L}D^2}{2} \sum_{k=1}^T \eta_k^2\prod_{i=k+1}^{T} (1-\eta_i) 
\\
\leq{}& 0 + D\sum_{k=1}^T k^{-1}\frac{\sqrt{C}}{\sqrt{k}} \prod_{i=k+1}^{T} 
\frac{i-1}{i} + \frac{\bar{L}D^2}{2} \sum_{k=1}^T k^{-2} 
\prod_{i=k+1}^{T}\frac{i-1}{i} \\
={}& \frac{\sqrt{C}D}{T}\sum_{k=1}^T \frac{1}{\sqrt{k}} + 
\frac{\bar{L}D^2}{2T}\sum_{k=1}^T k^{-1}.
\end{split}
\end{equation}

Since
\begin{equation*}
\sum_{k=1}^T \frac{1}{\sqrt{k}} \leq \int_{0}^T x^{-1/2}\mathrm{d}x = 2\sqrt{T},
\end{equation*}
and
\begin{equation*}
\sum_{k=1}^T k^{-1} \leq 1 + \int_1^T x^{-1}\mathrm{d}x = 1+\ln T,
\end{equation*}
by \cref{eq:convex1}, we have
\begin{equation*}
\expect[F(x_{T+1}) - F(x^*)] \leq \frac{2\sqrt{C}D}{\sqrt{T}} + 
\frac{\bar{L}D^2}{2T}(1+\ln T).
\end{equation*}

\subsection{Proof of Theorem 2}
First, since $x_{t+1} = (1-\eta_t)x_t + \eta_t v_t$ is a convex 
combination of $x_t, v_t$, and $x_1 \in \constraint, v_t \in \constraint, 
\forall\ t$, we can prove $x_t \in \constraint, \forall\ t $ by induction. So 
$x_{o} \in \constraint$.

Note that if we define $v_t^\prime = \argmin_{v \in \constraint}\langle 
v, 
\nabla F(x_t)\rangle$, then $\mathcal{G}(x_t)=\langle 
v^\prime_t-x_t,-\nabla F(x_t)\rangle = -\langle v^\prime_t-x_t,\nabla 
F(x_t)\rangle$. So we have

\begin{equation*}
\begin{split}
F(x_{t+1}) 
\stackrel{(a)}{\leq}{}& F(x_t) + \langle \nabla f(x_t),x_{t+1}-x_t 
\rangle + 
\frac{\bar{L}}{2}\|x_{t+1}-x_t\|^2 \\
={}&F(x_t) + \langle \nabla F(x_t),\eta_t(v_t-x_t) \rangle + 
\frac{\bar{L}}{2}\|\eta_t(v_t-x_t)\|^2 \\
\stackrel{(b)}{\leq}{}& F(x_t) + \eta_t \langle \nabla F(x_t),v_t-x_t 
\rangle+\frac{\bar{L}\eta_t^2D^2}{2} \\
={}& F(x_t) + \eta_t \langle  d_t,v_t-x_t \rangle+ \eta_t \langle 
\nabla 
F(x_t)-d_t,v_t-x_t \rangle  + \frac{\bar{L}\eta_t^2D^2}{2} \\
\stackrel{(c)}{\leq}{}& F(x_t) + \eta_t \langle  d_t,v^\prime_t-x_t 
\rangle+ \eta_t \langle \nabla F(x_t)-d_t,v_t-x_t \rangle + 
\frac{\bar{L}\eta_t^2D^2}{2} \\
={}& F(x_t) + \eta_t \langle \nabla F(x_t),v^\prime_t-x_t \rangle  + 
\eta_t 
\langle  d_t-\nabla F(x_t),v^\prime_t-x_t \rangle \\
&\quad + \eta_t \langle 
\nabla F(x_t)-d_t,v_t-x_t \rangle + 
\frac{\bar{L}\eta_t^2D^2}{2} \\
={}& F(x_t) - \eta_t \mathcal{G}(x_t) + \eta_t \langle \nabla 
F(x_t)-d_t,v_t-v^\prime_t \rangle + \frac{\bar{L}\eta_t^2D^2}{2} \\
\stackrel{(d)}{\leq}{}& F(x_t) - \eta_t \mathcal{G}(x_t) + \eta_t \|\nabla 
F(x_t)-d_t\|\| v_t-v^\prime_t\| + \frac{\bar{L}\eta_t^2D^2}{2} \\
\stackrel{(e)}{\leq}{}& F(x_t) - \eta_t \mathcal{G}(x_t) + \eta_tD \|\nabla 
F(x_t)-d_t\| + \frac{\bar{L}\eta_t^2D^2}{2},
\end{split}
\end{equation*}
where we used the fact that $F$ is $\bar{L}$-smooth in inequality (a). 
Inequalities 
(b), (e) hold because of \cref{assum:constraint}. Inequality (c) is due to the 
optimality of $v_t$, and 
in (d), we applied the Cauchy-Schwarz inequality.

Rearrange the inequality above, we have 
\begin{equation}
\label{eq:bound_on_individual_gap-sfw}
\eta_t \mathcal{G}(x_t) \leq F(x_t)- F(x_{t+1})+ \eta_tD \|\nabla 
F(x_t)-d_t\| + \frac{\bar{L}\eta_t^2D^2}{2}.   
\end{equation}

Apply \cref{eq:bound_on_individual_gap-sfw} recursively for $t = 1, 2, \cdots, 
T$, and take expectations, we attain the following inequality:
\begin{equation*}
\begin{split}
\sum_{t=1}^T \eta_t \expect[\mathcal{G}(x_t)]
\leq F(x_1)-F(x_{T+1})
+D\sum_{t=1}^T \eta_t\expect[\|\nabla F(x_t)-d_t\|] 
+\frac{\bar{L}D^2}{2}\sum_{t=1}^T \eta_t^2.
\end{split}
\end{equation*}

By Jensen's inequality and \cref{lem:graident_error} with $\alpha=2/3$, we have
\begin{equation*}
\expect[\|\nabla F(x_t)-d_t\|] \leq \sqrt{\expect[\|\nabla 
F(x_t)-d_t\|^2]} \leq \frac{\sqrt{C}}{t^{1/3}},
\end{equation*}
where $C = \max \{\frac{2(2G+D\bar{L})^2}{4/3-2^{-2/3}}, \left( 
\frac{2}{4/3-2^{-2/3}}\right)^4, [2D(\bar{L}+L)]^4 \}$. Since $\eta_t = 
T^{-2/3}$, we have
\begin{equation*}
\begin{split}
\expect[\mathcal{G}(x_o)] &= \frac{\sum_{t=1}^T 
\expect[\mathcal{G}(x_t)]}{T} \\
&\leq \frac{1}{T\cdot T^{-2/3}} [F(x_1) - F(x_{T+1}) + D\sum_{t=1}^T 
T^{-2/3}\frac{\sqrt{C}}{t^{1/3}} + \frac{\bar{L}D^2}{2}\sum_{t=1}^T T^{-4/3}] \\
&\leq \frac{1}{T^{1/3}}[2B + 
D\sqrt{C}T^{-2/3}\frac{3}{2}T^{2/3}+\frac{\bar{L}D^2}{2T^{1/3}}] \\
&= \frac{2B+3\sqrt{C}D/2}{T^{1/3}} + \frac{\bar{L}D^2}{2T^{2/3}}, 
\end{split}
\end{equation*}
where the second inequality holds because $\sum_{t=1}^T t^{-1/3} \leq 
\int_{0}^T x^{-1/3}\mathrm{d}x = \frac{3}{2}T^{2/3}$.

\subsection{Proof of Theorem 3}
First, since $x_{t+1} = x_t + \eta_t v_t = x_t + T^{-1} v_t$, we have
$x_{T+1} = \frac{\sum_{t=1}^T v_t}{T} \in \constraint$. Also, because now 
$\|x_{t+1} -x_{t} \| = \|\eta_t v_t \| \leq \eta_t R$, (rather than 
$\eta_t D$), \cref{lem:graident_error} holds with new constant $C = \max 
\{\frac{2(2G+R\bar{L})^2}{2-2^{-\alpha}-\alpha}, 
\left(\frac{2}{2-2^{-\alpha}-\alpha} \right)^4, [2R(\bar{L}+L)]^4 \}$. Since 
$\alpha=1$, we have $C=\max\{4(2G+R\bar{L})^2, 256, [2R(\bar{L}+L)]^4 \}$.
Then by Jensen's inequality, we have
\begin{equation*}
\expect[\|\nabla F(x_t) -d_t \|] \leq \sqrt{\expect[\|\nabla F(x_t) 
-d_t \|^2]}\leq \frac{\sqrt{C}}{\sqrt{t}}.
\end{equation*}	

We observe that 
\begin{equation}\label{eq:sub_aux1}
\begin{split}
F(x_{t+1}) &\stackrel{(a)}{\geq} F(x_t) + \langle \nabla F(x_t), 
x_{t+1}-x_t \rangle -\frac{\bar{L}}{2} \|x_{t+1} -x_t \| \\
&= F(x_t) + \frac{1}{T} \langle \nabla F(x_t), v_t \rangle - 
\frac{\bar{L}}{2T^2} \|v_t \| \\
&\stackrel{(b)}{\geq} F(x_t) + \frac{1}{T} \langle d_t, v_t \rangle + 
\frac{1}{T} \langle \nabla F(x_t) -d_t, v_t \rangle - 
\frac{\bar{L}R^2}{2T^2} \\
&\stackrel{(c)}{\geq} F(x_t) + \frac{1}{T} \langle d_t, x^* \rangle + 
\frac{1}{T} \langle \nabla F(x_t) -d_t, v_t \rangle - 
\frac{\bar{L}R^2}{2T^2} \\
&= F(x_t) + \frac{1}{T} \langle \nabla F(x_t), x^* \rangle + 
\frac{1}{T} \langle \nabla F(x_t) -d_t, v_t - x^*\rangle - 
\frac{\bar{L}R^2}{2T^2} \\
&\stackrel{(d)}{\geq} F(x_t) + \frac{F(x^*)-F(x_t)}{T}- \frac{1}{T} 
\langle \nabla F(x_t) -d_t, -v_t + x^*\rangle - \frac{\bar{L}R^2}{2T^2} 
\\
&\stackrel{(e)}{\geq} F(x_t) + \frac{F(x^*)-F(x_t)}{T}- \frac{1}{T} 
\|\nabla F(x_t) -d_t\|\cdot \|-v_t + x^*\| - \frac{\bar{L}R^2}{2T^2} \\
&\stackrel{(f)}{\geq}  F(x_t) + \frac{F(x^*)-F(x_t)}{T}- \frac{1}{T} 
2R\|\nabla F(x_t) -d_t\| - \frac{\bar{L}R^2}{2T^2},
\end{split} 
\end{equation}
where inequality $(a)$ holds because of the $\bar{L}$-smoothness of $F$, 
inequalities $(b), (e)$ comes from \cref{assum:constraint}. We used the 
optimality of $v_t$ in inequality $(c)$, and applied the Cauchy-Schwarz 
inequality in$(e)$. Inequality $(d)$ is a little involved, since $F$ is 
monotone and concave in positive directions, we have
\begin{equation*}
\begin{split}
F(x^*) - F(x_t) &\leq F(x^* \vee x_t) - F(x_t) \\
&\leq \langle \nabla 
F(x_t), x^* \vee x_t - x_t \rangle \\
&= \langle \nabla F(x_t), (x^*- 
x_t) \vee 0 \rangle \\
&\leq \langle \nabla F(x_t), x^* \rangle.
\end{split}
\end{equation*}  

Taking expectations on both sides of \cref{eq:sub_aux1},
\begin{equation*}
\expect[F(x_{t+1})] \geq \expect[F(x_t)] + 
\frac{F(x^*)-\expect[F(x_t)]}{T}- \frac{2R}{T}\frac{\sqrt{C}}{\sqrt{t}}  - 
\frac{\bar{L}R^2}{2T^2}.
\end{equation*}

Or 
\begin{equation*}
F(x^*) - \expect[F(x_{t+1})] \leq 
(1-\frac{1}{T})[F(x^*)-\expect[F(x_t)]] + 
\frac{2R}{T}\frac{\sqrt{C}}{\sqrt{t}} + \frac{\bar{L}R^2}{2T^2}.
\end{equation*}

Apply the inequality above recursively for $t = 1, 2, \cdots, T$, we have
\begin{equation*}
\begin{split}
F(x^*) - \expect[F(x_{T+1})] &\leq (1-\frac{1}{T})^T [F(x^*)-F(x_1)] + 
\frac{2R\sqrt{C}}{T} \sum_{t=1}^T t^{-1/2} + \frac{\bar{L}R^2}{2T} \\
&\leq e^{-1} F(x^*) + \frac{4R\sqrt{C}}{T^{1/2}} + \frac{\bar{L}R^2}{2T},
\end{split}
\end{equation*}
where the second inequality holds since $\sum_{t=1}^T t^{-1/2} \leq \int_{0}^T 
x^{-1/2}\mathrm{d}x = 2T^{1/2}$. Thus we have
\begin{equation*}
\expect[F(x_{T+1})] \geq (1-e^{-1})F(x^*) - \frac{4R\sqrt{C}}{T^{1/2}} - 
\frac{\bar{L}R^2}{2T}.
\end{equation*}
	
	\chapter{Quantized Frank-Wolfe}\label{cha:qfw}
	%!TEX root = Thesis-Mingrui.tex

\section{Introduction}
%\vspace{-2mm}

In this chapter\footnote{\revise{This chapter is based on our work in 
		\citep{zhang2020quantized}.}}, we study the application of Frank-Wolfe 
		methods to large-scale 
problems. To be precise, we develop \AlgQFW (QFW), a general 
communication-efficient distributed FW framework for both convex and non-convex 
objective 
functions. We study the performance of QFW in two widely 
recognized 
settings: 1) stochastic optimization and 2) finite-sum optimization.

%\mz{we may want to summarize results of previous papers for non-parallel FW}
%\mz{is there any nice plot which can show the difference on the average bits?}

%The focus of this paper is on \textit{constrained} optimization in two widely 
%recognized settings: 1) stochastic and 2) finite-sum optimization. 
To be more specific, let $\ccalK\subseteq \reals^d$ be the constraint set.
%which is assumed to be convex and compact throughout the paper.  
In \textit{constrained stochastic optimization} the goal is to 
solve
\begin{equation}\label{stochastic_problem}
\min_{x \in \constraint} f(x)\ :=\ \min_{x \in \constraint} \expect_{z\sim 
P}[\tilde{f}(x,z)],
\end{equation}
where $x\in \reals^d$ is the optimization variable,  $z \in 
\reals^q$ is a random variable drawn from a probability distribution $P$, 
which determines the choice of a stochastic function 
$\tilde{f}:\reals^d\times \reals^q\to \reals$. For \textit{constrained 
finite-sum optimization}, we further assume that  $P$  is a uniform 
distribution 
over $[N] = \{1, 2, \cdots, N\}$ and the goal is to solve a special case of 
problem~\eqref{stochastic_problem}, namely, 
\begin{equation*}\label{finite_sum_problem}
\min_{x \in \constraint} f(x)\ :=\ \min_{x \in \constraint} \frac{1}{N} 
\sum_{j=1}^N f_j(x).
\end{equation*} 
In parallel settings, we suppose that we have a computing system consisting of  a master node and $M$ 
workers, and each worker maintains a local copy of $x$.
At every iteration of the stochastic case, each 
worker 
has access to independent stochastic 
gradients of $f$; whereas in the finite-sum case, we assume $N=Mn$, thus the 
objective function can be 
decomposed as $f(x)=\frac{1}{Mn}{\sum_{m\in[M],j\in[n]}f_{m,j}(x)}$, 
%\begin{equation*}
%f(x)=\frac{\sum_{m\in[M],i\in[n]}f_{m,i}(x)}{Mn},
%\end{equation*} 
and each worker $m$ has access to the exact gradients of $n$ component 
functions $f_{m,j}(x)$ for all 
$j \in [n]$.

This way the task of computing gradients is divided among the 
workers.  The 
master node aggregates local gradients from the workers, and sends the 
aggregated gradients back to them so that each worker can update the 
model (\emph{i.e.}, their own iterate) locally. 
%Note that  the kind of information that workers send to the 
%master is of the form of local gradients. 
Thus, by 
transmitting quantized gradients, we can reduce the communication complexity 
(\emph{i.e.}, number of transmitted bits) 
significantly. The workflow diagram of the proposed \AlgQFW
%Quantized Frank-Wolfe 
scheme is 
summarized in Figure \ref{fig:workflow2}. 
%\textcolor{red}{We note that in order 
%to guarantee convergence, proper variance reduction techniques should be 
%applied on the local gradients before sending them to the master.}
We should highlight that there is a 
trade-off between gradient 
quantization and information flow. Intuitively, more intensive 
quantization reduces the communication cost, but also loses more 
information, which may decelerate the convergence rate.

%%%%%%%%%%%% DIAGRAM %%%%%%%%%%%%%%%%%%%
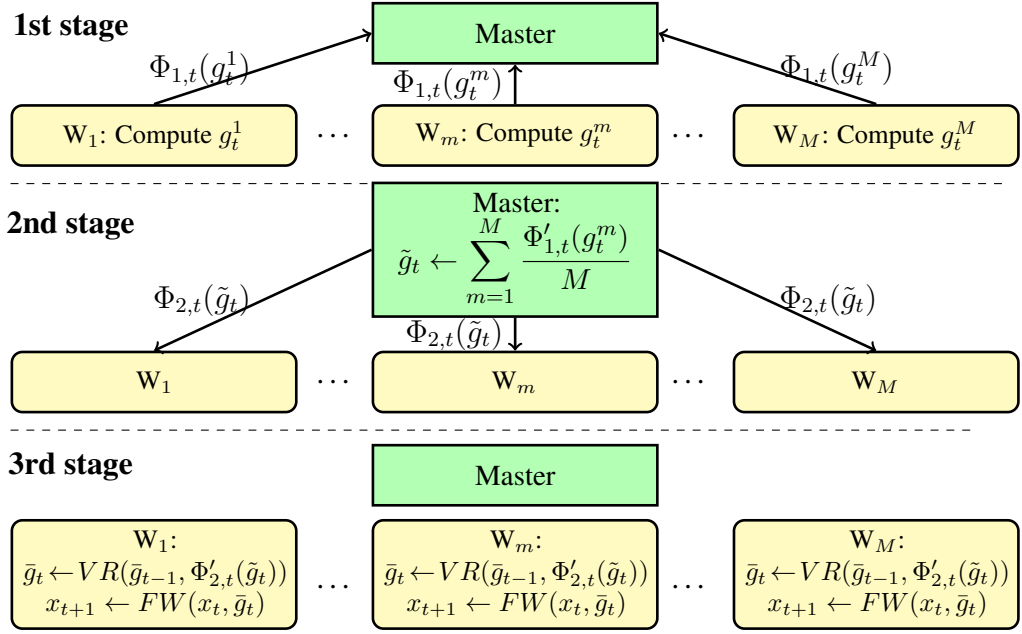
\begin{figure*}[t]
	\begin{center}
		%!TEX root = main.tex

%\begin{document}
\def \thisplotscale {0.8}
\def \unit {1 cm}

%%%%%%%%%%%%%%%%%%%%%%%%%%%%%%%%%%%
%
\tikzstyle{block}         = [ draw,
                              rectangle, rounded corners,
                              minimum height = 0.8*\unit,
                              minimum width  = 1.1*\unit,
                              text width     = 3.5*\unit,
                              text badly centered,
                              line width=1pt,
                              fill = yellow!30, 
                              font = \footnotesize, 
                              anchor = west]
\tikzstyle{bold block}    = [ block,
                              fill = blue!40]
\tikzstyle{light block}   = [ block,
                              fill = blue!10]
\tikzstyle{connector}     = [ draw, 
                              -stealth, 
                              shorten >=2,
                              shorten <=2,]
\tikzstyle{dot dot dot}   = [ draw, 
                              dashed]

\tikzstyle{blockbig}         = [ draw,
                              rectangle, 
                               minimum height = 0.8*\unit,
                              minimum width  = 1.1*\unit,
                              text width     = 3.5*\unit,
                              text badly centered,
                               line width=1pt,
                              fill = green!30, 
                              font = \small, 
                              anchor = west]

\begin{tikzpicture}[scale=0.8]

\node at (-2,15.3) {\textbf{1st stage}};

\path (3,15.2)                 node [blockbig] (C0) {Master}; 

\path (-3,13.5)                 node [block] (C1) {W$_1$: \black{Compute 
$g_t^1$}}; 

\path (3,13.5)                 node [block] (C2) {W$_m$: \black{Compute 
$g_t^m$}}; 
\path (9,13.5)                 node [block] (C3) {W$_M$: \black{Compute 
$g_t^M$}}; 
%\path (7,8)                 node [block] (C4) {Time-varying Models}; 

\node at (2.4,13.5) {\ldots};
\node at (8.3,13.5) {\ldots};

\draw [dashed] (-3,12.7) -- (13.5,12.7);

\draw [->,line width=1pt, black] (C1.north) -- (C0.west)  
node[midway,above,left] {$\black{\Phi_{1,t}(g_t^1)}$} ; 
\draw [->,line width=1pt, black]  (C2.north) -- (C0.south) node[midway,left] 
{$\black{\Phi_{1,t}(g_t^m)}$}; 
\draw [->,line width=1pt, black] (C3.north) -- (C0.east)   node[midway,right] 
{$\black{\Phi_{1,t}(g_t^M)}$}; 
%\draw [->,line width=2pt, red] (C0.east) to [out=0,in=90] (C4.north); 

%%%% STEP 2
\node at (-2,12) {\textbf{2nd stage}};

\path (3,11.6)                 node [blockbig] (C10) {Master:\\ 
\black{$\displaystyle{\tilde{g}_t \gets \sum_{m=1}^M 
\frac{\Phi_{1,t}^{\prime}(g_t^m)}{M}}$}}; 

\path (-3,9.4)                 node [block] (C11) {W$_1$}; 
\path (3,9.4)                 node [block] (C12) {W$_m$}; 
\path (9,9.4)                 node [block] (C13) {W$_M$}; 
%\path (7,8)                 node [block] (C4) {Time-varying Models}; 

\node at (2.4,9.4) {\ldots};
\node at (8.3,9.4) {\ldots};

\draw [->,line width= 1pt, black]  (C10.west) -- (C11.north) 
node[midway,above,left] {$\black{\Phi_{2,t}(\tilde{g}_t)}$} ; 
\draw [->,line width=1pt, black]  (C10.south) -- (C12.north)  node[midway,left] 
{$\black{\Phi_{2,t}(\tilde{g}_t)}$}; 
\draw [->,line width=1pt, black]  (C10.east)  -- (C13.north)  
node[midway,right] {$\black{\Phi_{2,t}(\tilde{g}_t)}$}; 
%\draw [->,line width=2pt, red] (C80.east) to [out=0,in=90] (C4.north); 
\draw [dashed] (-3,8.6) -- (13,8.6);

%%%%%%%%%%%%%%%%%%%%%%%%%%%%%%%%%%%%%%%%%%%
%%%% STEP 3
\node at (-2,8) {\textbf{3rd stage}};

\path (3,7.85)                 node [blockbig] (C90) {Master}; 

\path (-3,6.2)                 node [block] (C91)  {W$_1$:\\
		\black{$\bar{g}_t \!\gets \!VR(\bar{g}_{t-1} 
		,\Phi_{2,t}^{\prime}(\tilde{g}_t))$ $x_{t+1} \gets FW(x_t,\bar{g}_t 
		)$}}; 
\path (3,6.2)                 node [block] (C92) {W$_m$:\\
		\black{$\bar{g}_t \!\gets \!VR(\bar{g}_{t-1} 
		,\Phi_{2,t}^{\prime}(\tilde{g}_t))$ $x_{t+1} \gets FW(x_t,\bar{g}_t 
		)$}}; 
\path (9,6.2)                 node [block] (C93) {W$_M$:\\
		\black{$\bar{g}_t \!\gets \!VR(\bar{g}_{t-1} 
		,\Phi_{2,t}^{\prime}(\tilde{g}_t))$ $x_{t+1} \gets FW(x_t,\bar{g}_t 
		)$}};

\node at (2.4,6) {\ldots};
\node at (8.3,6) {\ldots};
\end{tikzpicture} 
	\end{center}
	\caption{Stages of our general \AlgQFW scheme at time $t$. 
		In the first stage, each worker $m$ computes its local gradient 
		{information} 
		$g_t^m$ and sends the quantized version $\Phi_{1,t}(g_t^m)$ to the 
		master node. In the second stage, master computes the average of 
		decoded received signals $\Phi_{1,t}^{\prime}(g_t^m)$, \emph{i.e.}, 
		$\tilde{g}_t \gets 
		({1}/{M})\sum_{m=1}^M {\Phi_{1,t}^{\prime}(g_t^m)}$ and then sends 
		its 
		quantized version $\Phi_{2,t}(\tilde{g}_t)$ to the workers. Note that 
		the two quantization schemes $\Phi_{1,t}, \Phi_{2,t}$ depend on $t$ and 
		can be different from each other. In the 
		third 
		stage, 
		workers use the decoded gradient information computed by all workers 
		$\Phi'_{2,t}(\tilde{g}_t)$ and their previous gradient estimation 
		$\bar{g}_{t-1}$ to update their new gradient estimation $\bar{g}_{t}$ 
		via a variance reduction (VR) scheme. Once the variance reduced 
		gradient 
		approximation $\bar{g}_{t}$ is evaluated, workers compute the new 
		variable 
		$x_{t+1}$ by following the update of Frank-Wolfe 
		(FW).}\label{fig:workflow2}
	\vspace{2mm}
\end{figure*}

%The goal of this paper is to provide 
%a communication-efficient and fast-converging parallel FW 
%algorithm in stochastic and finite-sum cases, for both constrained 
%convex and non-convex optimization problems.

\textbf{Our contributions:} In this chapter, we propose a novel 
distributed projection-free framework that handles quantization for constrained convex and 
non-convex optimization problems in finite-sum and stochastic cases. It is 
well-known that 
%the main challenge in designing a quantized variant of the FW method 
%is dealing with noisy gradients. To be more precise, 
unlike projected gradient-based methods, FW methods 
may diverge when fed with stochastic gradient 
\citep{hazan2016variance,mokhtari2018stochastic}. 
Indeed, a similar issue arises in a distributed setting where nodes exchange 
\textit{quantized gradients} which are noisy estimates of the gradients. By 
incorporating appropriate variance reduction techniques, 
%in different settings,
we show that with quantized gradients, we can obtain a 
provably convergent method which preserves the 
convergence rates of the state-of-the-art vanilla centralized methods in all 
the considered cases 
\citep{zhang2019one,shen2019complexities,hassani2019stochastic,yurtsever2019conditional}.
We 
believe 
our work presents the first quantized, distributed, and projection-free method. 
Our theoretical results for \AlgQFW (QFW) are summarized in   
\cref{tab:contribution}, where the SFO complexity is the 
required number of stochastic gradients in stochastic case, and the IFO 
complexity is the number of exact gradients for component functions in 
finite-sum case. For the convex case, the complexity indicates 
the 
number of gradients to achieve an $\eps$-suboptimal solution; while in the 
non-convex case, it refers to the number of gradients to find a 
first-order $\epsilon$-stationary point.
We note that since the $M$ workers compute the 
gradients simultaneously, the time to obtain gradients is proportional to the 
SFO/IFO complexity \emph{per worker}. So we report the 
SFO/IFO complexity per worker, as in 
 many other works on parallel optimization 
(\emph{e.g.}, Sign-SGD \citep{bernstein2018signsgd}). The results in \cref{tab:contribution} 
show that more workers can decrease the SFO/IFO complexity per worker 
effectively, 
and thus accelerate the optimization procedure.
All the proofs in this chapter are provided in \cref{sec:proof-qfw}.

\begin{table}[t!]
	%\begin{threeparttable}[t]
	\centering
	\caption{SFO/IFO Complexity per worker in different settings ($M$ is 
	the number of workers).}
%	\caption{SFO/IFO Complexity and average communication bits in different 
%		settings, where $M$ is the number of workers, %$n=N/M$, 
%		%is the number of component functions per worker,
%		$z_{1}=\lceil\log_2[({\sqrt{n}dT^2}/{M})^{1/2}+1]\rceil, 
%		z_{2}=\lceil\log_2[(\sqrt{n}dT^2)^{1/2}+1]\rceil, z_{3}=\lceil 
%		\log_2[({4\sqrt{n}dT}/{M})^{1/2}+1] \rceil, z_{4}=\lceil 
%		\log_2[(4\sqrt{n}dT)^{1/2}+1] \rceil$.}
	\label{tab:contribution}
%	\vspace{-1mm}
	%\begin{adjustbox}{width=\columnwidth,center}
		\begin{tabular}{lll}  
			\toprule[1.5pt]
			Setting & Function   & SFO/IFO Complexity  \\ %& Average Bits  \\  	
			\midrule
			\vspace{2mm}
			Finite-sum & Convex  & 
			$\displaystyle{\mathcal{O}\bigg(\frac{N\ln(1/\epsilon)+
					1/\epsilon^2}{M}\bigg)}$ \\ 
			\vspace{2mm}
			%$\mathcal{O}(N/(M\epsilon))$&
			%$d(Mz_{1}+z_{2})+(M+1)(d+32)$ \\%\tnote{$\dagger$}\\ 
			%\hline
			Finite-sum & Non-convex & 
			$\displaystyle{\mathcal{O}\bigg(\frac{\sqrt{N}}{
			\epsilon^2\sqrt{M}}\bigg)}$ \\ 
			\vspace{2mm}
			%$\mathcal{O}(\sqrt{N/(M\epsilon^2)})$ & 
			%$d(Mz_{3}+z_{4})+(M+1)(d+32)$ \\
			%\tnote{$\ddagger$}\\ 
			%\hline 
			Stochastic & Convex  & $\displaystyle{\mathcal{O}\bigg(\frac{1}{M\epsilon^2 }\bigg)}$ \\
			\vspace{2mm}
			%$\mathcal{O}(1/\epsilon^3)$&
			%$(M + 1)(2d+ 32)$ \\ 
			%\hline
			Stochastic & Non-convex  & 
			$\displaystyle{\mathcal{O}\bigg(\frac{1}{\epsilon^3 \sqrt{M}}\bigg)}$ \\
			%& $\mathcal{O}(1/\epsilon^4)$
			%& $(M + 1)(2d + 32)$ \\ 
			\bottomrule[1.25pt]
		\end{tabular}
	%\end{adjustbox}
%	\vspace{-3mm}
\end{table}

	%!TEX root = Thesis-Mingrui.tex
%\vspace{-1mm}
\section{Gradient Quantization Schemes} \label{sec:encoding schemes}
%\vspace{-1mm}
In most distributed optimization algorithms, the task of computing gradients 
is 
divided among the workers, and the master node uses parts of gradients 
at the workers to update the model (iterate) directly or sends the aggregated 
gradients to the worker so that each of them can update the model (iterate) 
locally. Therefore, the information that workers need to send to the master 
is the elements of the objective function gradient. Thus, by transmitting 
quantized gradients, we can reduce the communication bits effectively.
In this section, we introduce a 
quantization 
scheme called  s-\mscheme and explain how this scheme reduces the overall cost of exchanging gradients. Consider the gradient vector $g\in \reals^d$ and let $g_i$ be the $i$-th 
coordinate of the gradient. The s-\mscheme encodes 
$g_i$ into an element from the set 
$\{\pm1,\pm\frac{s-1}{s},\cdots,\pm\frac{1}{s},0\}$ in a random way. To do so, 
we first compute the ratio ${|g_i|}/{\|g\|_\infty}$ and find the 
indicator $l_i \in \{0,1,\cdots,s-1\}$ such that $|g_i|/\|g\|_\infty \in 
[l_i/s, (l_i+1)/s]$. 
%%%%
Then we define the random variable $b_i$ as
%%%% 
\begin{equation*}\label{s_multi_prob_dist}
b_i=
\begin{cases}
l_i/s, & \quad \text{w.p.}\ \ 
1-\frac{|g_i|}{\|g\|_\infty}s+l_i, \\
(l_i+1)/s, & \quad \text{w.p.}\ \  
\frac{|g_i|}{\|g\|_\infty}s-l_i.
\end{cases}
\end{equation*}
Finally, instead of transmitting $g_i$, we send $\sign(g_i) \cdot b_i$, 
%the product of $\sign(g_i)$ and the random variable $b_i$, 
alongside the norm 
$\|g\|_\infty$. It can be verified that 
$\expect[b_i|g]= {|g_i|}/{\|g\|_\infty}$. So we define the corresponding 
decoding scheme as $\phi^{\prime}(g_i) = \sign(g_i) b_i\|g\|_\infty$ to ensure 
that $\phi^{\prime}(g_i)$ is an unbiased estimator of $g_i$. We note that the 
encoding/decoding schemes in \cref{fig:workflow2} are 
denoted as capital $\Phi/\Phi'$, indicating that they can be any general 
schemes. The proposed $s$-\mscheme is denoted by $\phi/\phi'$.  
We also note that this 
quantization scheme is similar to the Stochastic Quantization method in 
\citep{alistarh2017qsgd}, except that we use $\ell_\infty$-norm while they adopt the 
$\ell_2$-norm.
In the $s$-\mscheme,  for each coordinate $i$, we need 1 bit to 
transmit 
$\sign(g_i)$. Moreover, since $b_i \in \{0,{1}/{s},\dots, (s-1)/{s}, 
1\}$, we need $z=\log_2(s+1)$
bits to send $b_i$. Finally, we need 32 bits to transmit $\|g\|_\infty$. Hence, 
the total number of communicated bits is  $32+d(z+1)$. Here, by ``bits'' we 
mean the number of 0’s and 1’s transmitted.

One major advantage of the $s$-\mscheme is that by tuning the 
partition parameter $s$ or the corresponding assigned bits $z$, we can smoothly 
control the trade-off between gradient 
quantization and information loss, which helps distributed 
algorithms to attain their best performance. We proceed to characterize the variance of the $s$-\mscheme. 
%and highlight the accuracy v.s. communication cost trade-off. 

%%%%%%%%%%%%%%%%%%%%%%%%%%%%%%%%%%%%%
%%%%%%%%%%%%%%%%%%%%%%%%%%%%%%%%%%%%%
%%%%%%     L  E  M  M  A       %%%%%%%%%%%%%%%%%%%%%  
%%%%%%%%%%%%%%%%%%%%%%%%%%%%%%%%%%%%%
%%%%%%%%%%%%%%%%%%%%%%%%%%%%%%%%%%%%%

\begin{lemma}%[Proof in \cref{app:lem_var_multi}]
	\label{lem:var_multi}
	The variance of  $s$-\mscheme $\phi$ for any $g\in \reals^d$ is 
	bounded by 
	\begin{equation*}\label{eq:var_multi_claim}
	\var{\phi^{\prime}(g)|g} \leq \frac{d}{s^2}\|g\|_\infty^2.
	\end{equation*}
\end{lemma}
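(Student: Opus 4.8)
The plan is to use the fact that the $s$-\mscheme acts \emph{coordinate-wise} and that $\phi'(g)$ is an unbiased estimator of $g$, so that the (vector) variance splits into a plain sum of scalar coordinate variances. First I would recall that, as already verified in the text, $\expect[b_i\mid g] = |g_i|/\|g\|_\infty$, hence $\expect[\phi'(g)\mid g] = g$. Consequently the variance is the expected squared error around the true mean, and expanding the squared Euclidean norm gives
\[
\var{\phi'(g)|g} = \expect\big[\|\phi'(g) - g\|^2 \,\big|\, g\big] = \sum_{i=1}^d \expect\big[(\phi'(g_i) - g_i)^2 \,\big|\, g\big] = \sum_{i=1}^d \var{\phi'(g_i)|g},
\]
where the cross terms vanish simply because each summand is a scalar; no independence across coordinates is needed for this decomposition. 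It therefore suffices to bound each single-coordinate variance by $\|g\|_\infty^2/s^2$.

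For a fixed coordinate $i$ I would rewrite $b_i$ as a scaled Bernoulli variable. Putting $p_i := \frac{|g_i|}{\|g\|_\infty}s - l_i$, the indicator $l_i$ is chosen so that $p_i \in [0,1]$, and the definition of the scheme says $b_i = (l_i + \xi_i)/s$ with $\xi_i \sim \mathrm{Bernoulli}(p_i)$. Since $\phi'(g_i) = \sign(g_i)\,b_i\,\|g\|_\infty$ and the quantities $l_i$, $\sign(g_i)$, and $\|g\|_\infty$ are all deterministic given $g$, the only randomness sits in $\xi_i$, so
\[
\var{\phi'(g_i)|g} = \|g\|_\infty^2\,\var{b_i|g} = \frac{\|g\|_\infty^2}{s^2}\,\var{\xi_i|g} = \frac{\|g\|_\infty^2}{s^2}\,p_i(1-p_i).
\]

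Finally I would apply the elementary bound $p_i(1-p_i) \le \tfrac14 \le 1$, valid for any $p_i \in [0,1]$, to obtain $\var{\phi'(g_i)|g} \le \|g\|_\infty^2/s^2$ for each $i$; summing over the $d$ coordinates yields $\var{\phi'(g)|g} \le \frac{d}{s^2}\|g\|_\infty^2$, as claimed (in fact the tighter factor $\tfrac14$ could be kept, but the stated bound suffices). There is no serious obstacle in this argument; the only point needing a little care is the first step, namely confirming unbiasedness of $b_i$, since it is precisely unbiasedness that lets me treat $\var{\phi'(g)|g}$ as an expected squared error and split it coordinatewise rather than having to carry a bias term. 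Everything after that reduces to the variance of a single Bernoulli trial.
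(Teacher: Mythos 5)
Your proof is correct. The overall skeleton matches the paper's: both decompose $\var{\phi'(g)|g}$ into a sum of per-coordinate terms and then bound each term by $\|g\|_\infty^2/s^2$. Where you differ is in the single-coordinate step. The paper argues deterministically: since both $|g_i|/\|g\|_\infty$ and $b_i$ lie in the same interval $[l_i/s,(l_i+1)/s]$ of length $1/s$, the squared deviation $(|g_i|/\|g\|_\infty-b_i)^2$ is bounded by $1/s^2$ pointwise, and no explicit use of unbiasedness or of the exact distribution of $b_i$ is needed beyond its support. You instead identify $b_i=(l_i+\xi_i)/s$ with $\xi_i\sim\mathrm{Bernoulli}(p_i)$ and compute the conditional variance exactly as $\|g\|_\infty^2\,p_i(1-p_i)/s^2$, which yields the sharper constant $d\|g\|_\infty^2/(4s^2)$ after applying $p(1-p)\le 1/4$. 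The paper's route is marginally more robust (it bounds the mean squared error directly, so it would survive even if $b_i$ were slightly biased within the interval), while yours buys a factor of $4$ and makes the unbiasedness of $\phi'(g)$ explicit, which is anyway needed elsewhere in the analysis. Your remark that the cross terms vanish because the squared Euclidean norm has no cross terms (rather than by independence) is also correct and worth keeping.
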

\cref{lem:var_multi} demonstrates the trade-off between the 
error of quantization and the communication cost for $s$-\mscheme. In a 
nutshell, for larger choices of $s$, the variance is 
smaller, which in turn results in higher communication cost. 
If we set $s=1$, we obtain the \sscheme,
% with 
%%%%%  
%\begin{equation*}
%b_i = \begin{cases}
%1, & \quad \text{w.p.}\ \ |g_i|/\|g\|_\infty, \\
%0, & \quad \text{w.p.}\ \ 1-|g_i|/\|g\|_\infty.
%\end{cases}
%\end{equation*}
%%%%%
which requires communicating the encoded 
scalars $\sign(g_i) b_i \in \{\pm1,0\}$ and the norm $\|g\|_{\infty}$. Since 
$z 
= 
\log_2(s+1)=1$, the overall communicated bits for each worker are 
$32+2d$ per round. We characterize its variance in \cref{lem:var_single}. 

%%%%%%%%%%%%%%%%%%%%%%%%%%%%%%%%%%%%
%%%%%%%%%%%%%%%%%%%%%%%%%%%%%%%%%%%%
%%%%%     L  E  M  M  A       %%%%%%%%%%%%%%%%%%%%%  
%%%%%%%%%%%%%%%%%%%%%%%%%%%%%%%%%%%%
%%%%%%%%%%%%%%%%%%%%%%%%%%%%%%%%%%%%
\begin{lemma}%[Proof in \cref{app:lem_var_single}]
\label{lem:var_single}
	%For any input vector $g$, 
	The variance of  \sscheme is given by
	%%%%
	\begin{equation*}\label{lem:var_single_claim}
	\var{\phi^{\prime}(g)|g}= \|g\|_1\|g\|_\infty - \|g\|_2^2.
	\end{equation*}
	%%%%
\end{lemma}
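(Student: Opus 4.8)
The plan is to obtain this as the exact, degenerate instance of the $s$-\mscheme at $s=1$, and then compute the total variance directly by exploiting the coordinate-wise independence of the randomized rounding. First I would observe that setting $s=1$ forces the indicator $l_i$ to be $0$ for every coordinate $i$, since $l_i$ ranges over $\{0,1,\dots,s-1\}=\{0\}$. The defining distribution of $b_i$ then collapses to a Bernoulli: $b_i=1$ with probability $|g_i|/\|g\|_\infty$ and $b_i=0$ with the complementary probability $1-|g_i|/\|g\|_\infty$. Consequently the decoded coordinate $\phi'(g_i)=\sign(g_i)\,b_i\,\|g\|_\infty$ equals $\sign(g_i)\|g\|_\infty$ with probability $|g_i|/\|g\|_\infty$ and equals $0$ otherwise, and it remains an unbiased estimator of $g_i$ as already noted in the construction.

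Next I would reduce the vector variance to a sum of scalar variances. Because the rounding is performed independently across the $d$ coordinates given $g$, the random variables $\{b_i\}_{i=1}^d$ are mutually independent, so $\var{\phi'(g)|g}=\sum_{i=1}^d \var{\phi'(g_i)|g}$. For each coordinate I would write $\var{\phi'(g_i)|g}=\expect[\phi'(g_i)^2|g]-\bigl(\expect[\phi'(g_i)|g]\bigr)^2$. Unbiasedness gives $\expect[\phi'(g_i)|g]=g_i$, while the second moment is computed from the Bernoulli law: using $\sign(g_i)^2=1$, we get $\expect[\phi'(g_i)^2|g]=\|g\|_\infty^2\cdot\frac{|g_i|}{\|g\|_\infty}=|g_i|\,\|g\|_\infty$. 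Hence $\var{\phi'(g_i)|g}=|g_i|\,\|g\|_\infty-g_i^2$.

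Finally, summing over $i$ and pulling the common factor $\|g\|_\infty$ out of the first term yields $\var{\phi'(g)|g}=\|g\|_\infty\sum_{i=1}^d|g_i|-\sum_{i=1}^d g_i^2=\|g\|_1\|g\|_\infty-\|g\|_2^2$, which is exactly the claimed identity. There is no substantive obstacle here; the only two points requiring care are reading off the correct degenerate Bernoulli distribution from the $s=1$ specialization, and justifying the additivity of the variances through the independence of the coordinate-wise rounding. I would favor this direct computation over invoking \cref{lem:var_multi}, since the latter only provides the bound $d\|g\|_\infty^2/s^2$ whereas the Sign Encoding Scheme admits an exact closed form.
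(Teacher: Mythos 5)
Your proof is correct and takes essentially the same route as the paper: the coordinate-wise decomposition of $\var{\phi^{\prime}(g)|g}=\expect[\|\phi^{\prime}(g)-g\|^2\mid g]$ used in the proof of \cref{lem:var_multi}, specialized to $s=1$ where the Bernoulli law of $b_i$ lets each term be computed exactly rather than bounded. One minor remark: the appeal to independence of the $b_i$ is superfluous, since the squared Euclidean norm already splits into a sum over coordinates by linearity of expectation alone.
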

\begin{remark}
	For the probability distribution of the random variable $b_i$, 
	instead of   $\|g\|_\infty$, we can use  other norms $\|g\|_p$ (where $ p 
	\geq 1$). But it can be verified that the $\ell_\infty$-norm
	leads to the smallest variance for \sscheme. That is also the reason why we 
	do not use 
	$\ell_2$-norm as in \citep{alistarh2017qsgd}.
\end{remark}
\section{Convex Minimization}
In this section, we analyze the convex minimization problem in both finite-sum 
and stochastic settings. Note that even in the setting without quantization, 
if we use stochastic gradients in the update of FW, it might diverge 
	\citep{hazan2016variance,mokhtari2018stochastic}. So appropriate variance 
	reduction 
	techniques are 
	needed for communicating 
	quantized gradients. 
	\citet{nguyen2017sarah,nguyen2017stochastic,nguyen2019optimal} 
	developed the 
	StochAstic Recursive grAdient algoritHm (SARAH), a stochastic recursive 
	gradient update framework. \citet{fang2018spider} proposed  Stochastic 
	Path-Integrated Differential Estimator (SPIDER) technique, a variant of 
	SARAH, 
	for centralized unconstrained 
	optimization. 
	%in centralized settings.  
	Recently, 
	\citet{hassani2019stochastic,2019complexities,yurtsever2019conditional} 
	proposed the SPIDER variants of FW method 
	for both 
	convex and non-convex optimization problems. Similar variance reduction 
	idea was also combined with SGD to solve non-convex finite-sum problems in 
	\citep{zhou2018stochastic}.
	In this chapter, we generalize 
	SPIDER to the constrained and distributed settings. 

We first consider the  case where no quantization is performed. Let $\{p_i\} 
\in 
\mathbb{N}^+$ be a sequence of period parameters. At the 
beginning of each period $i$, namely, $t = \sum_{j=1}^{i-1}p_j+1$, each worker 
$m$ samples $S_{i,1}$ component functions in finite-sum case, or stochastic 
functions in stochastic case, which are denoted as $\mathcal{S}_{i,1}^m$. 
We define the local average gradient on set $\mathcal{S}_{i,1}^m$ as
%\begin{equation}\label{eq:g1}
$g_{i,1}^m \triangleq \nabla f_{\mathcal{S}_{i,1}^m}(x_t) = \frac{1}{S_{i,1}} \sum_{j \in 
\mathcal{S}_{i,1}^m} \nabla 
f_j(x_t)$.
%\end{equation}
Then each 
worker $m$ computes the average of all these local gradients $g_{i,1}^m$ and 
sends 
it to the master. 
Then, master node calculates the average of the $M$ received signals and 
broadcasts 
it to all workers. Then, the workers update their gradient estimation 
$\bar{g}_t$ 
as the averaged signal
%%%
%\begin{equation*}
$\bar{g}_{t} =  \frac{1}{M}\sum_{m=1}^M g_{i,1}^m.$
%\end{equation*}
%%

Note $\bar{g}_t$ is identical for all the workers. In the rest
of 
that 
period, \emph{i.e.}, $t = \sum_{j=1}^{i-1}p_j+k$, where $2\leq k \leq p_i$, 
each 
worker $m$ samples a set of local functions, denoted as 
$\mathcal{S}_{i,k}^m$, of 
size $S_{i,k}$ uniformly at random, 
and computes the difference of averages of these gradients 
\begin{equation*}\label{eq:g2}
g_{i,k}^m \triangleq 
\nabla 
f_{\mathcal{S}_{i,k}^m}(x_t) - \nabla 
f_{\mathcal{S}_{i,k}^m}(x_{t-1}),
\end{equation*}
and sends it to master. Then 
master node calculates the average of the $M$ signals and broadcasts it to all 
the workers. The workers update their gradient estimation $g_t$ as
%%%
\begin{equation*}\label{g_avg_update}
\bar{g}_{t} = \bar{g}_{t-1}+\frac{1}{M}\sum_{m=1}^Mg_{i,k}^m.
\end{equation*}

\begin{algorithm}[t]
	\begin{algorithmic}[1]
		\Require constraint set $\constraint$, total iteration 
		number $T$, No. of workers 
		$M$, period parameters $\{p_i\}$, sample sizes $\{S_{i,k}\}$, learning 
		rate $\eta_t$,
		%		 total number of outer iterations $I$, 
		initial point $x_1 \in \constraint$ 
		%\STATE {\bfseries Output:} $\bar{x}_{I+1}$
		\Ensure $x_{T+1}$ or $x_o$,
		where $x_o$ is chosen from $\{x_1, x_2, 
		\cdots, x_T\}$ uniformly at random
		\For{$t=1,2,\dots, T$}
		%\STATE Set $x_1 \gets \bar{x}_i$
		%\FOR{$k=1$ {\bfseries to} $p_i$}
		\State Set $x_{i,k} \gets x_t$, where $t = \sum_{j=1}^{i-1}p_j + k, 
		1\le k 
		\le p_i$
		\State Each worker $m$ computes local 
		gradient $g_{i,k}^m$ by $g_{i,1}^m = \nabla 
		f_{\mathcal{S}_{i,1}^m}(x_{i,k})=\nabla 
		f_{\mathcal{S}_{i,1}^m}(x_{t})$ for $k=1$, or $g_{i,k}^m \triangleq 
		\nabla 
		f_{\mathcal{S}_{i,k}^m}(x_{i,k}) - \nabla 
		f_{\mathcal{S}_{i,k}^m}(x_{i,k-1})=\nabla 
		f_{\mathcal{S}_{i,k}^m}(x_t) - \nabla 
		f_{\mathcal{S}_{i,k}^m}(x_{t-1})$ for $k\ge 2$		
		%		\cref{eq:g1} or \cref{eq:g2}, 
		%		with $t$ replaced by $k$
		\State Each worker $m$ encodes $g_{i,k}^m$ as $\Phi_{1,i,k}(g_{i,k}^m)$ 
		and pushes it to the master
		\State Master decodes $\Phi_{1,{i,k}}(g_{i,k}^m)$ as 
		$\Phi_{1,{i,k}}^{\prime}(g_{i,k}^m)$, and computes 
		$\tilde{g}_{i,k} \gets 
		\frac{1}{M}\sum_{m=1}^M \Phi_{1,{i,k}}^{\prime}(g_{i,k}^m)$
		\State Master encodes $\tilde{g}_{i,k}$ as 
		$\Phi_{2,{i,k}}(\tilde{g}_{i,k})$, 
		and broadcasts it to all workers
		\State Workers decode
		$\Phi_{2,{i,k}}(\tilde{g}_{i,k})$ as 
		$\Phi_{2,{i,k}}^{\prime}(\tilde{g}_{i,k})$ 
		\If{$k = 1$}
		\State Workers update $\bar{g}_{i,k} \gets 
		\Phi_{2,{i,k}}^{\prime}(\tilde{g}_{i,k})$
		\Else
		\State Workers update $\bar{g}_{i,k} \gets 
		\Phi_{2,{i,k}}^{\prime}(\tilde{g}_{i,k})+\bar{g}_{i,k-1}$
		\EndIf
		\State Each worker updates
		$x_{t+1} \gets 
		x_t + \eta_{t}(v_{t}-x_t) = x_{i,k} + \eta_{i,k}(v_{i,k}-x_{i,k})$ 
		where $v_{i,k} \gets \argmin_{v 
			\in \constraint} \langle v, \bar{g}_{i,k} \rangle$
		%\ENDFOR
		%\STATE $\bar{x}_{i+1} = x_{p_i+1}$
		\EndFor
	\end{algorithmic}
	\caption{\AlgQFW (QFW)}\label{alg:convex_dist_fw}
\end{algorithm}

%%%
So $\bar{g}_t$ is still identical for all the workers.
In order to incorporate quantization, each worker simply pushes the quantized 
version of the average gradients. Then the 
master decodes the quantizations, encodes the average of decoded 
signals in a quantized fashion, and broadcasts the quantization. Finally, 
each worker 
decodes the quantized signal and updates $x_t$ 
locally. 
To be more specific, in the quantized setting, in each iteration $t$ such that 
$t = \sum_{j=1}^{i-1}p_j+k$ where $1\leq k \leq p_i$, each 
worker $m$ sends the 
quantized version of its local gradient information $\Phi_{1,t}(g_{i,k}^m)$ 
to the master. Once 
master collects all the quantized information, it decodes them, \emph{i.e.}, 
finds 
$\{\Phi_{1,t}'(g_{i,k}^m)\}_{m=1}^{M}$, computes their average 
$\tilde{g}_{t}$, and 
sends its quantized version $\Phi_{2,t}(\tilde{g}_{t})$ to all workers. 
%We note that $i, k$ are actually functions of $t$, \emph{i.e.}, depend on the 
%value of $t$. 
Then, all the workers decode the received quantized signal and use it 
as their new gradient 
approximation $\bar{g_t}$ and update their variable according to the update of 
Frank-Wolfe, \emph{i.e.},
\begin{equation*}\label{FW_update}
x_{t+1} =
x_t 
+ \eta_t(v_t-x_t),%\quad \text{ where}\quad  v_{t} \gets \argmin_{v 
%\in \constraint} \langle v, \bar{g}_t \rangle.
\end{equation*}
where $v_{t} \gets \argmin_{v 
	\in \constraint} \langle v, \bar{g}_t \rangle$.
%For the purpose of simplicity, we omit the iteration number $t$, and use a 
%double-loop structure (on $i$ and $k$) in the
The description of our proposed \AlgQFW (QFW) method is shown in 
\cref{fig:workflow2} and outlined 
in \cref{alg:convex_dist_fw}.

\begin{remark}
	The model update (Line 13 in \cref{alg:convex_dist_fw}) should 
	be performed at 
	each worker. Since all the 
	linear programming problems (to obtain $v_t$) are solved 
	simultaneously, the total 
	running time is the same with that where the model updating is 
	performed in the master node. However, additional 
	variance 
	would be introduced if the master node updates the model and 
	%then 
	broadcasts it in quantized manners. Thus the master-updating method 
	lacks 
	theoretical justification regarding convergence, and we adopt the 
	worker-updating approach.
\end{remark}

\subsection{Finite-Sum Setting}
Now we proceed to establish the convergence properties of our proposed QFW in 
the finite-sum setting. Recall that we assume that there are $N$ functions 
and $M$ workers in total, and each worker $m$ has access to $n=N/M$ functions 
$f_{m,j}$ for $j \in [n]$. We first make two assumptions on the 
constraint set and component functions. Let $\| \cdot \|$ denote the $\ell_2$ 
norm in Euclidean space through out the chapter. 

\begin{assump}
	\label{assump_on_K}
	The constraint set $\mathcal{K}$ is convex and 
	compact, with diameter $D = \sup_{x,y \in \mathcal{K}} \|x-y 
	\|$. 
\end{assump}

\begin{assump}
	\label{assump_on_f_i_convex}
	The functions $f_{m,i}$ are convex, $L$-smooth on 
	$\constraint$,
	%, and uniformly bounded, i.e.,  $\sup_{x \in 
	%\constraint}|f_{m,i}(x)| \leq M_0.$ %
	and satisfy that $\|\nabla f_{m,i}(x)\|_\infty \leq G_\infty, 
		\forall\ m \in [M], i \in [n], x, y \in \constraint$.
\end{assump}

\begin{theorem}[Finite-Sum Convex]
	\label{thm:finite_convex}
	Consider QFW outlined in Algorithm~\ref{alg:convex_dist_fw}.
	Under \cref{assump_on_K,assump_on_f_i_convex},
	if we set $ p_i = 2^{i-1}, \mathcal{S}_{i,1}^m = \{f_{m,j}: j \in [n]\}$ 
	(\emph{i.e.}, each worker $m$ samples 
	all its $n$ component functions), $S_{i,k} = p_i/M = 2^{i-1}/M, 
	\forall\ i \ge 
	1, k \ge 2$, and $\eta_{i,k}=2/(p_i+k)={2}/(2^{i-1}+k)$, and use the 
%	$s_{1,i,k}=(\sqrt{\frac{dp_i^3}{M}})$-\mscheme, and 
%$s_{2,i,k}=(\sqrt{dp_i^3})$-\mscheme 
	$s_{1,i,1}=(\sqrt{\frac{dp_i^2}{M}})$-\mscheme,  
	$s_{2,i,1}=(\sqrt{dp_i^2})$-\mscheme for $k=1$, and 
	$s_{1,i,k}=(\sqrt{\frac{dp_i}{M}})$-\mscheme,  
	$s_{2,i,k}=(\sqrt{dp_i})$-\mscheme for $k \ge 2$
	as 
	$\Phi_{1,i,k}$ and $\Phi_{2,i,k}$ in \cref{alg:convex_dist_fw}, 
	then the output $x_{T+1} \in \constraint$  satisfies
	\begin{equation*}
	\expect[f(x_{T+1})]-f(x^*) \leq 
	\frac{4D\sqrt{2(G^2_\infty+6L^2D^2)}
	+2LD^2}{T},  
	\end{equation*}
	where $x^*$ 
	is a minimizer of $f$ on $\constraint$. 
\end{theorem}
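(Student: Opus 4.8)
The plan is to reduce the analysis to a single bound on the gradient-estimation error and then feed it into the classical Frank--Wolfe descent inequality. The first useful observation is that the prescribed step sizes collapse to a clean global form: within period $i$ we have $t = \sum_{j=1}^{i-1}p_j + k = 2^{i-1}-1+k$, so $p_i + k = t+1$ and hence $\eta_{i,k} = 2/(p_i+k) = 2/(t+1)$ is exactly the standard Frank--Wolfe step size. Writing $e_t \triangleq \nabla f(x_t) - \bar{g}_t$, the crux is to prove
\begin{equation*}
\expect[\|e_t\|^2] \leq \frac{2(G_\infty^2 + 6L^2D^2)}{(t+1)^2},
\end{equation*}
which by Jensen gives $\expect[\|e_t\|] \leq \sqrt{2(G_\infty^2+6L^2D^2)}/(t+1)$.

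To establish this I would exploit the SPIDER-type recursive structure of $\bar{g}_t$. For $k \geq 2$ the update gives $\bar{g}_t - \bar{g}_{t-1} = \delta_t$, where $\delta_t$ is the doubly-quantized, $M$-averaged estimate of the gradient difference $\Delta_t \triangleq \nabla f(x_t) - \nabla f(x_{t-1})$. Chaining the unbiasedness of the two decoding maps $\Phi'_{1,i,k},\Phi'_{2,i,k}$ with the unbiasedness of the stochastic sampling yields $\expect[\delta_t \mid \mathcal{F}_{t-1}] = \Delta_t$, so $e_t = e_{t-1} + (\Delta_t - \delta_t)$ is a martingale-type recursion whose cross term vanishes, giving $\expect[\|e_t\|^2] = \expect[\|e_{t-1}\|^2] + \expect[\|\Delta_t - \delta_t\|^2]$. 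By the law of total variance through the two quantization stages, the per-step error splits into three additive pieces: (i) the SPIDER sampling variance of the averaged gradient difference, (ii) the first-stage quantization variance of $\Phi_{1,i,k}$, and (iii) the second-stage quantization variance of $\Phi_{2,i,k}$. Using $L$-smoothness, each sampled gradient difference has norm at most $L\|x_t-x_{t-1}\| \leq L\eta_{t-1}D$, so term (i) is at most $L^2\eta_{t-1}^2D^2/(MS_{i,k})$; terms (ii) and (iii) are controlled by \cref{lem:var_multi}, i.e.\ $(d/s^2)\|\cdot\|_\infty^2$ with the quantized quantities again bounded in $\ell_\infty$ (hence $\ell_2$) norm by $L\eta_{t-1}D$. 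Substituting $S_{i,k}=p_i/M$, $s_{1,i,k}^2 = dp_i/M$ and $s_{2,i,k}^2 = dp_i$ makes each of the three terms $\Theta(L^2D^2\eta_{t-1}^2/p_i) = \Theta(L^2D^2/p_i^3)$.

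At the start of a period ($k=1$) the full-batch choice $\mathcal{S}_{i,1}^m = \{f_{m,j}\}$ makes the pre-quantization average exactly $\nabla f(x_t)$, so $e_t$ is pure quantization error; the schedules $s_{1,i,1}^2 = dp_i^2/M$, $s_{2,i,1}^2 = dp_i^2$ together with $\|\cdot\|_\infty \leq G_\infty$ give $\expect[\|e_t\|^2] = O(G_\infty^2/p_i^2)$. Accumulating the at most $p_i$ per-step increments of order $L^2D^2/p_i^3$ over one period on top of this initial term yields $\expect[\|e_t\|^2] = O((G_\infty^2+L^2D^2)/p_i^2)$; since $p_i \geq (t+1)/2$ inside the period this is precisely the claimed $O(1/(t+1)^2)$ bound, with the constant $2(G_\infty^2+6L^2D^2)$ arising from the three variance sources and the accumulation.

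The remainder is routine. Convexity and $L$-smoothness of $f$ give the Frank--Wolfe descent inequality for approximate gradients (cf.\ \cref{lem:aryan1}),
\begin{equation*}
f(x_{t+1})-f(x^*) \leq (1-\eta_t)\big(f(x_t)-f(x^*)\big) + \eta_t D\|e_t\| + \tfrac{1}{2}LD^2\eta_t^2,
\end{equation*}
via the optimality of $v_t$ and Cauchy--Schwarz. Taking expectations, inserting $\eta_t = 2/(t+1)$ and the bound on $\expect[\|e_t\|]$, and unrolling the recursion (multiplying through by $t(t+1)$ so the $\tfrac{t-1}{t+1}$ factors telescope) delivers the stated $O(1/T)$ rate. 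The main obstacle is the gradient-error lemma: one must carefully verify that unbiasedness and the nested conditional-independence structure survive both successive quantization stages and the averaging over workers, and then confirm that the specific $s$-schedules balance the SPIDER sampling variance against \emph{both} quantization variances so that all three remain at order $1/p_i^3$ per step. It is exactly this balance that preserves the $O(1/T)$ rate despite communicating only quantized gradients.
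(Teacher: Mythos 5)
Your proposal is correct and follows essentially the same route as the paper: the core is the gradient-error bound $\expect[\|\nabla f(x_{i,k})-\bar g_{i,k}\|^2]=\mathcal{O}\bigl((G_\infty^2+L^2D^2)/p_i^2\bigr)$ (the paper's \cref{lem:convex_finite_error}), obtained by combining the SPIDER sampling variance with the two quantization variances from \cref{lem:var_multi} and the smoothness bound $\|g^m_{i,k}\|_\infty\le L\eta_{i,k-1}D$, followed by the approximate-gradient Frank--Wolfe recursion with $\eta_t=2/(t+1)$ and the $t(t+1)$ telescoping (the paper's \cref{lem:icml_thm1}). The only organizational difference is that you run a single martingale recursion on the total error $e_t$ and split each increment three ways by the law of total variance, whereas the paper first separates $\nabla f - g$ (sampling) from $g-\bar g$ (quantization) and recurses on each; the two bookkeepings are equivalent since they rest on the same conditional-unbiasedness/orthogonality structure, and your verification that all three per-step variance terms are balanced to $\Theta(L^2D^2/p_i^3)$ under the prescribed $S_{i,k}$ and $s$-schedules matches the paper's calculation. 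The only blemishes are cosmetic: the factor of $2$ lost in passing from $1/p_i^2$ to $1/(t+1)^2$ (the paper keeps the bound as $C_1/p_i$, which is where the final constant $4C_1D$ comes from), and the inverted parenthetical ``$\ell_\infty$ (hence $\ell_2$)'', which should read the other way since \cref{lem:var_multi} needs the $\ell_\infty$ bound and you obtain it from the $\ell_2$ bound.
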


\begin{corollary}\label{cor:convex_finite}
To obtain an $\epsilon$-suboptimal solution, we need to run the QFW method for at most $\mathcal{O}(1/\epsilon)$ iterations. The IFO complexity per worker in this case  is 
$\mathcal{O}(\frac{N\ln(1/\epsilon)+1/\epsilon^2}{M})$. 	
\end{corollary}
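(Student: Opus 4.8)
The plan is to read the iteration count straight off \cref{thm:finite_convex} and then turn the corollary into a bookkeeping exercise that tallies the component-gradient evaluations accumulated across the geometrically growing periods. The bound in \cref{thm:finite_convex} has the form $\expect[f(x_{T+1})]-f(x^*)\le \mathcal{O}(1/T)$, so driving the suboptimality below $\epsilon$ requires only $T=\mathcal{O}(1/\epsilon)$ iterations; this settles the first claim immediately. The substantive part is to count the IFO complexity \emph{per worker}, i.e., the total number of evaluations $\nabla f_{m,j}$ that a single worker performs over all $T$ iterations, and to show it equals $\mathcal{O}(\tfrac{N\ln(1/\epsilon)+1/\epsilon^2}{M})$.

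First I would organize the count by periods. With $p_i=2^{i-1}$, running $T$ iterations means completing $I$ full periods, where $T=\sum_{i=1}^{I}p_i=2^{I}-1$, so $I=\mathcal{O}(\log T)=\mathcal{O}(\log(1/\epsilon))$. Within period $i$ there are two kinds of steps. At the head of the period ($k=1$) each worker evaluates its full local batch $\mathcal{S}_{i,1}^m=\{f_{m,j}:j\in[n]\}$, costing $n$ gradient evaluations. For each of the remaining $p_i-1$ steps ($2\le k\le p_i$) each worker samples $S_{i,k}=p_i/M$ functions and forms the difference $\nabla f_{\mathcal{S}_{i,k}^m}(x_t)-\nabla f_{\mathcal{S}_{i,k}^m}(x_{t-1})$, costing $2S_{i,k}=2p_i/M$ evaluations. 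Thus the per-worker cost of period $i$ is
\begin{equation*}
n+(p_i-1)\cdot\frac{2p_i}{M}\le n+\frac{2p_i^2}{M}.
\end{equation*}

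Summing over all periods and using $n=N/M$, the total per-worker IFO complexity becomes
\begin{equation*}
\sum_{i=1}^{I}\Bigl(n+\frac{2p_i^2}{M}\Bigr)=I\,n+\frac{2}{M}\sum_{i=1}^{I}4^{i-1}
=\mathcal{O}\Bigl(\frac{N\log(1/\epsilon)}{M}\Bigr)+\frac{2}{M}\cdot\frac{4^{I}-1}{3}.
\end{equation*}
Since $4^{I}=(2^{I})^2=\Theta(T^2)=\Theta(1/\epsilon^2)$, the second term is $\mathcal{O}(1/(M\epsilon^2))$, and combining the two contributions yields the claimed $\mathcal{O}\bigl(\tfrac{N\ln(1/\epsilon)+1/\epsilon^2}{M}\bigr)$.

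The only delicate point is the geometric accounting: because the period lengths double, the difference-step cost $\sum_i p_i^2=\Theta(T^2)$ is dominated by the \emph{final} period alone, so this single contribution is what produces the $1/\epsilon^2$ term, while the full-batch restarts contribute only the mild $\log(1/\epsilon)$ factor multiplying $N/M$. The hard part will be confirming that the prescribed sample size $S_{i,k}=p_i/M$ together with $I=\Theta(\log T)$ is precisely what makes these two pieces balance into the stated bound; I would also verify that the factor of two from evaluating each sampled gradient at two consecutive iterates is absorbed harmlessly into the $\mathcal{O}(\cdot)$ and that neglecting the $-1$ in $p_i-1$ does not change the order.
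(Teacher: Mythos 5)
Your proposal is correct and follows essentially the same route as the paper: read $T=\mathcal{O}(1/\epsilon)$ off \cref{thm:finite_convex}, then sum the per-period cost $n+(p_i-1)S_{i,k}$ over the $I=\mathcal{O}(\log(1/\epsilon))$ doubling periods, with the full-batch restarts contributing $nI=\mathcal{O}(N\log(1/\epsilon)/M)$ and the geometric sum $\sum_i p_i^2/M=\Theta(T^2/M)$ contributing the $1/(M\epsilon^2)$ term. The extra factor of $2$ you track for evaluating each freshly sampled gradient at two consecutive iterates is indeed absorbed into the constants, exactly as you anticipate.
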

\cref{cor:convex_finite} shows IFO complexity per worker is linear in $1/M$,
which implicates that increasing the number of workers $M$ will decrease the 
IFO complexity per worker effectively, thus accelerate the optimization 
procedure. Also, our numerical experiments in 
\cref{sec:experiments-qfw} showed that our proposed 
method requires significantly fewer bits than the unquantized version to achieve a specific accuracy.

\subsection{Stochastic Setting}
QFW can also be applied to the stochastic case. Recall that in the stochastic setting we 
assume that the objective function is $f(x)\ = \expect_{z\sim 
P}[\tilde{f}(x,z)]$ and each worker has access to independent samples 
$\tilde{f}(x,z)$. Before proving the convergence properties of QFW 
for the stochastic setting, we first make a standard assumption on 
$\tilde{f}(x,z)$.

\begin{assump}
	\label{assump_on_f_stoch_convex}
	The stochastic function $\tilde{f}(x,z)$ is convex, $L$-smooth on 
	$\constraint$. The gradient $\nabla \tilde{f}(x,z)$ is an unbiased estimate 
	of $\nabla f(x)$ with bounded variance $\sigma^2$,
	and satisfies that $\|\nabla \tilde{f}(x,z)\|_\infty \leq 
	G_\infty,  
	\forall\ x \in \constraint, z \in \mathbb{R}^q$.
\end{assump}

\begin{theorem}[Stochastic Convex]
	\label{thm:stoch_convex}
	Consider QFW outlined in Algorithm~\ref{alg:convex_dist_fw}.
	Under \cref{assump_on_K,assump_on_f_stoch_convex},
	if we set $ p_i = 2^{i-1}, S_{i,1} =  \frac{\sigma^2 
	p_i^2}{ML^2D^2}$, $S_{i,k} = p_i/M = 2^{i-1}/M, 
	\forall\ i \ge 1, k \ge 2$, and $\eta_{i,k}=2/(p_i+k)={2}/(2^{i-1}+k)$, and 
	use the 
	%	$s_{1,i,k}=(\sqrt{\frac{dp_i^3}{M}})$-\mscheme, and 
	%$s_{2,i,k}=(\sqrt{dp_i^3})$-\mscheme 
	$s_{1,i,1}=(\sqrt{\frac{dp_i^2}{M}})$-\mscheme,  
	$s_{2,i,1}=(\sqrt{dp_i^2})$-\mscheme for $k=1$, and 
	$s_{1,i,k}=(\sqrt{\frac{dp_i}{M}})$-\mscheme,  
	$s_{2,i,k}=(\sqrt{dp_i})$-\mscheme for $k \ge 2$
	as 
	$\Phi_{1,i,k}$ and $\Phi_{2,i,k}$ in \cref{alg:convex_dist_fw}, 
	then the output $x_{T+1} \in \constraint$  satisfies
	\begin{equation*}
	\expect[f(x_{T+1})]-f(x^*) \leq 
	 \frac{4D\sqrt{13L^2D^2+2G^2_\infty}+2LD^2}{T},  
	\end{equation*}
	where $x^*$ 
	is a minimizer of $f$ on $\constraint$. 
\end{theorem}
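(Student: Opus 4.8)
The plan is to follow the same two-part template used for the finite-sum case in \cref{thm:finite_convex}: first establish a Frank-Wolfe descent inequality whose error term is the gradient estimation error, and then bound that error by tracking the SPIDER-style recursion together with the variance injected by the two quantization stages. For the descent step I would invoke the $L$-smoothness of $f$, the update $x_{t+1}=x_t+\eta_t(v_t-x_t)$, the optimality of $v_t=\argmin_{v\in\constraint}\langle v,\bar{g}_t\rangle$, and convexity of $f$, splitting $\nabla f(x_t)=\bar{g}_t+(\nabla f(x_t)-\bar{g}_t)$ and applying Cauchy--Schwarz with $\|v_t-x^*\|\le D$ to obtain
\begin{equation*}
f(x_{t+1})-f(x^*)\le(1-\eta_t)\big(f(x_t)-f(x^*)\big)+\eta_t D\,\|\nabla f(x_t)-\bar{g}_t\|+\frac{LD^2\eta_t^2}{2}.
\end{equation*}
The quantity to control is therefore $\expect[\|\nabla f(x_t)-\bar{g}_t\|]$, which by Jensen's inequality is at most $\sqrt{\expect[\|\nabla f(x_t)-\bar{g}_t\|^2]}$.

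The heart of the argument is bounding $\expect[\|\nabla f(x_t)-\bar{g}_t\|^2]$ inside period $i$, where $t=\sum_{j<i}p_j+k$. At the start of the period ($k=1$) the estimate $\bar{g}_t$ has three independent error sources. The stochastic sampling variance, by \cref{assump_on_f_stoch_convex} and the choice $S_{i,1}=\sigma^2 p_i^2/(ML^2D^2)$, is $\sigma^2/(MS_{i,1})=L^2D^2/p_i^2$. The two quantization variances I would bound via \cref{lem:var_multi}: after averaging $M$ independent worker encodings and bounding $\|g_{i,1}^m\|_\infty\le G_\infty$, the first stage contributes at most $\frac{1}{M}\frac{d}{s_{1,i,1}^2}G_\infty^2=G_\infty^2/p_i^2$ using $s_{1,i,1}^2=dp_i^2/M$, and the master-to-worker stage contributes $\frac{d}{s_{2,i,1}^2}G_\infty^2=G_\infty^2/p_i^2$ using $s_{2,i,1}^2=dp_i^2$ (here $\|\tilde{g}\|_\infty\le G_\infty$ since decoding preserves the $\ell_\infty$ bound). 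Thus the fresh error is $\mathcal{O}\big((L^2D^2+G_\infty^2)/p_i^2\big)$.

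For the remaining iterates ($k\ge2$), unbiasedness of both the sampling and the quantization makes the cross term in the recursion vanish, so the error telescopes as $\expect[\|\nabla f(x_t)-\bar{g}_t\|^2]=\expect[\|\nabla f(x_{t-1})-\bar{g}_{t-1}\|^2]+\expect[\|e_t\|^2]$, where $e_t$ collects the sampling and quantization errors of the gradient \emph{difference}. Using $\|x_t-x_{t-1}\|\le\eta_{t-1}D$ and $L$-smoothness to get $\|\nabla f_{\mathcal{S}}(x_t)-\nabla f_{\mathcal{S}}(x_{t-1})\|_\infty\le L\eta_{t-1}D$, together with $S_{i,k}=p_i/M$, $s_{1,i,k}^2=dp_i/M$, and $s_{2,i,k}^2=dp_i$, each term is $\expect[\|e_t\|^2]=\mathcal{O}(L^2D^2\eta_{t-1}^2/p_i)$; summing over the $p_i$ iterates of the period again gives $\mathcal{O}(L^2D^2/p_i^2)$. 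Collecting the constants carefully yields $\expect[\|\nabla f(x_t)-\bar{g}_t\|^2]\le(13L^2D^2+2G_\infty^2)/p_i^2$.

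To finish, I would observe that $p_i=2^{i-1}$ forces $p_i\le t<2p_i$, so the bound becomes $\expect[\|\nabla f(x_t)-\bar{g}_t\|]\le\sqrt{13L^2D^2+2G_\infty^2}/p_i\le 2\sqrt{13L^2D^2+2G_\infty^2}/t$, and that the prescribed step size simplifies to $\eta_{i,k}=2/(p_i+k)=2/(t+1)$. Substituting into the descent inequality and solving the resulting recursion $h_{t+1}\le(1-\tfrac{2}{t+1})h_t+(DG'+\tfrac{LD^2}{2})\eta_t^2$ (with $G'=\sqrt{13L^2D^2+2G_\infty^2}$) by the standard Frank-Wolfe induction produces the $\mathcal{O}(1/T)$ rate with constant $4D\sqrt{13L^2D^2+2G_\infty^2}+2LD^2$. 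I expect the main obstacle to be the bookkeeping in the error bound: correctly propagating the two-stage quantization variance through the master's averaging, confirming that unbiasedness kills every cross term in the SPIDER recursion, and bounding the $\|g\|_\infty$ factor in \cref{lem:var_multi} by $G_\infty$ for fresh gradients but by the smoothness quantity $L\eta_{t-1}D$ for the differences—it is precisely these distinctions that make the chosen $s$-values deliver the matching $1/p_i^2$ rate.
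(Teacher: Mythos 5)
Your proposal is correct and follows essentially the same route as the paper: the same orthogonal decomposition of the gradient error into SPIDER sampling variance and two-stage quantization variance (each bounded via \cref{lem:var_multi} with $\|g\|_\infty\le G_\infty$ for fresh gradients and $\le L\eta_{t-1}D$ for differences), the same telescoping within each period to reach $(13L^2D^2+2G_\infty^2)/p_i^2$, and the same Frank--Wolfe descent recursion (the paper's \cref{lem:icml_thm1}) to conclude. The only cosmetic difference is that you merge the sampling and quantization increments into a single conditionally unbiased error $e_t$ and run one recursion, whereas the paper bounds $\|\nabla f-g\|^2$ and $\|g-\bar g\|^2$ separately before adding them; both are valid for the same reason.
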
  
\begin{corollary}\label{cor:convex_stoch}
To obtain an $\epsilon$-suboptimal solution, we need to run the QFW method outlined in \cref{alg:convex_dist_fw} for at most $\mathcal{O}(1/\epsilon)$ 
iterations. The SFO complexity per worker in this case is 
$\mathcal{O}(1/(M\epsilon^2))$.
\end{corollary}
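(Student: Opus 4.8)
The plan is to derive this corollary directly from the iteration-complexity bound of \cref{thm:stoch_convex} and then perform a careful accounting of the stochastic gradient queries aggregated over all periods. Two quantities must be controlled: the number of outer iterations $T$ needed for $\expect[f(x_{T+1})]-f(x^*)\le\epsilon$, and the cumulative per-worker sample count implied by the sampling schedule $\{S_{i,k}\}$.

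First I would read off the iteration count. \cref{thm:stoch_convex} gives
\begin{equation*}
\expect[f(x_{T+1})]-f(x^*) \leq \frac{4D\sqrt{13L^2D^2+2G^2_\infty}+2LD^2}{T},
\end{equation*}
whose right-hand side is $\mathcal{O}(1/T)$ since $D, L, G_\infty$ are problem constants. Forcing this to be at most $\epsilon$ yields $T=\mathcal{O}(1/\epsilon)$, establishing the first claim. Next I would count the per-worker SFO queries. With $p_i = 2^{i-1}$, inside period $i$ the schedule draws $S_{i,1}=\tfrac{\sigma^2 p_i^2}{ML^2D^2}=\mathcal{O}(p_i^2/M)$ samples at the period's first iterate ($k=1$), and $S_{i,k}=p_i/M$ samples at each of the remaining $p_i-1$ iterates ($k\ge 2$). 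Summing within the period gives
\begin{equation*}
S_{i,1} + \sum_{k=2}^{p_i} S_{i,k} \;=\; \mathcal{O}\!\left(\frac{p_i^2}{M}\right) + (p_i-1)\cdot\frac{p_i}{M} \;=\; \mathcal{O}\!\left(\frac{p_i^2}{M}\right),
\end{equation*}
so each period costs $\mathcal{O}(p_i^2/M)$ queries per worker.

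I would then aggregate across periods using the geometric structure. Choosing $T$ at a period boundary, the relation $\sum_{i=1}^{I} p_i = 2^{I}-1$ shows $T = \Theta(2^{I})$ and hence $p_I = \Theta(T)$. Summing the per-period costs, $\sum_{i=1}^{I}\mathcal{O}(p_i^2/M)=\tfrac{1}{M}\sum_{i=1}^{I}\mathcal{O}(4^{i-1})=\mathcal{O}(4^{I}/M)=\mathcal{O}(T^2/M)$, since the dominant term in a geometric sum with ratio $4$ is its last term. Substituting $T=\mathcal{O}(1/\epsilon)$ gives total per-worker SFO complexity $\mathcal{O}(T^2/M)=\mathcal{O}(1/(M\epsilon^2))$, which is the second claim.

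The main obstacle is the bookkeeping of the $S_{i,1}$ batch: unlike the finite-sum case—where the first iterate of each period uses a fixed full batch of size $n=N/M$, producing the $\tfrac{N\ln(1/\epsilon)}{M}$ term from $\mathcal{O}(\log T)$ periods—the stochastic schedule inflates this initial batch to $\mathcal{O}(p_i^2/M)$, so one must verify that these initial batches do not dominate asymptotically and instead contribute the same $\mathcal{O}(T^2/M)$ order as the $k\ge 2$ increments. Once it is confirmed that both contributions scale as the square of the last period length, the geometric summation and the substitution $T=\mathcal{O}(1/\epsilon)$ are routine, and no $\log$ factor survives in the stochastic setting.
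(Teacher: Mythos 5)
Your proposal is correct and follows essentially the same route as the paper: read off $T=\mathcal{O}(1/\epsilon)$ from \cref{thm:stoch_convex}, bound the per-period cost by $S_{i,1}+\sum_{k\ge 2}S_{i,k}=\mathcal{O}(p_i^2/M)$, and sum the geometric series $\sum_i 4^{i-1}/M=\mathcal{O}(4^I/M)=\mathcal{O}(T^2/M)$ using $T=\Theta(2^I)$. Your closing observation about why the stochastic initial batches $S_{i,1}=\mathcal{O}(p_i^2/M)$ contribute at the same $T^2/M$ order (so no $\log$ factor survives, unlike the finite-sum case) matches the accounting the paper performs implicitly.
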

\cref{cor:convex_stoch} shows SFO complexity per worker is linear in $1/M$,
which implies a speed-up for distributed settings. It also shows that the 
dependency of QFW's complexity on $\eps$ for convex settings is optimal.

\begin{remark}
In theory, the partitioning levels of 
quantization do depend 
on the number of iterations. Thus more transmission bits are required 
over 
the optimization procedure. But it will not render our QFW method 
communication-expensive. We set the partitioning levels conservatively to 
achieve the theoretical guarantees. However, as shown in 
the experiments (\cref{sec:experiments-qfw}), 
much smaller 
quantization levels (which are actually constants) are usually 
preferred in practice.
\end{remark}
	
	%!TEX root = Thesis-Mingrui.tex
%\vspace{-1mm}
\section{Non-Convex Optimization}

With slightly different parameters, QFW can be applied to 
non-convex settings as well. In \textit{unconstrained} non-convex optimization 
problems, the gradient norm $\|\nabla f\|$ is usually a 
good measure of convergence as $\|\nabla f\| \to 
0$ implies convergence to a stationary point. However, in 
the constrained setting we study the 
Frank-Wolfe Gap \citep{jaggi2013revisiting,lacoste2016convergence} defined as
%%%%
\begin{equation*}
\label{eq:fw_gap}
\mathcal{G}(x) = \max_{v \in \constraint} \langle v-x, -\nabla f(x)\rangle.
\end{equation*}
%%%
For constrained optimization problem, if a point 
$x$ satisfies $\mathcal{G}(x)=0$, then it is a first-order stationary point. 
Also, by  definition, we have $\mathcal{G}(x) 
\geq 0, 
\forall\ x \in \constraint$. 
We first analyze the 
finite-sum setting and then the more general stochastic setting.

\subsection{Finite-Sum Setting}

To extend our results to the non-convex setting we first assume that the 
following condition is satisfied.

\begin{assump}
	\label{assump_on_f_i_nonconvex}
	The component functions $f_{m,i}$ are $L$-smooth on $\constraint$ and 
	uniformly bounded, \emph{i.e.},  
	$\sup_{x \in 
		\constraint}|f_{m,i}(x)| \leq M_0$. Further, $\|\nabla f_{m,i}(x)\|_\infty \leq G_\infty,
	\forall\ m \in [M], i \in [n], x, y \in \constraint$.
\end{assump}

\begin{theorem}[Finite-Sum 
Non-Convex]\label{thm:finite_nonconvex} 
		Under \cref{assump_on_K,assump_on_f_i_nonconvex},		
		if we set $ p_i=\sqrt{n}$, $\mathcal{S}_{i,1}^m = \{f_{m,j}: j \in 
		[n]\}$ (\emph{i.e.}, each worker $m$ samples 
		all its $n$ component functions), $S_{i,k} = \sqrt{n}/M\ \forall\ i 
		\ge1, k \ge 2, \eta_t=T^{-1/2}\ \forall\ t$, 
		and use the  
		$s_{1,i,1}=(\sqrt{\frac{Td}{M}})$-\mscheme,  
		$s_{2,i,1}=(\sqrt{Td})$-\mscheme for $k=1$, and 
		$s_{1,i,k}=(\frac{d^{1/2}n^{1/4}}{\sqrt{M}})$-\mscheme,  
		$s_{2,i,k}=(d^{1/2}n^{1/4})$-\mscheme for $k \ge 2$
		as 
		$\Phi_{1,i,k}$ and $\Phi_{2,i,k}$ in \cref{alg:convex_dist_fw}, 
		then the output $x_o \in \constraint$ satisfies
%%%%%%%%%%%
		\begin{equation*}
		\begin{split}
		\expect[\mathcal{G}(x_o)]%&=\frac{\sum_{t=1}^T 
			%\expect[\mathcal{G}(x_t)]}{T} \\
		&\leq 
		\frac{2M_0+D\sqrt{3L^2D^2+2G_\infty^2}+\frac{LD^2}{2}}{\sqrt{T}}.		
		\end{split}
		\end{equation*}
	\end{theorem}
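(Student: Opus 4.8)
The plan is to pair a per-iteration descent inequality for the Frank-Wolfe gap with a bound on the gradient-estimation error $\expect[\|\nabla f(x_t) - \bar{g}_t\|^2]$ that simultaneously controls the SPIDER sampling variance and the two rounds of quantization introduced by the master/worker communication.

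First I would establish the descent step exactly as in the proof of \cref{thm:nonconvex}. By the $L$-smoothness of $f$ and the update $x_{t+1}=x_t+\eta_t(v_t-x_t)$ with $\|v_t-x_t\|\le D$,
\[
f(x_{t+1}) \le f(x_t) + \eta_t\langle \nabla f(x_t), v_t-x_t\rangle + \frac{L\eta_t^2D^2}{2}.
\]
Writing $v_t'=\argmin_{v\in\constraint}\langle v,\nabla f(x_t)\rangle$ so that $\mathcal{G}(x_t)=-\langle v_t'-x_t,\nabla f(x_t)\rangle$, and using the optimality of $v_t$ for $\bar{g}_t$ together with Cauchy--Schwarz, the inner product splits as $\langle\nabla f(x_t),v_t-x_t\rangle \le -\mathcal{G}(x_t)+D\|\nabla f(x_t)-\bar{g}_t\|$. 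Rearranging, summing over $t=1,\dots,T$, taking expectations, and using $|f|\le M_0$ together with $\eta_t=T^{-1/2}$ (so $\sum_t\eta_t^2=1$ and $\expect[\mathcal{G}(x_o)]=\frac1T\sum_t\expect[\mathcal{G}(x_t)]$) reduces the theorem to the key estimate $\expect[\|\nabla f(x_t)-\bar{g}_t\|^2]\le (3L^2D^2+2G_\infty^2)/T$ for every $t$.

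The central lemma is this gradient-error bound, where the SPIDER structure and the quantization variances interact. Writing $e_t=\bar{g}_t-\nabla f(x_t)$, I would track $\expect[\|e_t\|^2]$ across a period $i$ of length $p_i=\sqrt n$. At the period start ($k=1$) each worker averages all $n$ of its local gradients, so the full batch is exact and the only error comes from the two quantizations; \cref{lem:var_multi} with $s_{1,i,1}=\sqrt{Td/M}$, $s_{2,i,1}=\sqrt{Td}$ and $\|g\|_\infty\le G_\infty$ bounds this by $2G_\infty^2/T$. For $k\ge 2$ the estimator obeys $e_t = e_{t-1} + \big[\tfrac1M\sum_m g_{i,k}^m-(\nabla f(x_t)-\nabla f(x_{t-1}))\big] + (\text{quantization error})$; since the sampling term and each quantization error are conditionally mean-zero, the cross terms vanish and $\expect[\|e_t\|^2]=\expect[\|e_{t-1}\|^2]+(\text{sampling var})+(\text{quant var})$. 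Using $\|\nabla f_j(x_t)-\nabla f_j(x_{t-1})\|\le L\eta_{t-1}D = LD/\sqrt T$, the choice $S_{i,k}=\sqrt n/M$ makes the averaged sampling variance $O(L^2D^2/(T\sqrt n))$, and the levels $s_{1,i,k}=d^{1/2}n^{1/4}/\sqrt M$, $s_{2,i,k}=d^{1/2}n^{1/4}$ make each quantization variance scale identically as $O(L^2D^2/(T\sqrt n))$. Summing these $\sqrt n$ per-step contributions telescopes to $O(L^2D^2/T)$, so that adding the period-start term gives $\expect[\|e_t\|^2]\le (3L^2D^2+2G_\infty^2)/T$ once constants are tracked: the coefficient $3=1+2$ splits into one part from sampling and two from the two quantization rounds over the period, while $2G_\infty^2$ comes from the period-start quantization.

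Finally I would substitute $\expect[\|\nabla f(x_t)-\bar{g}_t\|]\le \sqrt{(3L^2D^2+2G_\infty^2)/T}$ (Jensen) into the summed descent inequality: the term $DT^{-1/2}\sum_t\expect[\|\nabla f(x_t)-\bar{g}_t\|]$ collapses to $D\sqrt{3L^2D^2+2G_\infty^2}$, and dividing through by $\sqrt T$ yields the stated bound. The main obstacle is the gradient-error lemma itself: the delicate part is verifying that the prescribed levels $s_{1,\cdot},s_{2,\cdot}$ force the per-step quantization variance to scale like the sampling variance (so both accumulate to $O(L^2D^2/T)$ rather than inflating by the period length $\sqrt n$), and confirming that all cross terms truly vanish under the correct conditioning---in particular that the master's quantization is unbiased given the averaged decoded worker signals, so that $\bar{g}_t$ remains an unbiased-increment estimator throughout each period.
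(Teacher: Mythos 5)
Your proposal is correct and follows essentially the same route as the paper's proof: the same Frank--Wolfe-gap descent inequality, the same reduction to the gradient-error bound $\expect[\|\nabla f(x_t)-\bar g_t\|^2]\le (3L^2D^2+2G_\infty^2)/T$, and the same per-period accounting in which the sampling variance contributes $L^2D^2/T$, the two quantization rounds contribute $2L^2D^2/T$ over the period plus $2G_\infty^2/T$ at the period start, with cross terms vanishing by conditional unbiasedness. The only cosmetic difference is that the paper first splits the error through the unquantized SPIDER estimator $g_{i,k}$ and recurses each piece separately, whereas you recurse the total error $e_t$ directly; the underlying orthogonality argument is identical.
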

%\end{mdframed}
\begin{corollary}\label{cor:nonconvex_finite}
\cref{alg:convex_dist_fw} finds an $\epsilon$-first-order stationary point 
after at most $\mathcal{O}(1/\epsilon^2)$ iterations. The IFO complexity per 
worker is $\mathcal{O}(\sqrt{N}/(\epsilon^2\sqrt{M}))$.
\end{corollary}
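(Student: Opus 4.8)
The plan is to deduce the corollary directly from Theorem~\ref{thm:finite_nonconvex} by a two-step counting argument: first converting the convergence bound into an iteration count, and then tallying the per-worker component-gradient evaluations accumulated over all iterations. No new analysis of the Frank-Wolfe gap is needed; the theorem already supplies the convergence rate, and the corollary is a matter of bookkeeping under the parameter choices specified there.

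First I would read off the iteration count. Theorem~\ref{thm:finite_nonconvex} gives $\expect[\mathcal{G}(x_o)] \leq K/\sqrt{T}$, where $K = 2M_0 + D\sqrt{3L^2D^2 + 2G_\infty^2} + LD^2/2$ is a problem-dependent constant independent of $T$. Setting $K/\sqrt{T} \leq \epsilon$ and solving yields $T \geq K^2/\epsilon^2$, so $T = \mathcal{O}(1/\epsilon^2)$ iterations suffice to produce an $\epsilon$-first-order stationary point in expectation. This establishes the first claim.

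Next I would count the IFO calls per worker. The $T$ iterations are organized into periods of fixed length $p_i = \sqrt{n}$ (recall $n = N/M$), so there are $\mathcal{O}(T/\sqrt{n})$ periods. Within a period I separate the two cost regimes dictated by \cref{alg:convex_dist_fw}: at the first step ($k=1$) each worker evaluates a full local gradient over $\mathcal{S}_{i,1}^m = \{f_{m,j}: j\in[n]\}$, costing $n$ component-gradient evaluations; at each of the remaining $p_i - 1 = \sqrt{n}-1$ steps ($k \geq 2$) each worker computes the gradient difference $g_{i,k}^m = \nabla f_{\mathcal{S}_{i,k}^m}(x_t) - \nabla f_{\mathcal{S}_{i,k}^m}(x_{t-1})$ over a sample of size $S_{i,k} = \sqrt{n}/M$, costing $2 S_{i,k} = 2\sqrt{n}/M$ evaluations (two evaluation points). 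Summing within one period gives $n + (\sqrt{n}-1)\cdot 2\sqrt{n}/M = \mathcal{O}(n)$ evaluations per worker.

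Finally I would aggregate. Multiplying the $\mathcal{O}(n)$ per-period cost by the $\mathcal{O}(T/\sqrt{n})$ periods yields a per-worker IFO complexity of $\mathcal{O}\big((T/\sqrt{n})\cdot n\big) = \mathcal{O}(T\sqrt{n})$. Substituting $T = \mathcal{O}(1/\epsilon^2)$ and $\sqrt{n} = \sqrt{N/M} = \sqrt{N}/\sqrt{M}$ gives $\mathcal{O}(\sqrt{N}/(\epsilon^2\sqrt{M}))$, as claimed. The only delicate point — the closest thing to an obstacle in an otherwise routine count — is confirming which regime sets the per-period rate: the incremental contribution $(\sqrt{n}-1)\cdot 2\sqrt{n}/M \approx 2n/M$ is of the same order $\mathcal{O}(n)$ as the full-gradient term but never larger for $M \geq 1$, so neither regime inflates the final bound and the full-gradient cost at $k=1$ is what the rate hinges on.
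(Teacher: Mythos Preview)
Your proposal is correct and matches the paper's own argument essentially line for line: the paper derives $T=\mathcal{O}(1/\epsilon^2)$ from the $K/\sqrt{T}$ bound and then computes the per-worker IFO cost as $[n+2(p-1)S_{i,k}]\cdot T/p = \mathcal{O}(\sqrt{n}/\epsilon^2)=\mathcal{O}(\sqrt{N}/(\epsilon^2\sqrt{M}))$, which is exactly your per-period cost times the number of periods. Your observation about which regime dominates the per-period count is a nice clarification that the paper leaves implicit.
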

\cref{cor:nonconvex_finite} shows IFO complexity per worker is linear in 
$1/\sqrt{M}$, implicating that increasing the number of workers $M$ will 
decrease the IFO complexity per worker effectively, thus accelerate the 
optimization 
procedure.  

\subsection{Stochastic Setting}
For non-convex objective function, the stochastic optimization problem in 
\eqref{stochastic_problem} can be solved 
approximately by the QFW method described in \cref{alg:convex_dist_fw}. 
Specifically, the objective function $f(x) =  
\expect_{z\sim 
	P}[\tilde{f}(x,z)]$ can be approximated by a finite-sum problems with $B$ 
	samples where the samples $\{z_1,\dots,z_B\}$ are independently drawn
according to the probability distribution $P$. Thus we define the surrogate 
function $\hat{f}$
\begin{equation*}
\hat{f}(x) = \frac{1}{B} \sum_{i=1}^B \tilde{f}(x, z_i),
\end{equation*}
as the finite-sum approximation of $f(x)$. As a result, we can apply QFW on 
$\hat{f}$, thus optimize $f$ approximately. The 
algorithm is outlined in \cref{alg:nc_stoch_to_finitesum}.

\begin{algorithm}[t]
	\begin{algorithmic}[1]
		\Require constraint set $\constraint$, iteration 
		number $T$, No. of workers $M$, 
		initial 
		point $x_1 \in \constraint$
		\State Obtain $T$ independent samples of $z_i$, and define finite-sum 
		$\hat{f}(x) = \frac{1}{T} \sum_{i=1}^T \tilde{f}(x, z_i)$ 
		\State Apply \cref{alg:convex_dist_fw} on $\hat{f}$ with $N=T$ and all 
		other parameters being the same as in \cref{thm:finite_nonconvex}
		\Ensure $x_o$, %(for the non-convex setting), 
		where $x_o$ is chosen from $\{x_1, x_2, 
		\cdots, x_T\}$ uniformly at random
	\end{algorithmic}
	\caption{\sncAlg (SNC-QFW)\label{alg:nc_stoch_to_finitesum}}
\end{algorithm}

In the non-convex setting, if we further assume that $\tilde{f}(x,z)$ is 
$G$-Lipschitz for all $z \in \mathbb{R}^q$, then we have the following lemma:
\begin{lemma}[Theorem 5 of \citep{reddi2016stochastic}]\label{lem:approx}
	 If we define 
	$\hat{\mathcal{G}}(x) = \max_{v \in \constraint} \langle v-x, -\nabla 
	\hat{f}(x) \rangle$, then
	%\begin{equation}
	$\expect[\mathcal{G}(x) - \hat{\mathcal{G}}(x)] \leq \frac{GD}{\sqrt{B}}.$ 
	%\end{equation}
	Recall that $D$ is the diameter of $\constraint$ as defined in 
	\cref{assump_on_K}, $\mathcal{G}(x) = \max_{v \in \constraint} \langle v-x, 
	-\nabla f(x)\rangle$. 
	Thus for the output $x_o$, we have
	\begin{equation*}
	\expect[\mathcal{G}(x_o)] \leq \expect[\hat{\mathcal{G}}(x_o)] + 
	\frac{GD}{\sqrt{B}}.
	\end{equation*}	
\end{lemma}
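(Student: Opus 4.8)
The plan is to control the difference of the two Frank--Wolfe gaps by the difference of the two gradients, and then bound the latter by a standard i.i.d.\ averaging (variance) argument. Fix any point $x \in \constraint$ and let $v_1 \in \argmax_{v \in \constraint} \langle v-x, -\nabla f(x)\rangle$ be a maximizer defining $\mathcal{G}(x)$. Since $v_1$ is feasible for the maximization defining $\hat{\mathcal{G}}(x)$, we have $\hat{\mathcal{G}}(x) \ge \langle v_1 - x, -\nabla \hat{f}(x)\rangle$, and therefore
\begin{equation*}
\mathcal{G}(x) - \hat{\mathcal{G}}(x) \le \langle v_1 - x, -\nabla f(x)\rangle - \langle v_1 - x, -\nabla \hat{f}(x)\rangle = \langle v_1 - x, \nabla \hat{f}(x) - \nabla f(x)\rangle.
\end{equation*}
Applying Cauchy--Schwarz together with $\|v_1 - x\| \le D$ (\cref{assump_on_K}) gives the pointwise bound $\mathcal{G}(x) - \hat{\mathcal{G}}(x) \le D\,\|\nabla \hat{f}(x) - \nabla f(x)\|$.

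Next I would bound the expected gradient deviation. By construction $\nabla \hat{f}(x) = \frac{1}{B}\sum_{i=1}^B \nabla \tilde{f}(x, z_i)$ is an average of i.i.d.\ unbiased estimators of $\nabla f(x) = \expect_{z\sim P}[\nabla \tilde{f}(x,z)]$, so
\begin{equation*}
\expect\big[\|\nabla \hat{f}(x) - \nabla f(x)\|^2\big] = \frac{1}{B}\,\expect\big[\|\nabla \tilde{f}(x,z) - \nabla f(x)\|^2\big] \le \frac{1}{B}\,\expect\big[\|\nabla \tilde{f}(x,z)\|^2\big] \le \frac{G^2}{B},
\end{equation*}
where the last step uses that the $G$-Lipschitz assumption on $\tilde{f}$ forces $\|\nabla \tilde{f}(x,z)\| \le G$. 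By Jensen's inequality, $\expect[\|\nabla \hat{f}(x) - \nabla f(x)\|] \le G/\sqrt{B}$. Combining with the pointwise bound of the first step yields $\expect[\mathcal{G}(x) - \hat{\mathcal{G}}(x)] \le GD/\sqrt{B}$ for each fixed $x$.

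Finally I would instantiate this at the output $x_o$ to conclude $\expect[\mathcal{G}(x_o)] \le \expect[\hat{\mathcal{G}}(x_o)] + GD/\sqrt{B}$. The main obstacle lies precisely here: $x_o$ is produced by running QFW on the surrogate $\hat{f}$, hence it is a random point correlated with the very samples $z_1,\dots,z_B$ that define $\hat{f}$, so the per-point variance bound of the second step cannot be applied verbatim with $x = x_o$. The clean way around this is to observe that the inequality $\mathcal{G}(x) - \hat{\mathcal{G}}(x) \le D\|\nabla \hat{f}(x) - \nabla f(x)\|$ from the first step is deterministic and holds simultaneously for every $x \in \constraint$; one then controls $\expect\big[\sup_{x\in\constraint}\|\nabla \hat{f}(x) - \nabla f(x)\|\big]$ by a uniform deviation argument, or, as in the cited result, reads the statement as a bound relative to a fixed comparison point to which the selection of $x_o$ reduces. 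I expect verifying that the $G/\sqrt{B}$ rate survives this dependence --- rather than the elementary gap-to-gradient reduction --- to be the delicate part of the argument.
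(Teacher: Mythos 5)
The paper does not prove this lemma at all --- it imports it verbatim as Theorem~5 of \citep{reddi2016stochastic} --- so there is no in-paper proof to compare against. Your reconstruction is exactly the argument behind the cited result: the reduction $\mathcal{G}(x)-\hat{\mathcal{G}}(x)\le \langle v_1-x,\nabla\hat f(x)-\nabla f(x)\rangle\le D\|\nabla\hat f(x)-\nabla f(x)\|$ via feasibility of the $\mathcal{G}$-maximizer in the $\hat{\mathcal{G}}$-problem is correct and deterministic, and the i.i.d.\ variance bound $\expect\|\nabla\hat f(x)-\nabla f(x)\|\le G/\sqrt{B}$ for a \emph{fixed} $x$ is also correct under the Lipschitz assumption on $\tilde f$.

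The one place you stop short is the instantiation at $x=x_o$, and you are right to flag it: $x_o$ is a function of the very samples $z_1,\dots,z_B$ defining $\hat f$, so the fixed-$x$ variance bound does not apply verbatim. Be aware, though, that the two escape routes you sketch are not equally viable. Replacing the bound by $\expect\bigl[\sup_{x\in\constraint}\|\nabla\hat f(x)-\nabla f(x)\|\bigr]$ would require a uniform concentration argument (covering numbers plus the $L$-smoothness of $\tilde f$), which generically costs an extra dimension-dependent factor such as $\sqrt{d\log(1/\epsilon)}$ and would \emph{not} preserve the clean $GD/\sqrt{B}$ constant claimed in the lemma. The cited theorem, and the paper following it, simply apply the fixed-$x$ bound at the random output point; this is a standard (if slightly imprecise) convention in this literature rather than something your elementary argument can repair at the stated rate. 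So your proof matches the intended argument in substance, correctly isolates the only delicate step, but does not (and, at the advertised constant, cannot by the sup route) close that step more rigorously than the source does.
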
 

Baesd on \cref{thm:finite_nonconvex} and 
\cref{lem:approx}, we have the following theoretical guarantee for stochastic 
non-convex minimization.

\begin{theorem}[Stochastic Non-Convex]\label{thm:finite_to_stoch__nonconvex}
	Assuming that for all $z \in \mathbb{R}^q$, $\tilde{f}(x,z)$ is 
	$G$-Lipschitz, 
	$L$-smooth, 
	and satisfies $|\tilde{f}(x,z)| \leq M_0$
	%, \| \nabla \tilde{f}(x,z) \|_\infty \leq G_\infty$ 
	for 
	all $x \in \constraint$. If we obtain $T$ independent samples of $z_i$, and 
	apply \cref{alg:convex_dist_fw} on $\hat{f}(x) = \frac{1}{T} \sum_{i=1}^T 
	\tilde{f}(x, 
	z_i)$ with $N=T, n=T/M$, and all the other parameters set the 
	same as in \cref{thm:finite_nonconvex}, then 
	after $T$ iterations, the output $x_o \in \constraint$  satisfies
	%%%%%%%%%%%
	\begin{equation*}
	\expect[\mathcal{G}(x_o)] \leq 
	\frac{2M_0+D\sqrt{3L^2D^2+2G^2}+\frac{LD^2}{2}}{\sqrt{T}} + 
	\frac{GD}{\sqrt{T}}.
	\end{equation*}
\end{theorem}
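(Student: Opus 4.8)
The plan is to establish \cref{thm:finite_to_stoch__nonconvex} by chaining two results already in hand: the finite-sum guarantee of \cref{thm:finite_nonconvex} applied to the random surrogate $\hat{f}(x) = \frac{1}{T} \sum_{i=1}^T \tilde{f}(x, z_i)$, and the approximation bound of \cref{lem:approx} relating the true Frank-Wolfe gap $\mathcal{G}$ to the surrogate gap $\hat{\mathcal{G}}$. There are two sources of randomness to keep separate — the draw of the samples $\{z_i\}$ that defines $\hat{f}$, and the internal randomness of the QFW algorithm (subsampling and quantization) — and the argument will combine them through the law of total expectation.

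First I would check that $\hat{f}$ meets the hypotheses of \cref{thm:finite_nonconvex}. Each component $\tilde{f}(\cdot, z_i)$ is $L$-smooth and satisfies $|\tilde{f}(x,z_i)| \le M_0$ by assumption, so \cref{assump_on_f_i_nonconvex} holds with the same $L$ and $M_0$. The only point requiring care is the gradient bound: \cref{thm:finite_nonconvex} is phrased in terms of the $\ell_\infty$ bound $\|\nabla f_{m,i}(x)\|_\infty \le G_\infty$, while here we only assume $\tilde{f}(\cdot,z)$ is $G$-Lipschitz, i.e.\ $\|\nabla \tilde{f}(x,z)\| \le G$ in the $\ell_2$ norm. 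Since $\|\cdot\|_\infty \le \|\cdot\|_2$, the $\ell_\infty$ bound holds with $G_\infty = G$, and because the bound in \cref{thm:finite_nonconvex} is monotone increasing in $G_\infty$, substituting $G$ for $G_\infty$ yields a valid (if slightly loose) upper bound. Applying \cref{thm:finite_nonconvex} conditionally on a fixed realization of $\{z_i\}$ then gives
\begin{equation*}
\expect[\hat{\mathcal{G}}(x_o) \mid \{z_i\}] \le \frac{2M_0 + D\sqrt{3L^2D^2 + 2G^2} + \frac{LD^2}{2}}{\sqrt{T}},
\end{equation*}
where the conditional expectation is over the algorithmic randomness alone. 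Since this bound is uniform over all admissible finite-sum instances, taking the outer expectation over $\{z_i\}$ via the tower property preserves it for $\expect[\hat{\mathcal{G}}(x_o)]$.

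Finally I would invoke \cref{lem:approx} with $B = T$, which gives $\expect[\mathcal{G}(x_o)] \le \expect[\hat{\mathcal{G}}(x_o)] + \frac{GD}{\sqrt{T}}$; substituting the bound just derived for $\expect[\hat{\mathcal{G}}(x_o)]$ produces the claimed inequality. The work here is almost entirely bookkeeping rather than genuine analysis, so the main (conceptual) obstacle is simply getting the two nested expectations right: \cref{thm:finite_nonconvex} controls the optimization error for a \emph{fixed} surrogate, whereas \cref{lem:approx} controls the sampling error of the surrogate itself. It is precisely the uniformity of \cref{thm:finite_nonconvex} across admissible finite-sum problems that lets these two steps be composed cleanly through the law of total expectation.
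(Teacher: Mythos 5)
Your proposal is correct and follows essentially the same route as the paper: apply \cref{thm:finite_nonconvex} to the surrogate finite-sum $\hat{f}$ (with $N=T$) to bound $\expect[\hat{\mathcal{G}}(x_o)]$, then add the sampling-error term $GD/\sqrt{T}$ from \cref{lem:approx}. Your explicit treatment of the $G_\infty$ versus $G$ norm issue and of the two nested layers of randomness is a welcome bit of extra care that the paper leaves implicit.
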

We note that the algorithm finds an $\epsilon$-first-order stationary 
point with at most $\mathcal{O}(1/\epsilon^2)$ rounds. The SFO complexity per 
worker is 
$\mathcal{O}(\sqrt{N}/(\sqrt{M} 
\epsilon^2))=\mathcal{O}(\frac{1}{\epsilon^3\sqrt{M}})$.
Thus the SFO complexity per worker is linear in 
$1/\sqrt{M}$, which implicates that increasing the number of workers $M$ will 
decrease the SFO complexity per worker.
%effectively, and accelerate the optimization procedure.  

	%!TEX root = Thesis-Mingrui.tex
%\vspace{-1mm}
\section{Experiments}
\label{sec:experiments-qfw}
%\vspace{-1mm}
%We evaluate the performance of the algorithms from two aspects. The first one 
%is how the loss changes with an increasing number of 
%epochs, while the second one is the number of bits (\emph{i.e.}, the 
%communication 
%complexity) that the master and worker nodes exchange per iteration.

We evaluate the performance of algorithms by visualizing 
%their optimality gap 
%$ 
%f(x_t)-f(x^*) $ (for convex settings),
 their loss $f(x_t)$ 
% (for non-convex settings) as well as their testing accuracy
  vs.\ the number of transmitted bits. The experiments were performed on 20 
  Intel Xeon E5-2660 cores 
and thus the number of workers is 20. For each curve in the figures below, we 
ran at least 50 repeated experiments, and the height of shaded regions 
represents two 
standard deviations. 

In our first setup, we consider a multinomial logistic regression problem. Consider the dataset $ \{ (x_i,y_i)  \}_{i=1}^N \subseteq \mathbb{R}^d\times 
\{1,\dots, C\}$ with $N$ samples that have $C$ different labels. We aim to find a model $w$ to classify these sample points under the condition that the solution has a small $\ell_1$-norm. Therefore, we aim to solve the following convex problem
\begin{align}\label{mul_log}
\min_{w} f(w) := &-\sum_{i=1}^N \sum_{c=1}^C 1\{y_i=c\} \log 
\frac{\exp(w_c^\top 
x_i)}{\sum_{j=1}^C \exp(w_j^\top x_i)},\nonumber \\
\text{subject to}\quad &{\|w\|_1 \le 1}.
\end{align}
%%%
In our experiments, we use the MNIST dataset and assume that each worker stores 
$ 
3000$ images. Therefore, the overall number of samples in the training set 
is $N=60000$. 
%The result on 
%CIFAR-10 is similar and deferred to \cref{app:additonal_experiments}.
% We also conduct an additional experiment on the CIFAR-10 dataset 
%and obtain similar results (see).

In our second setup, our goal is to minimize the loss of a three-layer neural network under some conditions on the norm of the solution. 
%Recent work shows an increasing research interest in training neural models with the Frank-Wolfe algorithm~\citep{ravi2018constrained,ping2016learning}.
Before stating the problem precisely, let us define the log-loss function as $
h(y,p)\triangleq -\sum_{c=1}^C 1\{ y=c \}\log p_c $ for $ y\in 
\{1,\dots,C\} $ and a $ C $-dimensional probability vector $ p:= (p_1, \cdots, p_C) $. We aim to solve the following non-convex problem 
%%%
\begin{align}\label{nn}
\min_{W_1, W_2} &\sum_{i=1}^N h(y_i, 
\phi(W_2\relu(W_1x_i\!+\!b)\!+\!b_2) ), \nonumber\\
\text{subject to }&  \|W_i\|_1\le a_1, 
\|b_i\|_1\le a_2,  
\end{align}
%%%
%%%
where $\relu(x)\triangleq \max\{0,x\} $ is the ReLU function and $\phi$ is the 
softmax function. The imposed $\ell_1$ constraint on the weights leads to a 
sparse network. We further remark that Frank-Wolfe methods are suitable for 
training a neural network subject to an $\ell_1$ constraint as they are 
equivalent to a dropout 
regularization \citep{ravi2018constrained}. 
%For this setup,
%We use the MNIST 
%and CIFAR-10 datasets. 
%For 
%the MNIST dataset, we assume that each  worker stores $3000 $ images.
 In our setup, the size 
of matrices $ W_1 $ and $ W_2 $ are $ 784\times 50 $ and $ 50\times 10 $, 
respectively, and the constraints parameters are $a_1=a_2=5$. 

For all of the considered settings,  we vary the quantization level, use the 
$s_1$-partition encoding scheme when workers send encoded tensors to the master 
and use the $ s_2 $-partition encoding scheme when the master broadcasts 
encoded 
tensors to the workers ($s_i=uq$ indicates FW without quantization and $  
s_i=thm$ indicates \qfw with the quantization level recommended by our 
theorems, where $ i=1,2 $). We 
also propose the federated learning approach \fl, an effective heuristic based 
on \qfw, where each worker performs its local Frank-Wolfe update 
autonomously 
without communicating with each other and synchronizes the model only at the 
end of each round.  
This method may not enjoy the strong theoretical guarantees of \qfw 
and we observe in our experiments that it is even prone to divergence. 
%(and may even diverge) we observed in our experiments that it performs on par 
%with QFW in practice. 
%Let us emphasize that the proposed  \signfw algorithm is 
%similar to \qfwshort with \sscheme except that $ 
%\|g\|_\infty $ is not transmitted  and only $ \sign(g_i)b_i $ is transmitted 
%(see Section~\ref{sec:encoding schemes}). 
In stochastic minimization, each worker samples $ 1000 $ images uniformly at 
random and without replacement.

% {\color{red}{add ref.}}

In \cref{fig:convex}, we observe the performance of \fl, FW without 
quantization, and different variants of  \qfw for solving the multinomial 
logistic regression problem in \eqref{mul_log}. The stochastic minimization is 
presented in \cref{fig:sc} and the finite-sum minimization is shown in 
\cref{fig:fc}. We observe that 
 \qfw with \mscheme ($ s_1=1,s_2=3 $) has the best performance in terms of 
 the amount of transmitted bits. Specifically, \qfw with 
 \mscheme ($ s_1=1,s_2=3 $) requires $ 3\times 10^8 $ bits to hit the 
 lowest loss in 
\cref{fig:sc,fig:fc}, while \fl with the same level of quantization only 
achieves a 
suboptimal loss (approximately $ 2.24 $) with the same amount of communication. 
 Furthermore,  FW without quantization requires more than $ 3.8\times 10^9
$ bits to reach the same error, \emph{i.e.}, quantization reduces communication 
load by 
at least an order of magnitude.
%Regarding the testing accuracy, \qfwshort with \sscheme clearly outperforms all 

\begin{figure*}[t]
	\centering 
	\begin{subfigure}[t!]{0.37\linewidth}
		\includegraphics[height=1.6in]{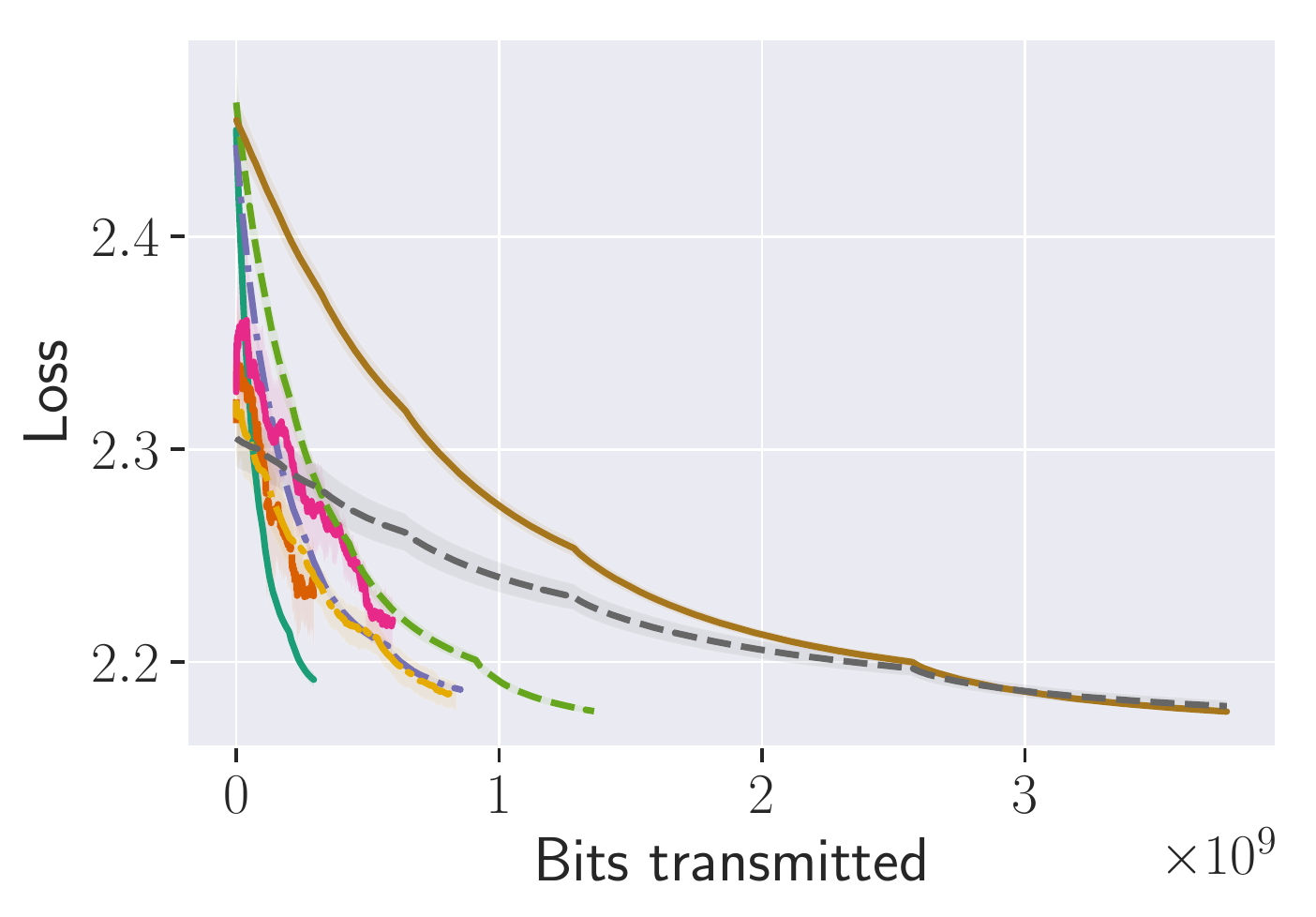} 
		%				\vspace{-6mm}
		\caption{Stochastic convex minimization}
		\label{fig:sc}
	\end{subfigure}
	\begin{subfigure}[t!]{0.19\linewidth}
		\includegraphics[width=\linewidth]{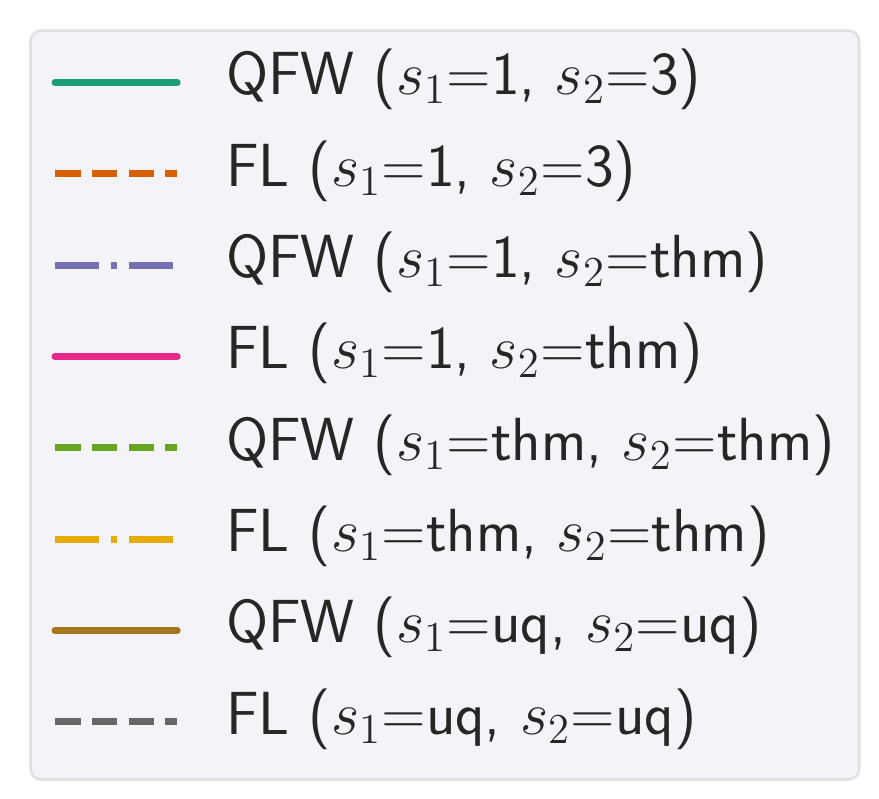} 
		%			\caption{Legend}
		%			\label{fig:stoch_c1}
	\end{subfigure}
	\begin{subfigure}[t!]{0.37\linewidth}
		\includegraphics[height=1.6in]{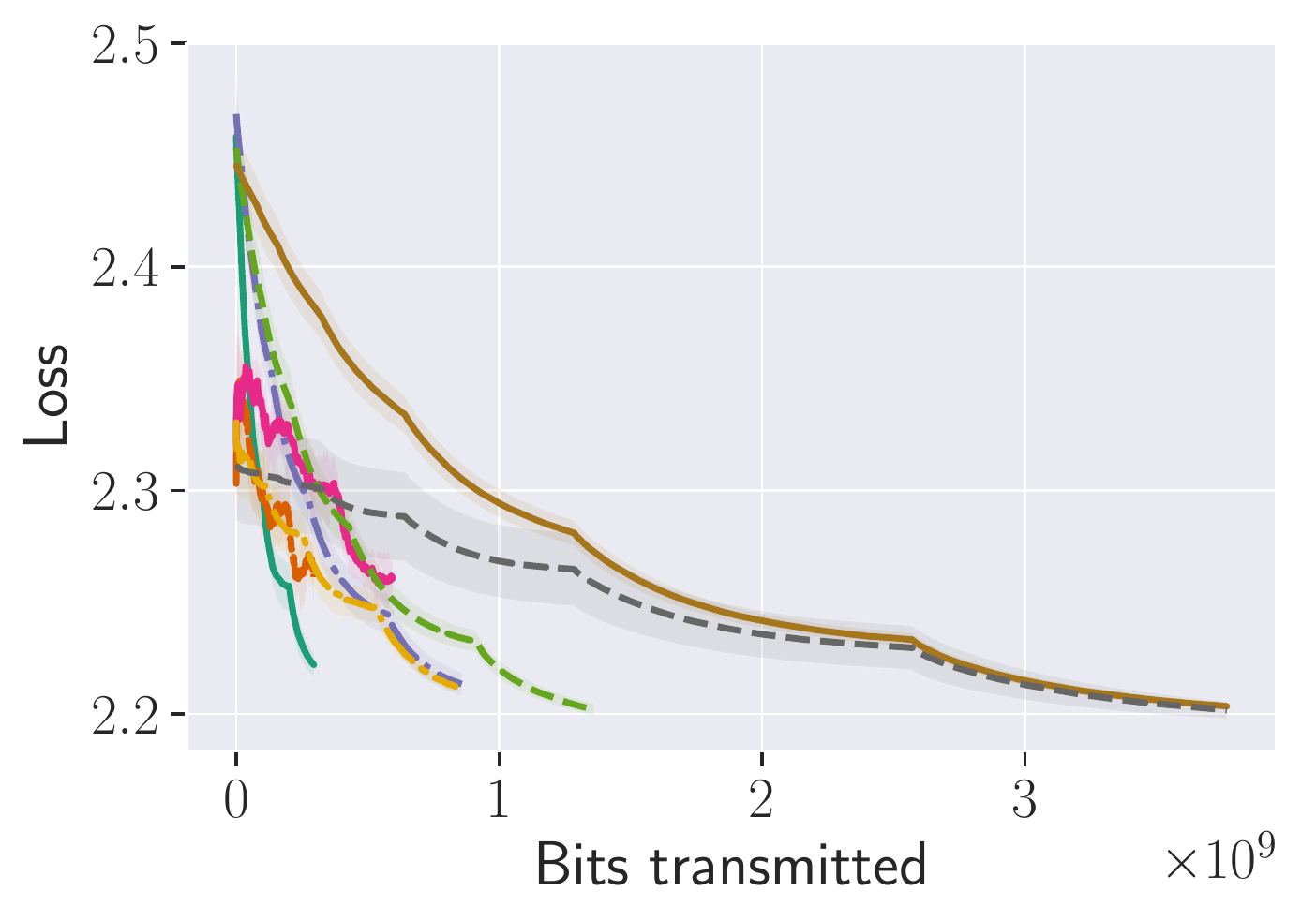} 
		%			\vspace{-6mm}
		\caption{Finite-sum convex minimization}
		\label{fig:fc}
	\end{subfigure} 
	%		\vspace{-1.5mm}
	\caption{Comparison in terms of the loss versus the number of transmitted 
		bits for a multinomial logistic 
		regression problem. The best performance belongs to \qfw with 
		\mscheme 
		($ s_1=1,s_2=3 $), and FW without quantization has the worst 
		performance.
		%		The quantization level \emph{thm} denotes that  $ 
		%		z_{1,t} $ and $ z_{2,t} $ are chosen according to  
		%		\cref{thm:finite_convex}.
		\label{fig:convex}
	}
	%\vspace{-1mm}
\end{figure*}

\cref{fig:nonconvex} demonstrates the performance of \fl, FW without 
quantization, and different variants of  \qfw for solving the three-layer 
neural network in \eqref{nn}. Again we show the stochastic minimization on the 
left (\cref{fig:sn}) and the finite-sum minimization on the right 
(\cref{fig:fn}). 
%The relative behavior of the considered methods 
%in Figure \ref{fig:stoch_2} is similar to the one in Figure \ref{fig:stoch_1}. 
We observe four divergent curves of the federated 
learning method \fl ($ s_1=3,s_2=1 $; $ s_1=thm,s_2=1 $; $ s_1=1,s_2=thm $; 
and $ s_1=1,s_2=uq $), while all \qfw curves converge. This observation is 
in accordance with the fact that \fl 
has no theoretical guarantee, in contrast to the proposed \qfw method. FW 
without quantization consumes approximately $ 6.5\times 10^9 $ bits to achieve 
the lowest loss. Its amount of communication is twice that of \qfw with 
the quantization levels recommended by 
\cref{thm:finite_nonconvex,thm:finite_to_stoch__nonconvex}.

\begin{figure*}[t]
	\centering
	\begin{subfigure}[t!]{0.36\linewidth}
		\includegraphics[height=1.5in]{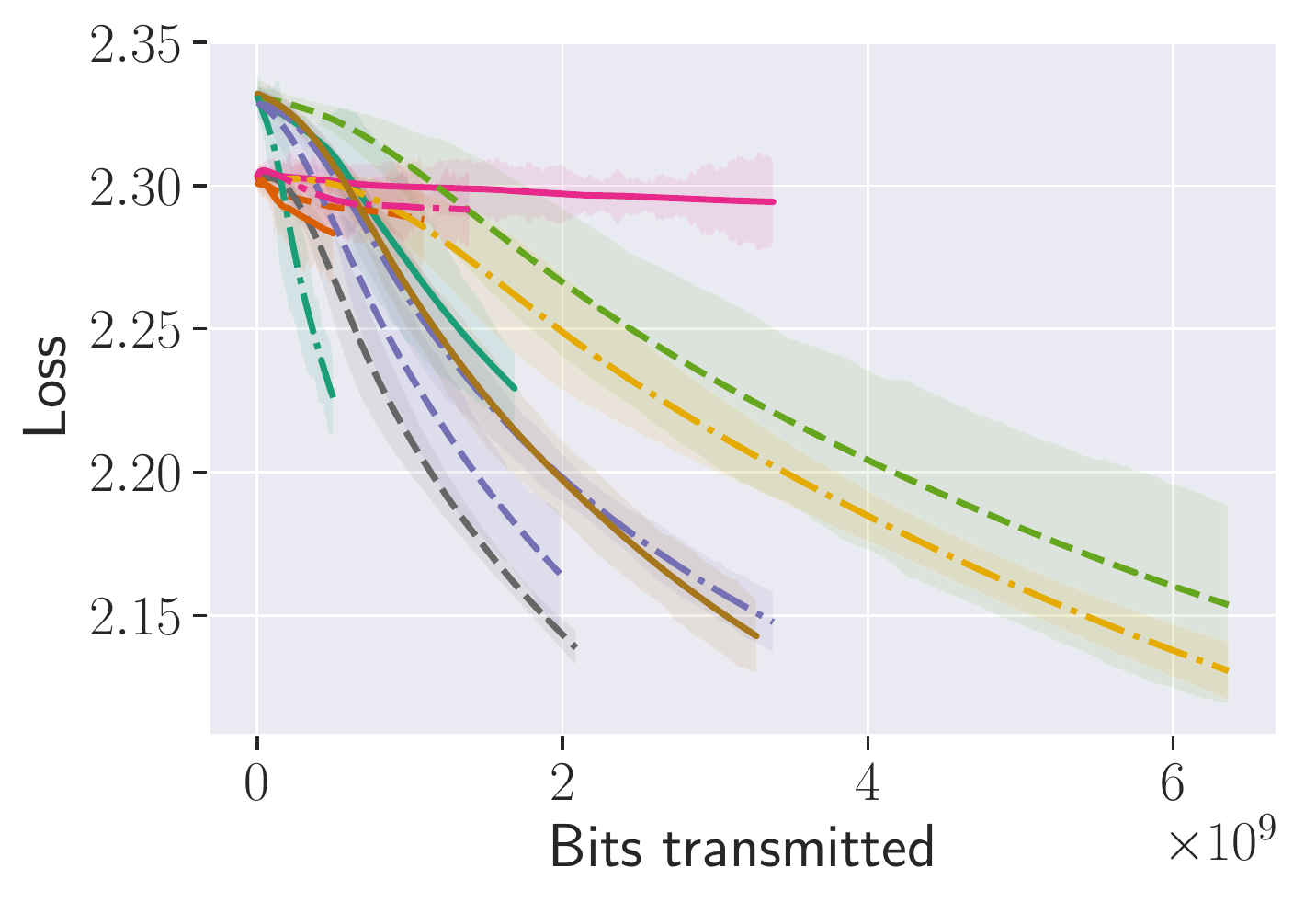} 
		%				\vspace{-6mm}
		\caption{Stochastic non-convex minimization}
		\label{fig:sn}
	\end{subfigure}
	\begin{subfigure}[t!]{0.19\linewidth}
		\includegraphics[width=\linewidth]{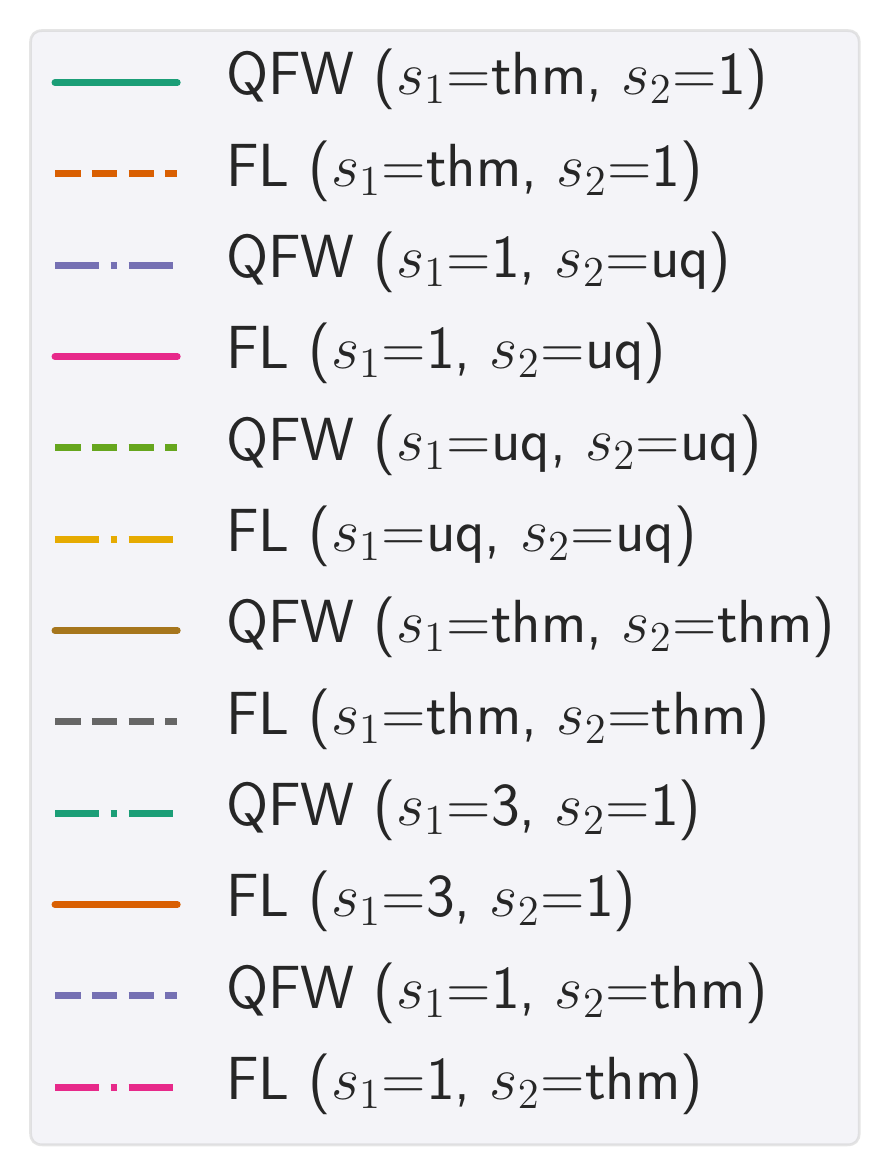} 
		%			\caption{Legend}
		%			\label{fig:stoch_c1}
	\end{subfigure}
	\begin{subfigure}[t!]{0.36\linewidth}
		\includegraphics[height=1.5in]{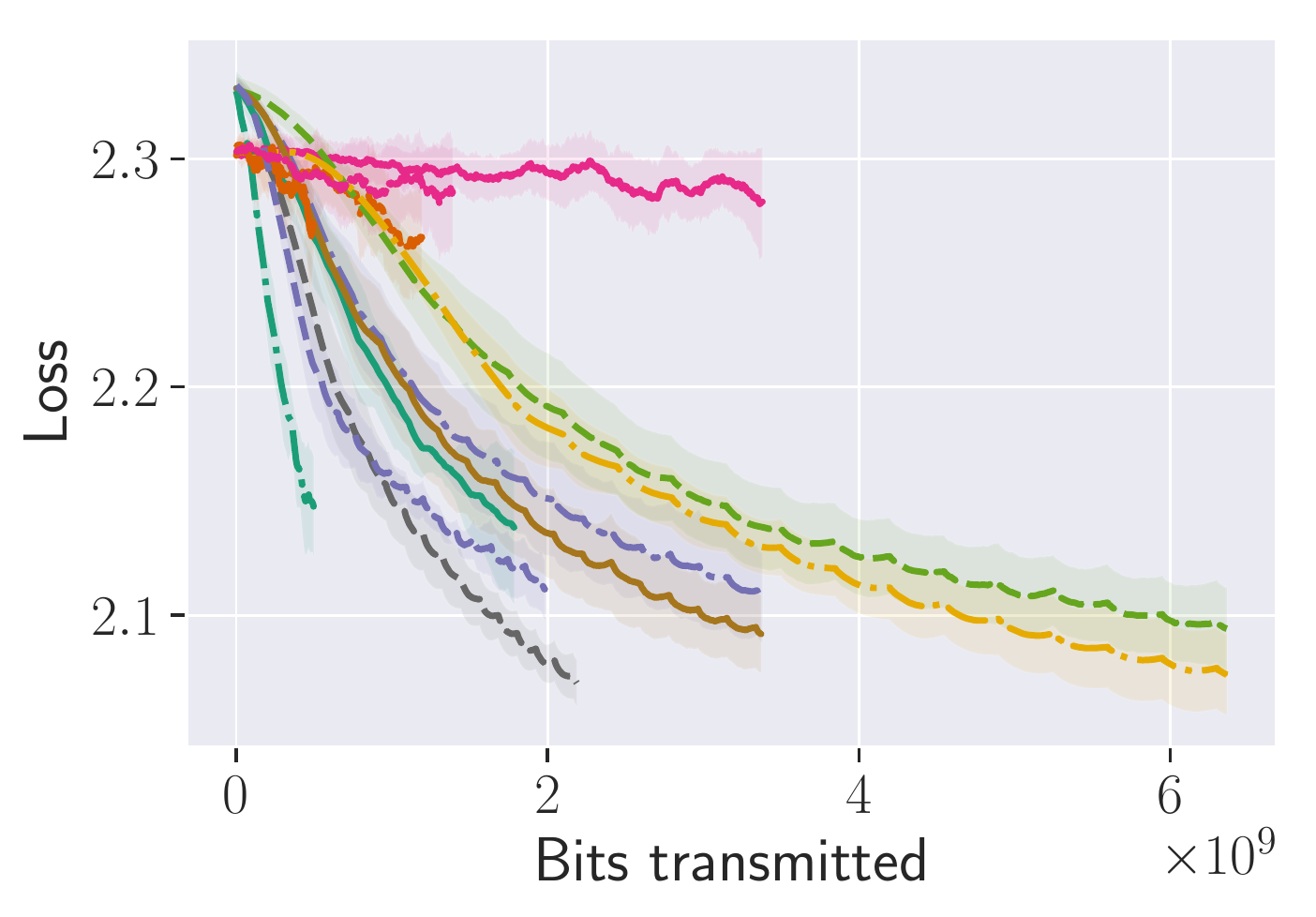} 
		%			\vspace{-6mm}
		\caption{Finite-sum non-convex minimization}
		\label{fig:fn}
	\end{subfigure} 
	%			\vspace{-1.5mm}
	\caption{Comparison of algorithms in terms of the loss versus the number of 
		transmitted bits for a three-layer neural 
		network. FW without quantization ($s=uq$) significantly underperforms 
		the 
		quantized FW methods. We observe four divergent curves of the federated 
		learning method \fl ($ s_1=3,s_2=1 $; $ s_1=thm,s_2=1 $; $ 
		s_1=1,s_2=thm $; 
		and $ s_1=1,s_2=uq $).
		%	\qfwshort with \mscheme ($ s_1=3,s_2=1 $) achieves 
		%	the best performance.
		%		The quantization level \emph{thm} denotes that  $ 
		%		z_{1,t} $ and $ z_{2,t} $ are chosen according to  
		%		\cref{thm:finite_convex}.
		\label{fig:nonconvex}
	}
	%	\vspace{-5mm}
\end{figure*}
%\qfwshort 
%with \mscheme obtains a loss less than $ 2 $ after transmitting $ 2\times 
%10^6  
%$ bits, while to attain the same loss level, it takes $ 5\times 10^6 $ bits if 
%one uses \signfw or \qfwshort with $ s=3 $. The number of required bits 
%becomes 
%approximately $ 1.5\times 10^7$ for $ s=7 $. Also, if no quantization is 
%applied, then the number of required bits is at least the  $ 3\times 10^8 $ 
%(\emph{i.e.}, quantization reduces communication load by at least two orders 
%of 
%magnitude). To achieve a testing accuracy greater than $ 0.8 $, \qfwshort with 
%$ s=1 $ requires $ 3\times 10^6 $ bits transmission, while the second most 
%communication-efficient method \qfwshort with $ s=3 $ needs $ 10^7 $ bits.

In both convex and non-convex setups, the theoretically guaranteed quantization 
levels recommended by our theorems may be conservative. In fact, a 
\mscheme 
with partitions much fewer than our theorems recommend achieves a similar loss 
level and saves even more communication bits. For example, \qfw with $ 
s_1=1,s_2=3 $ and $ s_1=1,s_2=thm $ exhibits a higher communication efficiency 
than \qfw with $ s_1=thm,s_2=thm $ in \cref{fig:sc,fig:fc}.

\section{Conclusion}
In this chapter, we developed \AlgQFW (\qfw), the first general-purpose 
projection-free 
and communication-efficient 
framework for constrained optimization. Along with proposing various 
quantization schemes, \qfw can address both convex and non-convex 
optimization settings in  stochastic and finite-sum cases. We provided 
theoretical guarantees on the convergence rate of \qfw and validated its 
efficiency  empirically 
on training multinomial logistic regression and neural networks. Our 
theoretical results highlighted the importance of variance reduction techniques 
to stabalize FW and achieve a sweet trade-off between the 
communication complexity and convergence rate in distributed settings. We also 
note that it might be possible to design simpler Quantized FW methods based on 
the new developments \citep{zhang2020one}.

\section{Proofs}\label{sec:proof-qfw}
\subsection{Proof of Lemma 5}\label{app:lem_var_multi}
\begin{proof}
	For any given vector $g\in \reals^d$, the ratio ${|g_i|}/{\|g\|_\infty}$ 
	lies in an 
	interval of the form $[l_i/s,{(l_i+1)}/{s}]$ where $l_i\in \{0,1,\dots, 
	s-1\}$. Hence, for that specific $l_i$, the following inequalities 
	%%%%
	\begin{align}\label{helpppp_1}
	\frac{l_i}{s} \leq \frac{|g_i|}{\|g\|_\infty} \leq \frac{l_i+1}{s}
	\end{align}
	%%%
	are satisfied. Moreover, based on the probability distribution of $b_i$ we 
	know that  
	%%%
	\begin{align}\label{helpppp_2}
	\frac{l_i}{s} \leq b_i \leq \frac{l_i+1}{s}.
	\end{align}
	%%%
	
	Therefore, based on the inequalities in \eqref{helpppp_1} and 
	\eqref{helpppp_2} we can write
	%%%%
	\begin{align}\label{helpppp_3}
	-\frac{1}{s} \leq \frac{|g_i|}{\|g\|_\infty}- b_i \leq \frac{1}{s}.
	\end{align}
	%%%%
	%%%%
	
	Hence, we can show that the variance of $s$-\mscheme is upper bounded by
	%%%%
	\begin{align*}
	\var{\phi^{\prime}(g)|g}
	&= \expect[\|\phi^{\prime}(g)-g\|^2|g] \nonumber\\
	&= \sum_{i=1}^d \expect[(g_i - \sign(g_i)b_i\|g\|_\infty)^2|g] \nonumber\\
	&= \sum_{i=1}^d \expect[(|g_i| - b_i\|g\|_\infty)^2|g] \nonumber\\
	&= \sum_{i=1}^d \|g\|_\infty^2 
	\expect\left[\left(\frac{|g_i|}{\|g\|_\infty} - b_i\right)^2\mid g\right] 
	\nonumber\\
	&\leq \frac{d}{s^2}\|g\|_\infty^2,
	\end{align*}
	%%%
	where the inequality holds due to \eqref{helpppp_3}.
\end{proof}

\subsection{Proof of Theorem 5 and Corollary 4.3.1}
The key to the proofs of \cref{thm:finite_convex} is to upper bound the 
difference between the true gradient $\nabla f(x_t) = \nabla f(x_{i,k})$ and 
the estimated gradient 
$\bar{g}_{i,k}$. Intuitively, if the error is 
small enough, then we can approximate $\nabla f(x_{i,k})$ by $\bar{g}_{i,k}$. 
Thus 
the algorithm fed with the estimated gradient $\bar{g}_{i,k}$ will still 
converge.

So we first address the bound of $\|\nabla f(x_{i,k}) - \bar{g}_{i,k}\|$, which 
is 
resolved in the following lemma.
\begin{lemma}\label{lem:convex_finite_error}
Under the condition of \cref{thm:finite_convex}, we have
\begin{equation*}
\expect[\| \nabla f(x_{i,k}) - \bar{g}_{i,k}\|^2] \leq 
\frac{2(G^2_\infty+2L^2D^2+4L_\infty^2D^2 )}{p_i^2}.
\end{equation*}
\end{lemma}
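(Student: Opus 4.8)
\textbf{Proof proposal for Lemma~\ref{lem:convex_finite_error}.}
The plan is to track the squared estimation error $\expect[\|\nabla f(x_{i,k}) - \bar{g}_{i,k}\|^2]$ along a single period $i$ and exploit the fact that every building block of $\bar{g}_{i,k}$ is an \emph{unbiased} estimator. The SPIDER-type differences $g_{i,k}^m$ are unbiased for $\nabla f(x_{i,k})-\nabla f(x_{i,k-1})$ by i.i.d.\ sampling, and both encoding schemes $\Phi_{1,i,k}',\Phi_{2,i,k}'$ are unbiased by construction (see the discussion preceding \cref{lem:var_multi}). Writing $e_{i,k} \triangleq \bar{g}_{i,k}-\nabla f(x_{i,k})$, for $k\ge 2$ the recursion $\bar{g}_{i,k}=\bar{g}_{i,k-1}+\Phi_{2,i,k}'(\tilde g_{i,k})$ gives the martingale-type identity
\begin{equation*}
e_{i,k} = e_{i,k-1} + \xi_{i,k},\qquad \xi_{i,k}\triangleq \Phi_{2,i,k}'(\tilde g_{i,k}) - \big(\nabla f(x_{i,k})-\nabla f(x_{i,k-1})\big),
\end{equation*}
where $\expect[\xi_{i,k}\mid \mathcal{F}_{i,k-1}]=0$ and $e_{i,k-1}$ is $\mathcal{F}_{i,k-1}$-measurable. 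The cross term therefore vanishes and I obtain the clean telescoping bound $\expect[\|e_{i,k}\|^2]=\expect[\|e_{i,1}\|^2]+\sum_{j=2}^{k}\expect[\|\xi_{i,j}\|^2]$.

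First I would control the base term $\expect[\|e_{i,1}\|^2]$. Since at $k=1$ each worker computes its \emph{full} local gradient over all $n$ functions, the sampling variance is zero and $e_{i,1}$ collects only the two rounds of quantization noise. Applying \cref{lem:var_multi} with $s_{1,i,1}=\sqrt{dp_i^2/M}$ gives a per-worker $\Phi_1$-variance of $\tfrac{d}{s_{1,i,1}^2}G_\infty^2=\tfrac{M}{p_i^2}G_\infty^2$, which shrinks to $G_\infty^2/p_i^2$ after averaging $M$ independent encodings; the master's $\Phi_2$-step with $s_{2,i,1}=\sqrt{dp_i^2}$ contributes another $\tfrac{1}{p_i^2}\|\tilde g_{i,1}\|_\infty^2$. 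Bounding $\|\tilde g_{i,1}\|_\infty$ by $G_\infty$ (up to the quantization fluctuation), this yields $\expect[\|e_{i,1}\|^2]\lesssim G_\infty^2/p_i^2$, the source of the $G_\infty^2$ term.

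Next I would bound each increment $\expect[\|\xi_{i,j}\|^2]$ by splitting it into three conditionally mean-zero pieces — the $\Phi_2$ quantization error, the $\Phi_1$ quantization error, and the SPIDER sampling error — whose variances add by the tower property. The sampling piece is bounded via $L$-smoothness: the average of $g_{i,k}^m$ has variance at most $\tfrac{L^2\|x_{i,k}-x_{i,k-1}\|^2}{M S_{i,k}}$, and with $S_{i,k}=p_i/M$ and the step-size estimate $\|x_{i,k}-x_{i,k-1}\|=\eta_{i,k-1}\|v_{i,k-1}-x_{i,k-1}\|\le \tfrac{2D}{p_i}$ (from $\eta_{i,k}=2/(p_i+k)$ and \cref{assump_on_K}) this is $O(L^2D^2/p_i^3)$. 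The two quantization pieces are handled by \cref{lem:var_multi} with $s_{1,i,k}=\sqrt{dp_i/M}$ and $s_{2,i,k}=\sqrt{dp_i}$; here the crucial point is that the vector being quantized is a \emph{gradient difference}, so its $\ell_\infty$-norm is controlled by the coordinatewise smoothness, $\|g_{i,k}^m\|_\infty\le L_\infty\|x_{i,k}-x_{i,k-1}\|$, producing the $L_\infty^2D^2$ contributions, again of order $O(L_\infty^2 D^2/p_i^3)$ per step. Summing these $O(1/p_i^3)$ increments over the at most $p_i$ inner steps turns each into an $O(1/p_i^2)$ term, and assembling them with the base term reproduces the claimed $\tfrac{2(G_\infty^2+2L^2D^2+4L_\infty^2D^2)}{p_i^2}$.

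The main obstacle I anticipate is the $\Phi_2$ quantization variance, because it depends on $\|\tilde g_{i,k}\|_\infty$ where $\tilde g_{i,k}=\tfrac1M\sum_m \Phi_{1,i,k}'(g_{i,k}^m)$ is itself a \emph{random, already-quantized} average rather than a deterministic gradient. Controlling it requires an auxiliary second-moment bound $\expect[\|\tilde g_{i,k}\|_\infty^2]$ that combines the deterministic smoothness bound $L_\infty\|x_{i,k}-x_{i,k-1}\|$ with the residual $\Phi_1$ fluctuation, taking care that the encoded scalars $\sign(g_i)b_i\|g\|_\infty$ can slightly exceed $|g_i|$. Keeping this nested conditioning rigorous — so that sampling, $\Phi_1$, and $\Phi_2$ randomness are peeled off in the correct order — is where the argument demands the most attention; the remaining steps are routine applications of \cref{lem:var_multi}, $L$-smoothness, and the geometric-sum structure of $p_i=2^{i-1}$.
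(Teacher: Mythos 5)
Your proof is correct and follows essentially the same route as the paper's: both rest on the orthogonality of the sampling, $\Phi_1$, and $\Phi_2$ noise (via unbiasedness and the tower property), bound each piece using \cref{lem:var_multi} and $L$-smoothness, and sum $O(1/p_i^3)$ increments over the at most $p_i$ inner steps — the paper merely splits the error into (unquantized SPIDER vs.\ true gradient) plus (quantized vs.\ unquantized) first and telescopes each over $k$, whereas you telescope first and split each increment, which is the same double sum in a different order. Two minor remarks: the obstacle you anticipate concerning $\|\tilde g_{i,k}\|_\infty$ dissolves because the decoded coordinates $\sign(g_j)\,b_j\,\|g\|_\infty$ satisfy $b_j\le 1$, so $\|\Phi_{1,i,k}'(g)\|_\infty\le\|g\|_\infty$ deterministically and hence $\|\tilde g_{i,k}\|_\infty\le \eta_{i,k-1}LD$ (resp.\ $G_\infty$ at $k=1$) with no extra fluctuation term; and the constant $L_\infty$ you invoke for the coordinatewise bound is not defined in \cref{assump_on_f_i_convex} — the paper instead uses $\|g_{i,k}^m\|_\infty\le\|g_{i,k}^m\|_2\le L\eta_{i,k-1}D$ and in fact arrives at the constant $2(G_\infty^2+6L^2D^2)/p_i^2$, which is what \cref{thm:finite_convex} actually uses.
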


\begin{proof}
We first define a few auxiliary variables. On each worker $m$, we 
define the average function of its $n$ component functions as $f^{(m)}(x) = 
\frac{\sum_{j=1}^n f_{m,j}(x)}{n}$, so $f(x) = 
\frac{\sum_{m=1}^M f^{(m)}(x)}{M}$. We also define 
\begin{equation*}
g_{i,k}^{(m)} =
\begin{cases}
g_{i,k}^{m}& \quad k = 1, \\
g_{i,k-1}^{(m)} + g_{i,k}^{m} = \sum_{j=1}^k g_{i,j}^m& \quad k \ge 2,
\end{cases}
\end{equation*}
where $g_{i,k}^{m}$ is defined in \cref{alg:convex_dist_fw}. Then 
$g_{i,k}^{(m)}$ is an unbiased 
estimator of $\nabla f^{(m)}(x_{i,k})$. We define 
the average of $g_{i,k}^{(m)}$ as
\begin{equation*}
g_{i,k} = \frac{\sum_{m=1}^M g_{i,k}^{(m)}}{M}.
\end{equation*}
We also define $\mathcal{F}_{i,k}$ to be the $\sigma$-field generated by 
all the randomness before round $(i,k)$, \emph{i.e,} round $t = 
\sum_{j=1}^{i-1}p_j +k$. We note that given 
$\mathcal{F}_{i,k}$, $x_{i,k}$ is actually determined, and we can verify that 
$\expect[g_{i,k}|\mathcal{F}_{i,k}]=\nabla f(x_{i,k})$, and 
$\expect[\bar{g}_{i,k}|\mathcal{F}_{i,k},g_{i,k}]=g_{i,k}, \forall\ (i,k)$. 
Here, 
with abuse of notation, $\expect[\cdot|g_{i,k}]$ is the conditional expectation 
given not only the value of $g_{i,k}$, but also the sampled gradients $\nabla 
f_{m,j}(x_{i,k}), \nabla f_{m,j}(x_{i,k-1})$(if defined) for all $j \in 
\mathcal{S}_{i,k}^m, m 
\in [M]$.

Then by law of total expectation, we have
\begin{equation}\label{eq:convex_finite_error1}
\begin{split}
\expect[\| \nabla f(x_{i,k}) - \bar{g}_{i,k}\|^2]
={}&\expect[\expect[\| \nabla f(x_{i,k}) - 
\bar{g}_{i,k}\|^2|\mathcal{F}_{i,k}]] \\
={}& \expect[\expect[\| \nabla f(x_{i,k}) - g_{i,k}+ 
g_{i,k}-\bar{g}_{i,k}\|^2|\mathcal{F}_{i,k}]]    \\
={}& \expect[\expect[\| \nabla f(x_{i,k}) - g_{i,k}\|^2|\mathcal{F}_{t-1}]] + 
\expect[\expect[\| g_{i,k} -\bar{g}_{i,k}\|^2|\mathcal{F}_{i,k}]] \\
&\quad + 
2\expect[\expect[\langle \nabla f(x_{i,k}) - g_{i,k}, g_{i,k} -\bar{g}_{i,k} 
\rangle 
|\mathcal{F}_{i,k}]]\\
={}& \expect[\| \nabla f(x_{i,k}) - g_{i,k}\|^2] + \expect[\| g_{i,k} 
-\bar{g}_{i,k}\|^2],
\end{split}    
\end{equation}
where the last equation holds since 
\begin{equation*}
\begin{split}
\expect[\langle \nabla f(x_{i,k}) - g_{i,k}, g_{i,k} -\bar{g}_{i,k} 
\rangle|\mathcal{F}_{i,k}] 
={}& \expect[\expect[\langle \nabla f(x_{i,k}) - g_{i,k}, g_{i,k} 
-\bar{g}_{i,k} 
\rangle|\mathcal{F}_{i,k},g_{i,k}]|\mathcal{F}_{i,k}] \\
={}& \expect[\langle \nabla f(x_{i,k}) - g_{i,k}, \expect[g_{i,k} 
-\bar{g}_{i,k}|\mathcal{F}_{i,k},g_{i,k}] \rangle|\mathcal{F}_{i,k}] \\
={}&0.
\end{split}    
\end{equation*}

Now we turn to bound $\expect[\| \nabla f(x_{i,k}) - g_{i,k}\|^2]$. In fact, we 
have
\begin{equation}\label{eq:finite_convex_1}
\begin{split}
\expect[\| \nabla f(x_{i,k}) - g_{i,k}\|^2] &= \expect[\| \frac{\sum_{m=1}^M 
\nabla 
f^{(m)}(x_{i,k})}{M} - \frac{\sum_{m=1}^M g_{i,k}^{(m)}}{M}\|^2] \\
&= \frac{\sum_{m=1}^M\expect[\|\nabla f^{(m)}(x_{i,k}) - 
g_{i,k}^{(m)}\|^2]}{M^2}.
\end{split}
\end{equation}

For $k\ge 2$, we have
\begin{equation*}
\begin{split}
&\expect[\| \nabla f^{(m)}(x_{i,k}) - g_{i,k}^{(m)}\|^2] \\
={}& \expect[\expect[\| [\nabla f^{(m)}(x_{i,k}) 
-\nabla f^{(m)}(x_{i,k-1})] - g_{i,k}^m \|^2 |\mathcal{F}_{i,k}]]   + 
\expect[\expect[\|\nabla 
f^{(m)}(x_{i,k-1})-g_{i,k-1}^{(m)}\|^2|\mathcal{F}_{i,k}]]  \\	
={}& \expect[\var{g_{i,k}^m|\mathcal{F}_{i,k}}] + \expect[\|\nabla 
f^{(m)}(x_{i,k-1})-g_{i,k-1}^{(m)}\|^2]  \\
={}&\expect[\var{\frac{\sum_{j \in \mathcal{S}_{i,k}^m} \nabla 
f_{j}(x_{i,k})-\nabla 
	f_{j}(x_{i,k-1})}{S_{i,k}}|\mathcal{F}_{i,k}}] + \expect[\|\nabla 
f^{(m)}(x_{i,k-1})-g_{i,k-1}^{(m)}\|^2]\\
={} & \expect[\frac{\sum_{j \in \mathcal{S}_{i,k}^m} \var{\nabla 
	f_{j}(x_{i,k})-\nabla 
	f_{j}(x_{i,k-1})|\mathcal{F}_{i,k}}}{[S_{i,k}]^2}] + 
\expect[\|\nabla 
f^{(m)}(x_{i,k-1})-g_{i,k-1}^{(m)}\|^2]\\
\leq{} & \expect[\frac{\sum_{j\in \mathcal{S}_{i,k}^m} \expect[\|\nabla 
	f_{j}(x_{i,k})-\nabla 
	f_{j}(x_{i,k-1})\|^2|\mathcal{F}_{i,k}]}{[S_{i,k}]^2})] + 
\expect[\|\nabla 
f^{(m)}(x_{i,k-1})-g_{i,k-1}^{(m)}\|^2]\\
\leq{} & \frac{1}{S_{i,k}}(LD\eta_{i,k-1})^2+ \expect[\|\nabla 
f(x_{i,k-1})-g_{i,k-1}\|^2]\\
=& \frac{L^2D^2\eta_{i,k-1}^2}{S_{i,k}}+ \expect[\|\nabla 
f^{(m)}(x_{i,k-1})-g_{i,k-1}^{(m)}\|^2].
\end{split}
\end{equation*}

For $k=1$, we have $g_{i,1}^{(m)} = \nabla f^{(m)}(x_{i,1})$. So
\begin{equation*}
\expect[\| \nabla f^{(m)}(x_{i,k}) - g_{i,k}^{(m)}\|^2] \leq 
L^2D^2\sum_{j=2}^{k}\frac{\eta_{i,j-1}^2}{S_{i,j}} = 
\frac{L^2D^2M}{p_i}\sum_{j=2}^{k}\eta_{i,j-1}^2.
\end{equation*}

Since
\begin{equation*}
\sum_{j=2}^{k}\eta_{i,j-1}^2 = \sum_{j=2}^{k} \frac{4}{(p_i+j-1)^2} \leq 
\sum_{j=2}^{k} \frac{4}{p_i^2} \leq \frac{4}{p_i},
\end{equation*}
we have 
\begin{equation*}
\expect[\| \nabla f^{(m)}(x_{i,k}) - g_{i,k}^{(m)}\|^2] \leq 
\frac{4ML^2D^2}{p_i^2}.  
\end{equation*}

Combine with \cref{eq:finite_convex_1}, we have
\begin{equation}\label{eq:convex_finite_error2}
\expect[\| \nabla f(x_{i,k}) - g_{i,k}\|^2] \leq \frac{M\cdot 
4ML^2D^2}{M^2\cdot 
p_i^2} = 
\frac{4L^2D^2}{p_i^2}.
\end{equation}

Now we only need to bound $\expect[\| g_{i,k} 
-\bar{g}_{i,k}\|^2]$. For $k\ge 2$, we have
\begin{equation*}
\begin{split}
&\expect[\| g_{i,k} - \bar{g}_{i,k} \|^2] \\
={}& \expect[\expect[\| \frac{\sum_{m=1}^M g_{i,k}^m}{M} + g_{i,k-1}-
\phi_{2,i,k}^\prime(\tilde{g}_{i,k})-\bar{g}_{i,k-1}\|^2|\mathcal{F}_{i,k},g_{i,k}]]\\
={}& \expect[\expect[\| \frac{\sum_{m=1}^M g_{i,k}^m}{M} - 
\phi_{2,i,k}^\prime(\tilde{g}_{i,k})\|^2|\mathcal{F}_{i,k},g_{i,k}]] + 
\expect[\|g_{i,k-1} 
-\bar{g}_{i,k-1}\|^2] \\
& + 2\expect[\expect[\frac{\sum_{m=1}^M g_{i,k}^m}{M} 
-\phi_{2,i,k}^\prime(\tilde{g}_{i,k}), g_{i,k-1} -\bar{g}_{i,k-1} 
\rangle|\mathcal{F}_{i,k},g_{i,k-1}]].\\
\end{split}    
\end{equation*}

Moreover
\begin{equation*}
\begin{split}
\expect[\phi_{2,{i,k}}^\prime(\tilde{g}_{i,k})|\mathcal{F}_{i,k},g_{i,k}] 
={}& 
\expect[\tilde{g}_{i,k}|\mathcal{F}_{i,k},g_{i,k}]\\
={}&\expect[\sum_{m=1}^M 
\phi_{1,{i,k}}^\prime (g_{i,k}^m)/M |\mathcal{F}_{i,k},g_{i,k}] \\
={}& \frac{\sum_{m=1}^M g_{i,k}^m}{M},
\end{split}
\end{equation*}
and
\begin{equation*}
\begin{split}
& \expect[\expect[\| \frac{\sum_{m=1}^M g_{i,k}^m}{M} 
-\phi_{2,i,k}^\prime(\tilde{g}_{i,k})\|^2|\mathcal{F}_{i,k},g_{i,k}]] \\    
={}& \expect[\expect[\| \frac{\sum_{m=1}^M g_{i,k}^m}{M}
-\tilde{g}_{i,k} 
+ \tilde{g}_{i,k} 
-\phi_{2,{i,k}}^\prime(\tilde{g}_{i,k})\|^2|\mathcal{F}_{i,k},g_{i,k}]] \\
={}& \expect[\expect[\| \frac{\sum_{m=1}^M g_{i,k}^m}{M} -\sum_{m=1}^M 
\phi_{1,i,k}^\prime (g_{i,k}^m)/M \|^2 |\mathcal{F}_{i,k},g_{i,k}]] + 
\expect[\expect[\|\tilde{g}_{i,k}-\phi_{2,{i,k}}^\prime(\tilde{g}_{i,k})\|^2|
\mathcal{F}_{i,k},g_{i,k},\tilde{g}_{i,k}]]
\\
\leq{} & \frac{1}{M}\frac{d}{s_{1,{i,k}}^2}(\eta_{i,k-1}L D)^2 + 
\frac{d}{s_{2,{i,k}}^2}(\eta_{i,k-1}L D)^2\\ 
={}& 
\frac{\eta_{i,k-1}^2dL^2D^2}{Ms_{1,{i,k}}^2}+\frac{\eta_{i,k-1}^2dL^2D^2}{s_{2,{i,k}}^2},
\end{split}    
\end{equation*}
where in the inequality, we apply \cref{lem:var_multi} with 
$\|g_{i,k}^m \|_\infty = \|\nabla f_{\mathcal{S}_{i,k}^m}(x_{i,k}) - \nabla 
f_{\mathcal{S}_{i,k}^m}(x_{i,k-1})\|_\infty \leq \|\nabla 
f_{\mathcal{S}_{i,k}^m}(x_{i,k}) - \nabla 
f_{\mathcal{S}_{i,k}^m}(x_{i,k-1})\|_2 \leq\eta_{i,k-1}L D$
%\leq 2G_\infty$ 
and 
$\|\tilde{g}_{i,k}\|_\infty 
= \|\sum_{m=1}^M \phi_{1,{i,k}}^\prime (g_{i,k}^m)/M\|_\infty \leq 
\eta_{i,k-1}L D$.
Now for $k \ge 2$ we have,
\begin{equation*}
\expect[\| g_{i,k} -\bar{g}_{i,k}\|^2] \leq 
\frac{\eta_{i,k-1}^2dL^2D^2}{Ms_{1,{i,k}}^2}+
\frac{\eta_{i,k-1}^2dL^2D^2}{s_{2,{i,k}}^2}+ 
\expect[\|g_{i,k-1} -\bar{g}_{i,k-1}\|^2].
\end{equation*}

If $k=1$, we have
\begin{equation*}
\begin{split}
\expect[\| g_{i,k} -\bar{g}_{i,k}\|^2] 
={}& \expect[\|\nabla f(x_{i,k}) -\tilde{g}_{i,k}+\tilde{g}_{i,k}- 
\phi_{2,{i,k}}^\prime(\tilde{g}_{i,k})\|^2] \\
={}& \expect[\expect[\|\nabla f(x_{i,k}) - \frac{\sum_{m=1}^M 
	\phi_{1,{i,k}}^\prime(\nabla f^{(m)}(x_{i,k}))}{M}\|^2 
	|\mathcal{F}_{i,k},g_{i,k}]] \\
&\quad + \expect[\expect[\| 
\tilde{g}_{i,k}- 
\phi_{2,{i,k}}^\prime(\tilde{g}_{i,k})\|^2|\mathcal{F}_{i,k},g_{i,k},\tilde{g}_{i,k}]]
 \\
\leq{}& \frac{1}{M^2} \expect[\sum_{m=1}^M\expect[\|\nabla f^{(m)}(x_{i,k}) -  
\phi_{1,t}^\prime(\nabla f^{(m)}(x_{i,k}))\|^2 |\mathcal{F}_{i,k},g_{i,k}]] + 
\frac{d}{s_{2,{i,k}}^2}G_\infty^2 \\
\leq{} & \frac{dG_\infty^2}{Ms_{1,{i,k}}^2}+\frac{dG_\infty^2}{s_{2,{i,k}}^2},
\end{split}    
\end{equation*}
where in the inequality, we apply \cref{lem:var_multi} with $\| 
\nabla f^{(m)}(x_k) \|_\infty \leq G_\infty$ and $\|\tilde{g}_{i,k}\|_\infty 
= \|\frac{\sum_{m=1}^M \phi_{1,{i,k}}^\prime(\nabla 
	f^{(m)}(x_k))}{M}\|_\infty \leq G_\infty$. Then we have
\begin{equation}\label{eq:convex_finite_error3} 
\begin{split}
\expect[\| g_{i,k} -\bar{g}_{i,k}\|^2] &\leq 
\sum_{j=2}^k\frac{\eta_{i,j-1}^2dL^2D^2}{Ms_{1,{i,j}}^2}+
\sum_{j=2}^k\frac{\eta_{i,j-1}^2dL^2D^2}{s_{2,{i,j}}^2} + 
\frac{dG_\infty^2}{Ms_{1,i,1}^2}+\frac{dG_\infty^2}{s_{2,i,1}^2} \\
&\leq \frac{dL^2D^2}{Ms_{1,i}^2}\sum_{j=2}^k\eta_{i,j-1}^2 + 
\frac{dL^2D^2}{s_{2,i}^2}\sum_{j=2}^k\eta_{i,j-1}^2 +
\frac{dG_\infty^2}{Ms_{1,i,1}^2}+\frac{dG_\infty^2}{s_{2,i,1}^2} \\
&\leq \frac{dL^2D^2}{M\frac{p_id}{M}}\frac{4}{p_i} + 
\frac{dL^2D^2}{p_id}\frac{4}{p_i} + 
\frac{dG_\infty^2}{M\frac{dp_i^2}{M}}+\frac{dG_\infty^2}{dp_i^2} \\
&= \frac{2G^2_\infty+8L^2D^2}{p_i^2}.
\end{split}  	
\end{equation}

Now combine 
\cref{eq:convex_finite_error1,eq:convex_finite_error2,eq:convex_finite_error3},
we have
%\begin{align*}
%\expect[\| \nabla f(x_k) - \bar{g}_{i,k}\|^2] \leq
%\frac{4L^2D^2}{p_i^2} + 
%\frac{4dkG_\infty^2}{Ms_{1,i}^2}+\frac{4dkG_\infty^2}{s_{2,i}^2}.   
%\end{align*}

%Since we set $s_{1,i}=\frac{dp_i^3}{M}, s_{2,i}=dp_i^3$, we have
\begin{align*}
\expect[\| \nabla f(x_{i,k}) - \bar{g}_{i,k}\|^2] \leq 
\frac{2(G^2_\infty+6L^2D^2)}{p_i^2} \triangleq 
\frac{C_1^2}{p_i^2}.
\end{align*}
\end{proof}

Now we turn to prove \cref{thm:finite_convex}. First, since $x_{t+1} = 
(1-\eta_{i,k})x_t + \eta_{i,k} v_{i,k}$ is a convex combination 
of $x_t, v_{i,k}$, and $x_1 \in \constraint, v_{i,k} \in \constraint, \forall\ 
t$, we can prove $x_t \in \constraint, \forall\ t $ by induction. So $x_{T+1} 
\in \constraint$. Then we need the following lemma.
\begin{lemma}[Proof of Theorem 1 in 
\citep{yurtsever2019conditional}]\label{lem:icml_thm1}
Consider \cref{alg:convex_dist_fw}, under the conditions of 
\cref{thm:finite_convex}, we have
\begin{equation*}
\expect[f_{i,k+1}] - f(x^*) \leq (1-\eta_{i,k}) (\expect[f(x_{i,k})-f(x^*)]) + 
\eta_{i,k}D\expect[\| \nabla f(x_i,k) - \bar{g}_{i,k}\|] + 
\eta_{i,k}^2\frac{LD^2}{2}.
\end{equation*}
Moreover, by analyzing the telescopic sum of the inequality over $(i,k)$, we 
have
\begin{equation*}
\expect[f(x_{i,k+1})] - f(x^*) \leq \sum_{(\tau,j)}\left( 
\eta_{\tau,j}D\expect[\| \nabla f(x_{\tau,j}) - \bar{g}_{\tau,j}\|] + 
\eta_{\tau,j}^2\frac{LD^2}{2} \right)\frac{(p_\tau +j-2)(p_\tau 
+j-1)}{(p_i+k-1)(p_i+k)}.
\end{equation*}
\end{lemma}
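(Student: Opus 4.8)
The plan is to establish the two displayed inequalities in turn: first the one-step progress bound for inexact Frank--Wolfe, and then the telescoped form obtained by unrolling that recursion over the global iteration counter.

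For the per-step bound I would run the standard inexact FW estimate, viewing $\bar g_{i,k}$ as a noisy surrogate for $\nabla f(x_{i,k})$. By \cref{assump_on_f_i_convex} the average $f$ is $L$-smooth, and the update $x_{i,k+1}=x_{i,k}+\eta_{i,k}(v_{i,k}-x_{i,k})$ gives
\[
f(x_{i,k+1})\le f(x_{i,k})+\eta_{i,k}\langle\nabla f(x_{i,k}),v_{i,k}-x_{i,k}\rangle+\tfrac{L\eta_{i,k}^2}{2}\|v_{i,k}-x_{i,k}\|^2 .
\]
I would bound the quadratic term by $\tfrac{L\eta_{i,k}^2D^2}{2}$ using the diameter bound in \cref{assump_on_K}. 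For the linear term I would write $\nabla f(x_{i,k})=\bar g_{i,k}+(\nabla f(x_{i,k})-\bar g_{i,k})$; the optimality $\langle\bar g_{i,k},v_{i,k}\rangle\le\langle\bar g_{i,k},x^*\rangle$ together with convexity $\langle\nabla f(x_{i,k}),x^*-x_{i,k}\rangle\le f(x^*)-f(x_{i,k})$ reduces the error contribution to $\langle\nabla f(x_{i,k})-\bar g_{i,k},v_{i,k}-x^*\rangle$, which Cauchy--Schwarz and the diameter turn into $D\|\nabla f(x_{i,k})-\bar g_{i,k}\|$. Subtracting $f(x^*)$ and taking expectations yields the first inequality; this is exactly the convex descent step already used in \cref{lem:aryan1}, transcribed to the $(i,k)$ indexing.

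For the telescoping, set $a_{i,k}=\expect[f(x_{i,k})]-f(x^*)$ and let $e_{i,k}=\eta_{i,k}D\expect[\|\nabla f(x_{i,k})-\bar g_{i,k}\|]+\tfrac{L\eta_{i,k}^2D^2}{2}$, so the first part reads $a_{i,k+1}\le(1-\eta_{i,k})a_{i,k}+e_{i,k}$. The cleanest route is to reindex by the global counter $t=\sum_{j<i}p_j+k$: since $p_i=2^{i-1}$ we have $\sum_{j<i}p_j=p_i-1$, hence the effective index $m:=p_i+k=t+1$ advances by one at every step, even across period boundaries, and $\eta_{i,k}=2/(p_i+k)$ becomes the familiar $2/m$ with $1-\eta_{i,k}=(m-2)/m$. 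Unrolling the recursion from the first step, whose factor $(m-2)/m$ vanishes at $m=2$ so that the initial term drops, the weight attached to $e_{\tau,j}$ is a product of such factors, which I would collapse with the telescoping identity $\prod_{m=a}^{b}\frac{m-2}{m}=\frac{(a-2)(a-1)}{(b-1)b}$. Taking $a=p_\tau+j$ and $b=p_i+k$ reproduces the stated weight $\frac{(p_\tau+j-2)(p_\tau+j-1)}{(p_i+k-1)(p_i+k)}$.

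The main obstacle is precisely the bookkeeping across period boundaries. Because $\eta_{i,k}=2/(p_i+k)$ appears to reset its denominator at the start of each new period, it is not obvious a priori that the $(1-\eta)$ products combine into a single telescoping chain. The decisive observation is that the geometric schedule $p_{i+1}=2p_i$ makes $m=p_i+k$ a continuous global counter: the denominator $(2p_\tau-1)(2p_\tau)$ accumulated at the end of period $\tau$ cancels exactly against the numerator $(p_{\tau+1}-1)p_{\tau+1}=(2p_\tau-1)(2p_\tau)$ produced at the start of period $\tau+1$. Once this cancellation (equivalently, the reindexing $p_i+k=t+1$) is recognized, the double-indexed sum collapses to the stated closed form, and the remaining manipulations are the routine FW estimates already appearing earlier in the dissertation.
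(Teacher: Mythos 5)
Your strategy is the right one and, as far as one can tell, matches the intended source: the dissertation never proves this lemma itself but defers entirely to the proof of Theorem~1 in \citep{yurtsever2019conditional}, and your one-step estimate (smoothness, the split $\nabla f(x_{i,k})=\bar g_{i,k}+(\nabla f(x_{i,k})-\bar g_{i,k})$, optimality of $v_{i,k}$, convexity at $x^*$, Cauchy--Schwarz with the diameter) is exactly the argument behind the first display and behind \cref{lem:aryan1}. Your identification of the crux of the second display is also correct: because $p_{i+1}=2p_i$ implies $\sum_{j<i}p_j=p_i-1$, the quantity $m=p_i+k=t+1$ is a genuine global counter, so the $(1-\eta)$ factors chain across period boundaries and the product collapses by your stated identity.

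One piece of bookkeeping deserves care. Writing the recursion as $a_{m+1}\le\frac{m-2}{m}a_m+e_m$, the factor multiplying $e_{\tau,j}$ after unrolling is
\begin{equation*}
\prod_{m'=p_\tau+j+1}^{p_i+k}\frac{m'-2}{m'}=\frac{(p_\tau+j-1)(p_\tau+j)}{(p_i+k-1)(p_i+k)},
\end{equation*}
i.e.\ the product runs over the steps \emph{strictly after} $(\tau,j)$. Your product starts at $m'=p_\tau+j$ and therefore absorbs the extra factor $(1-\eta_{\tau,j})=\frac{p_\tau+j-2}{p_\tau+j}$, which in the recursion multiplies $a_{\tau,j}$ rather than $e_{\tau,j}$. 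That is precisely how you reproduce the printed weight $\frac{(p_\tau+j-2)(p_\tau+j-1)}{(p_i+k-1)(p_i+k)}$, which appears to carry the same off-by-one: it is strictly smaller than what the telescoping actually yields, so it cannot be obtained by this route (nor, I believe, by the multiply-by-$t(t+1)$-and-sum variant). The slip is immaterial downstream, since in the proof of \cref{thm:finite_convex} the weight only enters through $\eta_{\tau,j}$ times the weight and $\eta_{\tau,j}^2$ times the weight, and both versions of the weight lead to the same bound of order $1/\bigl((p_i+k-1)(p_i+k)\bigr)$ per term; but if you want your writeup to be airtight you should state the telescoped inequality with the weight $\frac{(p_\tau+j-1)(p_\tau+j)}{(p_i+k-1)(p_i+k)}$.
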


By \cref{lem:convex_finite_error} and Jensen's inequality, we have
\begin{equation*}
\expect[\| \nabla f(x_i,k) - \bar{g}_{i,k}\|] \leq \sqrt{\expect[\| \nabla 
f(x_i,k) - \bar{g}_{i,k}\|^2]} \leq 
\frac{C_1}{p_i}. 
\end{equation*}

So 
\begin{equation*}
\begin{split}
 &\sum_{(\tau,j)} \eta_{\tau,j}D\expect[\| \nabla f(x_i,k) - \bar{g}_{i,k}\|] 
 \frac{(p_\tau +j-2)(p_\tau 
 	+j-1)}{(p_i+k-1)(p_i+k)} \\
 \leq & \sum_{(\tau,j)} \frac{2}{p_\tau+j}D\frac{C_1}{p_\tau} 
 \frac{(p_\tau +j-2)(p_\tau +j-1)}{(p_i+k-1)(p_i+k)} \\
\leq & \frac{4C_1D}{(p_i+k-1)(p_i+k)}\sum_{(\tau,j)}1 \\
\leq & \frac{4C_1D}{p_i+k}.
\end{split}
\end{equation*}

We also have
\begin{equation*}
\begin{split}
\sum_{(\tau,j)}\eta_{\tau,j}^2\frac{LD^2}{2}\frac{(p_\tau +j-2)(p_\tau 
	+j-1)}{(p_i+k-1)(p_i+k)}&= \sum_{(\tau,j)} 
	\frac{4}{(p_\tau+j)^2}\frac{LD^2}{2}\frac{(p_\tau 
+j-2)(p_\tau +j-1)}{(p_i+k-1)(p_i+k)}\\
&\leq \frac{2LD^2}{(p_i+k-1)(p_i+k)}\sum_{(\tau,j)}1 \\
&\leq \frac{2LD^2}{p_i+k}.
\end{split}
\end{equation*}

Thus by \cref{lem:icml_thm1}, we have
\begin{equation*}
\expect[f(x_{i,k+1})] - f(x^*) \leq \frac{4C_1D+2LD^2}{p_i+k}.
\end{equation*}

By definition, $x_{i,k+1} = x_t$, where $t = \sum_{j=1}^{i-1}p_j+k+1 = p_i+k$. 
When $t=T$, we have 
\begin{equation*}
\expect[f(x_T)] - f(x^*) \leq \frac{4C_1D+2LD^2}{T}.
\end{equation*}

Therefore, to obtain an $\epsilon$-suboptimal solution, we need 
$\mathcal{O}(1/\epsilon)$ 
iterations. Let $T=\sum_{j=1}^{I-1}p_i+K = p_I+K-1$, then $I\le \log_2(T) + 1$, 
and 
thus IFO complexity per 
worker is 
\begin{equation*}
\begin{split}
IFO &\le \sum_{i=1}^I (n+ \sum_{j=2}^{p_i}S_{i,k}) \\
&\leq 
\sum_{i=1}^I(n+\frac{2^{2(i-1)}}{M}) \\
&\leq nI + 2^{2I}/M \\
&\leq  [\log_2(T)+1]N/M 
+ 4T^2/M \\
&= \mathcal{O}(\frac{N\ln(1/\epsilon)+1/\epsilon^2}{M}).
\end{split}
\end{equation*}

\subsection{Proof of Theorem 6 and Corollary 4.3.2}
The proof is quite similar to that of \cref{thm:finite_convex}. 

We first need to upper bound $\expect[\|\nabla f(x_{i,k}) - 
\bar{g}_{i,k}\|^2]$. \cref{eq:convex_finite_error1,eq:convex_finite_error3} 
still hold. Similarly, we also have for $k \ge 2$,
\begin{equation*}
\begin{split}
\expect[\| f(x_{i,k}) - g_{i,k}\|^2] &\leq 
\frac{L^2D^2\eta_{i,k-1}^2}{MS_{i,k}} + \expect[\| f(x_{i,k-1}) - 
g_{i,k-1}\|^2 \\
&=\frac{L^2D^2\eta_{i,k-1}^2}{p_i} + \expect[\| f(x_{i,k-1}) - 
g_{i,k-1}\|^2.
\end{split}
\end{equation*}

For $k=1$,
\begin{equation*}
\expect[\| f(x_{i,k}) - g_{i,k}\|^2] \leq \frac{\sigma^2}{MS_{i,1}} = 
\frac{\sigma^2}{M\frac{\sigma^2 p_i^2}{ML^2D^2}} = \frac{L^2D^2}{p_i^2}.
\end{equation*}

So 
\begin{equation}\label{eq:convex_stoch_aux1}
\expect[\| f(x_{i,k}) - g_{i,k}\|^2] \leq \frac{L^2D^2}{p_i^2} + 
\frac{L^2D^2}{p_i}\sum_{j=2}^k\eta_{i,j-1}^2 \leq \frac{L^2D^2}{p_i^2} + 
\frac{4L^2D^2}{p_i^2} = \frac{5L^2D^2}{p_i^2}.
\end{equation}

Combine 
\cref{eq:convex_finite_error1,eq:convex_finite_error3,eq:convex_stoch_aux1}, we 
have 
\begin{equation*}
\expect[\| f(x_{i,k}) - \bar{g}_{i,k}\|^2] \leq 
\frac{13L^2D^2+2G^2_\infty}{p_i^2} \triangleq \frac{C_2^2}{p_i^2}.
\end{equation*}

Applying \cref{lem:icml_thm1}, we have
\begin{equation*}
\expect[f(x_{i,k+1})] - f(x^*) \leq \frac{4C_2D+2LD^2}{p_i+k}.
\end{equation*}

By definition, $x_{i,k+1} = x_t$, where $t = \sum_{j=1}^{i-1}p_j+k+1 = p_i+k$. 
When $t=T$, we have 
\begin{equation*}
\expect[f(x_T)] - f(x^*) \leq \frac{4C_2D+2LD^2}{T}.
\end{equation*}

Therefore, to obtain an $\epsilon$-suboptimal solution, we need 
$\mathcal{O}(1/\epsilon)$ 
iterations. Let $T=\sum_{j=1}^{I-1}p_i+K = p_I+K-1$, then $I\le \log_2(T) + 1$, 
and thus SFO complexity per worker is 
\begin{equation*}
\begin{split}
SFO &\le \sum_{i=1}^I (\frac{\sigma^2p_i^2}{ML^2D^2}+ \sum_{j=2}^{p_i}S_{i,k}) 
\\
&\leq 
\sum_{i=1}^I(\frac{\sigma^22^{2(i-1)}}{ML^2D^2}+\frac{2^{2(i-1)}}{M}) \\
&\leq \frac{2^{2I}}{M} (\frac{\sigma^2 }{L^2D^2}+1) \\
&\leq 
\frac{4T^2}{M}(\frac{\sigma^2}{L^2D^2}+1) \\
&= \mathcal{O}(1/(M\epsilon^2)).
\end{split}
\end{equation*}

\subsection{Proof of Theorem 7 and Corollary 4.4.1}
First, since $x_{t+1} = (1-\eta_t)x_t + \eta_t v_t$ is a convex combination 
of $x_t, v_t$, and $x_1 \in \constraint, v_t \in \constraint, \forall\ t$, 
we can prove $x_t \in \constraint, \forall\ t $ by induction. So $x_{o} \in 
\constraint$.

Then we turn to upper bound $\expect[\|\nabla f(x_{i,k}) - \bar{g}_{i,k} 
\|^2]$. \cref{eq:convex_finite_error1} still holds. Similarly, we also have for 
$k \ge 2$,
\begin{equation*}
\begin{split}
\expect[\| f^{(m)}(x_{i,k}) - g^{(m)}_{i,k}\|^2] &\leq 
\frac{L^2D^2\eta_{i,k-1}^2}{S_{i,k}} + \expect[\| f^{(m)}(x_{i,k-1}) - 
g^{(m)}_{i,k-1}\|^2 \\
&=\frac{L^2D^2T^{-1}}{\frac{\sqrt{n}}{M}} + \expect[\| f^{(m)}(x_{i,k-1}) - 
g^{(m)}_{i,k-1}\|^2 \\
&=\frac{ML^2D^2}{\sqrt{n}T} + \expect[\| f^{(m)}(x_{i,k-1}) - 
g^{(m)}_{i,k-1}\|^2.
\end{split}
\end{equation*}

For $k=1$, we have $g_{i,1}^{(m)} = \nabla f^{(m)}(x_{i,1})$. So
\begin{equation*}
\expect[\| \nabla f^{(m)}(x_{i,k}) - g_{i,k}^{(m)}\|^2] \leq 
\frac{ML^2D^2}{\sqrt{n}T} (k-1) \leq \frac{ML^2D^2}{\sqrt{n}T} p_i= 
\frac{ML^2D^2}{T}.
\end{equation*}

By \cref{eq:finite_convex_1},
\begin{equation}\label{eq:finite_nonconvex_1}
\expect[\nabla f(x_{i,k}) - g_{i,k}\|^2] \leq \frac{M\frac{ML^2D^2}{T}}{M^2} = 
\frac{L^2D^2}{T}.
\end{equation}

We also have
\begin{equation}\label{eq:finite_nonconvex_2} 
\begin{split}
\expect[\| g_{i,k} -\bar{g}_{i,k}\|^2] &\leq 
\sum_{j=2}^k\frac{\eta_{i,j-1}^2dL^2D^2}{Ms_{1,{i,j}}^2}+
\sum_{j=2}^k\frac{\eta_{i,j-1}^2dL^2D^2}{s_{2,{i,j}}^2} + 
\frac{dG_\infty^2}{Ms_{1,i,1}^2}+\frac{dG_\infty^2}{s_{2,i,1}^2} \\
&\leq \frac{p_idL^2D^2}{TM \frac{d\sqrt{n}}{M}} + 
\frac{p_idL^2D^2}{Td\sqrt{n}} + \frac{dG_\infty^2}{M\frac{Td}{M}} + 
\frac{dG_\infty^2}{dT}\\
&=\frac{2(L^2D^2+G_\infty^2)}{T}.
\end{split}  	
\end{equation}

Combine 
\cref{eq:convex_finite_error1,eq:finite_nonconvex_1,eq:finite_nonconvex_2}
\begin{equation*}\label{eq:nonconvex_finite_error}
\expect[\|\nabla f(x_{i,k}) - \bar{g}_{i,k} \|^2] \leq 
\frac{3L^2D^2+2G_\infty^2}{T}.
\end{equation*}

By \cref{assump_on_f_i_nonconvex}, $f$ is also a bounded (potentially) 
non-convex 
function on $\constraint$ with $L$-Lipschitz continuous gradient. 
Specifically, we have $\sup_{x \in \constraint}|f(x)| \leq M_0$. Note that 
if we define $v_t^\prime = \argmin_{v \in \constraint}\langle v, 
\nabla f(x_t)\rangle$, then $\mathcal{G}(x_t)=\langle 
v^\prime_t-x_t,-\nabla f(x_t)\rangle = -\langle v^\prime_t-x_t,\nabla 
f(x_t)\rangle$. So we have

\begin{equation*}
\begin{split}
f(x_{t+1}) 
\stackrel{(a)}{\leq}{}& f(x_t) + \langle \nabla f(x_t),x_{t+1}-x_t \rangle + 
\frac{L}{2}\|x_{t+1}-x_t\|^2 \\
={}&f(x_t) + \langle \nabla f(x_t),\eta_t(v_t-x_t) \rangle + 
\frac{L}{2}\|\eta_t(v_t-x_t)\|^2 \\
\stackrel{(b)}{\leq}{}& f(x_t) + \eta_t \langle \nabla f(x_t),v_t-x_t 
\rangle+\frac{L\eta_t^2D^2}{2} \\
={}& f(x_t) + \eta_t \langle  \bar{g}_t,v_t-x_t \rangle+ \eta_t \langle 
\nabla 
f(x_t)-\bar{g}_t,v_t-x_t \rangle  + \frac{L\eta_t^2D^2}{2} \\
\stackrel{(c)}{\leq}{}& f(x_t) + \eta_t \langle  \bar{g}_t,v^\prime_t-x_t 
\rangle+ \eta_t \langle \nabla f(x_t)-\bar{g}_t,v_t-x_t \rangle + 
\frac{L\eta_t^2D^2}{2} \\
={}& f(x_t) + \eta_t \langle \nabla f(x_t),v^\prime_t-x_t \rangle  + \eta_t 
\langle  \bar{g}_t-\nabla f(x_t),v^\prime_t-x_t \rangle \\
&\quad + \eta_t \langle 
\nabla f(x_t)-\bar{g}_t,v_t-x_t \rangle + 
\frac{L\eta_t^2D^2}{2} \\
={}& f(x_t) - \eta_t \mathcal{G}(x_t) + \eta_t \langle \nabla 
f(x_t)-\bar{g}_t,v_t-v^\prime_t \rangle + \frac{L\eta_t^2D^2}{2} \\
\stackrel{(d)}{\leq}{}& f(x_t) - \eta_t \mathcal{G}(x_t) + \eta_t \|\nabla 
f(x_t)-\bar{g}_t\|\| v_t-v^\prime_t\| + \frac{L\eta_t^2D^2}{2} \\
\stackrel{(e)}{\leq}{}& f(x_t) - \eta_t \mathcal{G}(x_t) + \eta_tD \|\nabla 
f(x_t)-\bar{g}_t\| + \frac{L\eta_t^2D^2}{2},
\end{split}
\end{equation*}
where we used the assumption that $f$ has $L$-Lipschitz continuous gradient 
in inequality (a). Inequalities (b), (e) hold because of 
\cref{assump_on_K}. Inequality (c) is due to the optimality of $v_t$, and 
in (d), we applied the Cauchy-Schwarz inequality.

Rearrange the inequality above, we have 
\begin{equation}
\label{eq:bound_on_individual_gap}
\eta_t \mathcal{G}(x_t) \leq f(x_t)- f(x_{t+1})+ \eta_tD \|\nabla 
f(x_t)-\bar{g}_t\| + \frac{L\eta_t^2D^2}{2}.   
\end{equation}

Apply \cref{eq:bound_on_individual_gap} recursively for $t = 1, 2, \cdots, 
T$, and take expectations, we attain the following inequality:
\begin{equation*}
\label{eq:bound_on_gap}
\begin{split}
\sum_{t=1}^T \eta_t \expect[\mathcal{G}(x_t)]
\leq f(x_1)-f(x_{T+1})
+D\sum_{t=1}^T \eta_t\expect[\|\nabla f(x_t)-\bar{g}_t\|] 
+\frac{LD^2}{2}\sum_{t=1}^T \eta_t^2.
\end{split}
\end{equation*}

Since we have $\expect[\|\nabla f(x_{i,k}) - \bar{g}_{i,k} \|^2] \leq 
\frac{3L^2D^2+2G_\infty^2}{T} \triangleq \frac{c^2}{T}$, we have
\begin{equation*}
\expect[\|\nabla f(x_t)-\bar{g}_t\|] \leq \sqrt{\expect[\|\nabla 
	f(x_t)-\bar{g}_t\|^2]} \leq \frac{c}{\sqrt{T}}.    
\end{equation*}

With $\eta_t = T^{-1/2}$, we then have
\begin{equation*}
\begin{split}
\sum_{t=1}^T \expect[\mathcal{G}(x_t)]
\leq{}& \sqrt{T}[f(x_1)-f(x_{T+1})] + 
D \sum_{t=1}^T \expect[\|\nabla f(x_t)-\bar{g}_t\|] + \sqrt{T} 
\frac{LD^2}{2} T (T^{-1/2})^2 \\
\leq{}& 2M_0\sqrt{T}+DT\frac{c}{\sqrt{T}}+\frac{LD^2}{2}\sqrt{T}\\
={}& (2M_0+cD+\frac{LD^2}{2})\sqrt{T}.
\end{split}
\end{equation*}

So
\begin{equation*}
\expect[\mathcal{G}(x_o)]=\frac{\sum_{t=1}^T \expect[\mathcal{G}(x_t)]}{T} 
\leq \frac{2M_0+cD+\frac{LD^2}{2}}{\sqrt{T}}.    
\end{equation*}

Therefore, in order to find an $\epsilon$-first-order stationary points, we 
need at most $\mathcal{O}(1/\epsilon^2)$ iterations. The IFO complexity per 
worker is $[n+2(p-1)S_{i,k}]\cdot \frac{T}{p} = 
\mathcal{O}(\sqrt{n}/\epsilon^2)=\mathcal{O}(\sqrt{N}/(\epsilon^2\sqrt{M}))$. 
The average communication bits per round is $\frac{1}{p} \{M[32+d(z_{1,i,1}+1) 
+ (p-1)(32+d(z_{1,i,k}+1)) ] + [32+d(z_{2,i,1}+1) + (p-1)(32+d(z_{2,i,k}+1))]  
\} = (32+d)(M+1) + \frac{Md}{\sqrt{n}}\log_2(\sqrt{\frac{Td}{M}}+1) + 
Md\log_2(\frac{d^{1/2}n^{1/4}}{\sqrt{M}}+1) + 
\frac{d}{\sqrt{n}}\log_2(\sqrt{TD}+1) + d\log_2(d^{1/2}n^{1/4}+1).$

	\chapter{Black-Box Submodular Maximization}\label{cha:black}
	
	%!TEX root = Thesis-Mingrui.tex

\section{Introduction}

The focus of this chapter\footnote{\revise{This chapter is based on our work in 
		\citep{chen2020black}.}} is the \textit{constrained} continuous 
DR-submodular maximization %optimization
%More precisely, we consider continuous DR-submodular maximization,
%, as well as the minimization of a convex function, 
over a bounded convex body. We aim to design an algorithm that uses only 
zeroth-order 
information while avoiding expensive  projection operations. Note that one way 
the 
optimization methods can deal with constraints is to apply the projection 
oracle once the 
proposed iterates land outside the feasibility region. However, computing 
the projection in 
many constrained %optimization 
settings is computationally prohibitive 
(\emph{e.g.}, 
projection over bounded 
trace norm matrices, flow polytope, matroid polytope, rotation matrices). In 
such scenarios, 
projection-free algorithms, \emph{a.k.a.}, Frank-Wolfe 
\citep{frank1956algorithm},  replace 
the projection with a linear program. Indeed, our proposed algorithm combines 
efficiently the 
zeroth-order information with solving a series of linear programs to ensure 
convergence to a 
near-optimal solution.

%All the existing 
%methods for maximizing a 
%monotone and continuous DR-submodular function rely on knowing the gradients 
%exactly \cite{bian2017guaranteed} or stochastically 
%\cite{mokhtari2018conditional}. In this work, we 
%alleviate this requirement.  

\textbf{Motivation:}
\revise{Computing the gradient 
of a continuous DR-submodular function has been shown to be computationally prohibitive (or even 
intractable) in many applications. For example, 
the objective function of influence maximization is defined via specific  
stochastic processes \citep{kempe2003maximizing,rodriguez2012influence} and computing/estimating the gradient of the mutliliear extension would require a relatively high computational complexity. In the 
problem of D-optimal experimental design 
, the gradient of the objective function involves inversion of a potentially 
large matrix \citep{chen2018online}.} Moreover, when one 
attacks a submodular recommender model, only black-box information is available 
and the service provider is unlikely to provide additional first-order 
information (this is known as the black-box adversarial attack model) 
\citep{lei2019discrete}.

There has been very recent progress on developing zeroth-order methods for 
constrained optimization 
problems in convex and non-convex settings 
\cite{ghadimi2013stochastic,sahu2018towards}. Such methods typically assume 
the objective function is defined on the whole $\mathbb{R}^d$ so that they 
can sample points from a proper 
distribution defined on $\mathbb{R}^d$. For DR-submodular functions, this 
assumption might be unrealistic, since 
many DR-submodular 
functions might be only defined on a subset of $\mathbb{R}^d$, %. For example, 
\emph{e.g.}, the multi-linear extension 
\cite{vondrak2008optimal}, a canonical example of DR-submodular functions, is 
only defined on a unit %$d$-dimensional 
cube. Moreover, they can only guarantee to reach a first-order stationary 
point. However, 
%Second, 
\citet{hassani2017gradient} showed that for a monotone 
DR-submodular function, the stationary points can only guarantee $1/2$ 
approximation to the optimum. 
\revise{Therefore, if a state-of-the-art zeroth-order 
	non-convex algorithm is used for maximizing a monotone DR-submodular 
	function, it is likely to terminate at a suboptimal stationary point whose 
	approximation ratio is only $ 1/2 $.}
%
%\textcolor{red}{As a result, we need a
%specialized 
%zeroth-order algorithm for continuous DR-submodular functions. 
%In this paper, 
%We propose \Alg that provably achieves the tight  	
%$[(1-1/e)OPT-\epsilon]$ approximation guarantee with $O(d/\epsilon^3)$ 
%function 
%evaluations.
%} 

\textbf{Our contributions:} In this chapter, we  propose a derivative-free 
and projection-free algorithm \Algblack (BCG), that maximizes a monotone 
continuous 
DR-submodular 
function over 
a bounded convex body $\constraint\subseteq \mathbb{R}^d$. We 
consider three scenarios:

(1) In the deterministic setting, where function evaluations can be obtained 
exactly, BCG achieves the tight $[(1-1/e)OPT-\epsilon]$ approximation guarantee 
with $\mathcal{O}(d/\epsilon^3)$ function evaluations.%, where $d$ 
%is the dimension. 

(2) In the stochastic setting, where function evaluations are noisy, BCG 
achieves the tight $[(1-1/e)OPT-\epsilon]$ approximation guarantee with 
$\mathcal{O}(d^3/\epsilon^5)$ function evaluations.

%(3) In the discrete setting, %where we use the multi-linear extension of 
%	%a submodular set function, 
%\emph{with probability} at least $1-\delta$, \Algdis (DBG) achieves the tight 
%$[(1-1/e)OPT-\epsilon]$ approximation guarantee, with 
%$O(\frac{d^5}{\epsilon^5}\ln \frac{d^3}{\delta \epsilon^3})$ function 
%evaluations.

(3) In the discrete setting, \Algdis (DBG), the discrete
version of BCG, achieves the tight 
$[(1-1/e)OPT-\epsilon]$ 
approximation guarantee with $\mathcal{O}(d^5/\epsilon^5)$ function evaluations.
%\begin{itemize}
%	\item In the deterministic setting, where function evaluations can be 
%	obtained exactly, 
%	BCG achieves the tight 
%	$[(1-1/e)OPT-\epsilon]$ approximation guarantee with $O(d/\epsilon^3)$ 
%	function 
%	evaluations, where $d$ is the dimension.
%	\item In the stochastic setting, where function evaluations are noisy, BCG 
%	achieves the 
%	tight 
%	$[(1-1/e)OPT-\epsilon]$ approximation guarantee with $O(d^3/\epsilon^5)$ 
%	function 
%	evaluations.
%	\item In the discrete setting, %where we use the multi-linear extension of 
%	%a submodular set function, 
%	with probability at least 
%	$1-\delta$, \Algdis (DBG) achieves  the tight 
%	$[(1-1/e)OPT-\epsilon]$ approximation guarantee, with 
%	$O(\frac{d^5}{\epsilon^5}\ln \frac{d^3}{\delta 
%		\epsilon^3})$ function evaluations. 
%\end{itemize}

All the theoretical results are summarized in 
\cref{tab:number_of_queries}.

\begin{table*}[t]%[bht]
	\centering
	%\begin{adjustbox}{width=\columnwidth,center}
	%\begin{center}
	%\begin{threeparttable}[t]
	\caption{Number of function queries in different 
		settings, where $D_1$ is the diameter of 
		$\constraint$.\label{tab:number_of_queries}}
	\begin{tabular}{ll}
		\toprule[1.5pt]
		Function &Function Queries  \\ 
		\midrule
		continuous DR-submodular 
		& 
		$\mathcal{O}(\max\{G, LD_1\}^3 \cdot \frac{d}{\epsilon^3})$ 
		[\cref{thm:zero}]\\ 
		stochastic continuous DR-submodular  &
		$\mathcal{O}(\max\{G, LD_1\}^3 \cdot \frac{d^3}{\epsilon^5})$ 
		[\cref{thm:zero_stochatic}] \\ 
		discrete submodular &
		$\mathcal{O}(\frac{d^5}{\epsilon^5})$ [\cref{thm:zero_discrete}]
		\\ 
		\bottomrule[1.25pt]
	\end{tabular}
	%			\begin{tablenotes}
	%				%\item[$\dagger$] see \cref{alg:zero_frank_wolfe,thm:zero}
	%				%\item[$\ddagger$] see 
	%				%\cref{alg:zero_frank_wolfe,thm:zero_stochatic}
	%				\item[$\sharp$] The tight 
	%				$[(1-1/e)OPT-\epsilon]$ approximation guarantee is achieved 
	%				with 
	%				probability at least $1-\delta$.
	%				
	%%%$1-\frac{4d^3}{\epsilon^3}\exp(-\frac{l\epsilon^2}{2d^2M^2})$%,
	%				%% see 
	%				%\cref{alg:zero_frank_wolfe_discrete,thm:zero_discrete}
	%				%\item[$\natural$] see 
	%				%\cref{alg:zero_frank_wolfe_discrete,cor:discrete}
	%				%	\item[$\flat$] see  
	%				%\cref{alg:zero_frank_wolfe_convex,thm:zero_con}
	%			\end{tablenotes}
	%	\end{center}
	%\end{adjustbox}
\end{table*}

We would like to note that in the discrete setting, due to  
%conservative union bound 
%for probability (\cref{thm:zero_discrete}) and 
the conservative upper bounds for the Lipschitz and smooth parameters of 
general multilinear extensions, %(\cref{lem:discrete_to_continuous}), 
and the variance of 
the gradient estimators 
subject to noisy function evaluations,
%(\cref{thm:discrete_cor})
the required number of function 
queries in theory is larger than the best known result, 
$\mathcal{O}(d^{5/2}/\epsilon^3)$ 
in  \cite{mokhtari2018conditional,mokhtari2018stochastic}. However, our 
experiments (in \cref{sec:experiment-black}) show 
that empirically, our proposed algorithm often requires significantly fewer 
function evaluations and less running time, while achieving a practically 
similar utility.
%\mz{do we need to emphasize (like marked in bold) the convex polytope 
%constraint and our general convex constraints?}

\textbf{Novelty of our work:} All the previous results in 
constrained DR-submodular maximization assume access to (stochastic) gradients. 
In this work, we address a harder problem, \emph{i.e.}, we provide the first 
rigorous analysis when only (stochastic) function values can be obtained.
%, arguably addressing a harder problem. 
More specifically, with the smoothing trick \citep{flaxman2005online}, one can 
construct an unbiased gradient estimator via function queries. However, this 
estimator has a large $\mathcal{O}(d^2/\delta^2)$ variance which may cause  
FW-type 
methods to diverge. To overcome this issue, we build on the momentum method 
proposed by \citet{mokhtari2018conditional} in which they assumed access to the 
\emph{first-order} information.
	
Given a point $x$, the smoothed version of $F$ at $x$ is 
defined as 
$\expect_{v\sim B^d}[F(x+ \delta v)]$. If $x$ is close to the boundary of the 
domain $\domain, (x + \delta v)$ may fall outside of $\domain$, leaving the 
smoothed function undefined for many instances of DR-submodular functions 
(\emph{e.g.}, the multilinear extension is only defined over the unit cube). 
Thus the vanilla smoothing trick will not work. To this end, we transform the 
domain $\domain$ and constraint set $\constraint$ in a proper way 
and run our zeroth-order method on the transformed constraint set 
$\constraint'$. Importantly, we retrieve the same convergence rate of 
$\mathcal{O}(T^{-1/3})$ as in \cite{mokhtari2018conditional} with a minimum 
number of 
function queries in different settings (continuous, stochastic continuous, 
discrete).

We further note that by using more recent variance reduction techniques 
\citep{zhang2020one}, one might be able to reduce the required number of 
function evaluations. All the proofs in this chapter are provided in 
\cref{sec:proof-black}.
	
%Finally, in the discrete setting, in contrast to 
%\cite{mokhtari2018conditional}, we 
%provide both in expectation %(\cref{thm:discrete_cor}) 
%and with high-probability 
%(\cref{thm:zero_discrete}) guarantees for the 
%convergence rate. Indeed, our high probability guarantee is stronger in 
%nature. 

	%!TEX root = Thesis-Mingrui.tex
\section{Smoothing Trick}\label{sec:preliminaries}
%\subsection{Submodular Functions}\label{sec:pre_ubmodular}
%We first define the $\delta$-smoothed version of a function $f$, which is an 
%important ingredient of our analysis. To be precise, 
For a function $F$ defined on $\mathbb{R}^d$, its $\delta$-smoothed version is 
given as %$\tF_\delta(x)\triangleq \expect_{v\sim B^d}[F(x+\delta v)],$
\begin{equation*}
\label{eq:smooth_definition}
\tF_\delta(x)\triangleq \expect_{v\sim B^d}[F(x+\delta v)],    
\end{equation*}
where $v$ is chosen uniformly at random from the $d$-dimensional unit ball 
$B^d$. In words, the function $\tF_\delta$ at any point $x$ is obtained by 
``averaging''  $F$ over a ball of 
radius $\delta$ around $x$. In the sequel, we omit the subscript $\delta$ for 
the sake of simplicity and use  $\tF$ instead of $\tF_\delta$.

%Since $\tf$ is a special average of the original function $f$, the 
%function values $\tf(x)$ and $f(x)$ should be close to each other under some 
%mild conditions.

\cref{lem:smooth_approx} below shows that under the Lipschitz assumption for $F$, the 
smoothed 
version $\tF$ is a good approximation of $F$, and also inherits the key 
structural properties of  
$F$ (such as monotonicity and submodularity). Thus one can 
(approximately) optimize $F$ via optimizing~$\tF$. 

\begin{lemma}\label{lem:smooth_approx}
	If $F$ is monotone continuous 
	DR-submodular and $G$-Lipschitz continuous on $\mathbb{R}^d$,  %for $\tF(x) 
	%= 
	%\expect_{v\sim B^d}[F(x+\delta v)]$, %is $\frac{dG}{\delta}$-smooth and 
	then %we have $\tF$ is $G$-Lipschitz, 
	so is $\tF$ and 
	\begin{equation*}
	|\tF(x)-F(x)|\le \delta G.
	\end{equation*}   
	%\textcolor{red}{H: Define what $\constraint$ is beforehand.}
\end{lemma}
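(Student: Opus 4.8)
The plan is to exploit the fact that $\tF$ is nothing but an average of translated copies of $F$, and that \emph{every} property in the statement---monotonicity, continuous DR-submodularity, and $G$-Lipschitzness---is preserved both under translation of the argument and under taking expectations. First I would record the representation $\tF(x) = \expect_{v \sim B^d}[F(x + \delta v)]$ and note that linearity of expectation together with the pointwise inequalities available for $F$ is essentially the only tool required.

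For the approximation bound I would simply write $|\tF(x) - F(x)| = |\expect_{v}[F(x+\delta v) - F(x)]| \le \expect_v[|F(x+\delta v) - F(x)|]$ (Jensen's inequality applied to $|\cdot|$, equivalently the triangle inequality for integrals), and then bound each integrand by $G\|\delta v\| = \delta G \|v\| \le \delta G$ using the $G$-Lipschitzness of $F$ and the fact that $\|v\| \le 1$ on the unit ball $B^d$. The same step delivers $G$-Lipschitzness of $\tF$: $|\tF(x) - \tF(y)| \le \expect_v[|F(x+\delta v) - F(y+\delta v)|] \le \expect_v[G\|x-y\|] = G\|x-y\|$, which in particular makes $\tF$ continuous.

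For monotonicity I would observe that $x \le y$ implies $x + \delta v \le y + \delta v$ for every fixed $v$, so monotonicity of $F$ gives $F(x+\delta v) \le F(y+\delta v)$ pointwise, and integrating over $v$ yields $\tF(x) \le \tF(y)$. For DR-submodularity, since the definition in the excerpt presupposes differentiability, I would differentiate under the expectation to get $\nabla \tF(x) = \expect_v[\nabla F(x + \delta v)]$, and then use $x \le y \Rightarrow x + \delta v \le y + \delta v$ to conclude $\nabla F(x+\delta v) \ge \nabla F(y+\delta v)$ coordinatewise; taking expectations gives $\nabla \tF(x) \ge \nabla \tF(y)$, the required antitonicity of the gradient. (Alternatively one can sidestep differentiation by using the equivalent diminishing-returns inequality $F(w + z e_k) - F(w) \ge F(w' + z e_k) - F(w')$ for $w \le w'$ and $z \ge 0$, which is manifestly invariant under the common shift $w \mapsto w + \delta v$ and therefore survives the averaging.)

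The only genuinely technical point---and the step I would be most careful about---is justifying the interchange of gradient and expectation used in the DR-submodular argument. Here the uniform bound $\|\nabla F\| \le G$, which follows from the $G$-Lipschitzness of $F$, supplies the integrable dominating function needed for a dominated-convergence (differentiation-under-the-integral) argument over the compact ball $B^d$. Every other claim reduces to a pointwise inequality for $F$ that is preserved by the expectation operator, so I expect no further obstacle.
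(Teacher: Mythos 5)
Your proposal is correct and follows essentially the same route as the paper's proof: bound $|\tF(x)-F(x)|$ and $|\tF(x)-\tF(y)|$ by pushing the absolute value inside the expectation and applying $G$-Lipschitzness, and obtain monotonicity and DR-submodularity by observing that the defining pointwise inequalities for $F$ (and for $\nabla F$) are preserved under the shift $x\mapsto x+\delta v$ and under averaging over $v$. Your explicit justification of differentiating under the expectation via the uniform gradient bound is a point the paper passes over silently, but it does not change the argument.
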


An important property of $\tF$ is that one can obtain an unbiased estimation for its 
gradient 
$\nabla \tF$ by a single query of $F$. This property plays a key 
role in our proposed derivative-free algorithms.

%This property was first studied in the 
%field of online learning \citep{flaxman2005online}. 

\begin{lemma}[Lemma 6.5 in \citep{hazan2016introduction}]
	\label{lem:gradient_of_smooth} 
	Given a function $F$ on $\mathbb{R}^d$, if we choose $u$ uniformly at 
	random from the $(d-1)$-dimensional unit 
	sphere $S^{d-1}$, then %for $\tF(x) = 
	%\expect_{v\sim B^d}[F(x+\delta v)]$, 
	we have 
%	$\nabla \tF(x)= \expect_{u 
%	\sim S^{d-1}}[d/\delta\cdot F(x+\delta u)u].$
	\begin{equation*} \label{eq:gradient_smooth}
	\nabla \tF(x)= \expect_{u \sim S^{d-1}}\left[\frac{d}{\delta}F(x+\delta 
	u)u\right].    
	\end{equation*}
\end{lemma}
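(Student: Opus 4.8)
The plan is to prove this identity through the divergence theorem, exploiting the fact that the ball-average $\tF$ is a smoothing whose gradient can be written as a volume integral of $\nabla F$, which Stokes' theorem then converts into a flux integral over the bounding sphere. First I would rewrite the smoothed function as a normalized volume integral. Writing $\kappa_d = \mathrm{vol}(B^d)$ for the volume of the $d$-dimensional unit ball, the uniform density on $B^d$ is $1/\kappa_d$, so after the change of variables $w=\delta v$ (whose Jacobian contributes a factor $\delta^d$),
\begin{equation*}
\tF(x) = \frac{1}{\kappa_d}\int_{B^d} F(x+\delta v)\,\mathrm{d}v = \frac{1}{\delta^d \kappa_d}\int_{\delta B^d} F(x+w)\,\mathrm{d}w .
\end{equation*}
Since the region $\delta B^d$ does not depend on $x$, I would bring the gradient inside the integral to obtain $\nabla \tF(x) = \frac{1}{\delta^d \kappa_d}\int_{\delta B^d} \nabla_x F(x+w)\,\mathrm{d}w$.

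Next I would apply the divergence theorem coordinate by coordinate. Setting $g(w)=F(x+w)$, one has $\partial_{w_i} g(w) = (\partial_i F)(x+w)$, and the divergence theorem gives $\int_{\delta B^d}(\partial_i F)(x+w)\,\mathrm{d}w = \int_{\partial(\delta B^d)} F(x+w)\,n_i\,\mathrm{d}S(w)$, where $n$ is the outward unit normal. Collecting the coordinates yields
\begin{equation*}
\int_{\delta B^d} \nabla_x F(x+w)\,\mathrm{d}w = \int_{\partial(\delta B^d)} F(x+w)\,n\,\mathrm{d}S(w).
\end{equation*}
I would then parametrize the sphere of radius $\delta$ by $w=\delta u$ with $u\in S^{d-1}$, note that the outward normal is $n = w/\delta = u$ and that the surface element scales as $\mathrm{d}S(w)=\delta^{d-1}\mathrm{d}S(u)$, and use $\mathrm{area}(S^{d-1}) = d\kappa_d$ to conclude
\begin{equation*}
\nabla \tF(x) = \frac{1}{\delta^d \kappa_d}\int_{S^{d-1}} F(x+\delta u)\,u\,\delta^{d-1}\,\mathrm{d}S(u) = \frac{1}{\delta \kappa_d}\int_{S^{d-1}} F(x+\delta u)\,u\,\mathrm{d}S(u).
\end{equation*}
Recognizing the uniform spherical average $\expect_{u\sim S^{d-1}}[\,\cdot\,] = \frac{1}{d\kappa_d}\int_{S^{d-1}}\,\cdot\,\mathrm{d}S(u)$ then produces exactly the claimed factor $d/\delta$.

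The main obstacle is one of regularity: the divergence-theorem step as written presumes $F\in C^1$ so that $\nabla F$ is a continuous integrand, whereas the hypotheses of the lemma only supply $G$-Lipschitz continuity. I would handle this by a standard mollification argument, approximating $F$ by smooth functions $F_\varepsilon \to F$ uniformly on the compact set $x+\delta B^d$, establishing the identity for each $F_\varepsilon$, and then passing to the limit: the right-hand side is continuous in $F$ under uniform convergence (the integrand depends linearly on $F$ over a bounded domain), and $\tF_\varepsilon \to \tF$ together with uniform control of the smoothed gradients justifies interchanging the limit with $\nabla$. Since $\tF$ is automatically smooth regardless of the roughness of $F$, this limiting procedure is clean, and no further properties beyond Lipschitz continuity are needed.
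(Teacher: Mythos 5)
Your proof is correct, and it is essentially the standard Stokes'/divergence-theorem argument behind this identity — the same route taken in the cited source (Hazan's Lemma~6.5, originally due to Flaxman et al.); the paper itself imports the lemma by citation and gives no proof of its own. The volume-to-surface conversion, the normalization $\mathrm{area}(S^{d-1}) = d\,\kappa_d$ producing the factor $d/\delta$, and the mollification step to cover merely Lipschitz $F$ are all handled correctly.
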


	%!TEX root = Thesis-Mingrui.tex
\section{DR-Submodular Maximization}
In this chapter, we mainly focus on the constrained optimization 
problem: 
\begin{equation*}
\max_{x \in \constraint} F(x),
\end{equation*}
%\begin{equation}
%\max_{x \in \constraint} F(x),
%\end{equation}
where $F$ is a monotone continuous DR-submodular function on $\mathbb{R}^d$, and
the constraint set $\constraint \subseteq \mathcal{X} \subseteq\mathbb{R}^d$ is 
convex and compact. %In this section, 
%, and maximizesa monotone and continuous DR-submodular function subject to a 
%bounded convex body constraint. 
 
For \emph{first-order} monotone DR-submodular maximization, one can use \AlgCG 
\cite{calinescu2011maximizing, bian2017guaranteed}, a variant of Frank-Wolfe 
Algorithm \citep{frank1956algorithm,jaggi2013revisiting,lacoste2015global}, 
to achieve the  $[(1-1/e)OPT-\epsilon]$ approximation 
guarantee. At iteration $t$, the FW variant first maximizes the linearization 
of the objective function $F$: 
\begin{equation*}
v_t = \argmax_{v \in \constraint} \langle v, 
\nabla F(x_t) \rangle.
\end{equation*}
%$v_t = \argmax_{v \in \constraint} \langle v, 
%\nabla F(x_t) \rangle.$ 
Then the current point $x_t$ moves 
in the direction of $v_t$ with a step size $\gamma_t \in (0,1]$: 
\begin{equation*}
x_{t+1} = x_t + \gamma_t v_t.
\end{equation*}
%$x_{t+1} = x_t + \gamma_t v_t.$ 
Hence, by solving linear optimization problems, the iterates 
are updated without resorting to the projection oracle. 

Here we introduce our main algorithm \Algblack which assumes access only 
to function values (\emph{i.e.}, zeroth-order information). This algorithm is 
partially based on the idea of \AlgCG. The basic idea is to utilize the 
function evaluations of $F$ at carefully selected points to obtain unbiased 
estimations of the gradient of the smoothed version, $\nabla \tF$. By 
extending %the FW procedure, or more precisely   
\AlgCG to the derivative-free setting %to our setting 
and using recently proposed variance reduction 
techniques, we can then optimize $\tF$ 
near-optimally. Finally, by \cref{lem:smooth_approx} we show that the obtained 
optimizer also provides a good solution for $F$. 
%thanks to the approximation Lemma~\ref{lem:smooth_approx}. 

%As mentioned in \cref{sec:pre_ubmodular}, 
Recall that continuous DR-submodular functions are 
defined on a box $\mathcal{X}=\Pi_{i=1}^n \mathcal{X}_i$. To simplify the exposition, 
we can assume, without loss of generality, that
the  objective function $F$ is defined on $\domain\triangleq \prod_{i=1}^d 
[0,a_i]$ \cite{bian2017continuous}.
Moreover, we note that %from \cref{eq:smooth_definition} that 
since $\tF = \expect_{v\sim B^d}[F(x+\delta v)]$, for $x$ close to $\partial 
\domain$ (the boundary 
of $\domain$), the point 
$x + \delta v$ may fall outside of $\domain$, leaving the function $\tF$ 
undefined. 
%Hence, the function $\tF$ may not be well-defined on the points close to the 
%boundary of $\domain$. 

To circumvent this issue, we shrink the 
domain $\domain$ by $\delta$. Precisely, the shrunk domain is defined as
\begin{equation*} \label{D'}
\domain'_\delta = \{x \in \domain| d(x, \partial \domain) \geq \delta \}.
\end{equation*}
%where $\partial \domain$ is the boundary of $\domain$. 
Since we assume  
$\domain = \prod_{i=1}^d [0,a_i]$, the shrunk domain is 
$\domain'_\delta = 
\prod_{i=1}^d[\delta, a_i-\delta]$.
%\textcolor{red}{H: Explain how you shrink the 
%domain.} 
Then for all $x \in \domain'_\delta$, we have $x 
+ \delta v \in \domain$.  So $\tF$ is well-defined on $\domain'_\delta$.
%, we can do optimization on the shrunk domain. 
By \cref{lem:smooth_approx}, the optimum of 
$\tF$ on the shrunk domain $\domain'_\delta$ will be 
close to that on the original domain $\domain$, if $\delta$ is small enough. 
Therefore, we can first optimize $\tF$ on $\domain'_\delta$, then
approximately optimize $\tF$ (and thus $F$) on $\domain$. For simplicity of 
analysis, we also translate the shrunk domain 
$\domain'_\delta$
by $-\delta$, and denote it as $\domainsh = \prod_{i=1}^d [0, a_i-2\delta]$. 

Besides the domain $\domain$, 
%Note that here we only shrink and translate the domain $\domain$. In 
%constrained optimization problems, 
we also need to consider the transformation on constraint set 
$\constraint$. Intuitively, if there is no translation, we should consider the 
intersection of $\constraint$ and the shrunk domain $\domain'_\delta$. But 
since we translate $\domain'_\delta$ by $-\delta$, the same transformation 
should be performed on $\constraint$. Thus, we define the transformed 
constraint set as the translated intersection (by $-\delta$) of 
$\domain'_\delta$ and $\constraint$:
\begin{equation*} \label{K'}
\constraint'\triangleq  (\domain'_\delta \cap \constraint) - \delta \one 
=\domainsh \cap (\constraint-\delta\one).
\end{equation*} 

It is well known 
that the FW Algorithm is  sensitive to 
the accuracy of gradient, and may have arbitrarily poor performance with 
stochastic gradients \cite{hazan2016variance,mokhtari2018stochastic}. Thus we 
incorporate 
two methods of variance reduction into our 
proposed algorithm \Algblack which 
correspond to Step 5 and Step 6 in \cref{alg:zero_frank_wolfe}, respectively. 
First, instead of the one-point gradient estimation in 
\cref{lem:gradient_of_smooth}, we adopt the two-point estimator of $\nabla 
\tF(x)$
\citep{agarwal2010optimal,shamir2017optimal}:
\begin{equation}
\label{eq:gradient_smooth_two_points}
\frac{d}{2\delta}(F(x+\delta u) - F(x-\delta u)) u,
\end{equation}
where $u$ is chosen uniformly at random from the unit sphere $S^{d-1}$.We note 
that \eqref{eq:gradient_smooth_two_points} is an unbiased gradient estimator  
with less variance w.r.t. the one-point estimator. We also average over a 
mini-batch of $B_t$ independently sampled two-point estimators for further 
variance reduction. The second 
variance-reduction 
technique is the momentum method used in 
\citep{mokhtari2018conditional} to estimate the gradient by a vector 
$\bar{g}_t$ which is updated at each iteration as follows:
\begin{equation*}
\label{eq:averaging}
\bar{g}_t = (1-\rho_t) \bar{g}_{t-1} + \rho_t g_t.
\end{equation*}
%
%{H: Here you use $d_t$ while in Algorithm you use a different 
%notation $\bar{g}_t$. Please unify notation.}
Here $\rho_t$ is a given step size, $\bar{g}_0$ is initialized as an all zero vector $\mathbf{0}$, and $g_t$ 
is an unbiased estimate of the gradient at iterate $x_t$. As $\bar{g}_t$ is a 
weighted average of previous gradient approximation $\bar{g}_{t-1}$ and the 
newly updated stochastic gradient $g_t$, it has a lower variance compared with 
$g_t$. Although $\bar{g}_t$ is not an unbiased estimation of the true gradient, 
the error of it will approach zero as time proceeds. The detailed description 
of \Algblack is provided in \cref{alg:zero_frank_wolfe}. 
%A detailed explanation about these two techniques (two-point estimator and 
%momentum method for variance reduction) is provided in \cref{app:lemmas}.

%\textcolor{red}{H: You 
%should explain more about the role of the vector $d_t$ and its role. For 
%example, yu can say that because of this update, we can show that as time 
%progresses, $d_t$ becomes close to the trues gradient. One way is to use the 
%explanation provided in the mokhtari paper and reword it here.}
%It can be shown that this technique reduces variance as 
%$t$ increases. 

\begin{algorithm}[t!]
	\begin{algorithmic}[1]
		\Require constraint set $\constraint$, iteration 
		number $T$, radius $\delta$, step size $\rho_t$, batch size $B_t$
		\Ensure $x_{T+1}+\delta \one$
		\State $x_1\gets \mathbf{0}, \enspace \bar{g}_0 \gets \mathbf{0}$
		\For{$t= 1,\cdots, T$}
		\State Sample $u_{t,1},\dots, u_{t,B_t}$ i.i.d.\ from $S^{d-1}$
		\State For $i=1$ to $B_t$, let $y_{t,i}^+ \gets \delta \one+x_t+\delta 
		u_{t,i}, y_{t,i}^- \gets \delta \one+x_t-\delta u_{t,i}$ and evaluate 
		$F(y_{t,i}^+), F(y_{t,i}^-)$ 
		\State $g_t\gets \frac{1}{B_t}\sum_{i=1}^{B_t} 
		\frac{d}{2\delta}[F(y_{t,i}^+)-F(y_{t,i}^-)]u_{t,i}$
		\State $\bar{g}_t\gets (1-\rho_t)\bar{g}_{t-1}+\rho_t g_t$
		\State $v_t\gets \argmax_{v\in \constraint' } \langle 
		v,\bar{g}_t\rangle $
		\State $x_{t+1}\gets x_t+\frac{v_t}{T}$
		\EndFor
		\State Output $x_{T+1}+\delta \one$
	\end{algorithmic}
	\caption{\Algblack}\label{alg:zero_frank_wolfe}
\end{algorithm} 

\begin{theorem}
	\label{thm:zero}
	For a monotone continuous DR-submodular function $F$, which is also 
	$G$-Lipschitz continuous and $L$-smooth on a 
	convex and compact constraint set $\constraint$, if we set 
	$\rho_t=2/(t+3)^{2/3}$ in \cref{alg:zero_frank_wolfe}, then we have
	\begin{align*}
	&(1-1/e)F(x^*)-\expect[F(x_{T+1}+\delta\one)] %\\
	\le%{}&  
	\frac{3D_1Q^{1/2}}{T^{1/3}}+ \frac{LD_1^2}{2 T} 
	+ \delta G(1+(\sqrt{d}+1)(1-1/e)),
	\end{align*}
	where $Q= \max \{ 4^{2/3}G^2, 
	4cdG^2/B_t+ 6L^2D_1^2 \}, c$ is a constant, $D_1= \diam(\constraint')$, and 
	$x^*$ is the global 
	maximizer of $F$ on $\constraint$.
\end{theorem}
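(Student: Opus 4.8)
The plan is to run the Continuous Greedy analysis on the \emph{smoothed} surrogate $\tF$ over the transformed feasible region $\constraint'$, and only at the very end translate the guarantee back to the original $F$ on $\constraint$ via Lemma~\ref{lem:smooth_approx}. By that lemma $\tF$ is again monotone, continuous DR-submodular and $G$-Lipschitz, and since $F$ is $L$-smooth one checks $\tF$ is $L$-smooth as well; moreover the two-point estimator $g_t$ in Step~5 is an unbiased estimate of $\nabla\tF(x_t)$ (the symmetric-difference version of Lemma~\ref{lem:gradient_of_smooth}). Thus Algorithm~\ref{alg:zero_frank_wolfe} is exactly a stochastic momentum Continuous Greedy scheme for $\tF$ on $\constraint'$, and the argument splits into (i) controlling the gradient-estimation error $\expect[\|\nabla\tF(x_t)-\bar{g}_t\|^2]$ and (ii) the standard Continuous-Greedy progress bound driven by that error.

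For step (i), I would first bound the variance of a single two-point estimator: since $F$ is $G$-Lipschitz, $|F(x+\delta u)-F(x-\delta u)|\le 2\delta G$, so the estimator has norm at most $dG$ and, after a spherical-average computation, second moment $\mathcal{O}(dG^2)$; averaging over the mini-batch of size $B_t$ gives $\expect[\|g_t-\nabla\tF(x_t)\|^2]\le 4cdG^2/B_t$ for an absolute constant $c$. Writing $\nabla\tF(x_t)-\bar{g}_t=(1-\rho_t)(\nabla\tF(x_{t-1})-\bar{g}_{t-1})+(1-\rho_t)(\nabla\tF(x_t)-\nabla\tF(x_{t-1}))+\rho_t(\nabla\tF(x_t)-g_t)$ and taking conditional expectation, the martingale term decouples while the gradient-drift term is controlled by smoothness, $\|\nabla\tF(x_t)-\nabla\tF(x_{t-1})\|\le L\|x_t-x_{t-1}\|=L\|v_{t-1}\|/T\le LD_1/T$ (using $x_1=\mathbf{0}\in\constraint'$, so $\|v\|\le D_1$). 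A Young's-inequality split then gives a recursion $\expect[A_t]\le(1-\rho_t)\expect[A_{t-1}]+\rho_t^2(4cdG^2/B_t)+\mathcal{O}(L^2D_1^2/(\rho_t T^2))$, and with $\rho_t=2/(t+3)^{2/3}$ an induction shows $\expect[A_t]=\expect[\|\nabla\tF(x_t)-\bar{g}_t\|^2]\le Q/(t+3)^{2/3}$ with $Q=\max\{4^{2/3}G^2,\,4cdG^2/B_t+6L^2D_1^2\}$; this is the exact analogue of Lemma~\ref{lem:graident_error}.

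For step (ii), let $z^*$ denote the maximizer of $\tF$ on $\constraint'$. $L$-smoothness and $x_{t+1}=x_t+v_t/T$ give $\tF(x_{t+1})\ge\tF(x_t)+\tfrac1T\langle\nabla\tF(x_t),v_t\rangle-\tfrac{L}{2T^2}\|v_t\|^2$. Using optimality of $v_t$ for $\bar{g}_t$, the bound $|\langle\nabla\tF(x_t)-\bar{g}_t,v_t-z^*\rangle|\le D_1\|\nabla\tF(x_t)-\bar{g}_t\|$, and the monotone DR-submodular inequality $\langle\nabla\tF(x_t),z^*\rangle\ge\langle\nabla\tF(x_t),z^*\vee x_t-x_t\rangle\ge\tF(z^*)-\tF(x_t)$, one obtains $\tF(z^*)-\expect[\tF(x_{t+1})]\le(1-\tfrac1T)(\tF(z^*)-\expect[\tF(x_t)])+\tfrac{D_1}{T}\expect[\|\nabla\tF(x_t)-\bar{g}_t\|]+\tfrac{LD_1^2}{2T^2}$. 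Unrolling over $t=1,\dots,T$, bounding $(1-1/T)^T\le e^{-1}$, applying Jensen with $\expect[\|\nabla\tF(x_t)-\bar{g}_t\|]\le Q^{1/2}(t+3)^{-1/3}$ and $\sum_t(t+3)^{-1/3}\le\tfrac32T^{2/3}$ yields $(1-e^{-1})\tF(z^*)-\expect[\tF(x_{T+1})]\le 3D_1Q^{1/2}T^{-1/3}+\tfrac{LD_1^2}{2T}$.

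Finally I would transfer back to $F$ and $\constraint$. The output is $x_{T+1}+\delta\one$, and Lemma~\ref{lem:smooth_approx} gives $\tF(x_{T+1})\ge F(x_{T+1}+\delta\one)-\delta G$ once the $-\delta\one$ translation built into $\constraint'$ is accounted for. On the optimum side, shrinking $\domain$ by $\delta$ displaces the global maximizer $x^*$ by at most $\delta\sqrt{d}$ in $\ell_2$, so $G$-Lipschitzness plus the smoothing bound give $\tF(z^*)\ge F(x^*)-(\sqrt{d}+1)\delta G$; carrying these two estimates through the $(1-e^{-1})$ factor produces precisely the additive term $\delta G(1+(\sqrt{d}+1)(1-1/e))$, completing the bound. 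I expect the main obstacle to be step~(i) together with this boundary bookkeeping: pinning down the $\mathcal{O}(dG^2/B_t)$ variance of the spherical two-point estimator with the correct constant, and verifying that the shrink-and-translate construction keeps every queried point $x_t+\delta u$ inside $\domain$ so that each use of $\tF$ and of Lemma~\ref{lem:smooth_approx} is legitimate, are the delicate points; the Continuous-Greedy recursion itself is routine once the error bound is established.
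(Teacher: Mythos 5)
Your proposal is correct and follows essentially the same route as the paper's proof: smooth--shrink--translate, the Shamir-type $\mathcal{O}(dG^2/B_t)$ variance bound for the two-point spherical estimator, the momentum variance-reduction recursion giving $\expect[\|\nabla\tF(z_t)-\bar g_t\|^2]\le Q/(t+4)^{2/3}$, the standard Continuous-Greedy recursion for $h_t=\tF(x^*_\delta)-\tF(z_t)$, and the final $\delta G(1+(\sqrt d+1)(1-1/e))$ transfer term. The only differences are presentational: you re-derive the momentum recursion that the paper imports as a cited lemma, and you use Cauchy--Schwarz plus Jensen where the paper uses a Young's-inequality split with an optimized parameter $\beta_t$ --- these yield the identical bound $D_1Q^{1/2}(t+4)^{-1/3}$.
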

%\end{mdframed}
\begin{remark}
By setting $T=\mathcal{O}(1/\epsilon^3)$, $B_t=d$, and 
$\delta=\epsilon/\sqrt{d}$, the error term is guaranteed to be at 
most $\mathcal{O}(\epsilon)$. Also, the total number of function evaluations is 
at 
most $\mathcal{O}(d/\epsilon^3)$.	
\end{remark}

We can also extend \cref{alg:zero_frank_wolfe} to the stochastic case in which
%instead of observing the exact function values $F(x)$,  
we obtain information 
about $F$ only through its noisy function evaluations $\hat{F}(x)=F(x)+\xi$, 
%In other words, we assume access to a noisy oracle that returns $\hat{F}(x) = 
%F(x) + \xi$ instead of the true value $F(x)$. Here, 
where $\xi$ is stochastic zero-mean noise. In particular,   
in Step 4 of \cref{alg:zero_frank_wolfe}, we obtain independent stochastic 
function evaluations $\hat{F}(y_{t,i}^+)$ and $ \hat{F}(y_{t,i}^-)$, instead of the 
exact function values $F(y_{t,i}^+)$ and $F(y_{t,i}^-)$. For unbiased function 
evaluation oracles with uniformly bounded variance, we have the following 
theorem.  
%\begin{mdframed}
\begin{theorem}
	\label{thm:zero_stochatic}
	Under the condition of \cref{thm:zero}, if we further assume that for all 
	$x$, $\expect[\hat{F}(x)]=F(x)$ and $\expect[|\hat{F}(x)-F(x)|^2]\leq 
	\sigma_0^2$, then we have
	\begin{align*}
	&(1-1/e)F(x^*)-\expect[F(x_{T+1}+\delta\one)] %\\
	\le%{}&  
	\frac{3D_1Q^{1/2}}{T^{1/3}}+ \frac{LD_1^2}{2 T} 
	+ \delta G(1+(\sqrt{d}+1)(1-1/e)),
	\end{align*}
	where $D_1= \diam(\constraint'), Q= \max \{4^{2/3}G^2, 
	6L^2D_1^2+(4cdG^2+2d^2\sigma_0^2/\delta^2)/B_t\}, c$ is a constant, and
	$x^*$ is the global 
	maximizer of $F$ on $\constraint$.
\end{theorem}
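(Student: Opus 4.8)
The plan is to reduce the stochastic case to the deterministic analysis of \cref{thm:zero} by observing that the only quantity which changes is the variance bound on the single-step gradient estimator, and that this change is fully absorbed into the constant $Q$. First I would verify that the mini-batch estimator $g_t=\frac{1}{B_t}\sum_{i=1}^{B_t}\frac{d}{2\delta}[\hat{F}(y_{t,i}^+)-\hat{F}(y_{t,i}^-)]u_{t,i}$ remains an unbiased estimator of $\nabla\tF(x_t)$. Since $\expect[\hat{F}(x)]=F(x)$ and the stochastic noise $\xi$ is independent of the sampling direction $u_{t,i}$, taking the expectation over both the noise and $u_{t,i}$ recovers the two-point formula behind \cref{lem:gradient_of_smooth}, so unbiasedness is preserved exactly as in the noiseless setting.

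The heart of the argument is the variance computation. I would split $g_t-\nabla\tF(x_t)=(g_t^{\mathrm{det}}-\nabla\tF(x_t))+(g_t-g_t^{\mathrm{det}})$, where $g_t^{\mathrm{det}}$ is the estimator built from the exact values $F(y_{t,i}^\pm)$. The first difference is precisely the error already controlled in the proof of \cref{thm:zero}, contributing at most $4cdG^2/B_t+6L^2D_1^2$. The second difference is the pure-noise term $\frac{1}{B_t}\sum_i\frac{d}{2\delta}(\xi_{t,i}^+-\xi_{t,i}^-)u_{t,i}$; since $\|u_{t,i}\|=1$ and the noises are independent, zero-mean, with variance at most $\sigma_0^2$, each summand contributes at most $\frac{d^2}{4\delta^2}\expect[(\xi^+-\xi^-)^2]\le \frac{d^2\sigma_0^2}{2\delta^2}$, and averaging over the batch yields a term of order $d^2\sigma_0^2/(\delta^2 B_t)$. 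Collecting both pieces gives $\expect[\|g_t-\nabla\tF(x_t)\|^2]\le 6L^2D_1^2+(4cdG^2+2d^2\sigma_0^2/\delta^2)/B_t$, which is exactly the new second entry in $Q$.

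With this variance bound in hand, the remaining analysis is verbatim that of \cref{thm:zero}. I would invoke the same momentum recursion on $\bar{g}_t=(1-\rho_t)\bar{g}_{t-1}+\rho_t g_t$ to show that the averaged estimator satisfies $\expect[\|\bar{g}_t-\nabla\tF(x_t)\|^2]\le Q/(t+4)^{2/3}$ with the updated $Q$, then run the continuous-greedy descent lemma on the smoothed function $\tF$ over the transformed constraint set $\constraint'$ to obtain $(1-1/e)\tF(x^*)-\expect[\tF(x_{T+1})]\le 3D_1Q^{1/2}T^{-1/3}+LD_1^2/(2T)$, and finally apply \cref{lem:smooth_approx} together with the domain-shrinking argument to transfer the guarantee from $\tF$ back to $F$, which accounts for the additive $\delta G(1+(\sqrt{d}+1)(1-1/e))$ term.

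The main obstacle I anticipate is the variance computation, specifically the $1/\delta^2$ blow-up in the noise term: unlike the deterministic case where $\delta$ may be taken as small as approximation accuracy demands, here shrinking $\delta$ inflates the gradient variance, so $Q$ itself now depends on $\delta$. Balancing the smoothing bias $\delta G$ against this variance is what forces the larger batch size and iteration count behind the $\mathcal{O}(d^3/\epsilon^5)$ query complexity, and some care is needed to choose $\delta$, $B_t$, and $T$ so that every error term is driven to $\mathcal{O}(\epsilon)$ simultaneously.
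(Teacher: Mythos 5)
Your proposal follows essentially the same route as the paper's proof: decompose the noisy two-point estimator into the deterministic estimator's error plus a zero-mean noise term, observe that the cross terms vanish by unbiasedness, bound the noise contribution by $d^2\sigma_0^2/(2\delta^2 B_t)$ per batch, and feed the enlarged single-step variance into the same momentum variance-reduction lemma and continuous-greedy analysis. One small labeling slip: the quantity $6L^2D_1^2+(4cdG^2+2d^2\sigma_0^2/\delta^2)/B_t$ is the second entry of $Q$ (i.e., $4\sigma^2+3G_0^2/2$ after applying the variance-reduction lemma), not the bound on $\expect[\|g_t-\nabla\tF(x_t)\|^2]$ itself, which is $(cdG^2+d^2\sigma_0^2/(2\delta^2))/B_t$; this does not affect the correctness of the argument.
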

%\end{mdframed}
\begin{remark}
By setting $T=\mathcal{O}(1/\epsilon^3)$, 
$B_t=d^3/\epsilon^2$, and $\delta=\epsilon/\sqrt{d}$, 
the error term is at most $\mathcal{O}(\epsilon)$.
%	implicit constant in the big-O notation only relies on $D_1$, $G$, $L$, and 
%	$c$. 
The total number of evaluations is at most 
$\mathcal{O}(d^3/\epsilon^5)$.
\end{remark}
%For the proof, we need to bound the new error term $\expect[\| \nabla 
%\tF(x_t+\delta \one)-\bar{g}_t\|^2]$, which can be achieved by considering the 
%additional variance caused by the stochastic function evaluations $\hat{F}$. 
%This error term is also the main reason behind obtaining a slower convergence 
%rate w.r.t. the deterministic case. 
%The rest of the proof follows from \cref{thm:zero}. The details are provided
%in \cref{app:theorem_zero_stochastic}. 

%By law of total expectation, 
%we only need to bound the additional items $\expect[\| 
%\frac{d}{2\delta}(\hat{f}(y_{t,i}^+)-f(y_{t,i}^+))u_{t,i}\|^2]$ and 
%$\expect[\| 
%\frac{d}{2\delta} (\hat{f}(y_{t,i}^-)-f(y_{t,i}^-))u_{t,i}\|^2]$, which are 
%variances caused by the stochastic function evaluations. The detailed proof is 
%in \cref{app:theorem_zero_stochastic}. \textcolor{red}{no body understands 
%this 
%paragraph--either write it in clear steps so that it becomes a sketch of the 
%proof, or scratch the whole paragraph}
	
	%!TEX root = Thesis-Mingrui.tex
\section{Discrete Submodular Maximization}\label{sec:discrete}
In this section, we describe how \Algblack can be used to solve a discrete 
submodular maximization problem 
with a general matroid constraint, \emph{i.e.}, $\max_{S \in \mathcal{I}} 
f(S)$, 
%\begin{equation}
%\max_{S \in \mathcal{I}} f(S),
%\end{equation}
where $f$ is a monotone submodular set function and $\mathcal{I}$ is the 
matroid constraint.
%This extension is under the framework of 
%rounding a fractional vector which is the solution to continuous relaxation 
%(\emph{e.g.}, multilinear extension) of the submodular set function 
%\citep{vondrak2008optimal,calinescu2011maximizing}.

In combinatorics, the matroid is an analogue to the notion of linear 
independence in linear algebra. Precisely, consider a ground set $\Omega$ and a 
family of subsets of $\Omega$ denoted as $\mathcal{I}$. We say the pair 
$(\Omega, \mathcal{I})$ is a matroid\footnote{\revise{For a detailed review of 
matroid theory, interested readers refer to \citep{oxley2006matroid}.}} if 

\begin{enumerate}
\item $\emptyset \in \mathcal{I}$.

\item For each $A \in \mathcal{I}$, if $A' \subseteq A$, then $A' \in 
\mathcal{I}$.

\item If $A \in \mathcal{I}, B \in \mathcal{I}, |A| >|B|$, then $\exists x \in 
A\setminus B$, such that $\{x\} \cup B \in \mathcal{I}$.
\end{enumerate}
%\end{itemize}

%One example of matroids is the family of all the $k$-element subset of the 
%ground set $\Omega$, which is also known as the uniform matroid of rank $k$.

For any monotone submodular set function $f: 2^\Omega \to \mathbb{R}_{\geq 0}$, 
its multilinear extension $F:[0,1]^d \to \mathbb{R}_{\geq 0}$, defined as 
\begin{equation*}
F(x)=\sum_{S \subseteq \Omega}f(S)\prod_{i \in S}x_i\prod_{j \notin S}(1-x_j),
\end{equation*}
%\begin{equation} \label{opt_disc}
%F(x)=\sum_{S \subseteq \Omega}f(S)\prod_{i \in S}x_i\prod_{j \notin 
%S}(1-x_j),  
%\end{equation}
is monotone and DR-submodular \citep{calinescu2011maximizing}. Here, 
$d=|\Omega|$ is
%denotes 
the size of the ground set $\Omega$. %, \emph{i.e.}, $|\Omega| = d$. 
Equivalently, 
we have 
$F(x) = \expect_{S \sim x}[f(S)],$ where  $S \sim x$ means that the each 
element $i \in \Omega$ is included in $S$ with probability $x_i$ independently.
%, and excluded with probability $1-x_i$.

It can be shown that in lieu of solving the discrete optimization problem 
%\eqref{opt_disc} 
one can solve the continuous optimization problem $\max_{x \in \mathcal{K}} 
F(x),$
%\begin{equation} \label{opt_cont}
%\max_{x \in \mathcal{K}} F(x),
%\end{equation}
where %$F$ is the multilinear of $f$ and 
$\mathcal{K} = \text{conv}\{1_I: I 
\in \mathcal{I}\}$ is the matroid polytope \citep{calinescu2011maximizing}. 
This equivalence is 
obtained by showing that (i) the optimal values of the two problems 
%\eqref{opt_disc} and \eqref{opt_cont} 
are the same, and (ii) for any fractional 
vector $x \in \mathcal{K}$ we can deploy efficient, lossless rounding 
procedures that produce a set $S \in \mathcal{I}$ such that $\mathbb{E}[f(S)] 
\geq F(x)$ (\emph{e.g.}, pipage 
rounding \citep{ageev2004pipage,calinescu2011maximizing} and contention 
resolution \citep{chekuri2014submodular}). So we can view $\tF$ as the 
underlying function that we intend to optimize, and invoke \Algblack. 
%Since \cref{alg:zero_frank_wolfe_discrete} can be viewed as applying the 
%multilinear extension $F$ on \cref{alg:zero_frank_wolfe}, 
As a result, we want that $F$ is $G$-Lipschitz 
and $L$-smooth as in \cref{thm:zero}. The following lemma shows these 
properties are 
satisfied automatically %for the multilinear extension function of 
if $f$ is bounded. 
%submodular set function.
\begin{lemma}
	\label{lem:discrete_to_continuous}
	For a submodular set function $f$ defined on $\Omega$ with $\sup_{X 
		\subseteq \Omega} |f(X)| 
	\le M$, its multilinear extension $F$ is $2M\sqrt{d}$-Lipschitz and  
	$4M\sqrt{d(d-1)}$-smooth.      
\end{lemma}

We note that the bounds for Lipschitz and smoothness parameters 
actually 
depend on the norms that we consider. However, different norms are equivalent 
up 
to a factor that may depend on the dimension. If we consider another norm, some 
dimension factors may be absorbed into the norm. Therefore, we only study 
Euclidean norm in \cref{lem:discrete_to_continuous}.

We further note that computing the exact value of %the multilinear 
%extension 
$F$ is difficult as it requires evaluating $f$ over all the subsets $S \in 
\Omega$.  However, one can construct an unbiased estimate for the value $F(x)$  
by simply sampling a random set $S\sim x$ and returning $f(S)$ as the estimate. 
We present our algorithm in detail in 
\cref{alg:zero_frank_wolfe_discrete},  
%We perform operations on 
%the discrete function $f$ (Steps 5 to 8) to acquire 
%estimates for the gradient of the smoothed version for multilinear extension, 
%\emph{i.e.}, to estimate $\nabla \tF$. In other words, $\tF$ is the 
%underlying function that 
%we intend to optimize.  By the 
%lossless rounding procedure and the approximation property (see 
%\cref{lem:smooth_approx}), we can then obtain an approximate 
%solution to the discrete optimization problem.  As a result, in 
%\cref{alg:zero_frank_wolfe_discrete} 
where we have $\domain = [0, 1]^d$, since 
$F$ is defined on $[0, 1]^d$, and thus $\domainsh = [0, 
1-2\delta]^d$. %The constraint set $\constraint$ is set to 
%the matroid polytope associated with the matroid constraint $\matroid$. 
%We also define the transformed constraint set $\constraint' = 
%\domainsh\cap (\constraint - \delta \one)$, where $\mathcal{K} = 
%\text{conv}\{1_I: I \in \mathcal{I}\}$.%see \eqref{K'}.
We state the theoretical result formally in \cref{thm:zero_discrete}.

\begin{algorithm}[t!]
	\begin{algorithmic}[1]
		\Require matroid constraint $\matroid$, transformed 
		constraint set $\constraint' = \domainsh\cap (\constraint - \delta 
		\one)$ where $\constraint = \text{conv}\{1_I: I \in \mathcal{I}\}$,	
		number of iterations $T$, radius $\delta$, step size $\rho_t$, batch 
		size $B_t$, sample size $S_{t,i}$	
		\Ensure $X_{T+1}$
		\State $x_1\gets \mathbf{0}, \enspace \bar{g}_0 \gets \mathbf{0}$, 
		\For{$t=1, \cdots, T$}
		\State Sample $u_{t,1},\dots, u_{t,B_t}$ i.i.d.\ from $S^{d-1}$
		\State For $i=1$ to $B_t$, let $y_{t,i}^+ \gets \delta \one+x_t+\delta 
		u_{t,i}, 
		y_{t,i}^- \gets \delta \one+x_t-\delta u_{t,i}$, independently sample 
		subsets 
		$Y_{t,i}^+$ and $Y_{t,i}^-$ for $S_{t,i}$ times according to 
		$y_{t,i}^+, 
		y_{t,i}^-$, get sampled subsets $Y_{t,i,j}^+, Y_{t,i,j}^-, \forall\ j 
		\in 
		[S_{t,i}]$, evaluate the function values 
		$f(Y_{t,i,j}^+), f(Y_{t,i,j}^-), \forall\ j \in 
		[S_{t,i}]$, and calculate the averages 
		$\bar{f}_{t,i}^+ \gets \frac{\sum_{j=1}^{S_{t,i}} 
		f(Y_{t,i,j}^+)}{S_{t,i}}, 
		\bar{f}_{t,i}^- \gets \frac{\sum_{j=1}^{S_{t,i}} 
		f(Y_{t,i,j}^-)}{S_{t,i}}$ 
		%and evaluate $f(y_{t,i}^+), f(y_{t,i}^-)$ 
		\State $g_t\gets \frac{1}{B_t}\sum_{i=1}^{B_t} 
		\frac{d}{2\delta}(\bar{f}_{t,i}^+-\bar{f}_{t,i}^-)u_{t,i}$
		\State $\bar{g}_t\gets (1-\rho_t)\bar{g}_{t-1}+\rho_t g_t$
		\State $v_t\gets \argmax_{v\in \constraint' } \langle 
		v,\bar{g}_t\rangle $
		\State $x_{t+1}\gets x_t+\frac{v_t}{T}$
		\EndFor
		\State Output $X_{T+1} = \text{round}(x_{T+1}+\delta \one)$
	\end{algorithmic}
	\caption{\Algdis}\label{alg:zero_frank_wolfe_discrete}
\end{algorithm}

%\begin{algorithm}
%\begin{algorithmic}[1]
%\Require matroid constraint $\matroid$, iteration number $T$, radius $\delta$, 
%step size $\rho_t$, batch size $B_t$, sample size $S_{t,i}$
%\Ensure $x_{T+1}+\delta \one$
%\State $x_1\gets 0, \enspace \bar{g}_0 \gets 0$, setting the constraint set 
%$\constraint$ to be the matroid polytope of $\matroid$

%\For{$t\gets 1,\dots, T$}
%\State Sample $u_{t,1},\dots, u_{t,B_t}$ i.i.d.\ from $S^{d-1}$
%\State For $i=1,\dots, B_t$, let $y_{t,i}^+ \gets \delta \one+x_t+\delta 
%u_{t,i}, y_{t,i}^- \gets \delta \one+x_t-\delta u_{t,i}$, independently sample 
%both $Y_{t,i}^+$ and $Y_{t,i}^-$ for $S_{t,i}$ times according to $y_{t,i}^+, 
%y_{t,i}^-$, evaluate the function values, and let $\bar{f}_{t,i}^+ \gets 
%\frac{\sum_{j=1}^{S_{t,i}} f(Y_{t,i,j}^+)}{S_{t,i}}, \bar{f}_{t,i}^- \gets 
%\frac{\sum_{j=1}^{S_{t,i}} f(Y_{t,i,j}^-)}{S_{t,i}}$ 

%and evaluate $f(y_{t,i}^+), f(y_{t,i}^-)$ 
%\State $g_t\gets \frac{1}{B_t}\sum_{i=1}^{B_t} 
%\frac{d}{2\delta}(\bar{f}_{t,i}^+-\bar{f}_{t,i}^-)u_{t,i}$
%\State $\bar{g}_t\gets (1-\rho_t)\bar{g}_{t-1}+\rho_t g_t$
%\State $v_t\gets \argmax_{v\in \domainsh\cap (\constraint - \delta \one) } 
%\langle v,\bar{g}_t\rangle $
%\State $x_{t+1}\gets x_t+\frac{v_t}{T}$
%\EndFor
%\State Output $X_{T+1} = round(x_{T+1}+\delta \one)$
%\end{algorithmic}
%\caption{\Algdis\label{alg:zero_frank_wolfe_discrete}}
%\end{algorithm}
\begin{theorem}\label{thm:zero_discrete}
	For a monotone submodular set function $f$ with $\sup_{X \subseteq \Omega} 
|f(X)| \le M$, if we set $\rho_t=2/(t+3)^{2/3}, S_{t,i}=l$ in 
\cref{alg:zero_frank_wolfe_discrete}, then we have	
\begin{align*}
&(1-1/e)f(X^*) - 
\expect[f(X_{T+1})]\\%\label{eq:gap_stoch_discrete}\\ 
\le{} & 
\frac{3D_1Q^{1/2}}{T^{1/3}}+ \frac{2M\sqrt{d(d-1)}D_1^2}{T} + 2M\delta 
\sqrt{d}(1+(\sqrt{d}+1)(1-1/e)),
\end{align*}
where $D_1= \diam(\constraint')$, $Q= \max 
\{\frac{2d^2M^2(\frac{1}{l\delta^2}+8c)}{B_t}+ 96d(d-1)M^2D_1^2, 
4^{5/3}dM^2 \}, c$ is a constant, $X^*$ is the global maximizer of $f$ 
under matroid constraint $\matroid$.	
\end{theorem}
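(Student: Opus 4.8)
The plan is to reduce the discrete problem to the stochastic continuous problem already resolved in \cref{thm:zero_stochatic}, applied to the multilinear extension $F$ of $f$. The first step is to recall that $F$ is monotone and DR-submodular \citep{calinescu2011maximizing}, so maximizing $f$ over the matroid constraint $\matroid$ is equivalent to maximizing $F$ over the matroid polytope $\constraint = \text{conv}\{1_I: I \in \matroid\}$. Since $F$ is multilinear, it is affine along every line, so its maximum over $\constraint$ is attained at a vertex $1_I$; as $F(1_I) = f(I)$, this yields $F(x^*) = f(X^*)$, identifying the continuous and discrete optima. Thus it suffices to control $(1-1/e)F(x^*) - \expect[F(x_{T+1}+\delta\one)]$ and then transfer the guarantee back to $f$ through rounding.

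Next I would supply the two ingredients that \cref{thm:zero_stochatic} requires. For the smoothness constants, \cref{lem:discrete_to_continuous} gives directly that $F$ is $G$-Lipschitz with $G = 2M\sqrt{d}$ and $L$-smooth with $L = 4M\sqrt{d(d-1)}$. For the noise model, observe that \cref{alg:zero_frank_wolfe_discrete} never evaluates $F$ exactly; instead it draws $S \sim x$ and returns $f(S)$, which satisfies $\expect_{S \sim x}[f(S)] = F(x)$. Averaging over $S_{t,i} = l$ independent samples produces $\hat{F}$ with $\expect[\hat{F}(x)] = F(x)$ and, since $|f(S)| \le M$, with per-sample variance at most $M^2$, so $\expect[|\hat{F}(x) - F(x)|^2] \le M^2/l =: \sigma_0^2$ uniformly in $x$. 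Hence \cref{alg:zero_frank_wolfe_discrete} is exactly \cref{alg:zero_frank_wolfe} run on the unbiased, bounded-variance oracle $\hat{F}$, and \cref{thm:zero_stochatic} applies.

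Plugging these values into the bound of \cref{thm:zero_stochatic} is then bookkeeping: the term $LD_1^2/(2T)$ becomes $2M\sqrt{d(d-1)}D_1^2/T$, the term $\delta G(1+(\sqrt{d}+1)(1-1/e))$ becomes $2M\delta\sqrt{d}(1+(\sqrt{d}+1)(1-1/e))$, and substituting $G^2 = 4M^2 d$, $L^2 = 16M^2 d(d-1)$, $\sigma_0^2 = M^2/l$ into $Q = \max\{4^{2/3}G^2,\, 6L^2D_1^2 + (4cdG^2 + 2d^2\sigma_0^2/\delta^2)/B_t\}$ reproduces exactly the stated $Q = \max\{\tfrac{2d^2M^2(1/(l\delta^2)+8c)}{B_t} + 96d(d-1)M^2D_1^2,\, 4^{5/3}dM^2\}$. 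Indeed $4^{2/3}G^2 = 4^{5/3}dM^2$, $6L^2D_1^2 = 96d(d-1)M^2D_1^2$, and $4cdG^2 + 2d^2\sigma_0^2/\delta^2 = 2d^2M^2(8c + 1/(l\delta^2))$. This gives the claimed bound on $(1-1/e)F(x^*) - \expect[F(x_{T+1}+\delta\one)]$.

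The final step is rounding. The output is $X_{T+1} = \text{round}(x_{T+1}+\delta\one)$, where $x_{T+1}+\delta\one \in \domain'_\delta \cap \constraint \subseteq \constraint$ lies in the matroid polytope; a lossless rounding scheme such as pipage rounding \citep{ageev2004pipage,calinescu2011maximizing} or contention resolution \citep{chekuri2014submodular} guarantees $\expect[f(X_{T+1})] \ge F(x_{T+1}+\delta\one)$. Combining this with $F(x^*) = f(X^*)$ yields $(1-1/e)f(X^*) - \expect[f(X_{T+1})] \le (1-1/e)F(x^*) - \expect[F(x_{T+1}+\delta\one)]$, and the theorem follows. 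The main obstacle is not any single estimate but the careful verification that the sampling-based value oracle genuinely fits the stochastic template of \cref{thm:zero_stochatic}---in particular confirming that the per-sample variance is uniformly bounded by $M^2/l$ and that the sample size $l$ enters the bound only through this variance term---after which the argument is a reduction plus constant-tracking.
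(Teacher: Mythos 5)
Your reduction is correct and is essentially the paper's own route: the paper's appendix proof carries out the same variance decomposition (two-point estimator error $4cd^2M^2/B_t$ plus sampling error $d^2M^2/(2B_t l\delta^2)$) inline rather than citing \cref{thm:zero_stochatic}, but the paper explicitly notes after \cref{thm:zero_discrete} that applying \cref{thm:zero_stochatic}, \cref{lem:discrete_to_continuous}, and $\expect[|\bar f^\pm_{t,i}-F(y^\pm_{t,i})|^2]\le M^2/S_{t,i}$ directly gives the result, and your constant-tracking matches exactly. The only quibble is your justification that $F(x^*)=f(X^*)$: a multilinear function is affine along coordinate lines, not along every line, so the vertex-attainment claim needs the standard pipage/coordinate-ascent argument --- but the proof only needs the trivial direction $f(X^*)\le F(x^*)$ (since $1_{X^*}\in\constraint$) together with lossless rounding, so nothing is lost.
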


\begin{remark}
	By setting $T=\mathcal{O}(d^3/\epsilon^3)$, $B_t=1, l =d^2/\epsilon^2$, and 
	$\delta=\epsilon/d$, the error term is at most 
	$\mathcal{O}(\epsilon)$. 
	The total number of evaluations is at most $\mathcal{O}(d^5/\epsilon^5)$.
\end{remark} 

We note that in \cref{alg:zero_frank_wolfe_discrete}, 
$\bar{f}^+_{t,i}$ is the unbiased estimation of $F(y^+_{t,i})$, and the same 
holds for $\bar{f}^-_{t,i}$ and $F(y^-_{t,i})$. As a result, we can 
analyze the algorithm under the framework of stochastic continuous 
submodular maximization. By applying \cref{thm:zero_stochatic}, 
\cref{lem:discrete_to_continuous}, and the facts $\expect[|\bar{f}^+_{t,i}- 
F(y^+_{t,i})|^2] \le M^2/S_{t,i}, \expect[|\bar{f}^-_{t,i}- 
F(y^-_{t,i})|^2] \le M^2/S_{t,i}$ directly, we can also attain 
\cref{thm:zero_discrete}.
\section{Experiments}\label{sec:experiment-black}

\newcommand{\AlgSCG}{\texttt{Stochastic Continuous Greedy}\xspace}
\newcommand{\AlgZGA}{\texttt{Zeroth-Order Gradient Ascent}\xspace}
\newcommand{\AlgGA}{\texttt{Gradient Ascent}\xspace}

%\cl{Use the macros defined in the preamble. Put the macro and its abbreviation 
%in the legend of the figures.}
In this section, we will compare \Algblack (BCG) and \Algdis (DBG) with the 
following
baselines: 

(1) \AlgZGA (ZGA) is the projected gradient ascent algorithm 
equipped with the same two-point gradient estimator as BCG uses. Therefore, it 
is a \emph{zeroth-order} projected algorithm. 

(2) \AlgSCG (SCG) is the 
state-of-the-art \emph{first-order} algorithm for maximizing continuous 
DR-submodular functions~\cite{mokhtari2018conditional,mokhtari2018stochastic}. 
Note that it is a projection-free algorithm. 

(3) \AlgGA (GA) is the 
\emph{first-order} projected gradient ascent 
algorithm~\cite{hassani2017gradient}.
%\begin{itemize}
%	\item \AlgZGA (ZGA) is the projected gradient ascent algorithm equipped 
%	with the same two-point gradient estimator as BCG uses. Therefore, it is a 
%	\emph{zeroth-order} projected algorithm.
%	\item \AlgSCG (SCG) is the state-of-the-art \emph{first-order} algorithm 
%for 
%	maximizing continuous DR-submodular 
%	functions~\cite{mokhtari2018conditional,mokhtari2018stochastic}. Note that 
%	it is a projection-free algorithm.
%	\item \AlgGA (GA) is the \emph{first-order} projected gradient ascent 
%	algorithm~\cite{hassani2017gradient}.
%\end{itemize}
%classical \AlgSCG Algorithm (FW) and Zeroth-order Gradient Descent Algorithm 
%\AlgZGA (GD Zero), where we use the same smoothing trick to get stochastic 
%gradients.

The stopping criterion for the algorithms is whenever a given 
number of iterations 
is achieved. Moreover, the batch sizes
$S_{t,i}$ in \cref{alg:zero_frank_wolfe} and $B_t$ in 
\cref{alg:zero_frank_wolfe_discrete} are both 1. Therefore, in the experiments, 
DBG uses 1 query per iteration while SCG uses $\mathcal{O}(d)$ queries.

We perform four sets of experiments which are described in detail in the following. The first two sets of 
experiments are maximization of continuous DR-submodular functions, which 
\Algblack 
is designed to solve. The 
last two are submodular set maximization problems. We will apply \Algdis to solve 
these problems. 
The function values at 
different rounds and the execution times are presented in 
\cref{fig:function,fig:time}. The first-order algorithms (SCG and GA) are 
marked in \textcolor{orange}{orange}, and the zeroth-order algorithms are 
marked in \textcolor{RoyalBlue}{blue}.  

\begin{figure*}[tb]
	\centering
	\begin{subfigure}[t]{0.24\textwidth}
		\includegraphics[width=\textwidth]{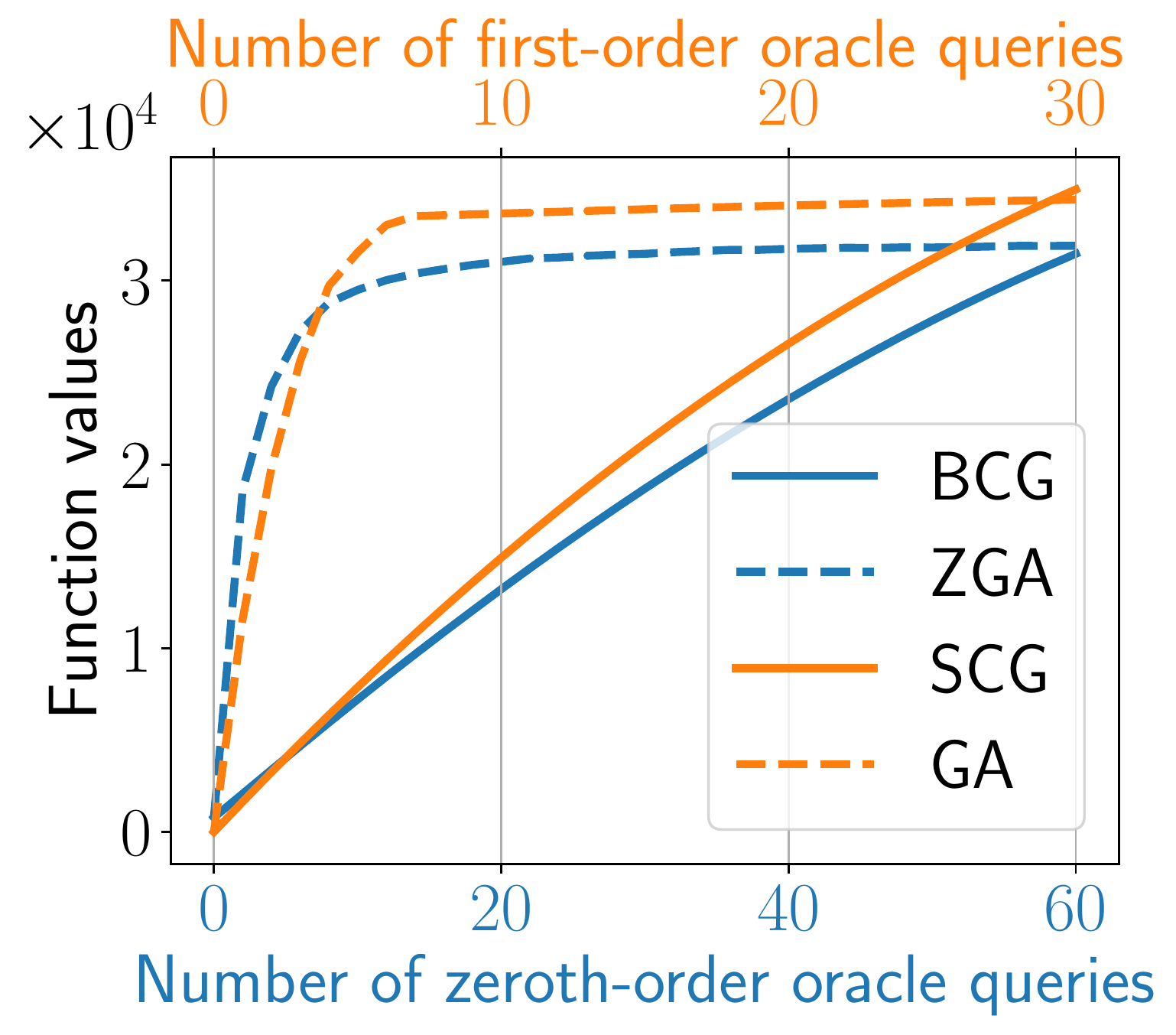}
		\caption{NQP}
		\label{fig:nqp_function}
	\end{subfigure}
	\begin{subfigure}[t]{0.24\textwidth}
		\includegraphics[width=\textwidth]{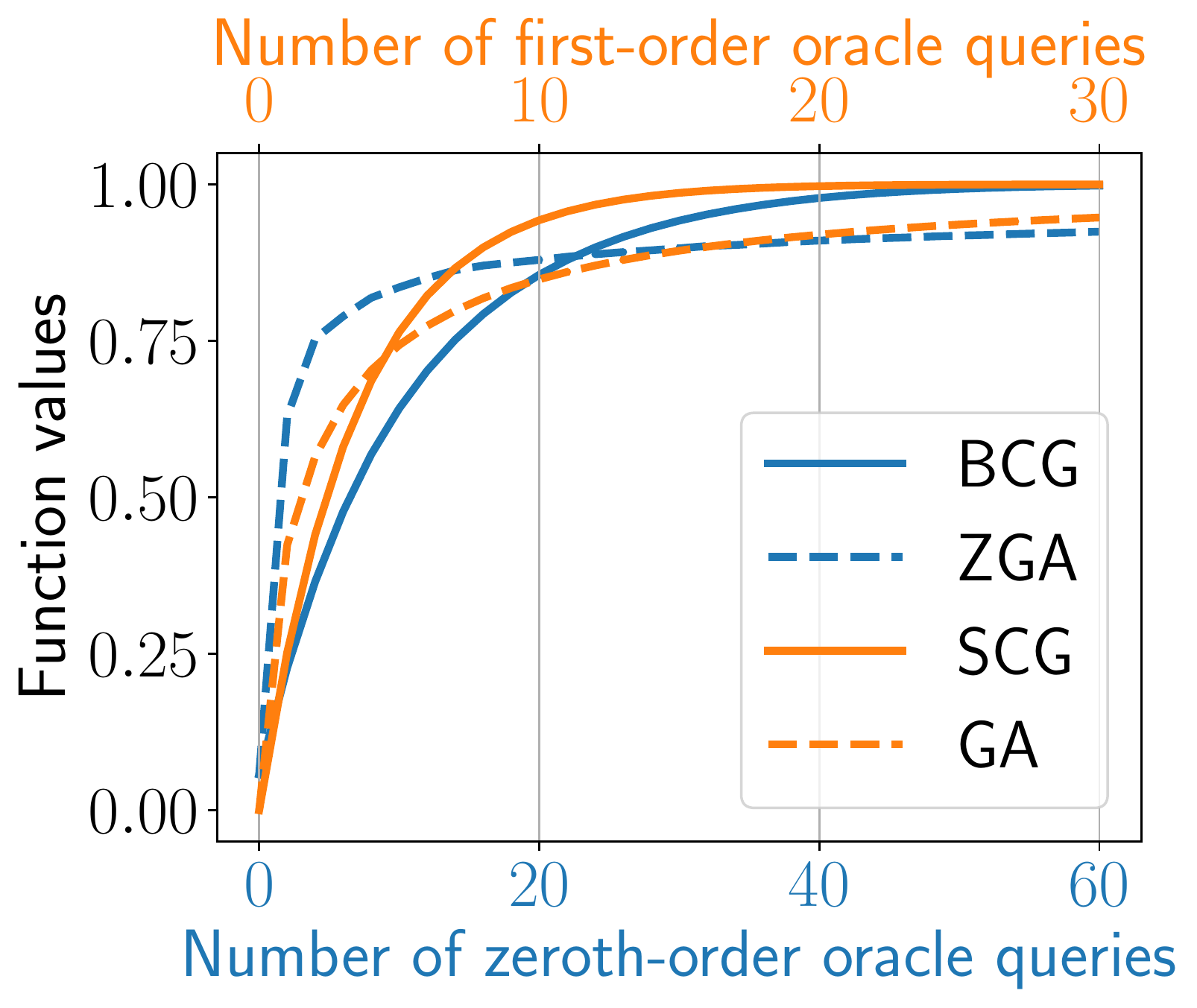}
		\caption{Topic summarization}
		\label{fig:topic_function}
	\end{subfigure}
	\begin{subfigure}[t]{0.24\textwidth}
		\includegraphics[width=\textwidth]{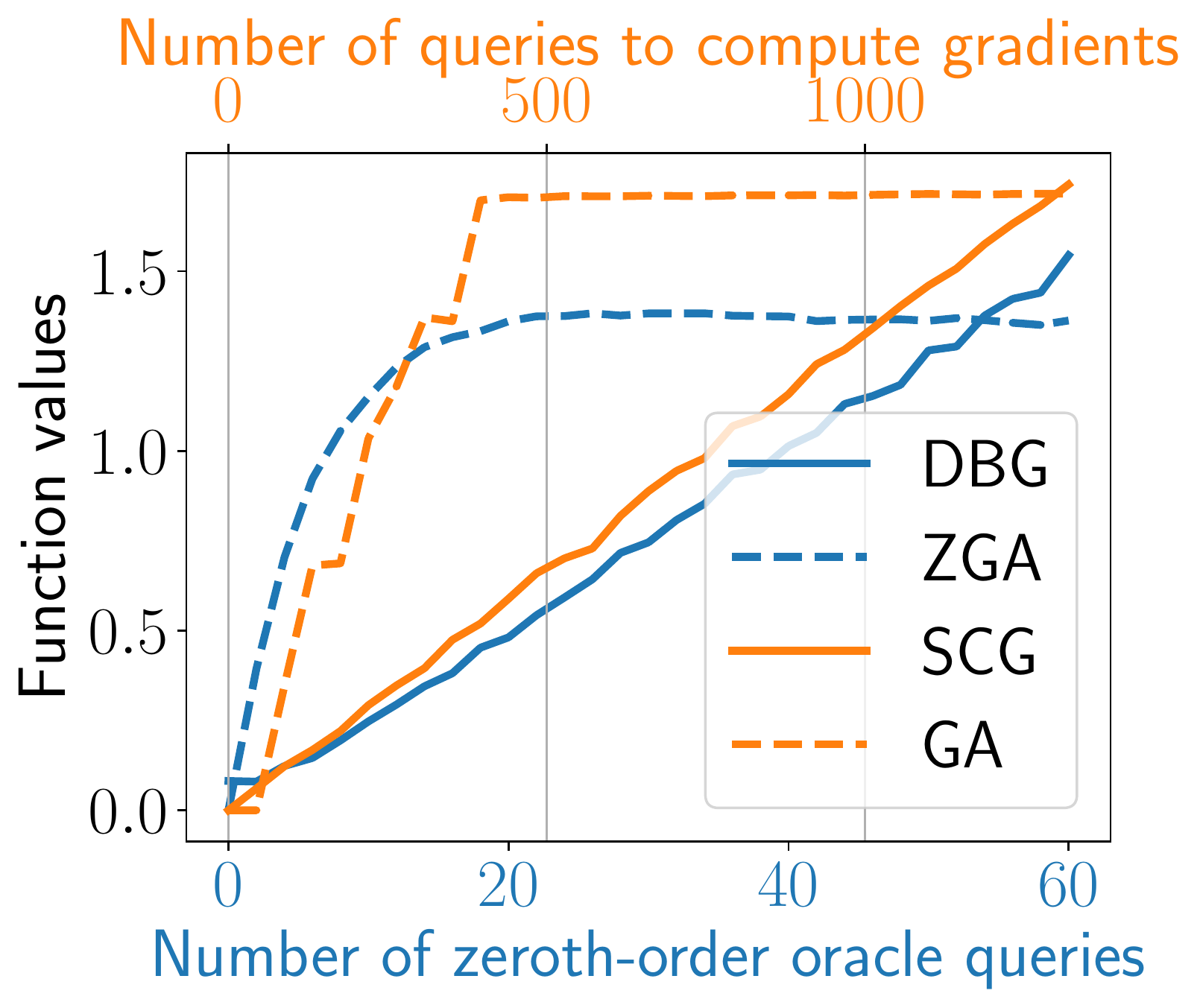}
		\caption{Active set selection}
		\label{fig:active_function}
	\end{subfigure}
	\begin{subfigure}[t]{0.24\textwidth}
		\includegraphics[width=\textwidth]{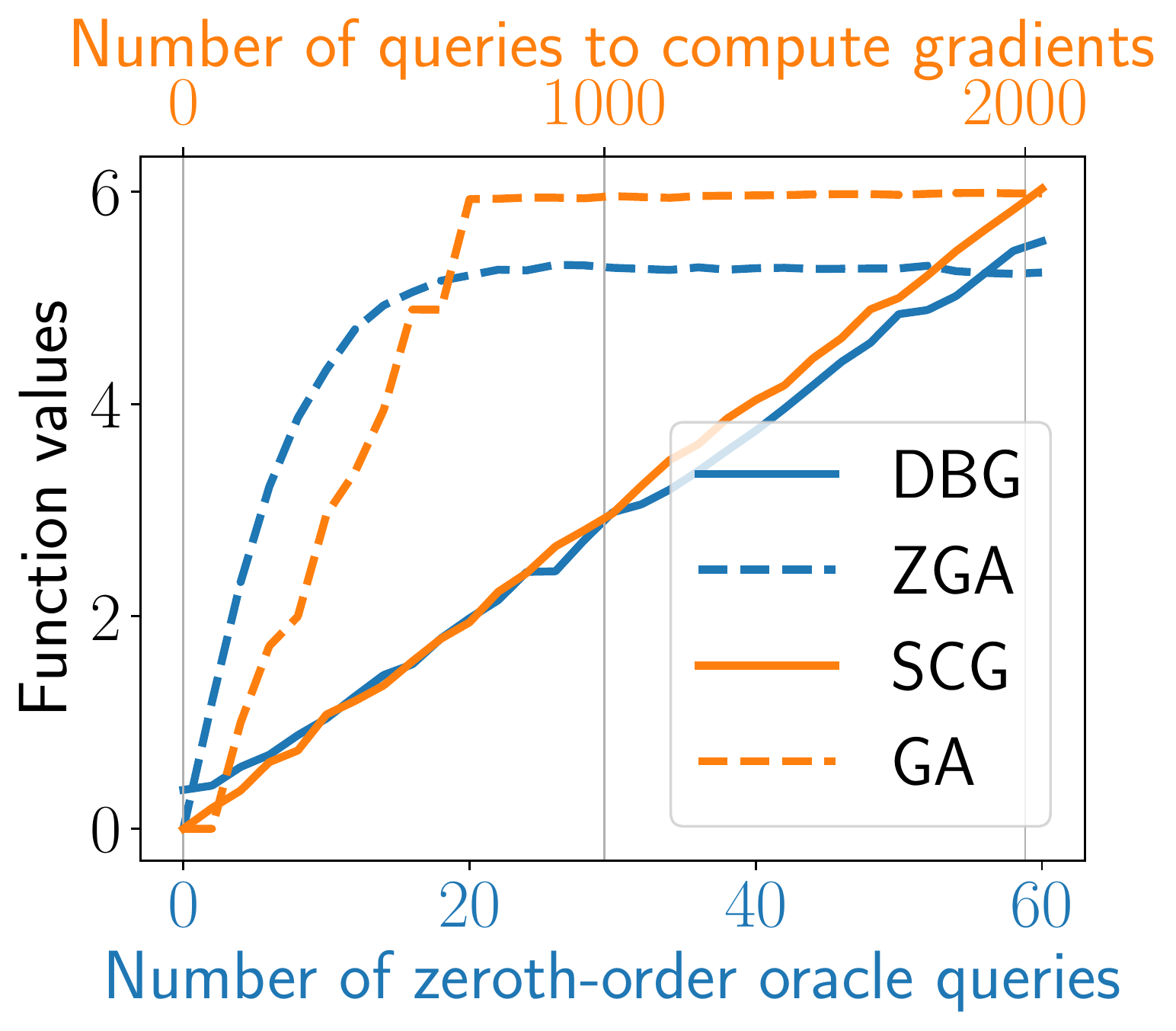}
		\caption{Influence maximization}
		\label{fig:cover_function}
	\end{subfigure}
	\vspace{-0.2cm}
	\caption{Function value vs.\ number of oracle queries. Note that every 
		chart has dual horizontal axes. 
		\textcolor{orange}{Orange} lines use the \textcolor{orange}{orange} 
		horizontal axes above while \textcolor{RoyalBlue}{blue} lines use the 
		\textcolor{RoyalBlue}{blue} ones below. }\label{fig:function}
	\vspace{-0.2cm}
\end{figure*}

\begin{figure*}[tb]
	\centering
	\begin{subfigure}[t]{0.23\textwidth}
		\includegraphics[width=\textwidth]{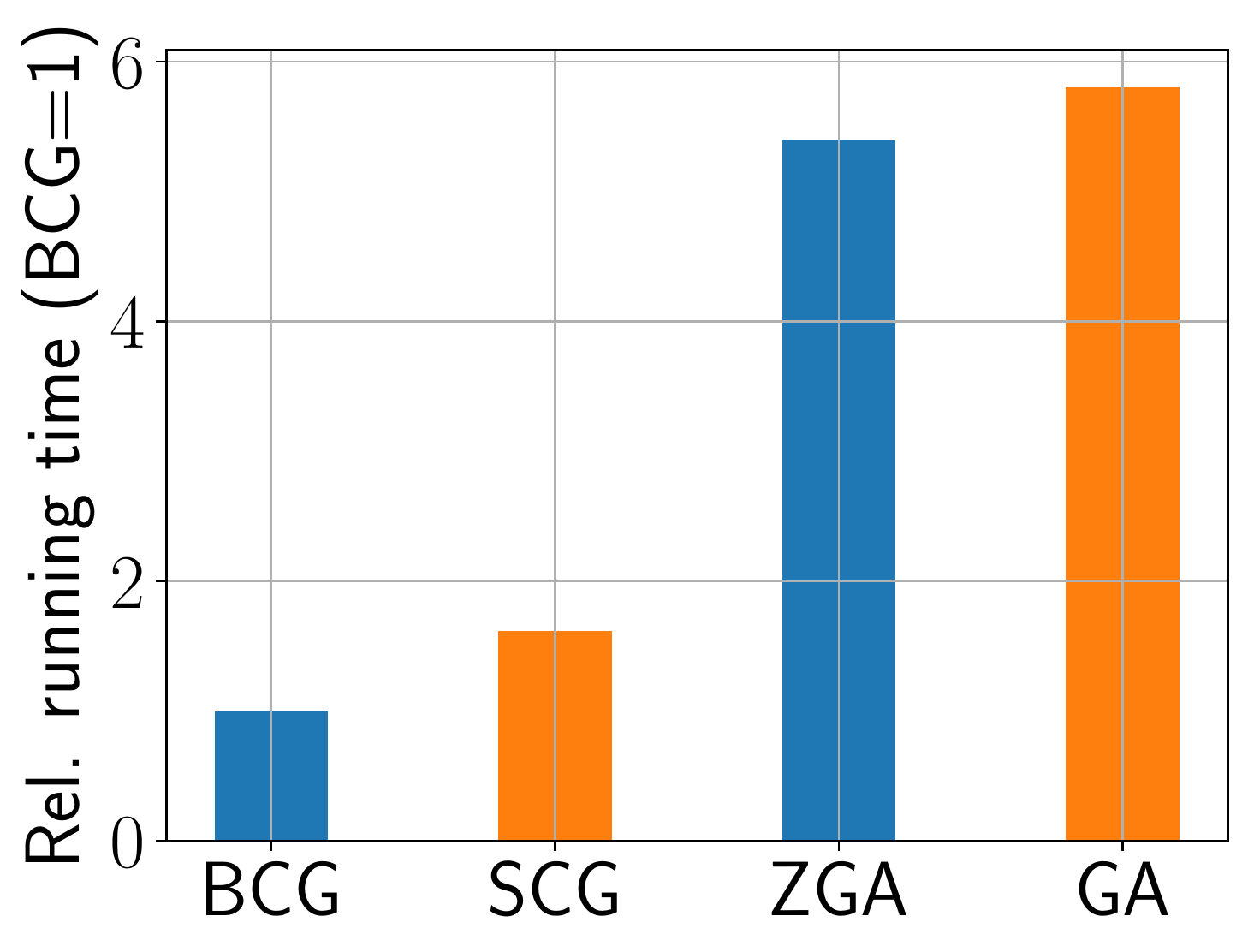}
		\caption{NQP}
		\label{fig:nqp_time}
	\end{subfigure}
	\begin{subfigure}[t]{0.23\textwidth}
		\includegraphics[width=\textwidth]{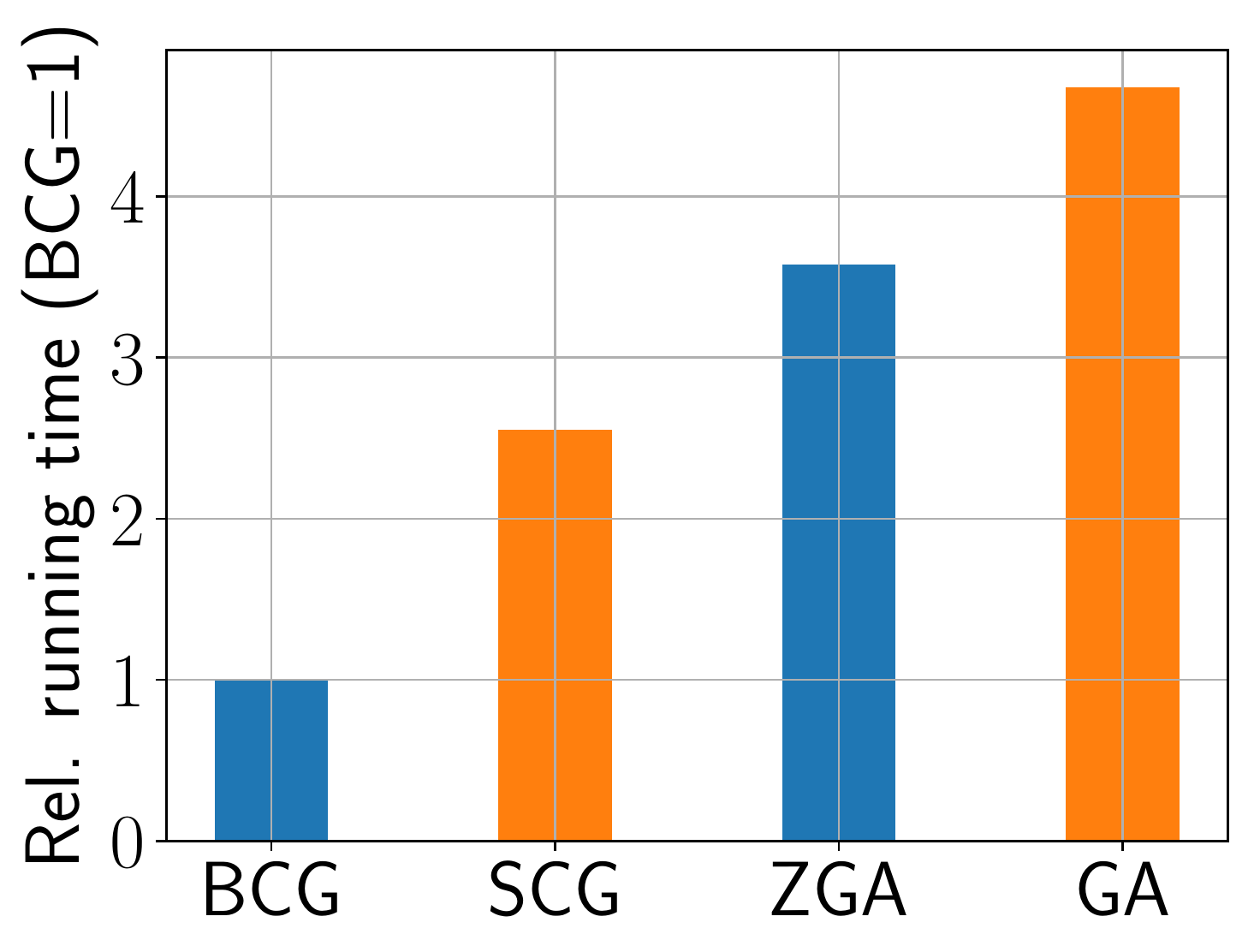}
		\caption{Topic summarization}
		\label{fig:topic_time}
	\end{subfigure}
	\begin{subfigure}[t]{0.245\textwidth}
		\includegraphics[width=\textwidth]{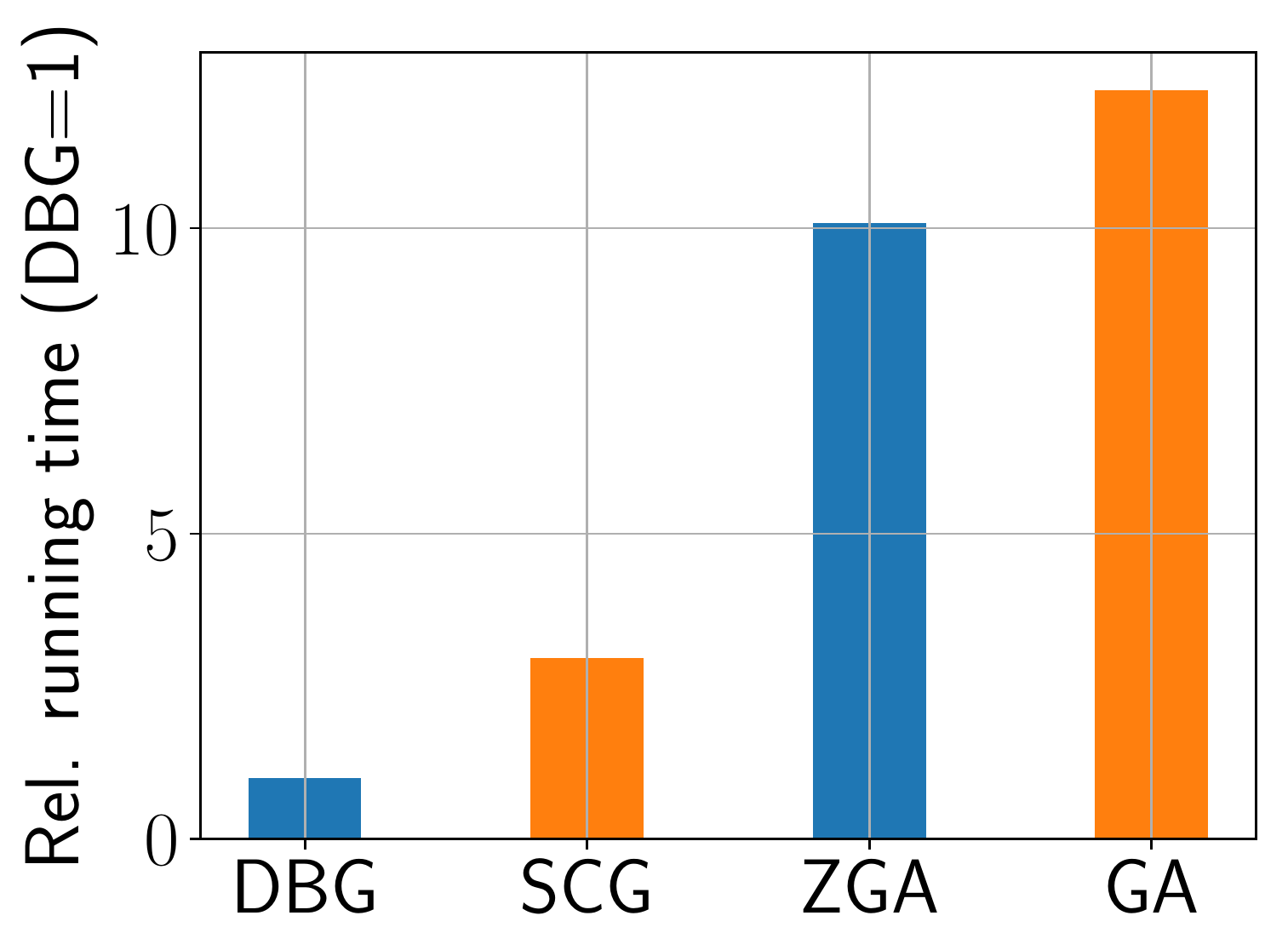}
		\caption{Active set selection}
		\label{fig:active_time}
	\end{subfigure}
	\begin{subfigure}[t]{0.23\textwidth}
		\includegraphics[width=\textwidth]{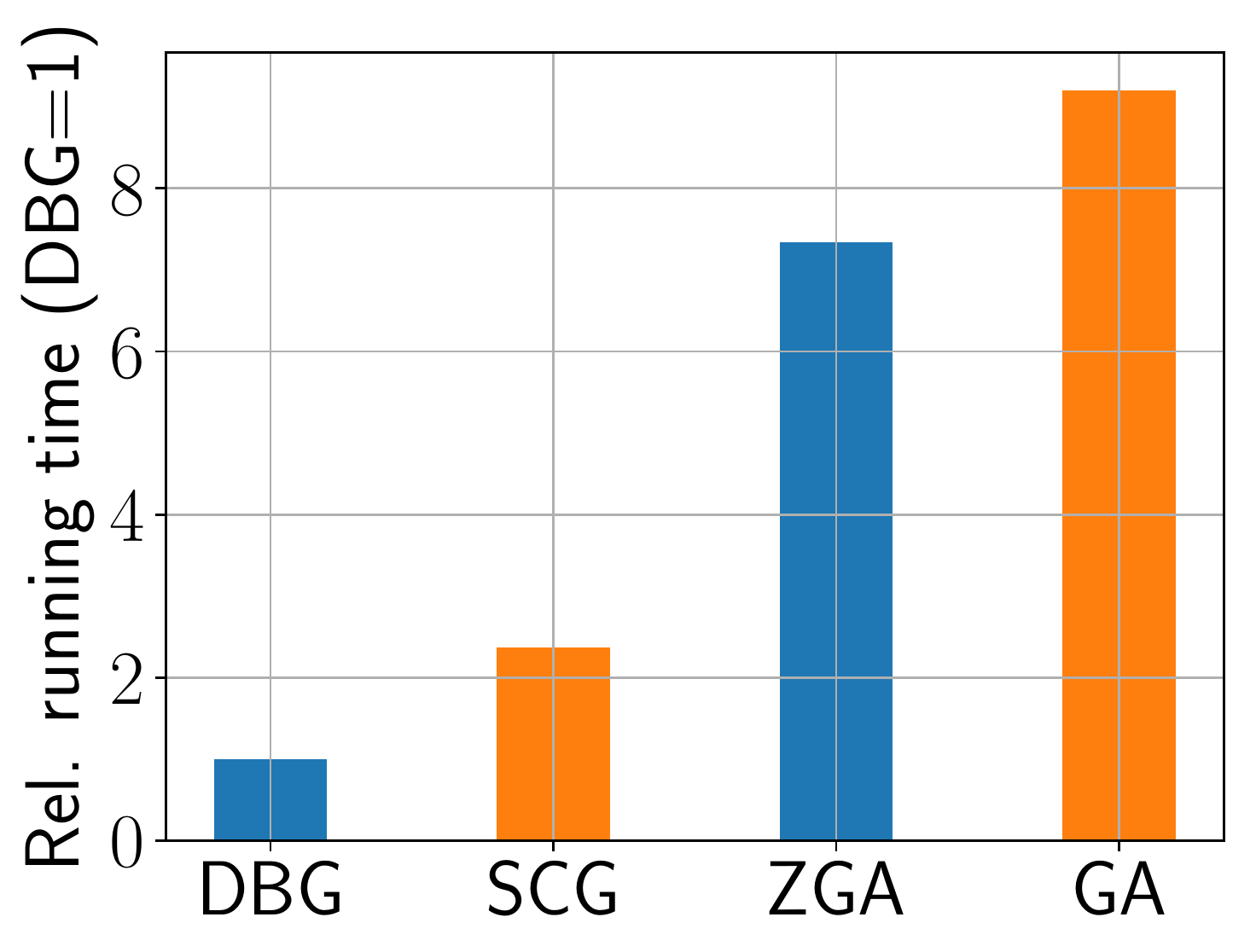}
		\caption{Influence maximization}
		\label{fig:cover_time}
	\end{subfigure}
	\vspace{-0.2cm}
	\caption{Relative running time normalized with respect to BCG (for 
		continuous 
		DR-submodular maximization in the first two sets of experiments) and 
		DBG 
		(for submodular set maximization in the last two sets of experiments). 
	}\label{fig:time}
	\vspace{-0.2cm}
\end{figure*}

\textbf{Non-convex/non-concave Quadratic Programming (NQP):}
In this set of experiments, we apply our proposed algorithm and the baselines 
to the problem of non-convex/non-concave quadratic programming. The objective 
function is of the form $F(x) = \frac{1}{2}x^\top H x + b^\top x$, 
where $ x $ is a 100-dimensional vector, $ H $ is a $ 100$-by-$ 100 $ matrix, 
and every component of $H$ is an i.i.d.\ random variable whose distribution 
is equal to that of the negated absolute value of a standard normal 
distribution. 
The constraints are $ \sum_{i=1}^{30}x_i \le 30 $, $ \sum_{i=31}^{60} x_i \le 20 
$, and $ \sum_{i=61}^{100} x_i \le 20 $. To 
guarantee that the gradient is non-negative, we set $b_t=-H^\top \mathbf{1}$. 
%The performance of the algorithms is illustrated in \cref{fig:nqp_function} 
%and 
%their execution time is presented in \cref{fig:nqp_time}.
 One can observe from \cref{fig:nqp_function} that 
the 
function 
value that BCG attains is only slightly lower than that of the first-order 
algorithm SCG. The final 
function value that BCG attains is similar to that of ZGA.
%, and that 
%both algorithms achieve a better function value than ZGA.
% The curve of 
%ZGA convergences quickly after approximately 10 gradient evaluations.

\textbf{Topic Summarization:} Next, we consider the topic 
summarization problem~\citep{el2009turning,yue2011linear}, which is to maximize 
the probabilistic coverage of selected articles on news topics. Each news 
article is characterized by its topic distribution, which is obtained by 
applying latent Dirichlet allocation to the corpus of Reuters-21578, 
Distribution 1.0. The number of topics is set to 10. We will choose from 120 
news articles. The probabilistic coverage of a subset of news articles (denoted 
by $ X $) is defined by
$f(X)=\frac{1}{10}\sum_{j=1}^{10}[1-\prod_{a \in X}(1-p_a(j))]$, where 
$p_a(\cdot)$ is the topic distribution of article 
$a$. 
%We want to study $f(x)$ with a partition matroid constraint 
%$\mathcal{M}=(\Omega,\mathcal{I})$, where for the disjoint subsets of $\Omega: 
%X_1, X_2, X_3$ such that $|X_1| =|X_2|=|X_3|=40$, we say a set $X \subseteq 
%\Omega$ is independent iff $|X \cap X_1 |\leq 25, |X\cap X_2|\leq 30, |X\cap 
%X_3|\leq 35$.
The multilinear extension function of $f$ is 
$F(x)=\frac{1}{10}\sum_{j=1}^{10}[1-\prod_{a \in 
\Omega}(1-p_a(j)x_a)]$, where 
$x \in [0,1]^{120}$~\cite{Iyer2014Monotone}. The constraint is
$\sum_{i=1}^{40}x_i\leq 
25$, $\sum_{i=41}^{80}x_i\leq 30$, $\sum_{i=81}^{120}x_i\leq 35$. 
%The function values at different rounds are shown in \cref{fig:topic_function} 
%and the execution time is presented in \cref{fig:topic_time}. 
It can be observed from \cref{fig:topic_function} that the proposed BCG 
algorithm achieves the same function value as the first-ordered algorithm SCG 
and outperforms the other two. 
As shown in \cref{fig:nqp_time}, BCG is the most efficient method. The two 
projection-free algorithms BCG and SCG run faster than the projected methods 
ZGA and GA. We will elaborate on the running time later in this section.
% \textcolor{red}{you can briefly mention running time too.}
%\cl{Describe the figure}

\textbf{Active Set Selection:}
We study the active set selection problem that arises in Gaussian process 
regression~\cite{mirzasoleiman2013distributed}. 
We use the \emph{Parkinsons
Telemonitoring} dataset, which is composed of biomedical voice measurements 
from people with early-stage Parkinson's disease \citep{tsanas2010enhanced}.
Let $ X \in\mathbb{R}^{n\times d} $ denote the data matrix. Each row  $ 
X[i,:] $ is a voice recording while each column $ X[:,j] $ denotes an 
attribute. The covariance matrix $ \Sigma $ is 
defined by
 $ \Sigma_{ij} = \exp(-\| X[:,i]-X[:,j] \|^2)/h^2 $, where $ h $ is set to $ 
 0.75 $. The objective function of the active set selection problem is defined 
 by
 $ f(S) = \log\det(I+\Sigma_{S,S}) $, where $ S\subseteq [d] $ and $ 
 \Sigma_{S,S} $ is the principal submatrix indexed by $ S $. The total number 
 of 22 attributes are 
 partitioned into 5 disjoint subsets with sizes 4, 4, 4, 5 and 5, respectively. 
 The problem is 
 subject to a partition matroid requiring that at most one attribute should be 
 active within each subset.
 Since this is a submodular set maximization 
 problem, in order to evaluate the gradient (\ie, obtain an unbiased estimate 
 of gradient) required by first-order algorithms SCG and GA, it needs $ 2d $ 
 function value queries. To be precise, the $ i $-th component of gradient is $ 
 \expect_{S\sim x}[f(S\cup\{i\})-f(S)] $ and requires two function value 
 queries. It can be observed 
 from \cref{fig:active_function} that DBG outperforms the other zeroth-order 
 algorithm ZGA. Although its performance is slightly worse than the two 
 first-order algorithms SCG and GA, it require significantly less number of function value 
 queries than the other two first-order methods (as discussed above).
 
 \textbf{Influence Maximization:}
In the influence maximization problem, we assume that every node in the network is able to influence all  of its one-hop 
neighbors.   
The objective of influence maximization is to select a subset of nodes in the network, called the seed set 
(and denoted by $ S $), so that the total number of  influenced nodes, 
including the seed nodes, is maximized. We choose the social network of 
Zachary's karate club~\cite{zachary1977information} in this study. The subjects 
in this social network are partitioned into three disjoint groups, whose sizes 
are 10, 14, and 10 respectively.  The chosen seed nodes should be subject to a 
partition matroid; \ie, we will select at most two subjects from each of the 
three groups. Note that this problem is also a submodular set maximization 
problem. Similar to the situation in the active set selection problem, 
first-order algorithms need function value queries to obtain an unbiased 
estimate of gradient. We can observe from \cref{fig:cover_function} that DBG 
attains a better influence coverage than the other zeroth-order algorithm ZGA. 
Again, even though SCG and GA achieve a slightly better coverage, due to their 
first-order nature, they require a significantly larger number of function 
value queries.
% \textbf{Exemplar-Based Clustering}
% Let $ \{ x_1,\dots,x_d \}  \subseteq \mathbb{R}^p $ denote the data points. 
%We 
% add a dummy data point $ x_0 = \mathbf{0} \in \mathbb{R}^p $. Define the 
% function $ L:2^{[d]\cup\{0\}}\to \mathbb{R} $ by $ L(S) =\frac{1}{d} 
% \sum_{i=1}^{d} \min_{j\in S} \| x_i-x_j \|^2 $. The objective function $ 
% f:2^{[d]} \to \mathbb{R}$ is defined by $ f(S) = L(\{0\})  - L(S\cup\{0\})$.
% 
% \textbf{Logistic Regression}
% The objective function is $ f(\beta) = \sum_{i=1}^{n} -y_i \langle \beta,x_i 
% \rangle +\log(1+\exp(\langle \beta,x_i \rangle)) $.
%
%\textbf{Curve Fitting}
%We obtain i.i.d.\ samples $ \mu_1,\dots,\mu_d $ uniformly at random on $ 
%[-10,10] $. Additionally, we sample a vector $ \tilde{w} $ uniformly at random 
%in the hypercube $ [0,1]^d $ and then obtain the $ L^1 $-normalized vector $ 
%w=\tilde{w}/\| \tilde{w} \|_1 $. The true function that we want to fit is $ 
%g(x) = \sum_{i=1}^{d} a_i\exp(-(x-\mu_i)^2) $.

\paragraph{Running Time}
The running times of the our proposed algorithms and the baselines   are
presented in \cref{fig:time} for the above-mentioned experimental set-ups. There are two main conclusions. First, the two 
projection-based algorithms (ZGA and GA) require significantly higher time complexity compared to the projection-free algorithms (BCG, DBG, and SCG), as the projection-based algorithms require 
solving quadratic optimization problems whereas projection-free ones 
require solving linear optimization problems which can be solved  more 
efficiently. Second, when we compare first-order and zeroth-order algorithms, we can 
observe that zeroth-order algorithms (BCG, DBG, and ZGA) run faster than their 
first-order counterparts (SCG and GA).
%\vspace{-0.3cm}
\paragraph{Summary}
\revise{The above experiment results show the following major 
advantages of our method over the baselines including SCG and ZGA.} 
\begin{enumerate}
\item BCG/DBG is at least twice faster than SCG and ZGA in all 
tasks in terms of running time
(\cref{fig:nqp_time,fig:topic_time,fig:active_time,fig:cover_time}).
\item DBG requires remarkably fewer function evaluations in the discrete
setting (\cref{fig:active_function,fig:cover_function}).
\item In addition to saving function evaluations, BCG/DBG achieves an objective 
function value comparable to that of the first-order baselines SCG and GA. 
%In some cases (e.g., \cref{fig:topic_function}), BCG/DBG has the 
%same function values compared with SCG.
\end{enumerate}

\revise{Furthermore, we note that the number of first-order queries 
required by SCG 
is only half the number required by BCG. However, as is shown in 
\cref{fig:nqp_time,fig:topic_time}, BCG runs significantly faster than SCG 
since a zeroth-order evaluation is faster than a first-order one.} 
%Additionally, we would also like to emphasize that a black-box method mainly 
%focuses on settings where the gradient is unavailable in closed forms 
%(\emph{e.g.}, the active set selection and influence maximization problem).

\revise{In the topic summarization task (\cref{fig:topic_function}), 
BCG exhibits a similar performance to that of the first-order baselines SCG and 
GA, in terms of the attained objective function value. In the other three 
tasks, BCG/DBG runs notably faster while achieving an only slightly inferior 
function value. 
Therefore, BCG/DBG is  particularly 
preferable  in a large-scale machine learning task and an application where the 
total number of function evaluations or 
the running time is subject to a budget. 

\section{Conclusion}
In this chapter, we presented \Algblack, a derivative-free and projection-free  
algorithm  for maximizing a 
monotone and  continuous 
DR-submodular function subject to a general convex body constraint.  We showed 
that \Algblack achieves 
the tight 
$[(1-1/e)OPT-\epsilon]$ 
approximation guarantee with $\mathcal{O}(d/\epsilon^3)$ function evaluations. 
We then 
extended the algorithm to the stochastic continuous setting and the discrete 
submodular 
maximization 
problem. Our experiments on both synthetic and real data validated the 
performance of our proposed algorithms. In particular, we observed that 
\Algblack 
practically achieves the 
same utility as \AlgCG while being way more efficient in terms of number of 
function evaluations. 
%In general, BCG/DBG can also be seen as efficient alternatives for 
%SCG.
}
\section{Proofs}\label{sec:proof-black}
\subsection{Proof of Lemma 10}\label{app:proof_lemma_smooth_approx}
\begin{proof}
	Using %\cref{lem:gradient_of_smooth} and 
	the assumption that
	$F$ is $G$-Lipschitz continuous, we have
	%\begin{align}
	%& \| \nabla \tilde{f}(x) - \nabla \tilde{f}(y) \| \\
	%=& \frac{d}{\delta} \|\expect_{v \sim S^n}[f(x+\delta v)v] - 
	%\expect_{v\sim 
	%S^n}[f(y+\delta v)v] \| \\
	%=& \frac{d}{\delta} \|\expect_{v \sim S^n}[f(x+\delta v)v - f(y+\delta 
	%v)v] \| 
	%\\
	%\leq & \frac{d}{\delta} \expect_{v \sim S^n} [\|f(x+\delta v)v - 
	%f(y+\delta 
	%v)v \|] \\
	%=& \frac{d}{\delta} \expect_{v \sim S^n}[ |f(x+\delta v) - f(y+\delta v)| 
	%\|v\|] \\
	%\leq & \frac{d}{\delta} \expect_{v \sim S^n}[G\|x-y \|\cdot \|v\|] \\
	%=& \frac{dG}{\delta}\|x-y \|.
	%\end{align}
	%Moreover,
	\begin{align*}
	|\tF(x)-\tF(y)| ={}& |\expect_{v\sim B^d} [F(x+\delta v) - F(y+\delta v)]| 
	\\
	\le{}& \expect_{v\sim B^d}[ |F(x+\delta v) - F(y+\delta v)|] \\
	\le{}& \expect_{v\sim B^d}[G\|(x+\delta v)-(y+\delta v)\|] \\
	={}& G\|x-y\|,
	\end{align*}
	and
	\begin{align*}
	|\tilde{F}(x)-F(x)| ={}& | \expect_{v\sim B^d}[F(x+\delta v)-F(x)] | \\
	\leq{}& \expect_{v\sim B^d} [|F(x+\delta v)-F(x)|] \\
	\leq{}& \expect_{v\sim B^d}[G \delta \|v \|] \\
	\le{}& \delta G.
	\end{align*}
	
	If $F$ is $G$-Lipschitz continuous and monotone continuous DR-submodular, 
	then 
	$F$ is differentiable. For all $\ x \leq y$, we also have
	\begin{equation*}
	\nabla F(x) \geq \nabla F(y),   
	\end{equation*}
	and
	\begin{equation*}
	F(x) \leq F(y).    
	\end{equation*}
	
	By definition of $\tilde{F}$, we have $\tilde{F}$ is differentiable and for 
	$\forall\ x \leq y$,
	\begin{align*}
	\nabla \tilde{F}(x) - \nabla \tilde{F}(y) ={} & \nabla \expect_{v\sim B^d} 
	[F(x+\delta v)]-  \nabla \expect_{v\sim B^d} 
	[F(y+\delta v)] \\
	={} & \expect_{v\sim B^d}[\nabla F(x+\delta v) - \nabla F(y+\delta v) ] \\
	\geq{} & \expect_{v\sim B^d} [0] \\
	={}& 0,
	\end{align*}
	and
	\begin{align*}
	\tF(x)-\tF(y) ={}& \expect_{v \sim B^d}[F(x+\delta v)]-\expect_{v \sim 
		B^d}[F(y+\delta v)] 
	\\
	={}& \expect_{v \sim B^d}[F(x+\delta v)-F(y+\delta v)] \\
	\leq{} & \expect_{v \sim B^d}[0] \\
	={}& 0,
	\end{align*}
	\emph{i.e.}, $\nabla \tilde{F}(x) \geq \nabla \tilde{F}(y), \tF(x) \leq 
	\tF(y).$ So $\tF$ is also a monotone continuous DR-submodular function.
\end{proof}

%\section{Proof of \cref{lem:smooth_inherit}}\label{app:lemma_smooth_inherit}
%\begin{proof}

	%	If $F$ is $G$-Lipschitz continuous and convex, \emph{i.e.}, for all 
	%$x,y$ 
	%	and 
	%	$0 \le \lambda \le 1$, we have
	%	\begin{equation}
	%	F(\lambda x + (1-\lambda)y) \le \lambda F(x) + (1-\lambda)F(y).    
	%	\end{equation}
	
	%	Now for the smoothed version,
	%	\begin{align}
	%	& \tF(\lambda x + (1-\lambda)y)   \\
	%	=& \expect_{v \sim B^d}[F(\lambda x + (1-\lambda)y+\delta v)] \\
	%	=& \expect_{v \sim B^d}[F(\lambda (x+\delta v) + (1-\lambda)(y+\delta 
	%v))] 
	%	\\
	%	\le & \expect_{v \sim B^d} [\lambda F(x+\delta v) + 
	%(1-\lambda)F(y+\delta 
	%	v)] \\
	%	=& \lambda \tF(x) + (1-\lambda)\tF(y).
	%	\end{align}
	
	%	So $\tF$ is also a convex function.
%\end{proof}

%\section{Lemmas for \cref{thm:zero}}\label{app:lemmas}
\subsection{Proof of Theorem 9}\label{app:theorem_zero}
In order to prove \cref{thm:zero}, we need the following variance reduction 
lemmas~\citep{shamir2017optimal,pmlr-v80-chen18c}, where the second one is a 
slight improvement of Lemma~2 in~\citep{mokhtari2018conditional} and Lemma~5 
in~\citep{mokhtari2018stochastic}. 

\begin{lemma}[Lemma~10 of \citep{shamir2017optimal}]\label{lem:variance}
	It holds that
	\begin{equation*}
	\expect_{u \sim S^{d-1}}[\frac{d}{2\delta}(F(z+\delta u)-F(z-\delta u))u| 
	z]  = \nabla \tilde{F}(z) ,  
	\end{equation*}
	\begin{equation*}
	%\begin{split}
	\expect_{u \sim S^{d-1}}[\|\frac{d}{2\delta}(F(z+\delta u)-F(z-\delta 
	u))u - \nabla \tF(z) \|^2|z]   \le cdG^2,       
	%\end{split}
	\end{equation*}
	where $c$ is a constant.
	%  , and \begin{equation}\label{eq:seq_diff}
	%      \|\nabla \tf(z_t)-\nabla \tf(z_{t-1}) \|\le 
	%\frac{G}{T}\diam(\domainsh \cap (\constraint-\delta\one)).
	%  \end{equation}
\end{lemma}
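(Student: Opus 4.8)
The plan is to prove the two assertions separately: the unbiasedness identity follows almost immediately from the one-point formula already recorded in \cref{lem:gradient_of_smooth}, while the genuine work lies in the second-moment bound, where the challenge is obtaining the factor $d$ rather than $d^2$.

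First I would establish the unbiasedness. Starting from \cref{lem:gradient_of_smooth}, which states $\nabla\tF(z)=\expect_{u\sim S^{d-1}}[\frac{d}{\delta}F(z+\delta u)u]$, I split the two-point estimator as
\[
\frac{d}{2\delta}(F(z+\delta u)-F(z-\delta u))u=\tfrac12\cdot\tfrac{d}{\delta}F(z+\delta u)u-\tfrac12\cdot\tfrac{d}{\delta}F(z-\delta u)u,
\]
and take expectations term by term. The first term contributes $\tfrac12\nabla\tF(z)$ directly. For the second term I would exploit that the uniform law on $S^{d-1}$ is symmetric under $u\mapsto-u$: substituting $w=-u$ converts $\expect_u[\frac{d}{\delta}F(z-\delta u)u]$ into $-\expect_w[\frac{d}{\delta}F(z+\delta w)w]=-\nabla\tF(z)$, so this term also contributes $\tfrac12\nabla\tF(z)$. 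Summing the two halves gives $\nabla\tF(z)$.

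For the variance, write $g_u\triangleq\frac{d}{2\delta}(F(z+\delta u)-F(z-\delta u))u$, so that $\nabla\tF(z)=\expect_u[g_u]$ by the first part. Since the variance is dominated by the second moment, $\expect_u\|g_u-\nabla\tF(z)\|^2\le\expect_u\|g_u\|^2$, and because $\|u\|=1$ on the sphere,
\[
\expect_u\|g_u\|^2=\frac{d^2}{4\delta^2}\,\expect_u[(F(z+\delta u)-F(z-\delta u))^2].
\]
By the same $u\mapsto-u$ symmetry, the scalar $\Delta(u)\triangleq F(z+\delta u)-F(z-\delta u)$ has mean zero, so its second moment equals $\var{\Delta}$. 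The naive route, bounding $|\Delta(u)|\le G\|2\delta u\|=2\delta G$ pointwise via $G$-Lipschitzness, would only yield $\expect_u\|g_u\|^2\le d^2G^2$, off by a factor of $d$.

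Closing precisely this $d^2$-to-$d$ gap is the main obstacle, and I would resolve it through concentration of measure rather than a worst-case pointwise bound. The key observation is that $u\mapsto\Delta(u)$ is $2\delta G$-Lipschitz on $S^{d-1}$, since $|\Delta(u)-\Delta(u')|\le|F(z+\delta u)-F(z+\delta u')|+|F(z-\delta u)-F(z-\delta u')|\le2\delta G\|u-u'\|$. Invoking the Poincar\'e inequality on the unit sphere, whose spectral gap (the first nonzero Laplace--Beltrami eigenvalue) equals $d-1$, any mean-zero $L$-Lipschitz $h$ obeys $\var{h}\le L^2/(d-1)$. Applying this with $L=2\delta G$ gives $\expect_u[\Delta(u)^2]=\var{\Delta}\le4\delta^2G^2/(d-1)$, whence
\[
\expect_u\|g_u\|^2\le\frac{d^2}{4\delta^2}\cdot\frac{4\delta^2G^2}{d-1}=\frac{d^2G^2}{d-1}\le2dG^2
\]
for $d\ge2$, so the claim holds with $c=2$. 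The intuition is the linear heuristic $\Delta(u)\approx2\delta\langle\nabla F(z),u\rangle$, which makes $g_u\approx d\langle\nabla F(z),u\rangle u$ with $\expect_u[\langle\nabla F(z),u\rangle^2]=\|\nabla F(z)\|^2/d$; the $d^2$ is beaten down to $d$ by the $1/d$ concentration of a single directional derivative, and the Poincar\'e inequality is exactly what makes this rigorous for merely Lipschitz, possibly non-differentiable, $F$.
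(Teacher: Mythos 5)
Your proof is correct. Note that the paper does not actually prove this lemma --- it imports it verbatim as Lemma~10 of \citep{shamir2017optimal} --- so there is no internal proof to compare against; what you have written is a complete, self-contained derivation of the cited result. Both halves check out: the unbiasedness via the $u\mapsto -u$ symmetry of the uniform law on $S^{d-1}$ combined with \cref{lem:gradient_of_smooth} is exactly the standard argument, and you correctly identify that the whole content of the lemma is the improvement from the naive $d^2G^2$ second-moment bound to $cdG^2$. Your route to that improvement --- observing that $\Delta(u)=F(z+\delta u)-F(z-\delta u)$ is mean-zero and $2\delta G$-Lipschitz on the sphere, then applying the Poincar\'e inequality with spectral gap $d-1$ to get $\expect_u[\Delta(u)^2]\le 4\delta^2G^2/(d-1)$ --- is the same concentration-of-measure mechanism that Shamir's original proof relies on, though he phrases it through a sub-Gaussian tail bound for Lipschitz functions on the sphere (L\'evy's lemma) and integrates the tail, which yields an unspecified constant. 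Your spectral-gap version is arguably cleaner: it goes straight to the variance, handles merely Lipschitz (a.e.\ differentiable) $F$ without fuss, and produces the explicit constant $c=2$ for $d\ge 2$ (the case $d=1$ being trivial from the pointwise bound). The only cosmetic caveat is that the Lipschitz constant of $\Delta$ is taken with respect to Euclidean distance while the Poincar\'e inequality is stated for the tangential gradient on $S^{d-1}$; since the geodesic distance dominates the Euclidean one, the tangential gradient is still bounded by $2\delta G$ almost everywhere, so the step is valid as written.
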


\begin{lemma}[Theorem~3 of 
	\citep{pmlr-v80-chen18c}]\label{lem:variance_reduction}
	Let $ \{ 
	a_t\}_{t=0}^{T}$ be a sequence of points in $\mathbb{R}^n$ 
	such that $ \| a_t - 
	a_{t-1} \| 
	\leq G_0/(t+s)  $ for all $1\leq t\leq T $ with fixed constants $ G_0 \geq 
	0 
	$ and $ s\geq 3 $. 
	Let $ \{ \tilde{a}_t\}_{t=1}^T$ be a sequence of random variables such that 
	$ \expect[ 
	\tilde{a}_t|\mathcal{F}_{t-1} ] = a_t $ and $ \expect[ \| \tilde{a}_t - a_t 
	\|^2|\mathcal{F}_{t-1} ] \leq \sigma^2$ for 
	every $ t\geq 0 $, 
	where 
	$\mathcal{F}_{t-1} $ is the $ \sigma $-field generated by 
	%	$ \{ \tilde{a}_i:0\leq i<t \} $ 
	$ \{ \tilde{a}_i\}_{i=1}^{t} $ 
	and $ \mathcal{F}_{0} = \varnothing $. Let $\{d_t\}_{t=0}^T$ be a sequence 
	of random 
	variables where $d_0$ is fixed and subsequent $d_{t}$ are obtained by the 
	recurrence 
	\begin{equation*}
	d_t = (1-\rho_t) d_{t-1} +\rho_t \tilde{a}_t
	\end{equation*}
	% If we perform the variance reduction update 
	with $ \rho_t = \frac{2}{(t+s)^{2/3}} $. 
	%	initialization 
	%	$ d_0=0 $ and set
	%	 $ \rho_t = \frac{2}{(t+s)^{2/3}} $ for $ \forall 1\leq 
	%	t\leq T $.
	Then, we have
	\begin{equation*}
	\expect[\| a_t-d_t\|^2 ] \leq \frac{Q}{(t+s+1)^{2/3}},
	\end{equation*}
	where $ Q \triangleq \max \{ \|a_0 - d_0 \|^2 (s+1)^{2/3}, 
	4\sigma^2 + 3G_0^2/2 \} $.
\end{lemma}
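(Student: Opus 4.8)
The plan is to track the error $e_t \triangleq a_t - d_t$ and to show by induction on $t$ that $\Gamma_t \triangleq \expect[\|e_t\|^2]$ decays like $(t+s+1)^{-2/3}$. First I would rewrite the momentum recursion in terms of the error: subtracting $d_t = (1-\rho_t)d_{t-1} + \rho_t \tilde{a}_t$ from $a_t$ and inserting $a_{t-1}$ yields the decomposition
\begin{equation*}
e_t = (1-\rho_t)\bigl(e_{t-1} + (a_t - a_{t-1})\bigr) + \rho_t\bigl(a_t - \tilde{a}_t\bigr).
\end{equation*}
The purpose of this split is that $e_{t-1}$ and the deterministic increment $a_t - a_{t-1}$ are $\mathcal{F}_{t-1}$-measurable, while $\expect[a_t - \tilde{a}_t \mid \mathcal{F}_{t-1}] = 0$ by the unbiasedness hypothesis. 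Taking $\expect[\,\cdot \mid \mathcal{F}_{t-1}]$ of $\|e_t\|^2$, the cross term vanishes and the stochastic term contributes at most $\rho_t^2\sigma^2$, giving
\begin{equation*}
\expect[\|e_t\|^2 \mid \mathcal{F}_{t-1}] \le (1-\rho_t)^2\,\|e_{t-1} + (a_t - a_{t-1})\|^2 + \rho_t^2\sigma^2.
\end{equation*}

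Next I would separate the two deterministic contributions with Young's inequality using parameter $\rho_t/2$, i.e.\ $\|e_{t-1} + (a_t-a_{t-1})\|^2 \le (1+\rho_t/2)\|e_{t-1}\|^2 + (1+2/\rho_t)\|a_t - a_{t-1}\|^2$. The crucial algebraic observation is that since $\rho_t = 2/(t+s)^{2/3} \le 1$ for all $t\ge 1$ (using $s\ge 3$), one has $(1-\rho_t)^2(1+\rho_t/2) \le 1-\rho_t$, so the coefficient of $\Gamma_{t-1}$ contracts exactly as $1-\rho_t$; similarly $(1-\rho_t)^2(1+2/\rho_t)\le 3/\rho_t$. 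Bounding $\|a_t - a_{t-1}\|\le G_0/(t+s)$ and substituting $\rho_t = 2/(t+s)^{2/3}$ then collapses everything to the scalar recursion
\begin{equation*}
\Gamma_t \le \Bigl(1 - \tfrac{2}{(t+s)^{2/3}}\Bigr)\Gamma_{t-1} + \frac{3G_0^2/2 + 4\sigma^2}{(t+s)^{4/3}}.
\end{equation*}

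Finally I would establish $\Gamma_t \le Q/(t+s+1)^{2/3}$ by induction. The base case $t=0$ is immediate from $Q \ge \|a_0-d_0\|^2(s+1)^{2/3}$. For the inductive step, inserting $\Gamma_{t-1}\le Q/(t+s)^{2/3}$ and using $Q \ge 4\sigma^2 + 3G_0^2/2$ to absorb the additive term leaves $\Gamma_t \le Q/(t+s)^{2/3} - Q/(t+s)^{4/3}$, after which it suffices to verify the analytic inequality $x^{-2/3} - x^{-4/3} \le (x+1)^{-2/3}$ with $x = t+s$; this follows from convexity of $x\mapsto x^{-2/3}$ (whence $(x+1)^{-2/3}\ge x^{-2/3} - \tfrac{2}{3}x^{-5/3}$) together with $x\ge 3 > 2/3$. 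I expect the main obstacle to be this final bookkeeping: one must pick the Young parameter so that the surviving coefficient is precisely $1-\rho_t$, and simultaneously ensure $Q$ is chosen large enough for the induction to close — which is exactly why $Q$ is defined as the maximum of the two competing quantities $\|a_0-d_0\|^2(s+1)^{2/3}$ and $4\sigma^2 + 3G_0^2/2$.
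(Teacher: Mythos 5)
Your proof is correct. Note that the paper itself does not prove this lemma --- it imports it verbatim as Theorem~3 of the cited reference --- and your argument is essentially the standard one used there: kill the cross term with the martingale-difference property of $\tilde{a}_t - a_t$, split the remaining deterministic part with Young's inequality tuned so that the contraction factor is exactly $1-\rho_t$, and close the induction on the scalar recursion; all the constants check out (in particular $(1-\rho)^2(1+\rho/2)\le 1-\rho$ and $(1-\rho)^2(1+2/\rho)\le 3/\rho$ for $\rho\le 1$, and the final inequality $x^{-2/3}-x^{-4/3}\le (x+1)^{-2/3}$ via convexity, which only needs $x\ge (2/3)^3$).
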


Now we turn to prove \cref{thm:zero}.

\begin{proof}[Proof of \cref{thm:zero}]
	First of all, we note that technically we need the iteration number $T \geq 
	4$, which always holds in practical applications.
	
	Then we show that $\forall\ t=1,\dots, T+1$, $x_t\in \domainsh$. By 
	the definition of $x_t$, we have $x_t=\sum_{i=1}^{t-1}\frac{v_i}{T}$. Since 
	$v_t$'s are non-negative vectors, we know that $x_t$'s are also 
	non-negative vectors and that $0=x_1\le x_2 \le \dots \le x_{T+1}$. It 
	suffices to show that $x_{T+1}\in \domainsh$. Since $x_{T+1}$ is a convex 
	combination of $v_1,\dots, v_T$ and $v_t$'s are in $\domainsh$, we conclude 
	that $x_{T+1}\in \domainsh$. In addition, since $v_t$'s are also in 
	$\constraint-\delta \one$, $x_{T+1}$ is also in $\constraint-\delta\one$. 
	Therefore our final choice $x_{T+1}+\delta \one$ resides in the constraint 
	$\constraint$.
	
	Let $z_t\triangleq x_t+\delta \one$ and the shrunk domain (without 
	translation) 
	$\domainsh'\triangleq \domainsh + \delta \one = \prod_{i=1}^d [\delta, 
	a_i-\delta]\subseteq \domain$. %Since $f$ is $G$-Lipschitz by assumption, 
	%applying \cref{lem:smooth}, we have that $\tf$ is 
	%$\frac{dG}{\delta}$-smooth. 
	%Thus
	By Jensen's inequality and the fact $F$ has $L$-Lipschitz continuous 
	gradients, we have
	\begin{equation*}
	\| \nabla \tF(x) -\nabla \tF(y) \| \le L\|x-y\|.    
	\end{equation*}
	
	Thus,
	\begin{equation}\label{eq:gv}
	\begin{split}
	\tF(z_{t+1})-\tF(z_t)={} & \tF(z_t+\frac{v_t}{T})-\tF(z_t)\\
	\ge{} & \frac{1}{T} \nabla \tF(z_t)^\top v_t - \frac{L}{2 T^2} \| v_t \|^2 
	\\
	\ge{} & \frac{1}{T} \nabla \tF(z_t)^\top v_t - \frac{L}{2 T^2} D_1^2 \\
	={} & \frac{1}{T}\left(\bar{g}_t^\top v_t + (\nabla 
	\tF(z_t)-\bar{g}_t)^\top 
	v_t \right)  - \frac{L}{2 T^2} D_1^2.
	\end{split}
	\end{equation}

	Let $x^*_\delta \triangleq \argmax_{x\in \domainsh'\cap \constraint} 
	\tF(x)$.
	Since $x^*_\delta,z_t\in \domainsh'$, we have $v_t^*\triangleq (x^*_\delta 
	- z_t)\vee 0 \in \domainsh$. We know  $z_t+v_t^* = x_\delta^*\vee z_t\in 
	\domainsh'$ and 
	\begin{equation*}
	v_t^*+\delta \one =(x_\delta^*-x_t)\vee \delta \one \le x_\delta^*.
	\end{equation*}
	
	Since we assume that $F$ is monotone continuous DR-submodular, by 
	\cref{lem:smooth_approx}, $\tF$ is also monotone continuous DR-submodular. 
	As a result, $\tF$ is concave along non-negative directions, and $\nabla 
	\tF$ is 
	entry-wise non-negative. Thus we have
	\begin{align*}
	\tF(z_t+v_t^*)-\tF(z_t)\le{} & \nabla \tF(z_t)^\top v_t^*\\
	\le{} & \nabla \tF(z_t)^\top (x_\delta^*-\delta\one).
	\end{align*}
	%\mz{$\nabla \tf$ is entrywise non-negative needs more explanation}
	
	Since $x_\delta^*-\delta\one\in \constraint'$, we deduce \begin{align*}
	\bar{g}_t^\top v_t \ge{} & \bar{g}_t^\top (x_\delta^*-\delta\one)\\
	={} & \nabla \tF(z_t)^\top (x_\delta^*-\delta\one) + (\bar{g}_t-\nabla 
	\tF(z_t))^\top (x_\delta^*-\delta\one)\\
	\ge{} & \tF(z_t+v_t^*)-\tF(z_t) + (\bar{g}_t-\nabla \tF(z_t))^\top 
	(x_\delta^*-\delta\one)\\
	\ge{} & \tF(x_\delta^*)-\tF(z_t) + (\bar{g}_t-\nabla \tF(z_t))^\top 
	(x_\delta^*-\delta\one).
	\end{align*}
	
	Therefore, we obtain
	\begin{align}
	\bar{g}_t^\top v_t + (\nabla \tF(z_t)-\bar{g}_t)^\top v_t \ge  
	\tF(x_\delta^*)-\tF(z_t) + (\nabla \tF(z_t)-\bar{g}_t)^\top 
	(v_t-(x_\delta^*-\delta\one)).\label{eq:gv2}
	\end{align}
	
	By plugging \cref{eq:gv2} into \cref{eq:gv}, after re-arrangement of the 
	terms, we obtain
	\begin{align}\label{eq:h_t}
	h_{t+1} \le (1-\frac{1}{T})h_t + \frac{1}{T}(\nabla 
	\tF(z_t)-\bar{g}_t)^\top ((x_\delta^*-\delta\one)-v_t)
	+\frac{L}{2 T^2} D_1^2,
	\end{align}
	where $h_t\triangleq \tF(x_\delta^*)-\tF(z_t)$. Next we derive an upper 
	bound for $(\nabla \tF(z_t)-\bar{g}_t)^\top ((x_\delta^*-\delta\one)-v_t)$. 
	By Young's inequality, it can be deduced that for any $\beta_t>0$,
	\begin{equation}\label{eq:inner_product}
	\begin{split}
		(\nabla \tF(z_t)-\bar{g}_t)^\top ((x_\delta^*-\delta\one)-v_t) \le{} & 
	\frac{\beta_t}{2}\| \nabla \tF(z_t)-\bar{g}_t\|^2 + 
	\frac{1}{2\beta_t} \|(x_\delta^*-\delta\one)-v_t \|^2 \\
	\le{} & \frac{\beta_t}{2}\| \nabla \tF(z_t)-\bar{g}_t\|^2 + 
	\frac{1}{2\beta_t} D_1^2. 
	\end{split}
	\end{equation}

	%\mz{$d_0$ corresponds to $\bar{g}_0$ or $\bar{g}_1$?}

	Now let $\mathcal{F}_1 \triangleq \varnothing$ and $\mathcal{F}_t$ be the 
	$\sigma$-field generate by $\{\bar{g}_1,
	\dots,\bar{g}_{t-1} \}$, then by \cref{lem:variance}, we have
	\begin{equation*}
	\expect[\frac{d}{2\delta}(F(y_{t,i}^+)-F(y_{t,i}^-))u_{t,i}|\mathcal{F}_{t-1}]
	= \nabla \tF(z_t) ,  
	\end{equation*}
	and
	\begin{equation*}
	\expect[\|\frac{d}{2\delta}(F(y_{t,i}^+)-F(y_{t,i}^-))u_{t,i}-\nabla 
	\tF(z_t) \|^2|\mathcal{F}_{t-1}]  \leq cdG^2.  
	\end{equation*}
	
	Therefore,
	\begin{align*}
	\expect[g_t|\mathcal{F}_{t-1}] 
	={}&\expect[\frac{1}{B_t}\sum_{i=1}^{B_t}\frac{d}{2\delta}(F(y_{t,i}^+)-F(y_{t,i}^-))u_{t,i}|\mathcal{F}_{t-1}]
	\\
	={}&\nabla \tF(z_t) , 
	\end{align*}
	and
	\begin{equation}\label{eq:variance_upper}
	\begin{split}
	\expect[\|g_t - \nabla \tF(z_t) \|^2|\mathcal{F}_{t-1}] ={}&\frac{1}{B_t^2} 
	\sum_{i=1}^{B_t} 
	\expect[\|\frac{d}{2\delta}(F(y_{t,i}^+)-F(y_{t,i}^-))u_{t,i}-\nabla 
	\tF(z_t) \|^2|\mathcal{F}_{t-1}] \\
	\leq{}& \frac{cdG^2}{B_t}.  
	\end{split}
	\end{equation}

	%\cl{We take the average in addition to Lemma 10. Perhaps a little bit more 
	%explanation}
	%\cl{We need to consider the stochastic case; i.e, add $\sigma^2$ to it. 
	%Also change $Q$ below.}
	
	% \mz{use Lemma 10 in }
	%\cl{Proof of this lemma}
	%  By \cref{lem:variance}, we have
	By Jensen's inequality and the assumption $F$ is $L$-smooth, we have
	\begin{equation*}
		\|\nabla \tF(z_t)-\nabla\tF(z_{t-1}) \|\le L\frac{D_1}{T}\le 
		\frac{2LD_1}{t+3}.
	\end{equation*}
	
	Then by \cref{lem:variance_reduction} with $s=3, d_t = \bar{g}_t, \forall\ 
	t 
	\ge 
	0, \tilde{a}_t=g_t, a_t=\nabla \tF(z_t), \forall\ t \ge 1, a_0=\nabla 
	\tF(z_1), G_0=2LD_1$, we have
	\begin{equation}
	\expect[\| \nabla \tF(z_t)-\bar{g}_t\|^2]\le 
	\frac{Q}{(t+4)^{2/3}},\label{eq:bound_on_norm}
	\end{equation}
	where $Q\triangleq \max \{ \|\nabla \tF(x_1+\delta \one) \|^2 4^{2/3}, 
	\frac{4cdG^2}{B_t}+ 6L^2D_1^2 \} $.
	Note that by \cref{lem:smooth_approx}, we have $\| \nabla \tF(x) \| \le G$, 
	thus we can re-define $Q = \max \{ 4^{2/3}G^2, 
	\frac{4cdG^2}{B_t}+ 6L^2D_1^2 \}$.
	
	Using \cref{eq:h_t,eq:inner_product,eq:bound_on_norm} and taking 
	expectation, we obtain
	\begin{equation*}
	\begin{split}
	\expect[h_{t+1}]\le{}& (1-\frac{1}{T})\expect[h_t]+\frac{1}{T}\left( 
	\frac{\beta_t}{2}\cdot \frac{Q}{(t+4)^{2/3}} +\frac{D_1^2}{2\beta_t}
	\right) + \frac{L}{2 T^2}D_1^2 \\
	\le{}& (1-\frac{1}{T})\expect[h_t]+
	\frac{D_1Q^{1/2}}{T(t+4)^{1/3}}
	+ \frac{L}{2T^2}D_1^2,
	\end{split}
	\end{equation*}
	where we set $\beta_t = \frac{D_1(t+4)^{1/3}}{Q^{1/2}}$. Using the above 
	inequality recursively, we have
	\begin{align*}
	\expect[h_{T+1}]\le{} & (1-\frac{1}{T})^T (\tF(x_\delta^*)-\tF(\delta\one)) 
	+\sum_{t=1}^T \frac{D_1Q^{1/2}}{T(t+4)^{1/3}} +  \frac{L}{2 T}D_1^2\\
	\le{} & e^{-1}(\tF(x_\delta^*)-\tF(\delta\one)) + 
	\frac{D_1Q^{1/2}}{T}\int_0^T \frac{\dif x}{(x+4)^{1/3}} + \frac{L}{2 
		T}D_1^2\\
	\leq{} & e^{-1}(\tF(x_\delta^*)-\tF(\delta\one)) + \frac{D_1Q^{1/2}}{T} 
	\frac{3}{2}(T+4)^{2/3} + \frac{L}{2 T}D_1^2 \\
	\leq{} & e^{-1}(\tF(x_\delta^*)-\tF(\delta\one))
	+ \frac{D_1Q^{1/2}}{T} \frac{3}{2} (2T)^{2/3} + \frac{L}{2 
		T}D_1^2 \\
	\le{} & e^{-1}(\tF(x_\delta^*)-\tF(\delta\one)) + 
	\frac{3D_1Q^{1/2}}{T^{1/3}}+ \frac{LD_1^2}{2 T}.
	\end{align*}
	
	By re-arranging the terms, we conclude
	\begin{align*}
	(1-\frac{1}{e})\tF(x_\delta^*)-\expect[\tF(z_{T+1})] \le{} & 
	-e^{-1}\tF(\delta\one)+\frac{3D_1Q^{1/2}}{T^{1/3}}+ \frac{LD_1^2}{2 
		T}\\
	\le{} & \frac{3D_1Q^{1/2}}{T^{1/3}}+ \frac{LD_1^2}{2 T},
	\end{align*}
	where the second inequality holds since 
	%as defined in \cref{sec:pre_ubmodular}, 
	the image of $F$ is in $\mathbb{R}_+$.
	
	By \cref{lem:smooth_approx}, we have $\tF(z_{T+1})\le F(z_{T+1})+\delta G$ 
	and 
	\begin{align*}
	\tF(x_{\delta}^*)\ge \tF(x^*)-\delta G\sqrt{d}\ge  F(x^*)-\delta 
	G(\sqrt{d}+1).
	\end{align*}
	
	Therefore,
	\begin{align*}
	(1-\frac{1}{e})F(x^*)-\expect[F(z_{T+1})] \le  
	\frac{3D_1Q^{1/2}}{T^{1/3}}+ \frac{LD_1^2}{2 T} 
	+ \delta G(1+(\sqrt{d}+1)(1-\frac{1}{e})).%\label{eq:gap}
	\end{align*}
	%For sufficiently small $\epsilon$ such that $\epsilon\leq d^{-3/2}$, if we 
	%set $T=1/\epsilon^3$, $B_t=d$, and $\delta=\frac{\epsilon}{\sqrt{d}}$, 
	%\cref{eq:gap} is at most $O(\epsilon)$ as $\epsilon\to 0$. The total 
	%number 
	%of evaluations is at most $O(\frac{d}{\epsilon^3})$.
\end{proof}

\subsection{Proof of Theorem 10}\label{app:theorem_zero_stochastic}
\begin{proof}
	By the unbiasedness of $\hat{F}$ and \cref{lem:variance}, we have
	\begin{align*}
	\expect[\frac{d}{2\delta}(\hat{F}(y_{t,i}^+)-\hat{F}(y_{t,i}^-))u_{t,i}|\mathcal{F}_{t-1}]
	={}& 
	\expect[\expect[\frac{d}{2\delta}(\hat{F}(y_{t,i}^+)-\hat{F}(y_{t,i}^-))u_{t,i}|\mathcal{F}_{t-1},u_{t,i}]|\mathcal{F}_{t-1}]
	\\
	={}& 
	\expect[\frac{d}{2\delta}(F(y_{t,i}^+)-F(y_{t,i}^-))u_{t,i}|\mathcal{F}_{t-1}]
	\\
	={}& \nabla \tF(z_t),
	\end{align*}
	where $z_t= x_t+\delta \one$, and
	\begin{align*}
	&\expect[\|\frac{d}{2\delta}(\hat{F}(y_{t,i}^+)-\hat{F}(y_{t,i}^-))u_{t,i}-\nabla
	\tF(z_t) \|^2|\mathcal{F}_{t-1}] \\
	={}& \expect[\expect[\| 
	\frac{d}{2\delta}(F(y_{t,i}^+)-F(y_{t,i}^-))u_{t,i}-\nabla \tF(z_t) \\
	&+\frac{d}{2\delta}(\hat{F}(y_{t,i}^+)-F(y_{t,i}^+))u_{t,i} \\
	&- \frac{d}{2\delta} 
	(\hat{F}(y_{t,i}^-)-F(y_{t,i}^-))u_{t,i}\|^2|\mathcal{F}_{t-1},u_{t,i}]|\mathcal{F}_{t-1}]
	\\
	={}& \expect[\expect[\| 
	\frac{d}{2\delta}(F(y_{t,i}^+)-F(y_{t,i}^-))u_{t,i}-\nabla 
	\tF(z_t)\|^2|\mathcal{F}_{t-1},u_{t,i}]|\mathcal{F}_{t-1}] \\
	&+ \expect[\expect[\| 
	\frac{d}{2\delta}(\hat{F}(y_{t,i}^+)-F(y_{t,i}^+))u_{t,i}\|^2|\mathcal{F}_{t-1},u_{t,i}]|\mathcal{F}_{t-1}]
	\\
	&+ \expect[\expect[\| \frac{d}{2\delta} 
	(\hat{F}(y_{t,i}^-)-F(y_{t,i}^-))u_{t,i}\|^2|\mathcal{F}_{t-1},u_{t,i}]|\mathcal{F}_{t-1}]
	\\
	\leq{}& \expect[\| 
	\frac{d}{2\delta}(F(y_{t,i}^+)-F(y_{t,i}^-))u_{t,i}-\nabla 
	\tF(z_t)\|^2|\mathcal{F}_{t-1}] \\
	&+ \frac{d^2}{4\delta^2} \expect[\expect[| 
	\hat{F}(y_{t,i}^+)-F(y_{t,i}^+)|^2\cdot 
	\|u_{t,i}\|^2|\mathcal{F}_{t-1},u_{t,i}]|\mathcal{F}_{t-1}] \\
	&+ \frac{d^2}{4\delta^2} \expect[\expect[| 
	\hat{F}(y_{t,i}^-)-F(y_{t,i}^-)|^2\cdot 
	\|u_{t,i}\|^2|\mathcal{F}_{t-1},u_{t,i}]|\mathcal{F}_{t-1}] \\
	\leq{}& 
	cdG^2+\frac{d^2}{4\delta^2}\sigma_0^2+\frac{d^2}{4\delta^2}\sigma_0^2 
	\\
	={}& cdG^2+\frac{d^2}{2\delta^2}\sigma_0^2.
	\end{align*}
	
	Then we have
	\begin{align*}
	\expect[g_t|\mathcal{F}_{t-1}] 
	={}&\expect[\frac{1}{B_t}\sum_{i=1}^{B_t}\frac{d}{2\delta}(
	\hat{F}(y_{t,i}^+)-\hat{F}(y_{t,i}^-))u_{t,i}|\mathcal{F}_{t-1}]\\
	={}&\nabla \tF(z_t),    
	\end{align*}
	and
	\begin{align*}
	\expect[\|g_t - \nabla \tF(z_t) \|^2|\mathcal{F}_{t-1}] ={}&\frac{1}{B_t^2} 
	\sum_{i=1}^{B_t} \expect[\|\frac{d}{2\delta}(
	\hat{F}(y_{t,i}^+)-\hat{F}(y_{t,i}^-))u_{t,i}-\nabla \tF(z_t) 
	\|^2|\mathcal{F}_{t-1}] \\
	\leq{}& \frac{cdG^2+\frac{d^2}{2\delta^2}\sigma_0^2}{B_t}.     
	\end{align*}
	
	Similar to the proof of \cref{thm:zero}, we have
	\begin{equation*}
	\expect[\| \nabla \tF(z_t)-\bar{g}_t\|^2]\le 
	\frac{Q}{(t+4)^{2/3}},%\label{eq:bound_on_norm}
	\end{equation*}
	where $Q= \max 
	\{4^{2/3}G^2,6L^2D_1^2+\frac{4cdG^2+2d^2\sigma_0^2/\delta^2}{B_t}  \} 
	$. Thus we conclude
	\begin{align*}
	(1-\frac{1}{e})F(x^*)-\expect[F(z_{T+1})] \le  
	\frac{3D_1Q^{1/2}}{T^{1/3}}+ \frac{LD_1^2}{2 T} 
	+ \delta G(1+(\sqrt{d}+1)(1-\frac{1}{e})).
	\end{align*}
\end{proof}

\subsection{Proof of 
	Lemma 12}\label{app:lemma_disrite_to_continuous}
\begin{proof}
	Recall that $F(x)=\expect_{X \sim x}[f(X)] = \sum_{S \subseteq 
		\Omega}f(S)\prod_{i \in S}x_i\prod_{j \notin S}(1-x_j)$, then for any 
	fixed 
	$i \in [d]$, where $d=|\Omega|$, we have% and the fact that $F$ is 
	%multilinear with respect to $x$, so for all $i \in [d]$, $\frac{\partial 
	%F(x)}{\partial x_i}$ is a continuous function of $\{ x_j:j \neq i\}$, 
	%where 
	%$d=|\Omega|$. Since it is defined on the compact set $\constraint$, which 
	%is the matroid polytope of the matroid contraint $\matroid$, this function 
	%has global maximum and minimum on $\constraint$. Thus $|\frac{\partial 
	%F(x)}{\partial x_i}|$ is bounded. As a result, $\| \nabla F(x)\|$ is also 
	%bounded. Suppose that $ \| \nabla F(x) \| \le G_f$, then $F$ is 
	%$G_f$-Lipschitz. 
	
	%Note that although $\frac{\partial F(x)}{\partial x_i}$ is a sum of 
	%exponential items like $\pm f(S)\prod_{j \in S} x_j \prod_{k \notin S} 
	%(1-x_k)$, since $x_j \in [0,1], (1-x_k) \in [0,1]$, so $\prod_{j \in S} 
	%x_j 
	%\prod_{k \notin S} (1-x_k)$ can be small enough to make $|\frac{\partial 
	%F(x)}{\partial x_i}|$ not too large. Then so is the Lipschitz constant 
	%$G_f$. Precisely, we have
	\begin{align*}
	|\frac{\partial F(x)}{\partial x_i}| ={}& |\sum_{\substack{S \subseteq 
	\Omega 
			\\ i \in S}}f(S)\prod_{\substack{j 
			\in S \\ j \neq i}} x_j \prod_{\substack{k \notin S \\ k \neq i}} 
	(1-x_k)  - \sum_{\substack{S \subseteq \Omega \\ i \notin 
			S}}f(S)\prod_{\substack{j \in S \\ j \neq i}} x_j 
	\prod_{\substack{k \notin 
			S \\ k \neq i}} (1-x_k)| \\
	\le{} & M[\sum_{\substack{S \subseteq \Omega \\ i \in S}}\prod_{\substack{j 
			\in S \\ j \neq i}} x_j \prod_{\substack{k \notin S \\ k \neq i}} 
	(1-x_k)+ 
	\sum_{\substack{S \subseteq \Omega \\ i \notin S}}\prod_{\substack{j \in S 
			\\ j \neq i}} x_j \prod_{\substack{k \notin S \\ k \neq i}} 
	(1-x_k)] \\
	={}& 2M.
	\end{align*}
	
	So we have
	\begin{equation*}
	\| \nabla F(x) \| \le 2M\sqrt{d}.   
	\end{equation*}
	
	Then $F$ is $2M\sqrt{d}$-Lipschitz.
	
	Now we turn to prove that $F$ has Lipschitz continuous gradients. Thanks to 
	the multilinearity, we have
	\begin{equation*}
	\frac{\partial F}{\partial x_i} = F(x|x_i=1)-F(x|x_i=0).    
	\end{equation*}
	
	Since 
	\begin{equation*}
	F(x|x_i=1)=\sum_{\substack{S \subseteq \Omega \\ i \in 
			S}}f(S)\prod_{\substack{j \in S \\ j \neq i}} x_j 
	\prod_{\substack{k \notin 
			S \\ k \neq i}} (1-x_k),    
	\end{equation*}
	we have
	\begin{equation*}
	\frac{\partial F(x|x_i=1)}{\partial x_i} =0,    
	\end{equation*}
	and for any fixed $j \neq i$,
	\begin{align*}
	|\frac{\partial F(x|x_i=1)}{\partial x_j}| ={}& |\sum_{\substack{S 
	\subseteq 
			\Omega \\ i,j \in S}}f(S)\prod_{\substack{l 
			\in S \\ l \notin \{i,j\}}} x_l \prod_{\substack{k \notin S \\ k 
			\notin 
			\{i,j \}}} (1-x_k)  -\sum_{\substack{S \subseteq \Omega \\ i \in S, 
			j \notin 
			S}}f(S)\prod_{\substack{l \in S \\ l \notin \{i,j\}}} x_l 
	\prod_{\substack{k \notin S \\ k \notin \{i,j \}}} (1-x_k)| \\
	\le{} & M [\sum_{\substack{S \subseteq \Omega \\ i,j \in 
			S}}\prod_{\substack{l \in S \\ l \notin \{i,j\}}} x_l 
	\prod_{\substack{k 
			\notin S \\ k \notin \{i,j \}}} (1-x_k)  + \sum_{\substack{S 
			\subseteq \Omega \\ i \in S, j \notin 
			S}}\prod_{\substack{l \in S \\ l \notin \{i,j\}}} x_l 
	\prod_{\substack{k 
			\notin S \\ k \notin \{i,j \}}} (1-x_k)] \\
	={}& 2M.
	\end{align*}
	
	Similarly, we have $\frac{\partial F(x|x_i=0)}{\partial x_i}=0$, and 
	$|\frac{\partial F(x|x_i=0)}{\partial x_j}|\le 2M$ for $j \neq i$. So we 
	conclude that
	\begin{equation*}
	|\frac{\partial^2 F}{\partial x_j \partial x_i}| \le 
	\begin{cases}
	0,& \quad \text{if } j=i, \\
	4M,& \quad \text{if } j \neq i.
	\end{cases}
	\end{equation*}
	
	Then $\|\nabla \frac{\partial F}{\partial x_i}\| \le 4M\sqrt{d-1} $, 
	\emph{i.e.}, $\frac{\partial F}{\partial x_i}$ is $4M\sqrt{d-1}$-Lipschitz.
	
	%Similar to the analysis above, we can also prove that both $F(x|x_i=1)$ 
	%and $F(x|x_i=0)$ are Lipschitz functions. Then so is $\frac{\partial 
	%F}{\partial x_i}$. Suppose the corresponding Lipschitz constant is $L_i$, 
	%then
	Then we deduce that
	\begin{align*}
	\| \nabla F(z_1) - \nabla F(z_2) \| ={}& \left[\sum_{i=1}^{d} \left( 
	\frac{\partial F(z_1)}{\partial x_i} - 
	\frac{\partial F(z_2)}{\partial x_i}\right)^2\right]^{1/2} \\
	\le{} & \left[\sum_{i=1}^{d} (4M\sqrt{d-1})^2 \|z_1-z_2 \|^2\right]^{1/2} \\
	={}& \sqrt{\sum_{i=1}^{d} (4M\sqrt{d-1})^2} \cdot \|z_1-z_2 \| \\
	={}& 4M\sqrt{d(d-1)} \|z_1-z_2 \|.
	\end{align*}
	
	So $F$ is $4M\sqrt{d(d-1)}$-smooth.
\end{proof}

\subsection{Proof of Theorem 11}
\label{app:theorem_discrete}
\begin{proof}
	Recall that we define $z_t= x_t+\delta \one$. Then we have
	\begin{align*}
	\expect[\|g_t - \nabla \tF(z_t) \|^2|\mathcal{F}_{t-1}] 
	={}& \frac{1}{B_t^2} \sum_{i=1}^{B_t} 
	\expect[\|\frac{d}{2\delta}(\bar{f}_{t,i}^+-\bar{f}_{t,i}^-)u_{t,i}-\nabla 
	\tF(z_t) \|^2|\mathcal{F}_{t-1}] \\
	={}&\frac{1}{B_t^2} \sum_{i=1}^{B_t} 
	\expect[\|[\frac{d}{2\delta}(F(y_{t,i}^+)-F(y_{t,i}^-))u_{t,i}-\nabla 
	\tF(z_t)] \\
	&\quad + \frac{d}{2\delta}[\bar{f}_{t,i}^+ - F(y_{t,i}^+)]u_{t,i} - 
	\frac{d}{2\delta} [\bar{f}_{t,i}^- - F(y_{t,i}^-)]u_{t,i} 
	\|^2|\mathcal{F}_{t-1}] \\
	={}& \frac{1}{B_t^2} \sum_{i=1}^{B_t} 
	\expect[\|[\frac{d}{2\delta}(F(y_{t,i}^+)-F(y_{t,i}^-))u_{t,i}-\nabla 
	\tF(z_t)]\|^2 | \mathcal{F}_{t-1}] \\
	& \quad + \frac{1}{B_t^2} \sum_{i=1}^{B_t} 
	\expect[|\frac{d}{2\delta}[\bar{f}_{t,i}^+ - F(y_{t,i}^+)] |^2 | 
	\mathcal{F}_{t-1}] \\
	& \quad + \frac{1}{B_t^2} \sum_{i=1}^{B_t} 
	\expect[|\frac{d}{2\delta}[\bar{f}_{t,i}^- - F(y_{t,i}^-)] |^2 | 
	\mathcal{F}_{t-1}],
	\end{align*}
	where we used the independence of $\bar{f}^{\pm}_{t,i}$ and the facts that 
	$\expect[\bar{f}^{\pm}_{t,i}] = F(y^\pm_{t,i}), 
	\expect[\frac{d}{2\delta}(F(y_{t,i}^+)-F(y_{t,i}^-))u_{t,i}]=\nabla\tF(z_t)$.

	Then same to \cref{eq:variance_upper} and by 
	\cref{lem:discrete_to_continuous}, the first item is no more than 
	$\frac{4cd^2M^2}{B_t}$. To upper bound the last two items, we have for 
	every $i \in [B_t]$,
	\begin{equation*}
	\begin{split}
	\expect[|\frac{d}{2\delta}[\bar{f}_{t,i}^+ - F(y_{t,i}^+)] |^2 | 
	\mathcal{F}_{t-1}] &= \frac{d^2}{4\delta^2} \expect[ 
	[\sum_{j=1}^l 
	[f(Y_{t,i,j}^+) - F(y_{t,i}^+)]/l]^2 |\mathcal{F}_{t-1}] \\
	&\leq \frac{d^2}{4\delta^2}\cdot l \cdot \frac{M^2}{l^2} \\
	&=\frac{d^2M^2}{4l\delta^2}.
	\end{split}  
	\end{equation*}		
	
%	since 
%	\begin{equation}
%	\expect[\bar{f}_{t,i}^+] = 
%	\expect[\sum_{j=1}^{S_{t,i}}\frac{f(Y_{t,i,j}^+)} {S_{t,i}}] = 
%	F(y_{t,i}^+),   
%	\end{equation}
%	\begin{equation}
%	\frac{f(Y_{t,i,j}^+)}{S_{t,i}} \in [-\frac{M}{S_{t,i}},\frac{M}{S_{t,i}}] , 
%	\forall t,i,j  
%	\end{equation}
%	and $f(Y_{t,i,j}^+)$'s are independent with each other, by Hoeffding's 
%	inequality, we have
%	\begin{align}
%	\mathbb{P}(|\bar{f}_{t,i}^+ - F(y_{t,i}^+)| >\lambda) 
%	<& 2\exp(-\frac{2\lambda^2}{S_{t,i}(\frac{2M}{S_{t,i}})^2}) \\
%	=& 2\exp(-\frac{\lambda^2S_{t,i}}{2M^2}).    
%	\end{align}
%	
%	Let $\lambda = \delta$, we have
%	\begin{equation}
%	\mathbb{P}(|\bar{f}_{t,i}^+ - F(y_{t,i}^+)| >\delta) < 
%	2\exp(-\frac{\delta^2 S_{t,i}}{2M^2}).   
%	\end{equation}
	
	Similarly, we have 
	\begin{equation*}
	\expect[|\frac{d}{2\delta}[\bar{f}_{t,i}^- - F(y_{t,i}^-)] |^2 | 
	\mathcal{F}_{t-1}] \leq \frac{d^2M^2}{4l\delta^2}.
	\end{equation*}
	
%	Similarly, we have
%	\begin{equation}
%	\mathbb{P}(|\bar{f}_{t,i}^- - F(y_{t,i}^-)| >\delta) < 
%	2\exp(-\frac{\delta^2 S_{t,i}}{2M^2}).   
%	\end{equation}
%	
%	Let $S_{t,i}=l$ and using union bound, we have that $\forall i \in [B_t], 
%	\forall t \in [T]$,
%	\begin{align}
%	\mathbb{P}(|\bar{f}_{t,i}^\pm - F(y_{t,i}^\pm)|  \le \delta) 
%	>& 1-\sum_{t=1}^T B_t \cdot 2 \cdot 2\exp(-\frac{\delta^2 l}{2M^2}) \\
%	=& 1-4\exp(-\frac{\delta^2 l}{2M^2}) \sum_{t=1}^TB_t.
%	\end{align}
%	
%	
%	Conditioned on the event above, we have 
%	\begin{equation}
%	\frac{1}{B_t^2} \sum_{i=1}^{B_t} 
%	\expect[|\frac{d}{2\delta}[\bar{f}_{t,i}^\pm - F(y_{t,i}^\pm)] |^2 | 
%	\mathcal{F}_{t-1}]  \le \frac{d^2}{4B_t}.  
%	\end{equation}

As a result, we have 
\begin{equation*}
\begin{split}
\expect[\|g_t - \nabla \tF(z_t) \|^2|\mathcal{F}_{t-1}] 
&\le \frac{4cd^2M^2}{B_t} + \frac{1}{B_t^2}\cdot B_t \cdot 
\frac{d^2M^2}{4l\delta^2} + \frac{1}{B_t^2}\cdot B_t \cdot 
\frac{d^2M^2}{4l\delta^2} \\
&= \frac{4cd^2M^2}{B_t} + \frac{d^2M^2}{2B_tl\delta^2}.
\end{split}
\end{equation*}

%	So
%	\begin{align}
%	\expect[\|g_t - \nabla \tF(z_t) \|^2|\mathcal{F}_{t-1}] 
%	\le & \frac{4cd^2M^2}{B_t} +\frac{d^2}{4B_t} + \frac{d^2}{4B_t} \\
%	=& \frac{d^2(1+8cM^2)}{2B_t}.
%	\end{align}
	
	Then same to the proof for \cref{thm:zero}, we have
	\begin{equation}\label{eq:discrete_ineq2}
	\begin{split}
	&(1-\frac{1}{e})F(x^*)-\expect[F(z_{T+1})]\\
	\le{}& 
	\frac{3D_1Q^{1/2}}{T^{1/3}}+ \frac{2M\sqrt{d(d-1)}D_1^2}{T} 
	+ 2M\delta 
	\sqrt{d}(1+(\sqrt{d}+1)(1-\frac{1}{e})),
	\end{split}
	\end{equation}
	where $D_1\triangleq \diam(\constraint')$, $Q= \max \{ 4^{5/3}dM^2, 
	\frac{2d^2M^2(8c+\frac{1}{l\delta^2})}{B_t}+ 96d(d-1)M^2D_1^2 \} $, $x^*$ 
	is the global 
	maximizer of $F$ on $\constraint$.
	
	Note that since the rounding scheme is lossless, we have
	\begin{equation}
	(1-\frac{1}{e})f(X^*) - \expect[f(X_{T+1})] \le  (1-\frac{1}{e})F(x^*) - 
	\expect[F(z_{T+1})].   \label{eq:discrete_ineq1} 
	\end{equation}
	
	Combine \cref{eq:discrete_ineq1,eq:discrete_ineq2}, we have
	\begin{align*}
	&(1-\frac{1}{e})f(X^*) - \expect[f(X_{T+1})] \\
	\le{}&  
	\frac{3D_1Q^{1/2}}{T^{1/3}}+ \frac{2M\sqrt{d(d-1)}D_1^2}{T} 
	+ 2M\delta \sqrt{d}(1+(\sqrt{d}+1)(1-\frac{1}{e})).%\label{eq:gap}
	\end{align*}
\end{proof}

	\include{Chapter4-Test}

	\include{Chapter5-Fixed-Spec-Avoid}

	\include{Chapter6-Conclusions}

    \addcontentsline{toc}{chapter}{Bibliography} % To get an entry in the TOC.
    \setstretch{1.0}

    \bibliographystyle{unsrtnat}
    \bibliography{main}

\begin{thebibliography}{137}
\providecommand{\natexlab}[1]{#1}
\providecommand{\url}[1]{\texttt{#1}}
\expandafter\ifx\csname urlstyle\endcsname\relax
  \providecommand{\doi}[1]{doi: #1}\else
  \providecommand{\doi}{doi: \begingroup \urlstyle{rm}\Url}\fi

\bibitem[Zhang et~al.(2020{\natexlab{a}})Zhang, Shen, Mokhtari, Hassani, and
  Karbasi]{zhang2020one}
Mingrui Zhang, Zebang Shen, Aryan Mokhtari, Hamed Hassani, and Amin Karbasi.
\newblock One sample stochastic frank-wolfe.
\newblock In \emph{International Conference on Artificial Intelligence and
  Statistics}, pages 4012--4023. PMLR, 2020{\natexlab{a}}.

\bibitem[Zhang et~al.(2020{\natexlab{b}})Zhang, Chen, Mokhtari, Hassani, and
  Karbasi]{zhang2020quantized}
Mingrui Zhang, Lin Chen, Aryan Mokhtari, Hamed Hassani, and Amin Karbasi.
\newblock Quantized frank-wolfe: Faster optimization, lower communication, and
  projection free.
\newblock In \emph{International Conference on Artificial Intelligence and
  Statistics}, pages 3696--3706. PMLR, 2020{\natexlab{b}}.

\bibitem[Chen et~al.(2020)Chen, Zhang, Hassani, and Karbasi]{chen2020black}
Lin Chen, Mingrui Zhang, Hamed Hassani, and Amin Karbasi.
\newblock Black box submodular maximization: Discrete and continuous settings.
\newblock In \emph{International Conference on Artificial Intelligence and
  Statistics}, pages 1058--1070. PMLR, 2020.

\bibitem[Jaggi(2013)]{jaggi2013revisiting}
Martin Jaggi.
\newblock Revisiting frank-wolfe: Projection-free sparse convex optimization.
\newblock In \emph{ICML}, pages 427--435, 2013.

\bibitem[Lacoste-Julien and Jaggi(2015)]{lacoste2015global}
Simon Lacoste-Julien and Martin Jaggi.
\newblock On the global linear convergence of frank-wolfe optimization
  variants.
\newblock In \emph{Advances in Neural Information Processing Systems}, pages
  496--504, 2015.

\bibitem[Garber and Hazan(2015)]{garber2015faster}
Dan Garber and Elad Hazan.
\newblock Faster rates for the frank-wolfe method over strongly-convex sets.
\newblock In \emph{ICML}, volume~15, pages 541--549, 2015.

\bibitem[Hazan and Luo(2016)]{hazan2016variance}
Elad Hazan and Haipeng Luo.
\newblock Variance-reduced and projection-free stochastic optimization.
\newblock In \emph{ICML}, pages 1263--1271, 2016.

\bibitem[Mokhtari et~al.(2018{\natexlab{a}})Mokhtari, Hassani, and
  Karbasi]{mokhtari2018stochastic}
Aryan Mokhtari, Hamed Hassani, and Amin Karbasi.
\newblock Stochastic conditional gradient methods: From convex minimization to
  submodular maximization.
\newblock \emph{arXiv preprint arXiv:1804.09554}, 2018{\natexlab{a}}.

\bibitem[Lacoste-Julien(2016)]{lacoste2016convergence}
Simon Lacoste-Julien.
\newblock Convergence rate of frank-wolfe for non-convex objectives.
\newblock \emph{arXiv preprint arXiv:1607.00345}, 2016.

\bibitem[Reddi et~al.(2016)Reddi, Sra, P{\'o}czos, and
  Smola]{reddi2016stochastic}
Sashank~J Reddi, Suvrit Sra, Barnab{\'a}s P{\'o}czos, and Alex Smola.
\newblock Stochastic frank-wolfe methods for nonconvex optimization.
\newblock In \emph{2016 54th Annual Allerton Conference on Communication,
  Control, and Computing (Allerton)}, pages 1244--1251. IEEE, 2016.

\bibitem[Mokhtari et~al.(2018{\natexlab{b}})Mokhtari, Ozdaglar, and
  Jadbabaie]{mokhtari2018escaping}
Aryan Mokhtari, Asuman Ozdaglar, and Ali Jadbabaie.
\newblock Escaping saddle points in constrained optimization.
\newblock In \emph{Advances in Neural Information Processing Systems}, pages
  3629--3639, 2018{\natexlab{b}}.

\bibitem[Zhang et~al.(2019{\natexlab{a}})Zhang, Shen, Mokhtari, Hassani, and
  Karbasi]{zhang2019one}
Mingrui Zhang, Zebang Shen, Aryan Mokhtari, Hamed Hassani, and Amin Karbasi.
\newblock One sample stochastic frank-wolfe.
\newblock \emph{arXiv preprint arXiv:1910.04322}, 2019{\natexlab{a}}.

\bibitem[Hassani et~al.(2019)Hassani, Karbasi, Mokhtari, and
  Shen]{hassani2019stochastic}
Hamed Hassani, Amin Karbasi, Aryan Mokhtari, and Zebang Shen.
\newblock Stochastic continuous greedy++: When upper and lower bounds match.
\newblock In \emph{Advances in Neural Information Processing Systems}, pages
  13066--13076, 2019.

\bibitem[Frank and Wolfe(1956)]{frank1956algorithm}
Marguerite Frank and Philip Wolfe.
\newblock An algorithm for quadratic programming.
\newblock \emph{Naval Research Logistics (NRL)}, 3\penalty0 (1-2):\penalty0
  95--110, 1956.

\bibitem[Hazan and Kale(2012)]{hazan2012projection}
Elad Hazan and Satyen Kale.
\newblock Projection-free online learning.
\newblock In \emph{ICML}, pages 1843--1850, 2012.

\bibitem[Lan and Zhou(2016)]{lan2016conditional}
Guanghui Lan and Yi~Zhou.
\newblock Conditional gradient sliding for convex optimization.
\newblock \emph{SIAM Journal on Optimization}, 26\penalty0 (2):\penalty0
  1379--1409, 2016.

\bibitem[Braun et~al.(2017)Braun, Pokutta, and Zink]{braun2017lazifying}
G{\'a}bor Braun, Sebastian Pokutta, and Daniel Zink.
\newblock Lazifying conditional gradient algorithms.
\newblock In \emph{International conference on machine learning}, pages
  566--575. PMLR, 2017.

\bibitem[Shen et~al.(2019{\natexlab{a}})Shen, Fang, Zhao, Huang, and
  Qian]{shen2019complexities}
Zebang Shen, Cong Fang, Peilin Zhao, Junzhou Huang, and Hui Qian.
\newblock Complexities in projection-free stochastic non-convex minimization.
\newblock In \emph{The 22nd International Conference on Artificial Intelligence
  and Statistics}, pages 2868--2876, 2019{\natexlab{a}}.

\bibitem[Yurtsever et~al.(2019)Yurtsever, Sra, and
  Cevher]{yurtsever2019conditional}
Alp Yurtsever, Suvrit Sra, and Volkan Cevher.
\newblock Conditional gradient methods via stochastic path-integrated
  differential estimator.
\newblock In \emph{International Conference on Machine Learning}, pages
  7282--7291, 2019.

\bibitem[Defazio and Bottou(2018)]{defazio2018ineffectiveness}
Aaron Defazio and L{\'e}on Bottou.
\newblock On the ineffectiveness of variance reduced optimization for deep
  learning.
\newblock \emph{arXiv preprint arXiv:1812.04529}, 2018.

\bibitem[Ravi et~al.(2018)Ravi, Dinh, Lokhande, and Singh]{ravi2018constrained}
Sathya~N Ravi, Tuan Dinh, Vishnu Sai~Rao Lokhande, and Vikas Singh.
\newblock Constrained deep learning using conditional gradient and applications
  in computer vision.
\newblock \emph{arXiv preprint arXiv:1803.06453}, 2018.

\bibitem[Schramowski et~al.(2018)Schramowski, Bauckhage, and
  Kersting]{schramowski2018neural}
Patrick Schramowski, Christian Bauckhage, and Kristian Kersting.
\newblock Neural conditional gradients.
\newblock \emph{arXiv preprint arXiv:1803.04300}, 2018.

\bibitem[Berrada et~al.(2018)Berrada, Zisserman, and Kumar]{berrada2018deep}
Leonard Berrada, Andrew Zisserman, and M~Pawan Kumar.
\newblock Deep frank-wolfe for neural network optimization.
\newblock \emph{arXiv preprint arXiv:1811.07591}, 2018.

\bibitem[Ping et~al.(2016)Ping, Liu, and Ihler]{ping2016learning}
Wei Ping, Qiang Liu, and Alexander~T Ihler.
\newblock Learning infinite rbms with frank-wolfe.
\newblock In \emph{Advances in Neural Information Processing Systems}, pages
  3063--3071, 2016.

\bibitem[Zhang et~al.(2017)Zhang, Zhao, Zhu, Hoi, and
  Zhang]{zhang2017projection}
Wenpeng Zhang, Peilin Zhao, Wenwu Zhu, Steven~CH Hoi, and Tong Zhang.
\newblock Projection-free distributed online learning in networks.
\newblock In \emph{Proceedings of the 34th International Conference on Machine
  Learning-Volume 70}, pages 4054--4062. JMLR. org, 2017.

\bibitem[Zheng et~al.(2018)Zheng, Bellet, and Gallinari]{zheng2018distributed}
Wenjie Zheng, Aur{\'e}lien Bellet, and Patrick Gallinari.
\newblock A distributed frank--wolfe framework for learning low-rank matrices
  with the trace norm.
\newblock \emph{Machine Learning}, 107\penalty0 (8-10):\penalty0 1457--1475,
  2018.

\bibitem[Wang et~al.(2016)Wang, Sadhanala, Dai, Neiswanger, Sra, and
  Xing]{wang2016parallel}
Yu-Xiang Wang, Veeranjaneyulu Sadhanala, Wei Dai, Willie Neiswanger, Suvrit
  Sra, and Eric Xing.
\newblock Parallel and distributed block-coordinate frank-wolfe algorithms.
\newblock In \emph{International Conference on Machine Learning}, pages
  1548--1557, 2016.

\bibitem[Chilimbi et~al.(2014)Chilimbi, Suzue, Apacible, and
  Kalyanaraman]{chilimbi2014project}
Trishul~M Chilimbi, Yutaka Suzue, Johnson Apacible, and Karthik Kalyanaraman.
\newblock Project adam: Building an efficient and scalable deep learning
  training system.
\newblock In \emph{OSDI}, volume~14, pages 571--582, 2014.

\bibitem[Seide et~al.(2014)Seide, Fu, Droppo, Li, and Yu]{seide20141}
Frank Seide, Hao Fu, Jasha Droppo, Gang Li, and Dong Yu.
\newblock 1-bit stochastic gradient descent and its application to
  data-parallel distributed training of speech dnns.
\newblock In \emph{Fifteenth Annual Conference of the International Speech
  Communication Association}, 2014.

\bibitem[Strom(2015)]{strom2015scalable}
Nikko Strom.
\newblock Scalable distributed dnn training using commodity gpu cloud
  computing.
\newblock In \emph{Sixteenth Annual Conference of the International Speech
  Communication Association}, 2015.

\bibitem[De~Sa et~al.(2015)De~Sa, Zhang, Olukotun, and R{\'e}]{de2015taming}
Christopher~M De~Sa, Ce~Zhang, Kunle Olukotun, and Christopher R{\'e}.
\newblock Taming the wild: A unified analysis of hogwild-style algorithms.
\newblock In \emph{Advances in neural information processing systems}, pages
  2674--2682, 2015.

\bibitem[Abadi et~al.(2016)Abadi, Barham, Chen, Chen, Davis, Dean, Devin,
  Ghemawat, Irving, Isard, et~al.]{abadi2016tensorflow}
Mart{\'\i}n Abadi, Paul Barham, Jianmin Chen, Zhifeng Chen, Andy Davis, Jeffrey
  Dean, Matthieu Devin, Sanjay Ghemawat, Geoffrey Irving, Michael Isard, et~al.
\newblock Tensorflow: a system for large-scale machine learning.
\newblock In \emph{OSDI}, volume~16, pages 265--283, 2016.

\bibitem[Wen et~al.(2017)Wen, Xu, Yan, Wu, Wang, Chen, and Li]{wen2017terngrad}
Wei Wen, Cong Xu, Feng Yan, Chunpeng Wu, Yandan Wang, Yiran Chen, and Hai Li.
\newblock Terngrad: Ternary gradients to reduce communication in distributed
  deep learning.
\newblock In \emph{Advances in neural information processing systems}, pages
  1509--1519, 2017.

\bibitem[Alistarh et~al.(2017)Alistarh, Grubic, Li, Tomioka, and
  Vojnovic]{alistarh2017qsgd}
Dan Alistarh, Demjan Grubic, Jerry Li, Ryota Tomioka, and Milan Vojnovic.
\newblock Qsgd: Communication-efficient sgd via gradient quantization and
  encoding.
\newblock In \emph{Advances in Neural Information Processing Systems}, pages
  1709--1720, 2017.

\bibitem[Bernstein et~al.(2018)Bernstein, Wang, Azizzadenesheli, and
  Anandkumar]{bernstein2018signsgd}
Jeremy Bernstein, Yu-Xiang Wang, Kamyar Azizzadenesheli, and Anima Anandkumar.
\newblock signsgd: compressed optimisation for non-convex problems.
\newblock \emph{arXiv preprint arXiv:1802.04434}, 2018.

\bibitem[Stich et~al.(2018)Stich, Cordonnier, and Jaggi]{stich2018sparsified}
Sebastian~U Stich, Jean-Baptiste Cordonnier, and Martin Jaggi.
\newblock Sparsified sgd with memory.
\newblock In \emph{Advances in Neural Information Processing Systems}, pages
  4452--4463, 2018.

\bibitem[Bellet et~al.(2015)Bellet, Liang, Garakani, Balcan, and
  Sha]{bellet2015distributed}
Aur{\'e}lien Bellet, Yingyu Liang, Alireza~Bagheri Garakani, Maria-Florina
  Balcan, and Fei Sha.
\newblock A distributed frank-wolfe algorithm for communication-efficient
  sparse learning.
\newblock In \emph{Proceedings of the 2015 SIAM International Conference on
  Data Mining}, pages 478--486. SIAM, 2015.

\bibitem[Lafond et~al.(2016)Lafond, Wai, and Moulines]{lafond2016d}
Jean Lafond, Hoi-To Wai, and Eric Moulines.
\newblock D-fw: Communication efficient distributed algorithms for
  high-dimensional sparse optimization.
\newblock In \emph{Acoustics, Speech and Signal Processing (ICASSP), 2016 IEEE
  International Conference on}, pages 4144--4148. IEEE, 2016.

\bibitem[Audet and Hare(2017)]{audet2017derivative}
Charles Audet and Warren Hare.
\newblock \emph{Derivative-free and blackbox optimization}.
\newblock Springer, 2017.

\bibitem[Conn et~al.(2009)Conn, Scheinberg, and Vicente]{conn2009introduction}
Andrew~R Conn, Katya Scheinberg, and Luis~N Vicente.
\newblock \emph{Introduction to derivative-free optimization}, volume~8.
\newblock Siam, 2009.

\bibitem[Bergstra et~al.(2011)Bergstra, Bardenet, Bengio, and
  K{\'e}gl]{bergstra2011algorithms}
James~S Bergstra, R{\'e}mi Bardenet, Yoshua Bengio, and Bal{\'a}zs K{\'e}gl.
\newblock Algorithms for hyper-parameter optimization.
\newblock In \emph{Advances in neural information processing systems}, pages
  2546--2554, 2011.

\bibitem[Rios and Sahinidis(2013)]{rios2013derivative}
Luis~Miguel Rios and Nikolaos~V Sahinidis.
\newblock Derivative-free optimization: a review of algorithms and comparison
  of software implementations.
\newblock \emph{Journal of Global Optimization}, 56\penalty0 (3):\penalty0
  1247--1293, 2013.

\bibitem[Shahriari et~al.(2016)Shahriari, Swersky, Wang, Adams, and
  De~Freitas]{shahriari2016taking}
Bobak Shahriari, Kevin Swersky, Ziyu Wang, Ryan~P Adams, and Nando De~Freitas.
\newblock Taking the human out of the loop: A review of bayesian optimization.
\newblock \emph{Proceedings of the IEEE}, 104\penalty0 (1):\penalty0 148--175,
  2016.

\bibitem[Wainwright et~al.(2008)Wainwright, Jordan,
  et~al.]{wainwright2008graphical}
Martin~J Wainwright, Michael~I Jordan, et~al.
\newblock Graphical models, exponential families, and variational inference.
\newblock \emph{Foundations and Trends{\textregistered} in Machine Learning},
  1\penalty0 (1--2):\penalty0 1--305, 2008.

\bibitem[Taskar et~al.(2005)Taskar, Chatalbashev, Koller, and
  Guestrin]{taskar2005learning}
Ben Taskar, Vassil Chatalbashev, Daphne Koller, and Carlos Guestrin.
\newblock Learning structured prediction models: A large margin approach.
\newblock In \emph{Proceedings of the 22nd international conference on Machine
  learning}, pages 896--903. ACM, 2005.

\bibitem[Sokolov et~al.(2016)Sokolov, Kreutzer, Riezler, and
  Lo]{sokolov2016stochastic}
Artem Sokolov, Julia Kreutzer, Stefan Riezler, and Christopher Lo.
\newblock Stochastic structured prediction under bandit feedback.
\newblock In \emph{Advances in Neural Information Processing Systems}, pages
  1489--1497, 2016.

\bibitem[Snoek et~al.(2012)Snoek, Larochelle, and Adams]{snoek2012practical}
Jasper Snoek, Hugo Larochelle, and Ryan~P Adams.
\newblock Practical bayesian optimization of machine learning algorithms.
\newblock In \emph{Advances in neural information processing systems}, pages
  2951--2959, 2012.

\bibitem[Thornton et~al.(2013)Thornton, Hutter, Hoos, and
  Leyton-Brown]{thornton2013auto}
Chris Thornton, Frank Hutter, Holger~H Hoos, and Kevin Leyton-Brown.
\newblock Auto-weka: Combined selection and hyperparameter optimization of
  classification algorithms.
\newblock In \emph{Proceedings of the 19th ACM SIGKDD international conference
  on Knowledge discovery and data mining}, pages 847--855. ACM, 2013.

\bibitem[Chen et~al.(2017{\natexlab{a}})Chen, Zhang, Sharma, Yi, and
  Hsieh]{chen2017zoo}
Pin-Yu Chen, Huan Zhang, Yash Sharma, Jinfeng Yi, and Cho-Jui Hsieh.
\newblock Zoo: Zeroth order optimization based black-box attacks to deep neural
  networks without training substitute models.
\newblock In \emph{Proceedings of the 10th ACM Workshop on Artificial
  Intelligence and Security}, pages 15--26. ACM, 2017{\natexlab{a}}.

\bibitem[Ilyas et~al.(2018)Ilyas, Engstrom, Athalye, Lin, Athalye, Engstrom,
  Ilyas, and Kwok]{ilyas2018black}
Andrew Ilyas, Logan Engstrom, Anish Athalye, Jessy Lin, Anish Athalye, Logan
  Engstrom, Andrew Ilyas, and Kevin Kwok.
\newblock Black-box adversarial attacks with limited queries and information.
\newblock In \emph{Proceedings of the 35th International Conference on Machine
  Learning,$\{$ICML$\}$ 2018}, 2018.

\bibitem[Wang et~al.(2017)Wang, Du, Balakrishnan, and
  Singh]{wang2017stochastic}
Yining Wang, Simon Du, Sivaraman Balakrishnan, and Aarti Singh.
\newblock Stochastic zeroth-order optimization in high dimensions.
\newblock \emph{arXiv preprint arXiv:1710.10551}, 2017.

\bibitem[Balasubramanian and Ghadimi(2018)]{balasubramanian2018zeroth}
Krishnakumar Balasubramanian and Saeed Ghadimi.
\newblock Zeroth-order (non)-convex stochastic optimization via conditional
  gradient and gradient updates.
\newblock In \emph{Advances in Neural Information Processing Systems}, pages
  3455--3464, 2018.

\bibitem[Sahu et~al.(2018)Sahu, Zaheer, and Kar]{sahu2018towards}
Anit~Kumar Sahu, Manzil Zaheer, and Soummya Kar.
\newblock Towards gradient free and projection free stochastic optimization.
\newblock \emph{arXiv preprint arXiv:1810.03233}, 2018.

\bibitem[Bach(2016)]{bach2016submodular}
Francis Bach.
\newblock Submodular functions: from discrete to continuous domains.
\newblock \emph{Mathematical Programming}, pages 1--41, 2016.

\bibitem[Staib and Jegelka(2017)]{staib2017robust}
Matthew Staib and Stefanie Jegelka.
\newblock Robust budget allocation via continuous submodular functions.
\newblock In \emph{ICML}, pages 3230--3240, 2017.

\bibitem[Bian et~al.(2017{\natexlab{a}})Bian, Levy, Krause, and
  Buhmann]{bian2017continuous}
An~Bian, Kfir Levy, Andreas Krause, and Joachim~M Buhmann.
\newblock Continuous dr-submodular maximization: Structure and algorithms.
\newblock In \emph{Advances in Neural Information Processing Systems}, pages
  486--496, 2017{\natexlab{a}}.

\bibitem[Bian et~al.(2017{\natexlab{b}})Bian, Mirzasoleiman, Buhmann, and
  Krause]{bian2017guaranteed}
Andrew~An Bian, Baharan Mirzasoleiman, Joachim~M. Buhmann, and Andreas Krause.
\newblock Guaranteed non-convex optimization: Submodular maximization over
  continuous domains.
\newblock In Aarti Singh and Xiaojin~(Jerry) Zhu, editors, \emph{Proceedings of
  the 20th International Conference on Artificial Intelligence and Statistics,
  {AISTATS} 2017, 20-22 April 2017, Fort Lauderdale, FL, {USA}}, volume~54 of
  \emph{Proceedings of Machine Learning Research}, pages 111--120. {PMLR},
  2017{\natexlab{b}}.
\newblock URL \url{http://proceedings.mlr.press/v54/bian17a.html}.

\bibitem[Hassani et~al.(2017)Hassani, Soltanolkotabi, and
  Karbasi]{hassani2017gradient}
Hamed Hassani, Mahdi Soltanolkotabi, and Amin Karbasi.
\newblock Gradient methods for submodular maximization.
\newblock \emph{arXiv preprint arXiv:1708.03949}, 2017.

\bibitem[Mokhtari et~al.(2018{\natexlab{c}})Mokhtari, Hassani, and
  Karbasi]{mokhtari2018conditional}
Aryan Mokhtari, Hamed Hassani, and Amin Karbasi.
\newblock Conditional gradient method for stochastic submodular maximization:
  Closing the gap.
\newblock In \emph{AISTATS}, pages 1886--1895, 2018{\natexlab{c}}.

\bibitem[Kulesza et~al.(2012)Kulesza, Taskar, et~al.]{kulesza2012determinantal}
Alex Kulesza, Ben Taskar, et~al.
\newblock Determinantal point processes for machine learning.
\newblock \emph{Foundations and Trends{\textregistered} in Machine Learning},
  5\penalty0 (2--3):\penalty0 123--286, 2012.

\bibitem[Chen et~al.(2018{\natexlab{a}})Chen, Hassani, and
  Karbasi]{chen2018online}
Lin Chen, Hamed Hassani, and Amin Karbasi.
\newblock Online continuous submodular maximization.
\newblock In \emph{International Conference on Artificial Intelligence and
  Statistics}, pages 1896--1905, 2018{\natexlab{a}}.

\bibitem[Eghbali and Fazel(2016)]{eghbali2016designing}
Reza Eghbali and Maryam Fazel.
\newblock Designing smoothing functions for improved worst-case competitive
  ratio in online optimization.
\newblock In \emph{NIPS}, pages 3287--3295, 2016.

\bibitem[Bian et~al.(2018)Bian, Buhmann, and Krause]{bian2018optimal}
An~Bian, Joachim~M Buhmann, and Andreas Krause.
\newblock Optimal dr-submodular maximization and applications to provable mean
  field inference.
\newblock \emph{arXiv preprint arXiv:1805.07482}, 2018.

\bibitem[Shen et~al.(2019{\natexlab{b}})Shen, Ribeiro, Hassani, Qian, and
  Mi]{shen2019hessian}
Zebang Shen, Alejandro Ribeiro, Hamed Hassani, Hui Qian, and Chao Mi.
\newblock Hessian aided policy gradient.
\newblock In \emph{International Conference on Machine Learning}, pages
  5729--5738, 2019{\natexlab{b}}.

\bibitem[Dunn and Harshbarger(1978)]{dunn1978conditional}
Joseph~C Dunn and S~Harshbarger.
\newblock Conditional gradient algorithms with open loop step size rules.
\newblock \emph{Journal of Mathematical Analysis and Applications}, 62\penalty0
  (2):\penalty0 432--444, 1978.

\bibitem[Chen et~al.(2018{\natexlab{b}})Chen, Harshaw, Hassani, and
  Karbasi]{Chen2018Projection}
Lin Chen, Christopher Harshaw, Hamed Hassani, and Amin Karbasi.
\newblock Projection-free online optimization with stochastic gradient: From
  convexity to submodularity.
\newblock In \emph{International Conference on Machine Learning}, pages
  814--823. PMLR, 2018{\natexlab{b}}.

\bibitem[Chen et~al.(2019{\natexlab{a}})Chen, Zhang, and
  Karbasi]{chen2019projection}
Lin Chen, Mingrui Zhang, and Amin Karbasi.
\newblock Projection-free bandit convex optimization.
\newblock In \emph{The 22nd International Conference on Artificial Intelligence
  and Statistics}, pages 2047--2056. PMLR, 2019{\natexlab{a}}.

\bibitem[Johnson and Zhang(2013)]{Johnson2013Accelerating}
Rie Johnson and Tong Zhang.
\newblock Accelerating stochastic gradient descent using predictive variance
  reduction.
\newblock In \emph{NIPS}, pages 315--323, 2013.

\bibitem[Allen-Zhu and Hazan(2016)]{allen2016variance}
Zeyuan Allen-Zhu and Elad Hazan.
\newblock Variance reduction for faster non-convex optimization.
\newblock In \emph{International Conference on Machine Learning}, pages
  699--707, 2016.

\bibitem[Zhou et~al.(2018)Zhou, Xu, and Gu]{zhou2018stochastic}
Dongruo Zhou, Pan Xu, and Quanquan Gu.
\newblock Stochastic nested variance reduced gradient descent for nonconvex
  optimization.
\newblock In \emph{Advances in Neural Information Processing Systems}, pages
  3921--3932, 2018.

\bibitem[Nguyen et~al.(2017{\natexlab{a}})Nguyen, Liu, Scheinberg, and
  Tak{\'a}{\v{c}}]{nguyen2017sarah}
Lam~M Nguyen, Jie Liu, Katya Scheinberg, and Martin Tak{\'a}{\v{c}}.
\newblock Sarah: A novel method for machine learning problems using stochastic
  recursive gradient.
\newblock In \emph{Proceedings of the 34th International Conference on Machine
  Learning-Volume 70}, pages 2613--2621. JMLR. org, 2017{\natexlab{a}}.

\bibitem[Nguyen et~al.(2017{\natexlab{b}})Nguyen, Liu, Scheinberg, and
  Tak{\'a}{\v{c}}]{nguyen2017stochastic}
Lam~M Nguyen, Jie Liu, Katya Scheinberg, and Martin Tak{\'a}{\v{c}}.
\newblock Stochastic recursive gradient algorithm for nonconvex optimization.
\newblock \emph{arXiv preprint arXiv:1705.07261}, 2017{\natexlab{b}}.

\bibitem[Fang et~al.(2018)Fang, Li, Lin, and Zhang]{fang2018spider}
Cong Fang, Chris~Junchi Li, Zhouchen Lin, and Tong Zhang.
\newblock Spider: Near-optimal non-convex optimization via stochastic
  path-integrated differential estimator.
\newblock In \emph{Advances in Neural Information Processing Systems}, pages
  687--697, 2018.

\bibitem[Nemhauser et~al.(1978)Nemhauser, Wolsey, and
  Fisher]{nemhauser1978analysis}
George~L Nemhauser, Laurence~A Wolsey, and Marshall~L Fisher.
\newblock An analysis of approximations for maximizing submodular set
  functions.
\newblock \emph{Mathematical Programming}, 14\penalty0 (1):\penalty0 265--294,
  1978.

\bibitem[Jegelka and Bilmes(2011{\natexlab{a}})]{jegelka2011submodularity}
Stefanie Jegelka and Jeff Bilmes.
\newblock Submodularity beyond submodular energies: coupling edges in graph
  cuts.
\newblock 2011{\natexlab{a}}.

\bibitem[Jegelka and Bilmes(2011{\natexlab{b}})]{jegelka2011approximation}
Stefanie Jegelka and Jeff~A Bilmes.
\newblock Approximation bounds for inference using cooperative cuts.
\newblock In \emph{Proceedings of the 28th International Conference on Machine
  Learning (ICML-11)}, pages 577--584, 2011{\natexlab{b}}.

\bibitem[Lin and Bilmes(2011{\natexlab{a}})]{lin2011word}
Hui Lin and Jeff Bilmes.
\newblock Word alignment via submodular maximization over matroids.
\newblock In \emph{Proceedings of the 49th Annual Meeting of the Association
  for Computational Linguistics: Human Language Technologies: short
  papers-Volume 2}, pages 170--175. Association for Computational Linguistics,
  2011{\natexlab{a}}.

\bibitem[Lin and Bilmes(2011{\natexlab{b}})]{lin2011class}
Hui Lin and Jeff Bilmes.
\newblock A class of submodular functions for document summarization.
\newblock In \emph{ACL}, pages 510--520, 2011{\natexlab{b}}.

\bibitem[Tschiatschek et~al.(2014)Tschiatschek, Iyer, Wei, and
  Bilmes]{tschiatschek2014learning}
Sebastian Tschiatschek, Rishabh~K Iyer, Haochen Wei, and Jeff~A Bilmes.
\newblock Learning mixtures of submodular functions for image collection
  summarization.
\newblock In \emph{Advances in neural information processing systems}, pages
  1413--1421, 2014.

\bibitem[Chen et~al.(2018{\natexlab{c}})Chen, Feldman, and
  Karbasi]{chen2018weakly}
Lin Chen, Moran Feldman, and Amin Karbasi.
\newblock Weakly submodular maximization beyond cardinality constraints: Does
  randomization help greedy?
\newblock In \emph{ICML}, pages 804--813, 2018{\natexlab{c}}.

\bibitem[Chen et~al.(2017{\natexlab{b}})Chen, Krause, and
  Karbasi]{chen2017interactive}
Lin Chen, Andreas Krause, and Amin Karbasi.
\newblock Interactive submodular bandit.
\newblock In \emph{NeurIPS}, pages 141--152, 2017{\natexlab{b}}.

\bibitem[Kempe et~al.(2003)Kempe, Kleinberg, and Tardos]{kempe2003maximizing}
David Kempe, Jon Kleinberg, and {\'E}va Tardos.
\newblock Maximizing the spread of influence through a social network.
\newblock In \emph{SIGKDD}, pages 137--146. ACM, 2003.

\bibitem[Rodriguez and Sch{\"o}lkopf(2012)]{rodriguez2012influence}
Manuel~Gomez Rodriguez and Bernhard Sch{\"o}lkopf.
\newblock Influence maximization in continuous time diffusion networks.
\newblock \emph{arXiv preprint arXiv:1205.1682}, 2012.

\bibitem[Zhang et~al.(2016)Zhang, Bai, Chen, Bian, and Li]{zhang2016influence}
Yuanxing Zhang, Yichong Bai, Lin Chen, Kaigui Bian, and Xiaoming Li.
\newblock Influence maximization in messenger-based social networks.
\newblock In \emph{GLOBECOM}, pages 1--6. IEEE, 2016.

\bibitem[Bateni et~al.(2019)Bateni, Chen, Esfandiari, Fu, Mirrokni, and
  Rostamizadeh]{bateni2019categorical}
Mohammadhossein Bateni, Lin Chen, Hossein Esfandiari, Thomas Fu, Vahab
  Mirrokni, and Afshin Rostamizadeh.
\newblock Categorical feature compression via submodular optimization.
\newblock In \emph{International Conference on Machine Learning}, pages
  515--523, 2019.

\bibitem[Chen et~al.(2017{\natexlab{c}})Chen, Crawford, and
  Karbasi]{chen2017submodular}
Lin Chen, Forrest~W Crawford, and Amin Karbasi.
\newblock Submodular variational inference for network reconstruction.
\newblock In \emph{UAI}, 2017{\natexlab{c}}.

\bibitem[Guillory and Bilmes(2010)]{guillory2010interactive}
Andrew Guillory and Jeff Bilmes.
\newblock Interactive submodular set cover.
\newblock \emph{arXiv preprint arXiv:1002.3345}, 2010.

\bibitem[Golovin and Krause(2011)]{golovin2011adaptive}
Daniel Golovin and Andreas Krause.
\newblock Adaptive submodularity: Theory and applications in active learning
  and stochastic optimization.
\newblock \emph{JAIR}, 42:\penalty0 427--486, 2011.

\bibitem[Wei et~al.(2015)Wei, Iyer, and Bilmes]{wei2015submodularity}
Kai Wei, Rishabh Iyer, and Jeff Bilmes.
\newblock Submodularity in data subset selection and active learning.
\newblock In \emph{International Conference on Machine Learning}, pages
  1954--1963, 2015.

\bibitem[Singla et~al.(2014)Singla, Bogunovic, Bart{\'o}k, Karbasi, and
  Krause]{singla2014near}
Adish Singla, Ilija Bogunovic, G{\'a}bor Bart{\'o}k, Amin Karbasi, and Andreas
  Krause.
\newblock Near-optimally teaching the crowd to classify.
\newblock In \emph{ICML}, pages 154--162, 2014.

\bibitem[Das and Kempe(2011)]{das2011submodular}
Abhimanyu Das and David Kempe.
\newblock Submodular meets spectral: Greedy algorithms for subset selection,
  sparse approximation and dictionary selection.
\newblock \emph{arXiv preprint arXiv:1102.3975}, 2011.

\bibitem[Salehi et~al.(2017)Salehi, Karbasi, Scheinost, and
  Constable]{salehi2017submodular}
Mehraveh Salehi, Amin Karbasi, Dustin Scheinost, and R~Todd Constable.
\newblock A submodular approach to create individualized parcellations of the
  human brain.
\newblock In \emph{International Conference on Medical Image Computing and
  Computer-Assisted Intervention}, pages 478--485. Springer, 2017.

\bibitem[Bach(2010)]{bach2010structured}
Francis~R Bach.
\newblock Structured sparsity-inducing norms through submodular functions.
\newblock In \emph{Advances in Neural Information Processing Systems}, pages
  118--126, 2010.

\bibitem[Bach et~al.(2012)Bach, Jenatton, Mairal, Obozinski,
  et~al.]{bach2012optimization}
Francis Bach, Rodolphe Jenatton, Julien Mairal, Guillaume Obozinski, et~al.
\newblock Optimization with sparsity-inducing penalties.
\newblock \emph{Foundations and Trends{\textregistered} in Machine Learning},
  4\penalty0 (1):\penalty0 1--106, 2012.

\bibitem[Balkanski and Singer(2015)]{balkanski2015mechanisms}
Eric Balkanski and Yaron Singer.
\newblock Mechanisms for fair attribution.
\newblock In \emph{Proceedings of the Sixteenth ACM Conference on Economics and
  Computation}, pages 529--546. ACM, 2015.

\bibitem[Celis et~al.(2016)Celis, Deshpande, Kathuria, and
  Vishnoi]{celis2016fair}
L~Elisa Celis, Amit Deshpande, Tarun Kathuria, and Nisheeth~K Vishnoi.
\newblock How to be fair and diverse?
\newblock \emph{arXiv preprint arXiv:1610.07183}, 2016.

\bibitem[Steudel et~al.(2010)Steudel, Janzing, and
  Sch{\"o}lkopf]{steudel2010causal}
Bastian Steudel, Dominik Janzing, and Bernhard Sch{\"o}lkopf.
\newblock Causal markov condition for submodular information measures.
\newblock \emph{arXiv preprint arXiv:1002.4020}, 2010.

\bibitem[Zhou and Spanos(2016)]{zhou2016causal}
Yuxun Zhou and Costas~J Spanos.
\newblock Causal meets submodular: Subset selection with directed information.
\newblock In \emph{Advances in Neural Information Processing Systems}, pages
  2649--2657, 2016.

\bibitem[Bian et~al.(2019)Bian, Buhmann, and Krause]{bian2019optimal}
Yatao Bian, Joachim Buhmann, and Andreas Krause.
\newblock Optimal continuous dr-submodular maximization and applications to
  provable mean field inference.
\newblock In \emph{International Conference on Machine Learning}, pages
  644--653, 2019.

\bibitem[Bach(2015)]{bach2015submodular}
Francis Bach.
\newblock Submodular functions: from discrete to continous domains.
\newblock \emph{arXiv preprint arXiv:1511.00394}, 2015.

\bibitem[Mokhtari et~al.(2018{\natexlab{d}})Mokhtari, Hassani, and
  Karbasi]{mokhtari2017conditional}
Aryan Mokhtari, Hamed Hassani, and Amin Karbasi.
\newblock Conditional gradient method for stochastic submodular maximization:
  Closing the gap.
\newblock In \emph{AISTATS}, pages 1886--1895, 2018{\natexlab{d}}.

\bibitem[Niazadeh et~al.(2018)Niazadeh, Roughgarden, and
  Wang]{niazadeh2018optimal}
Rad Niazadeh, Tim Roughgarden, and Joshua Wang.
\newblock Optimal algorithms for continuous non-monotone submodular and
  dr-submodular maximization.
\newblock In \emph{Advances in Neural Information Processing Systems}, pages
  9594--9604, 2018.

\bibitem[Chen et~al.(2019{\natexlab{b}})Chen, Feldman, and
  Karbasi]{chen2019unconstrained}
Lin Chen, Moran Feldman, and Amin Karbasi.
\newblock Unconstrained submodular maximization with constant adaptive
  complexity.
\newblock In \emph{Proceedings of the 51st Annual ACM SIGACT Symposium on
  Theory of Computing}, pages 102--113, 2019{\natexlab{b}}.

\bibitem[Streeter and Golovin(2009)]{streeter2009online}
Matthew Streeter and Daniel Golovin.
\newblock An online algorithm for maximizing submodular functions.
\newblock In \emph{NIPS}, pages 1577--1584, 2009.

\bibitem[Golovin et~al.(2014)Golovin, Krause, and Streeter]{golovin14online}
Daniel Golovin, Andreas Krause, and Matthew Streeter.
\newblock Online submodular maximization under a matroid constraint with
  application to learning assignments.
\newblock Technical report, arXiv, 2014.

\bibitem[Gabillon et~al.(2013)Gabillon, Kveton, Wen, Eriksson, and
  Muthukrishnan]{gabillon2013adaptive}
Victor Gabillon, Branislav Kveton, Zheng Wen, Brian Eriksson, and
  Shanmugavelayutham Muthukrishnan.
\newblock Adaptive submodular maximization in bandit setting.
\newblock In \emph{NIPS}, pages 2697--2705. Citeseer, 2013.

\bibitem[Yue and Guestrin(2011)]{yue2011linear}
Yisong Yue and Carlos Guestrin.
\newblock Linear submodular bandits and their application to diversified
  retrieval.
\newblock In \emph{NIPS}, pages 2483--2491, 2011.

\bibitem[Yu et~al.(2016)Yu, Fang, and Tao]{yu2016linear}
Baosheng Yu, Meng Fang, and Dacheng Tao.
\newblock Linear submodular bandits with a knapsack constraint.
\newblock In \emph{Proceedings of the AAAI Conference on Artificial
  Intelligence}, volume~30, 2016.

\bibitem[Zhang et~al.(2019{\natexlab{b}})Zhang, Chen, Hassani, and
  Karbasi]{zhang2019online}
Mingrui Zhang, Lin Chen, Hamed Hassani, and Amin Karbasi.
\newblock Online continuous submodular maximization: From full-information to
  bandit feedback.
\newblock In \emph{Advances in Neural Information Processing Systems}, pages
  9206--9217, 2019{\natexlab{b}}.

\bibitem[Du et~al.(2017)Du, Chen, Li, Xiao, and Zhou]{du2017stochastic}
Simon~S Du, Jianshu Chen, Lihong Li, Lin Xiao, and Dengyong Zhou.
\newblock Stochastic variance reduction methods for policy evaluation.
\newblock In \emph{Proceedings of the 34th International Conference on Machine
  Learning-Volume 70}, pages 1049--1058. JMLR. org, 2017.

\bibitem[Sutton and Barto(2018)]{sutton2018reinforcement}
Richard~S Sutton and Andrew~G Barto.
\newblock \emph{Reinforcement learning: An introduction}.
\newblock MIT press, 2018.

\bibitem[Papini et~al.(2018)Papini, Binaghi, Canonaco, Pirotta, and
  Restelli]{papini2018stochastic}
Matteo Papini, Damiano Binaghi, Giuseppe Canonaco, Matteo Pirotta, and Marcello
  Restelli.
\newblock Stochastic variance-reduced policy gradient.
\newblock \emph{arXiv preprint arXiv:1806.05618}, 2018.

\bibitem[Cutkosky and Orabona(2019)]{cutkosky2019momentum}
Ashok Cutkosky and Francesco Orabona.
\newblock Momentum-based variance reduction in non-convex sgd.
\newblock \emph{arXiv preprint arXiv:1905.10018}, 2019.

\bibitem[Vondr{\'a}k(2008)]{vondrak2008optimal}
Jan Vondr{\'a}k.
\newblock Optimal approximation for the submodular welfare problem in the value
  oracle model.
\newblock In \emph{STOC}, pages 67--74. ACM, 2008.

\bibitem[Karimi et~al.(2017)Karimi, Lucic, Hassani, and
  Krause]{karimi2017stochastic}
Mohammad Karimi, Mario Lucic, Hamed Hassani, and Andreas Krause.
\newblock Stochastic submodular maximization: The case of coverage functions.
\newblock In \emph{NIPS}, page to appear, 2017.

\bibitem[Calinescu et~al.(2011)Calinescu, Chekuri, P{\'a}l, and
  Vondr{\'a}k]{calinescu2011maximizing}
Gruia Calinescu, Chandra Chekuri, Martin P{\'a}l, and Jan Vondr{\'a}k.
\newblock Maximizing a monotone submodular function subject to a matroid
  constraint.
\newblock \emph{SIAM Journal on Computing}, 40\penalty0 (6):\penalty0
  1740--1766, 2011.

\bibitem[Chekuri et~al.(2014)Chekuri, Vondr{\'a}k, and
  Zenklusen]{chekuri2014submodular}
Chandra Chekuri, Jan Vondr{\'a}k, and Rico Zenklusen.
\newblock Submodular function maximization via the multilinear relaxation and
  contention resolution schemes.
\newblock \emph{SIAM Journal on Computing}, 43\penalty0 (6):\penalty0
  1831--1879, 2014.

\bibitem[Koren et~al.(2009)Koren, Bell, and Volinsky]{koren2009matrix}
Yehuda Koren, Robert Bell, and Chris Volinsky.
\newblock Matrix factorization techniques for recommender systems.
\newblock \emph{Computer}, \penalty0 (8):\penalty0 30--37, 2009.

\bibitem[Weinberger and Saul(2006)]{weinberger2006unsupervised}
Kilian~Q Weinberger and Lawrence~K Saul.
\newblock Unsupervised learning of image manifolds by semidefinite programming.
\newblock \emph{International journal of computer vision}, 70\penalty0
  (1):\penalty0 77--90, 2006.

\bibitem[Xu et~al.(2013)Xu, Jin, and Zhou]{xu2013speedup}
Miao Xu, Rong Jin, and Zhi-Hua Zhou.
\newblock Speedup matrix completion with side information: Application to
  multi-label learning.
\newblock In \emph{Advances in neural information processing systems}, pages
  2301--2309, 2013.

\bibitem[Qu et~al.(2017)Qu, Li, and Xu]{qu2017non}
Chao Qu, Yan Li, and Huan Xu.
\newblock Non-convex conditional gradient sliding.
\newblock \emph{arXiv preprint arXiv:1708.04783}, 2017.

\bibitem[Nguyen et~al.(2019)Nguyen, van Dijk, Phan, Nguyen, Weng, and
  Kalagnanam]{nguyen2019optimal}
Lam~M Nguyen, Marten van Dijk, Dzung~T Phan, Phuong~Ha Nguyen, Tsui-Wei Weng,
  and Jayant~R Kalagnanam.
\newblock Optimal finite-sum smooth non-convex optimization with sarah.
\newblock \emph{arXiv preprint arXiv:1901.07648}, 2019.

\bibitem[Shen et~al.(2019{\natexlab{c}})Shen, Fang, Zhao, Huang, and
  Qian]{2019complexities}
Zebang Shen, Cong Fang, Peilin Zhao, Junzhou Huang, and Hui Qian.
\newblock Complexities in projection-free stochastic non-convex minimization.
\newblock In \emph{The 22nd International Conference on Artificial Intelligence
  and Statistics}, pages 2868--2876, 2019{\natexlab{c}}.

\bibitem[Lei et~al.(2019)Lei, Wu, Chen, Dimakis, Dhillon, and
  Witbrock]{lei2019discrete}
Qi~Lei, Lingfei Wu, Pin-Yu Chen, Alexandros Dimakis, Inderjit Dhillon, and
  Michael Witbrock.
\newblock Discrete adversarial attacks and submodular optimization with
  applications to text classification.
\newblock \emph{Systems and Machine Learning (SysML)}, 2019.

\bibitem[Ghadimi and Lan(2013)]{ghadimi2013stochastic}
Saeed Ghadimi and Guanghui Lan.
\newblock Stochastic first-and zeroth-order methods for nonconvex stochastic
  programming.
\newblock \emph{SIAM Journal on Optimization}, 23\penalty0 (4):\penalty0
  2341--2368, 2013.

\bibitem[Flaxman et~al.(2005)Flaxman, Kalai, and McMahan]{flaxman2005online}
Abraham~D Flaxman, Adam~Tauman Kalai, and H~Brendan McMahan.
\newblock Online convex optimization in the bandit setting: gradient descent
  without a gradient.
\newblock In \emph{SODA}, pages 385--394, 2005.

\bibitem[Hazan et~al.(2016)]{hazan2016introduction}
Elad Hazan et~al.
\newblock Introduction to online convex optimization.
\newblock \emph{Foundations and Trends{\textregistered} in Optimization},
  2\penalty0 (3-4):\penalty0 157--325, 2016.

\bibitem[Agarwal et~al.(2010)Agarwal, Dekel, and Xiao]{agarwal2010optimal}
Alekh Agarwal, Ofer Dekel, and Lin Xiao.
\newblock Optimal algorithms for online convex optimization with multi-point
  bandit feedback.
\newblock In \emph{COLT}, pages 28--40. Citeseer, 2010.

\bibitem[Shamir(2017)]{shamir2017optimal}
Ohad Shamir.
\newblock An optimal algorithm for bandit and zero-order convex optimization
  with two-point feedback.
\newblock \emph{Journal of Machine Learning Research}, 18\penalty0
  (52):\penalty0 1--11, 2017.

\bibitem[Oxley(2006)]{oxley2006matroid}
James~G Oxley.
\newblock \emph{Matroid theory}, volume~3.
\newblock Oxford University Press, USA, 2006.

\bibitem[Ageev and Sviridenko(2004)]{ageev2004pipage}
Alexander~A Ageev and Maxim~I Sviridenko.
\newblock Pipage rounding: A new method of constructing algorithms with proven
  performance guarantee.
\newblock \emph{Journal of Combinatorial Optimization}, 8\penalty0
  (3):\penalty0 307--328, 2004.

\bibitem[El-Arini et~al.(2009)El-Arini, Veda, Shahaf, and
  Guestrin]{el2009turning}
Khalid El-Arini, Gaurav Veda, Dafna Shahaf, and Carlos Guestrin.
\newblock Turning down the noise in the blogosphere.
\newblock In \emph{SIGKDD}, pages 289--298. ACM, 2009.

\bibitem[Iyer et~al.(2014)Iyer, Jegelka, and Bilmes]{Iyer2014Monotone}
Rishabh Iyer, Stefanie Jegelka, and Jeff Bilmes.
\newblock Monotone closure of relaxed constraints in submodular optimization:
  Connections between minimization and maximization.
\newblock In \emph{Uncertainty in Artificial Intelligence (UAI)}, Quebic City,
  Quebec Canada, July 2014. AUAI.

\bibitem[Mirzasoleiman et~al.(2013)Mirzasoleiman, Karbasi, Sarkar, and
  Krause]{mirzasoleiman2013distributed}
Baharan Mirzasoleiman, Amin Karbasi, Rik Sarkar, and Andreas Krause.
\newblock Distributed submodular maximization: Identifying representative
  elements in massive data.
\newblock In \emph{NIPS}, pages 2049--2057, 2013.

\bibitem[Tsanas et~al.(2010)Tsanas, Little, McSharry, and
  Ramig]{tsanas2010enhanced}
Athanasios Tsanas, Max~A Little, Patrick~E McSharry, and Lorraine~O Ramig.
\newblock Enhanced classical dysphonia measures and sparse regression for
  telemonitoring of parkinson's disease progression.
\newblock In \emph{2010 IEEE International Conference on Acoustics, Speech and
  Signal Processing}, pages 594--597. IEEE, 2010.

\bibitem[Zachary(1977)]{zachary1977information}
Wayne~W Zachary.
\newblock An information flow model for conflict and fission in small groups.
\newblock \emph{Journal of anthropological research}, 33\penalty0 (4):\penalty0
  452--473, 1977.

\bibitem[Chen et~al.(2018{\natexlab{d}})Chen, Harshaw, Hassani, and
  Karbasi]{pmlr-v80-chen18c}
Lin Chen, Christopher Harshaw, Hamed Hassani, and Amin Karbasi.
\newblock Projection-free online optimization with stochastic gradient: From
  convexity to submodularity.
\newblock In \emph{ICML}, pages 813--822, 10--15 Jul 2018{\natexlab{d}}.

\end{thebibliography}

\end{document}